\documentclass[reqno]{amsart}
\usepackage[utf8]{inputenc}
\usepackage{hyperref}
\usepackage{amsthm,amsmath,amssymb}
\usepackage{mathrsfs}
\usepackage{amsfonts}
\usepackage{tikz}
\usepackage{tkz-euclide}
\usepackage{amsthm}
\usepackage{amsmath,amscd}
\usepackage[all]{xy} 
\usepackage{graphicx}

\title[Laminations of Cubic Polynomials]{Real Laminations of Cubic Polynomials on Boundaries of Hyperbolic Components}

\author{Yueyang Wang} \thanks{Supported by the Fundamental Research Funds for the Central Universities} \address{School of Mathematics, Shanghai University of Finance and Economics, Shanghai, 200433, China} \email{yueyangwang@icloud.com}

\keywords{Hyperbolic component, Cubic polynomials, Lamination, Periodic slices}
\subjclass[2020]{Primary 37F20; Secondary 37F46, 37F44}
\date{\today}

\newtheorem{thm}{Theorem}[section]
\newtheorem{cor}[thm]{Corollary}
\newtheorem{prop}[thm]{Proposition}
\newtheorem{lem}[thm]{Lemma}
\newtheorem{fact}[thm]{Fact}

\newtheorem{asp}{Assumption}[section]

\theoremstyle{remark}
\newtheorem{clm}{Claim}
\newtheorem{rmk}{Remark}[section]

\theoremstyle{definition}
\newtheorem{defi}{Definition}

\def\a{{\mathcal a}}

\def\S{\mathcal{S}}

\def\RZ{\mathbb{S}}
\def\Q{\mathbb{Q}}
\def\QZ{\Q/\Z}

\def\P{{\mathcal{P}_{3}}}
\def\CCC{{\mathcal{C}_{3}}}
\def\H{{\mathcal{H}(3)}}

\def\A{{\rm (A)}}
\def\B{{\rm (B)}}
\def\CCC{{\rm (C)}}
\def\C{\widehat{\mathbb C}}
\def\D{\mathbb{D}}

\def\L{\lambda}

\def\R{\mathbb{R}}
\def\C{\mathbb{C}}
\def\Z{\mathbb{Z}}

\def\AA{\mathcal{A}}
\def\p{\partial_{\mathrm{t}}}
\def\pp{\partial_{\mathrm{w}}}
\def\cc{{\mathrm{Crit}}}
\def\d{\mathrm{dist.}}
\def\RR{ L }
\def\g{\tilde{g}}
\def\O{\tilde{O}}
\def\i{\underline{\iota}}

\def\CC{{\widehat{\mathbb{C}}}}
\def\a{{\bf a}}

\def\M{{\rm Modu}}
\def\S{\mathcal{S}_p}

\def\LL{\Delta}

\def\H{\mathcal{H}}
\def\RRR{\mathrm{R}}
\def\M{\mathbf{M}}
\def\vt{\vartheta}

\begin{document}

\maketitle

\begin{abstract} 
    Milnor divides all bounded hyperbolic components of cubic polynomials into 4 types (A), (B), (C) and (D).
    In this article, we characterize the real laminations of cubic polynomials on the tame boundary of all bounded hyperbolic components of type (A), (B), or (C).
    For such maps, we prove that the real lamination is the smallest lamination which contains the real lamination of maps in the hyperbolic component and an equivalence relation generated by one characteristic equivalence class.
    As an application, we show that every hyperbolic cubic polynomial except type (D) is not combinatorially rigid.
\end{abstract}

\section{Background and Main Result}

\subsection{Polynomials and Laminations}
Dynamics of cubic polynomial maps have been studied intensively over the past 40 years.
Following the celebrated papers \cite{branner1988iteration,branner1992iteration} by Branner and Hubbard, we study the space of monic and centered cubic polynomials with a marked critical point in the Branner-Hubbard normal form
$$
    f_\a(z)=z^{3}-3c^{2}z+(2c^3+v),\qquad \a=(c,v) \in \C^2.
$$
They form the space $\P\simeq \C^2$.
Every cubic polynomial is conjugated to one in the Branner-Hubbard normal form by an affine map.

Every $f_\a \in \P$ has marked critical points $\pm c(\a)=\pm c$.
Let
$K_\a:=\{ z \in \mathbb{C}: \{f^n_\a(z) \} \text{ is bounded}  \}$
be its \emph{filled-in Julia set} and $J_\a:=\partial K_\a$ be its \emph{Julia set}.
The complement $\Omega_\a:=\C \setminus K_\a$ of the filled-in Julia set is the \emph{basin of infinity}.

According  Douady and Hubbard \cite{douady1984etude}, a useful tool in the study of polynomial dynamics is the B\"ottcher map and \emph{external rays} in $\Omega_\a$.
Let $\varphi_\a$ be the unique B\"ottcher map defined near $\infty$ with $\varphi_\a(\infty)=\infty$ and the property
$$\varphi_\a(f_\a(z))=(\varphi_\a(z))^3,\qquad \lim_{z \to \infty}\frac{\varphi_\a(z)}{z}=1.$$ 
If $K_\a$ is connected, then the inverse map $\psi_\a:=\varphi_\a^{-1}$ can be extended to a conformal map $\psi_\a : \C \setminus \overline{\D} \to \C \setminus K_\a$.
For $\theta \in \mathbb{R}/\mathbb{Z}$, the \emph{external ray} with angle $\theta$ is defined as $$R_\a(\theta):=\psi_\a((1,+\infty)e^{2\pi i \theta}).$$
For $\theta \in \mathbb{R}/\mathbb{Z}$, if the external ray $R_\a(\theta)$ is defined and $\lim_{s\to 1}\psi_\a(se^{2\pi i\theta})=z$, then we say that the external ray $R_\a(\theta)$ \emph{lands} at $z$.
It is well-known that every external ray $R_\a(\theta)$ with rational angle $\theta \in \Q/\Z$ lands at a repelling or parabolic preperiodic point if $K_\a$ is connected

Following McMullen \cite{McMullen1995TheCO}, we consider the equivalence relation $\L_\Q(\a)$ which encodes the landing pattern of these external rays with rational angles.
That is, $(\theta,\theta')\in \L_\Q(\a)$ if and only if $R_\a(\theta)$ and $R_\a(\theta')$ land at a common point.
This equivalence relation $\L_\Q(\a)$ is called the \emph{rational lamination} of $f_\a$.

Furthermore, if $J_a$ locally connected, then by the Carath\'eodory Theorem,  
$\psi_\a$ can be extended to a continuous map $\partial \D \to J_\a$.
Hence every external ray lands at a point in $J_\a$.
Thus, we can define the \emph{real lamination} $\L_\R(\a) \subset \RZ \times \RZ$ of $f_\a$ such that  $(\theta,\theta')\in \L_\R(\a)$ if and only if $R_\a(\theta)$ and $R_\a(\theta')$ land at a common point.
In this case, the Julia set has a topological model $J_\a \simeq \RZ/\L_\R(\a)$.
Moreover, we have
\begin{equation*}
    \xymatrix{\ar @{} [dr]
   \RZ \ar[d]^{\psi_\a} \ar[r]^{\tau_d} & \RZ \ar[d]^{\psi_{\a}} \\
   J_\a \ar[r]^{f_\a}        & J_{\a} .     }
\end{equation*}
Thus, the combinatorial and topological aspects of the dynamics on $J_\a$ is encoded in its real lamination.
The rational and real laminations of polynomials of degree $d \geq 2$ are studied by Kiwi \cite{kiwi2001rational,wanderingorbitportrait,KIWI2004207,CombinatorialContinuity}.




In the parameter space $\C^2$, all $\a \in \C^2$ with $K_\a$ connected form the \emph{connected locus} $\mathcal{C}_3$.
The combinatorial structure of cubic connectedness locus and related topics are studied by the group of Blokh in \cite{blokh2005necessary,blokh2013cubic,blokh2013laminations,blokh2016quadratic,blokh2017combinatorial,blokh2017parameter,blokh2018perfect}.

In this article, we study the lamination of cubic polynomials on the boundaries of hyperbolic components.
A map $f_\a$ is called \emph{hyperbolic} if the orbits of $\pm c(\a)$ converge to attracting cycles.
Parameters $\a$ with $f_\a$ hyperbolic forms the \emph{hyperbolic locus}.
Its connected components are called \emph{hyperbolic components}.
hyperbolic components in connectedness locus are called \emph{bounded hyperbolic components}.
Milnor \cite{milnor2008cubic} divides them into $4$ types based on the orbit of the two finite critical points.
\begin{itemize}
    \item[(A)] Both finite critical points belong to the same Fatou component.
    \item[(B)] Two finite critical points belong to different Fatou components of the same attracting cycle.
    \item[(C)] Only one finite critical point belongs to the immediate basin of an attracting cycle. The other finite critical point belongs to the basin but not the immediate basin of this attracting cycle.
    \item[(D)] Two finite critical points are attracted by distinct attracting cycles.
\end{itemize}
Among them, hyperbolic components of type (D) are exceptional.
For $\a$ in a hyperbolic component of type (D), the dynamics of $f_\a$ can be decomposed into two ``disjoint'' quadratic polynomials.
In the following discussion, we exclude this case.

Let $\mathcal{H}$ be a bounded hyperbolic component of type \A, \B~or \CCC.
It is well-known that $J_\a$ is locally connected for $\a \in \mathcal{H}$.
According to Man\'e-Sud-Sullivan \cite{mane1983dynamics}, the real lamination $\L_\R$ takes a constant $\L_\R(\mathcal{H})$ on $\mathcal{H}$.

The dynamics of the maps on the boundary is more difficult to understand.
According to the classification of Peterson-Tan \cite{petersen2009analytic}, the boundary of a bounded hyperbolic component divides into two parts: the \emph{tame} part $\p \mathcal{H}$, consisting of the boundary points which possess an attracting cycle, and its complement the \emph{wild} part $\pp \mathcal{H}$.
According to Roesch \cite{roesch2007hyperbolic} and X.Wang \cite{wang2017hyperbolic}, the dynamics of $f_\a$ for $\a \in \p \mathcal{H}$ is now not so mysterious anymore.
Without loss of generality, we assume that the critical point $c(\a)$ is in the immediate attracting basin $B_\a$.
The attracting basin is denoted by $\tilde{B}_\a$.
According to \cite{wang2017hyperbolic,wang2023} (Theorem \ref{wang} and Proposition \ref{multiplier}), $\a \in \p \H$ can be classified into two cases:
\begin{itemize}
    \item {\bf parabolic case}: There is a parabolic periodic orbit attached on $\partial B_\a$;
    \item {\bf non-parabolic case}: $-c(\a)$ is on the boundary of some component in $\tilde{B}_\a$.
\end{itemize}
In particular, $J_\a$ is locally connected in both cases.
Hence the real lamination $\L_\R(\a)$ makes sense for every $\a \in \overline{\H}$.
(This is another reason why we exclude the type (D) case since boundary maps in a type (D) hyperbolic components may not have locally connected Julia set.)

Pictures of various examples of boundary maps are listed in the following.
Figure \ref{fig1} is a critically infinite (non-parabolic) example for $\a \in \p \H_0$ where $\H_0$ is the \emph{central hyperbolic component} containing $z \mapsto z^3$.
Figure \ref{fig2} and Figure \ref{fig3} are non-parabolic examples of the boundary points of type (C) components with an attracting fixed point.
Figure \ref{fig2} is a critically infinite example, while Figure \ref{fig3} is a critically finite example.
Figure \ref{fig4} is critically finite example of an intersection point of a hyperbolic component of type (A) and a hyperbolic component of type (C).
Figure \ref{fig5} and Figure \ref{fig6} are parabolic examples.
Figure \ref{fig5} represents a boundary point of $\H_0$ which is not on the boundary of any other hyperbolic components, while Figure \ref{fig6} represents an intersection point of a hyperbolic component of type (A) and a hyperbolic component of type (B).

 \begin{figure}[h]
  \begin{center}
   \vspace{2mm}
   \begin{minipage}{.31\linewidth}
    \includegraphics[width=\linewidth]{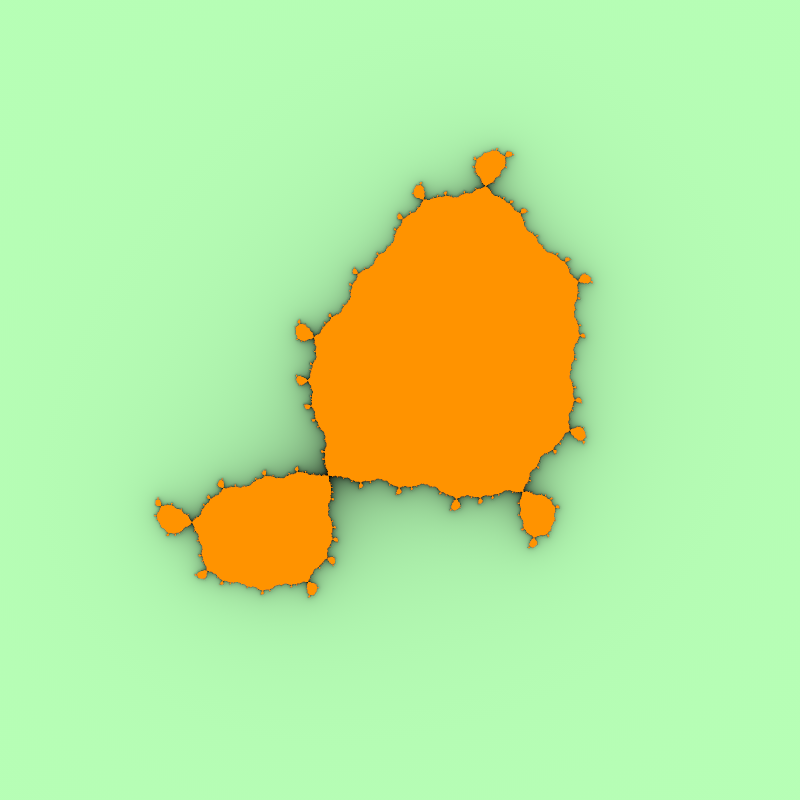}
    \caption{}\label{fig1}
  \end{minipage}
  \hspace{1mm}
  \begin{minipage}{.31\linewidth}
    \includegraphics[width=\linewidth]{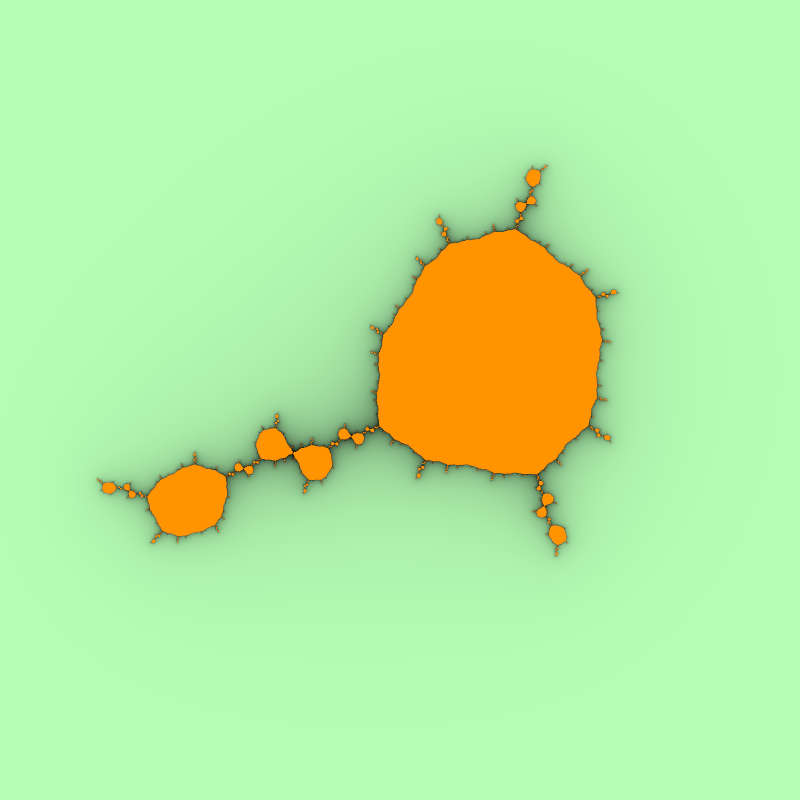}
    \caption{}\label{fig2}
    \end{minipage}
   \hspace{1mm}
   \begin{minipage}{.31\linewidth}
    \includegraphics[width=\linewidth]{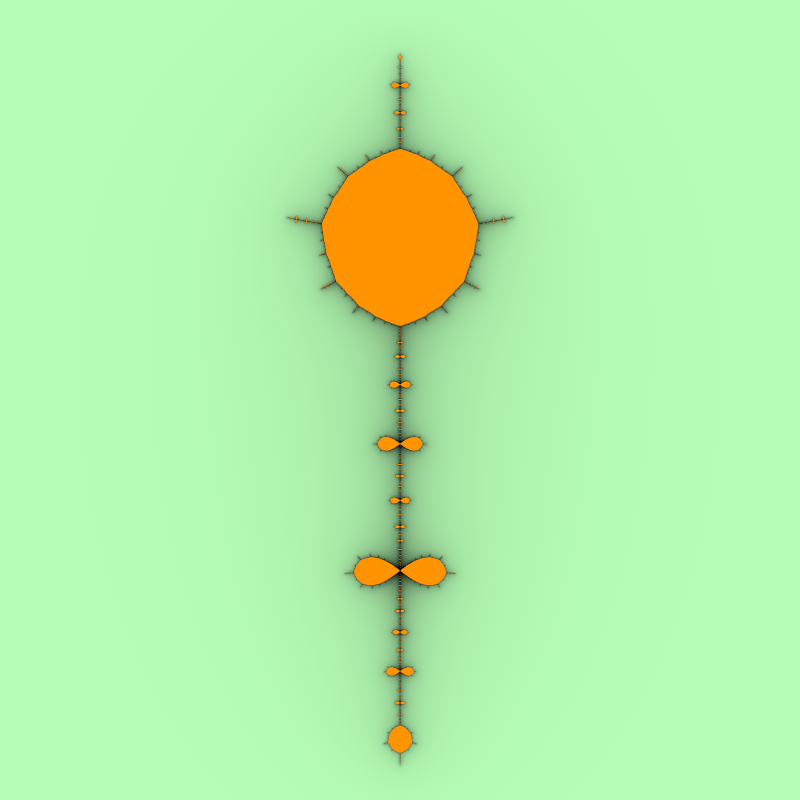}
    \caption{} \label{fig3}
\end{minipage}
  
  \vspace{6mm}
  \begin{minipage}{.31\linewidth}
    \includegraphics[width=\linewidth]{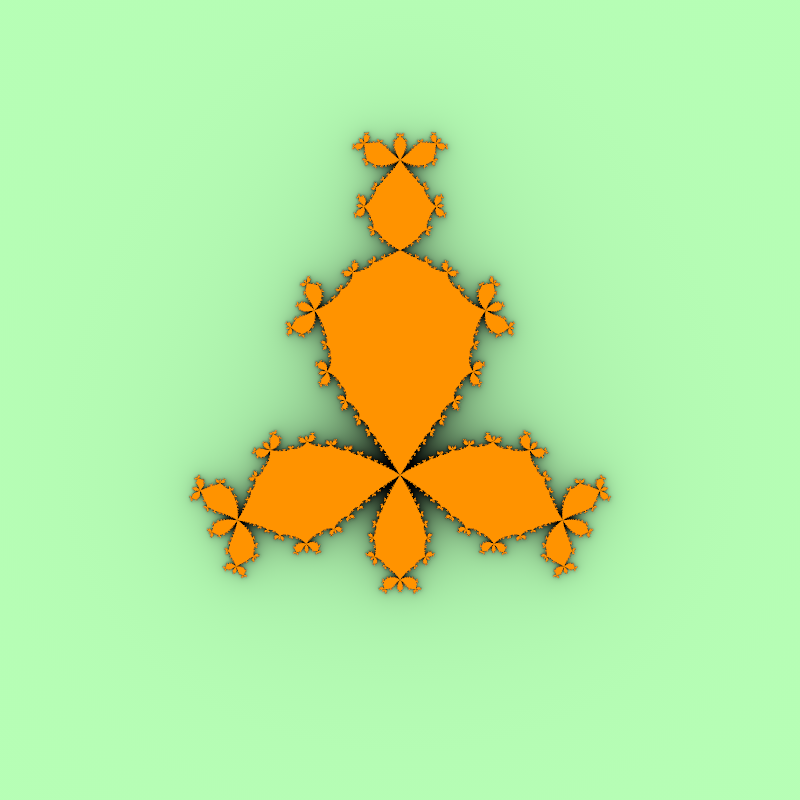}
    \caption{} \label{fig4}  \end{minipage}
  \hspace{1mm}
  \begin{minipage}{.31\linewidth}
    \includegraphics[width=\linewidth]{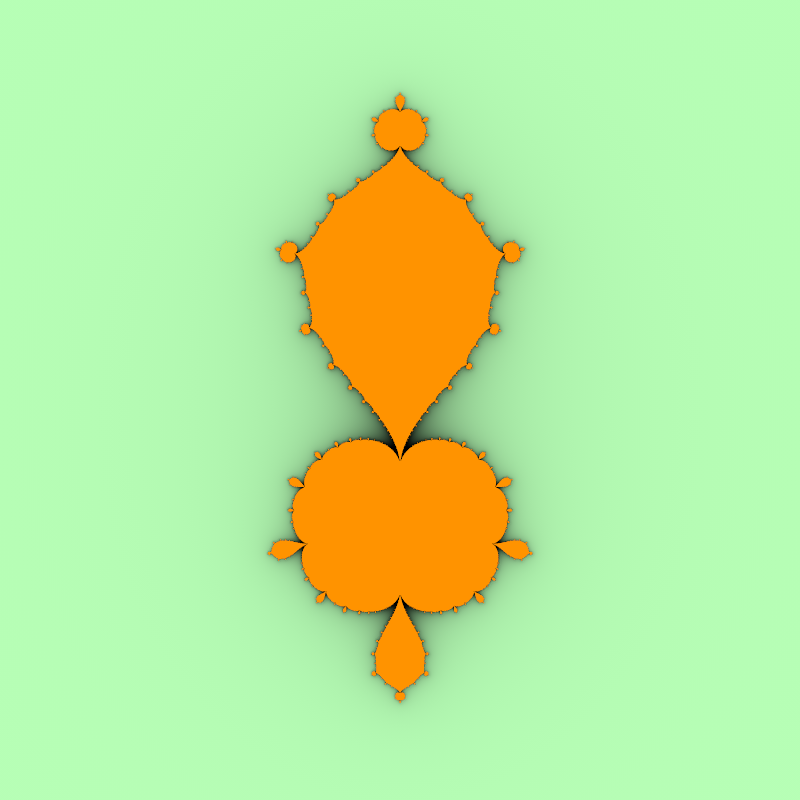}
    \caption{} \label{fig5}  \end{minipage}
   \hspace{1mm}
   \begin{minipage}{.31\linewidth}
    \includegraphics[width=\linewidth]{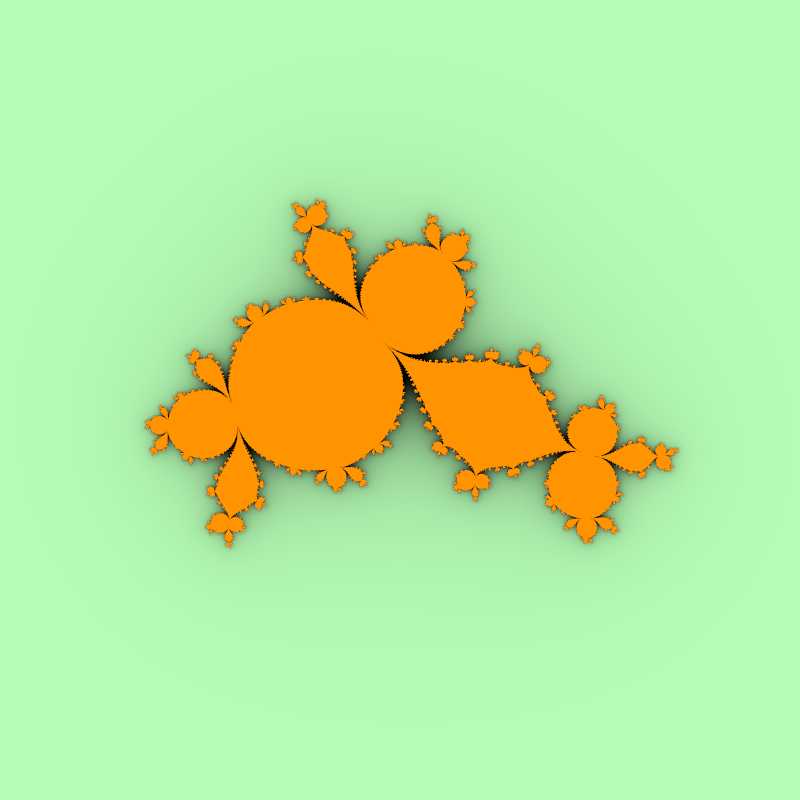}
    \caption{} \label{fig6}
\end{minipage} \end{center}
\end{figure}

Beyond the dynamical system, the topological and geometric structure of the closures of bounded hyperbolic components themselves are also a major topic and deeply studied in \cite{milnor2012hyperbolic,petersen200910,roesch2007hyperbolic,wang2017hyperbolic,cao2022boundaries,cao2025boundaryI,cao2025boundaryII,luo2023geometrically,gao2025boundaries}.

\subsection{Main Results}
This article aims to characterize the relations between real laminations $\L_\R(\H)$ of cubic polynomials in a hyperbolic component $\H$ and the real laminations of maps on its tame boundary.
We will show that the real lamination $\L_\R(\a)$ of any $\a \in \p \H$ contains $\L_\R(\H)$, and additional part is very ``small".
To precisely describe the the phrase ``small", we need the following terminologies.


An equivalence relation $\Lambda \subset \RZ \times \RZ$ is called \emph{$\tau_3$-invariant} if $\tau_3$ maps an equivalence class of $\Lambda$ to an equivalence class $\Lambda$.
A $\tau_3$-invariant equivalence relation $\Lambda$ is called \emph{minimal}, if there exists a equivalence class $E^*$ of $\Lambda$ such that every non-singleton equivalence class $E$ is eventually mapped to $E^*$ under $\tau_3$.
Here $E^*$ is called the \emph{generator} of $\Lambda$.

For $L \subset \RZ \times \RZ$, let $\langle L \rangle$ denote the smallest equivalence relation on $\RZ$ containing $L$.
The main result of this article is the following.

\begin{thm}[lamination of maps on tame boundary]\label{thmmain2}
    Let $\mathcal{H}$ be a bounded hyperbolic component of type \A, \B~ or \CCC.
    For any $\a \in \p \mathcal{H}$, there exists a minimal $\tau_3$-invariant equivalence relation $\Lambda(\a) \subset \RZ \times \RZ$ such that
    $$\L_\R(\a) = \langle \Lambda(\a) \cup \L_\R(\mathcal{H}) \rangle.$$
    In particular, we have $\L_\R(\mathcal{H})\subsetneq \L_\R(\a)$.
\end{thm}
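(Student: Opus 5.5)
The plan is to compare $f_\a$, for $\a\in\p\H$, with a map $f_{\a_0}$, $\a_0\in\H$, through a semiconjugacy (or a pinching deformation) that respects external angles, and then to isolate one characteristic class of new identifications. First I show $\L_\R(\H)\subset\L_\R(\a)$. For rational angles this follows from stability of landing patterns at repelling points: if $R(\theta)$ and $R(\theta')$ land together at a repelling preperiodic point for $\a_0\in\H$, the point moves holomorphically along a path in $\H$ ending at $\a$. It stays repelling at $\a$ except possibly on the parabolic cycle, and there the rays still co-land by the usual persistence-at-parabolic argument. Since $J_\a$ is locally connected (Theorem \ref{wang} and Proposition \ref{multiplier}), the landing map $\psi_\a$ is continuous on $\RZ$. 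Because $\L_\R(\H)$ is the closure of its rational part (hyperbolic maps, with all classes finite), passing to limits gives the inclusion for all real angles.

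Next I construct the generator $E^*$ in each of the two cases of Theorem \ref{wang}. In the non-parabolic case, $-c(\a)$ lies on $\partial U$ for some component $U$ of $\tilde B_\a$. For $\a_0\in\H$, the corresponding Fatou component $U_0$ contains the free critical point, and the internal ray of $U_0$ with the right internal angle lands at a point $z_0$ that is the landing point of a class $E_0$ of $\L_\R(\H)$, possibly a singleton. At $\a$, the critical point $-c$ sits where $U$ touches its preimage structure, so two (or more) classes of $\L_\R(\H)$, or a class together with its companion under the degree-two local map at $-c$, merge. I define $E^*$ to be the full class of $-c(\a)$, or of its critical value in the critically-infinite case, in $\L_\R(\a)$. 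In the parabolic case, $E^*$ is the class of the parabolic point on $\partial B_\a$: the rays landing there form a cycle of classes, and the parabolic pinching identifies root angles that were separated in $\H$. I then set $\Lambda(\a)$ to be the equivalence relation whose non-singleton classes are exactly the iterated preimage classes of $E^*$, that is, the components of $\tau_3^{-n}(E^*)$ determined by the pullback dictated by $f_\a$. By construction this relation is $\tau_3$-invariant and minimal.

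The main step, and the main obstacle, is the reverse inclusion $\L_\R(\a)\subset\langle\Lambda(\a)\cup\L_\R(\H)\rangle$, that is, showing that every identification in $J_\a$ beyond those of $\H$ is generated by the orbit of $E^*$. My approach is to build a monotone continuous surjection $h\colon J_{\a_0}\to J_\a$ that commutes with the dynamics and is compatible with the Böttcher coordinates. This would be done via the Roesch/Wang puzzle pieces, or via a quasiconformal surgery that collapses the arcs joining the points to be pinched. I would then prove that the nontrivial fibres of $h$ are exactly the closed sets that are unions of $\L_\R(\H)$-classes linked by arcs in preimages of the pinched set. The key tool is that, away from the grand orbit of the pinching locus, puzzle pieces of $f_\a$ and $f_{\a_0}$ shrink in the same way. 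The shrinking comes from Roesch--Wang local connectivity together with Yoccoz-type nesting for type (A), (B) and (C) maps, since the free critical point is captured or lies on a Fatou boundary. This forces $h$ to be injective there. One then has to handle infinite chains of merged classes and check that the generated equivalence relation is closed. Here I expect to use the fact that the diameters of preimage classes go to zero, because $\tau_3$ expands.

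Finally, the strict inclusion $\L_\R(\H)\subsetneq\L_\R(\a)$ follows once I show that $E^*$ is not a class of $\L_\R(\H)$. In the parabolic case, two root rays of distinct $\H$-classes co-land. In the non-parabolic case, the critical point on $\partial U$ receives rays from both sides of the critical preimage, while in $\H$ those angles land at distinct points of $\partial U_0$ and its sibling.
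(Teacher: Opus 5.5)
Your proposal has two genuine gaps: the generator you choose does not give a $\tau_3$-invariant relation, and the parabolic case is not handled in either direction.

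\textbf{The generator.} Your $\Lambda(\a)$ is not $\tau_3$-invariant in general, so the sentence ``by construction this relation is $\tau_3$-invariant and minimal'' fails. In the postcritically finite non-parabolic case you take $E^*$ to be the full $\L_\R(\a)$-class of $-c(\a)$, so $\tau_3(E^*)$ is the full class of the critical value. Since $-c(\a)\in J_\a$ is not periodic, this set is not an iterated preimage of $E^*$. It is therefore a class of your $\Lambda(\a)$ only if it is a singleton, and nothing forces that. The critical value is a repelling preperiodic point that may be a cut point of $J_\a$, for instance a point where two Fatou components of $\tilde{B}_\a$ touch. Then $E^*$ has $2k\ge 4$ angles mapping onto $k\ge 2$ angles, which are not a class of $\Lambda(\a)$. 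The full class mixes old $\L_\R(\H)$-identifications with the new one. What is needed is a two-element generator: one angle from each of the two $\L_\R(\H)$-classes that merge at $-c(\a)$, chosen so that both have the same image. Producing exactly this is the role of the paper's left/right turning generalized internal rays through $-c(\a)$ along the parameter ray $\mathcal{R}$. Their preferred angles give $E^*=\{\vt^L(x,t_0),\vt^R(x,t_0)\}$, with $\tau_3(\vt^L)=\tau_3(\vt^R)$ in the non-periodic case (Lemma \ref{Lc}). You would then still have to prove two things: that these angles co-land at the boundary map (Lemma \ref{impression} with Proposition \ref{wang2}, Corollary \ref{coland}), and that every class of $\langle\Lambda\cup\L_\R(\H)\rangle$ has the structure described in Lemma \ref{characterization}.

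\textbf{The parabolic case, inclusion $\L_\R(\H)\subset\L_\R(\a)$.} Continuity of landing points of periodic rays at a parabolic parameter cannot simply be quoted in degree $3$. By Theorem \ref{pz}, the limit of $z_\theta(\a_n)$ is only some fixed point of $f_\a^p$ joined to $z_\theta(\a)$ through the Hausdorff limit of the rays, and that limit may run through parabolic basins. The paper obtains Proposition \ref{keylemma} from the uniqueness of the fixed point of $f_\a^p$ on the boundary of each parabolic component (Lemma \ref{1fixedpoint}). You also omit strictly preperiodic angles landing at preimages of the parabolic cycle; these points are neither eventually repelling nor on the cycle. For them the limiting landing point is a priori determined only up to a choice of preimage: the boundary of the parabolic component containing $-c(\a)$ already contains two preimages of its root under the return map. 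The paper resolves this with the critical leaves $L^\pm$ (Lemma \ref{fact}, Theorem \ref{laminationsubset}).

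\textbf{The parabolic case, reverse inclusion.} Puzzle pieces do not shrink on the impression of $-c(\a)$, which is a nondegenerate copy of a parabolic quadratic filled Julia set (Theorem \ref{wang}, Proposition \ref{wang2}). So your puzzle comparison gives no control of co-landing on the grand orbit of this whole set, not just of the parabolic point. The paper instead reduces to rational pairs via Theorem \ref{kiwicontain}. It treats pairs at repelling points with Lemma \ref{stablerepelling}, and pairs at parabolic points with the splitting of the parabolic point into a fixed point and a $p$-cycle (Remark \ref{parabolicperturb}). This reduction requires closedness of $\langle\Lambda\cup\L_\R(\H)\rangle$, which the paper proves via Roesch--Yin limbs and shrinking puzzles at hyperbolic parameters (Proposition \ref{visuallamination}). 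Your planned justification, ``diameters of preimage classes go to zero'', does not follow from expansion of $\tau_3$. Indeed, the paper's proof of (R1) has to handle nested leaves converging to a nondegenerate leaf of $\L_\R(\H)$.
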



Let $\a^* \in \mathcal{H}$ be the unique post critically finite map in $\mathcal{H}$. 
By Theorem \ref{thmmain2}, we deduced the following semi-conjugacy statement.

\begin{cor}[semi-conjugacy]\label{semiconjugacy}
    For any $\a \in \p \mathcal{H}$, there exists a non-injective continuous semi-conjugacy $\pi_\a : J_{\a^*} \to J_\a$ such that  
\begin{equation*}
    \xymatrix{\ar @{} [dr]
 \partial \D \ar[d]^{\tau_3} \ar[r]_{\psi_{\a^*}} \ar@/^1pc/[rr]^{\psi_\a} &  J_{\a^*} \ar[d]^{f_{\a^*}} \ar[r]_{\pi_\a} & J_\a \ar[d]^{f_{\a}} \\
 \partial \D  \ar[r]^{\psi_{\a^*}} \ar@/_1pc/[rr]_{\psi_\a}  &   J_{\a^*} \ar[r]^{\pi_\a}        & J_\a .     }
\end{equation*}
\end{cor}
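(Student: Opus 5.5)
The statement to prove is Corollary \ref{semiconjugacy}; I would deduce it directly from Theorem \ref{thmmain2} by passing to quotients of the circle.

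\emph{Step 1: the models.} Since $\a^* \in \mathcal{H}$ and $\L_\R$ is constant on $\mathcal{H}$, we have $\L_\R(\a^*) = \L_\R(\mathcal{H})$. The Julia sets $J_{\a^*}$ and $J_\a$ are locally connected (for $\a\in\p\mathcal{H}$ by Theorem \ref{wang}). So the Carathéodory extensions $\psi_{\a^*}, \psi_\a : \partial\D \to J$ are continuous surjections from a compact space to Hausdorff spaces. Each is therefore a quotient map, and its fibres are exactly the classes of $\L_\R(\a^*)$, respectively of $\L_\R(\a)$. In particular $J_{\a^*} \cong \RZ/\L_\R(\mathcal{H})$ and $J_\a \cong \RZ/\L_\R(\a)$.

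\emph{Step 2: define $\pi_\a$.} By Theorem \ref{thmmain2}, $\L_\R(\mathcal{H}) \subset \langle \Lambda(\a)\cup\L_\R(\mathcal{H})\rangle = \L_\R(\a)$. Hence $\psi_\a$ is constant on every fibre of $\psi_{\a^*}$. We can then set
$$\pi_\a(\psi_{\a^*}(\theta)) := \psi_\a(\theta).$$
This is well defined, and it is continuous by the universal property of the quotient map $\psi_{\a^*}$. It is surjective because $\psi_\a$ is, and $\pi_\a\circ\psi_{\a^*} = \psi_\a$ holds by construction.

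\emph{Step 3: conjugacy.} Both $\psi_{\a^*}$ and $\psi_\a$ semi-conjugate $\tau_3$ to the respective maps, by the Böttcher equation extended to the boundary. Take any $z = \psi_{\a^*}(\theta)$. Then
$$\pi_\a(f_{\a^*}(z)) = \pi_\a(\psi_{\a^*}(3\theta)) = \psi_\a(3\theta) = f_\a(\psi_\a(\theta)) = f_\a(\pi_\a(z)).$$
So both squares of the diagram commute.

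\emph{Step 4: non-injectivity.} Theorem \ref{thmmain2} gives the strict inclusion $\L_\R(\mathcal{H}) \subsetneq \L_\R(\a)$. So there are angles $\theta,\theta'$ that are $\L_\R(\a)$-equivalent but not $\L_\R(\mathcal{H})$-equivalent. Then $\psi_{\a^*}(\theta) \neq \psi_{\a^*}(\theta')$, while both have the same image $\psi_\a(\theta)=\psi_\a(\theta')$ under $\pi_\a$.

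Given Theorem \ref{thmmain2}, every step is routine. The only point needing care is Step 1: the compact-to-Hausdorff argument is what makes $\psi_{\a^*}$ a quotient map, and this is what guarantees that $\pi_\a$ is continuous.
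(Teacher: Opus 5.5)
Your proposal is correct and follows the paper's proof: you define $\pi_\a$ by sending the landing point of $R_{\a^*}(\theta)$ to the landing point of $R_\a(\theta)$, and well-definedness comes from $\L_\R(\mathcal{H})\subset\L_\R(\a)$. Beyond the paper's short argument, you make continuity rigorous via the quotient-map property of $\psi_{\a^*}$, verify the commutation, and derive non-injectivity from the strict inclusion; citing Theorem \ref{thmmain2} rather than Theorem \ref{laminationsubset} also correctly covers all of $\p\mathcal{H}$, not just $\partial\mathcal{H}\cap\S$.
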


To the best knowledge of the author, it is not known whether the statement $\L_\R(\mathcal{H})\subset \L_\R(\a)$ holds for $\a \in \pp \mathcal{H}$ for $\H$ of type \A, \B, or \CCC.
While, counterexamples are constructed for $\a \in \pp \mathcal{H}$ for $\H$ of type (D).
See \cite[Example 8.5, Example 8.6]{inou2012combinatorics} for details.
Conjecturally, such counterexamples can also be constructed for wild boundaries of hyperbolic components of type (A), (B) and (C).

Since we focus on the tame boundary map $\a \in \p \H$, we may assume that the attracting cycle that $f_\a$ possess has period $p$.
By changing the multiplier of the $p$ attracting cycle using surgeries (Proposition \ref{multiplier}), we may assume that the $p$ attracting cycle is super-attracting.
This allows us to restrict our discussion in Milnor's periodic $p$ super-attracting slices 
$$\mathcal{S}_{p}:=\left \{ \a \in \mathbb{C}^2 : c(\a) \text{ is a $p$-periodic point of $f_\a$} \right \}.$$
It is first introduced by Milnor \cite{milnor2008cubic}, and is studied by many people, see \cite{roesch2007hyperbolic, dujardin2008distribution, Bonifant_2010, arfeux2016irreducibility, wang2017hyperbolic}.
The pictures of $\S$ for $p=1,2,3$ are shown form Figure \ref{S1} to Figure \ref{S3}.


Let $\mathcal{H} $ be a hyperbolic component of type \A, \B~or \CCC, then $\mathcal{H} \cap \S$ forms a bounded hyperbolic component in $\S$.
Theorem \ref{thmmain2} is obtained through a careful study of the real laminations of cubic polynomials in $\partial \H \cap \S$.
 \begin{figure}[h]
  \begin{center}
   \vspace{2mm}
   \begin{minipage}{.31\linewidth}
    \includegraphics[width=\linewidth]{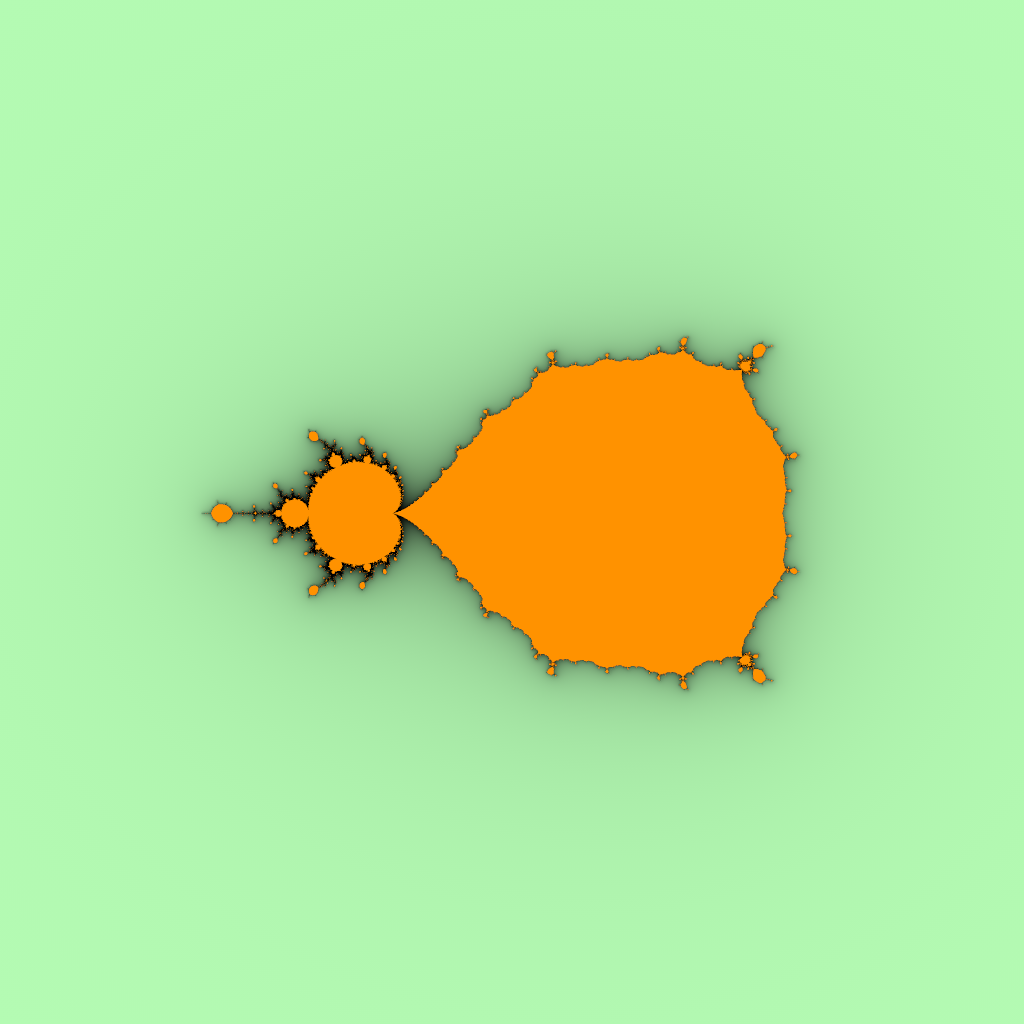}
    \caption{$\mathcal{S}_1$}\label{S1}
  \end{minipage}
  \hspace{1mm}
  \begin{minipage}{.31\linewidth}
    \includegraphics[width=\linewidth]{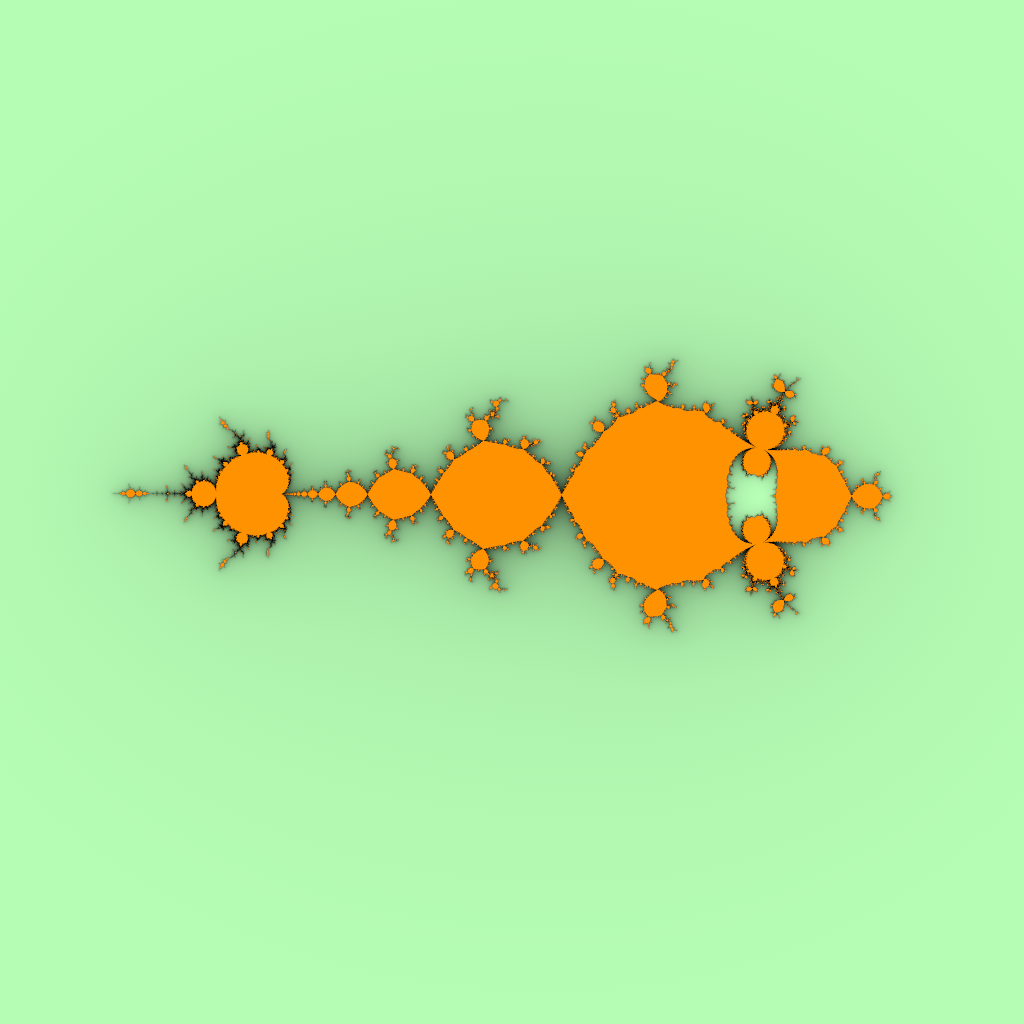}
    \caption{$\mathcal{S}_2$}\label{S2}
  \end{minipage}
   \hspace{1mm}
   \begin{minipage}{.31\linewidth}
    \includegraphics[width=\linewidth]{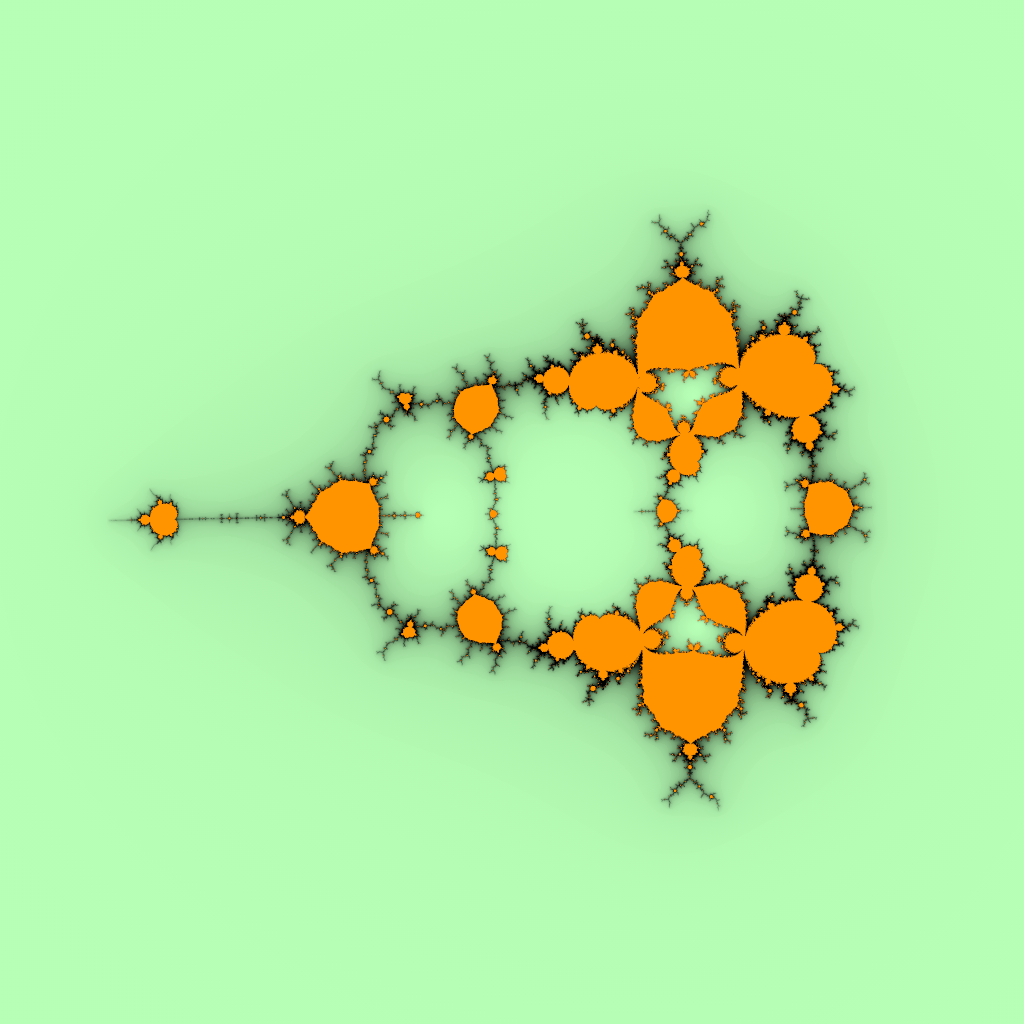}
    \caption{$\mathcal{S}_3$}\label{S3}
  \end{minipage}
 \end{center}
\end{figure}

As an application, we use our result to study the combinatorial rigidity of cubic polynomials.
Combinatorial rigidity of polynomials is studied by many people, see \cite{McMullen1995TheCO,Henriksen2003TheCR,avila2009combinatorial}.
A polynomial $f_{\a_0}$ of degree $d$ with no indifferent cycles is said to be \emph{combinatorially rigid} if for every polynomial $f_\a$ with no indifferent cycles and $\L_\Q(\a)=\L_\Q(\a_0)$, the composition $\psi_\a\circ\varphi_{\a_0}$ of the B\"ottcher maps extends to a quasiconformal map on $\CC$.

For each degree $d\geq 2$, it was conjectured that every polynomial with non indifferent cycles is combinatorially rigid.
This is called the Combinatorial Rigidity Conjecture.
In degree $d=2$, the Combinatorial Rigidity Conjecture implies the Density of Hyperbolicity Conjecture.
It is known that the combinatorial rigidity holds for every non infinite renormaliziable quadratic polynomial.
But in general, it is still open.

In degree $d=3$, Henriksen \cite{Henriksen2003TheCR} constructed a counterexample and showed the Combinatorial Rigidity Conjecture is false in degree $3$.
His counterexample involves infinite renormaliziable polynomials with non-locally connected Julia set.
Using Theorem \ref{thmmain2}, we obtain the following result.

\begin{thm}[combinatorial rigidity for cubic polynomials]\label{degree3}
    Every hyperbolic cubic polynomial which is not type {\rm (D)} is not combinatorially rigid.
\end{thm}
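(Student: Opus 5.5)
Fix a hyperbolic cubic polynomial $f_{\a_0}$ in a bounded hyperbolic component $\mathcal{H}$ of type \A, \B~or \CCC. Combinatorial rigidity fails as soon as we find a polynomial $f_\a$ with no indifferent cycles, $\L_\Q(\a)=\L_\Q(\a_0)$, and $\psi_\a\circ\varphi_{\a_0}$ not extending quasiconformally. The plan is to take $\a$ on the tame boundary $\p\mathcal{H}$, chosen so that the extra identifications supplied by Theorem \ref{thmmain2} involve only irrational angles. Then the rational laminations agree while the real laminations differ.

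\textbf{Choice of $\a$.} I take $\a\in\p\mathcal{H}$ in the non-parabolic case. Here $-c(\a)$ lies on the boundary of a component $U$ of $\tilde B_\a$, and $c(\a)$ is still attracted to a (non-super-attracting after surgery, or super-attracting) cycle. I choose $\a$ critically infinite, with $-c(\a)$ having an infinite, non-preperiodic orbit, as in Figure \ref{fig1} or Figure \ref{fig2}. Such parameters should exist, since $\partial U$ is a Jordan curve parametrized by internal angles. Using the boundary parametrization of \cite{roesch2007hyperbolic,wang2017hyperbolic} in the slice $\S$ (transported back via Proposition \ref{multiplier}), I choose the parameter so that $-c(\a)$ sits at an irrational internal angle.

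\textbf{Properties of $f_\a$.}
\begin{itemize}
\item $f_\a$ has no indifferent cycles. The only non-repelling cycle is the attracting one containing the orbit of $c(\a)$, and $-c(\a)$ lies in $J_\a$ with infinite orbit, so it cannot feed a parabolic cycle. This excludes the parabolic case by Theorem \ref{wang}.
\item $\L_\Q(\a)=\L_\Q(\mathcal{H})=\L_\Q(\a_0)$. By Theorem \ref{thmmain2}, $\L_\R(\a)=\langle\Lambda(\a)\cup\L_\R(\mathcal{H})\rangle$, where $\Lambda(\a)$ is minimal with generator $E^*$, the class of angles landing at the critical value $f_\a(-c(\a))$ (or at $-c(\a)$). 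Since this point has an infinite orbit, every angle in $E^*$ is non-preperiodic under $\tau_3$, hence irrational. Every non-singleton class of $\Lambda(\a)$ eventually maps to $E^*$, so it also consists of irrational angles, since a rational angle is preperiodic and would force $E^*$ to contain preperiodic angles.
\item The rational classes are unaffected. Taking $\langle\cdot\rangle$ merges a class of $\L_\R(\mathcal{H})$ containing a rational angle with a class of $\Lambda(\a)$ only if they share an angle. The class of a rational angle in $\L_\R(\mathcal{H})$ is finite and consists of rational angles landing at a preperiodic point, so it is disjoint from all irrational angles. Hence the rational part is unchanged and $\L_\Q(\a)=\L_\Q(\a_0)$.
\end{itemize}

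\textbf{Failure of rigidity.} If $\psi_\a\circ\varphi_{\a_0}$ extended to a quasiconformal homeomorphism $h$ of $\CC$, then $h$ would map $J_{\a_0}$ onto $J_\a$ and carry the landing point of $R_{\a_0}(\theta)$ to that of $R_\a(\theta)$. Both Julia sets are locally connected, so the rays extend continuously to their landing points, and this forces $\L_\R(\a_0)=\L_\R(\a)$. That contradicts the strict inclusion $\L_\R(\mathcal{H})\subsetneq\L_\R(\a)$ in Theorem \ref{thmmain2}. Equivalently, $h$ would be a homeomorphism factoring through the non-injective semi-conjugacy $\pi_\a$ of Corollary \ref{semiconjugacy}, which is impossible.

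\textbf{Main obstacle.} The hard step is guaranteeing that the generator class $E^*$ contains no rational angles, that is, producing a tame non-parabolic boundary parameter with $-c(\a)$ not preperiodic. I would first use the Roesch/Wang parametrization of $\partial\mathcal{H}\cap\S$ by internal angles on $\partial U$ and pick an irrational angle. If that description is unavailable for type \B~or \CCC, I would fall back on a countability argument: the tame non-parabolic boundary is uncountable, while the critically finite parameters are countable.
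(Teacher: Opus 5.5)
Your proposal is correct and is essentially the paper's argument. The paper takes $\a$ to be the landing point in $\partial\mathcal{H}\cap\mathcal{S}_p$ of a parameter internal ray of irrational angle $t_0$, which is exactly your choice of ``$-c(\a)$ at an irrational internal angle''; it then notes that the characteristic angles $\vartheta^L(x,t_0),\vartheta^R(x,t_0)$ are irrational, so the rational lamination is unchanged while the real lamination strictly grows. Your checks that $f_\a$ has no indifferent cycles, and that a quasiconformal extension of the B\"ottcher conjugacy would force equal real laminations, just make explicit what the paper leaves implicit. One small correction: the generator $E^*=\{\vartheta^L(x,t_0),\vartheta^R(x,t_0)\}$ lands at $-c(\a)$ itself, not at the critical value (its $\tau_3$-image is a single angle landing there). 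This follows from the paper's proof, via Lemma \ref{impression} and singleton impressions, rather than from the statement of Theorem \ref{thmmain2}; since either point has infinite orbit, your irrationality argument still goes through.
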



\subsection{Sructure of the Article and Strategy of the Proofs}
The strategy of the proof of Theorem \ref{thmmain2} is the following.
The major part of our discussion lies in the super-attracting slice $\S$.
Our idea is trying to observe how and why the real lamination changes when the parameter approaches the boundary.

In Section \ref{sec3}, we develop a new theory of non-smooth generalized internal rays over the model space $\O \times \RZ$ for a hyperbolic map in a hyperbolic component $\H$ of type (A), (B) or (C).
Here $\O$ denote the grand orbit $\O$ of the $p$ super-attracting cycle.
It is well-known that given each $(v,t) \in \O \times \RZ$, we can define the canonical B\"ottcher internal arc until it hits an iterative preimage $\omega(\a)$ of the critical point $-c(\a)$.
What we do is to pass through $\omega(\a)$ and continue the canonical extension to reach the boundary.
The Jordan arc we obtained is called a \emph{generalized internal ray}.
Each time (either $1$ or infinite times) when we pass through a pre-critical point, there are two directions $\iota \in \{ L,R \}$ (which represent the turning direction: left or right).
Thus, we can define various generalized internal rays based on how it turns at each pre-critical point.
There are two particular kinds of generalized internal rays which are called the \emph{left and right internal ray}s. They turn a particular direction (left or right) at every pre-critical points. 


The landing properties of the left and right internal rays help us to guess the lamination of the boundary maps
As we can observe in the computer generated pictures, the external rays which are joined by a pair of left and right internal rays are stretched closer and closer as the parameter approaches $\a_0 \in \partial \H$ along a certain canonical parameter ray.
Thus, a new but simple invariant equivalence relation $\Lambda(\a_0)$ appears in our vision.
For this reason, by adding $\Lambda(\a_0)$ to $\L_\R(\H)$, we call 
$\Lambda_\R(\a_0):=\langle \Lambda(\a_0) \cup \L_\R(\H) \rangle$
the \emph{visual lamination} of $\a_0$.
This definition is pure combinatorial.
To this end, surprisingly, we are able to prove that the visual lamination is indeed a lamination only by using the combinatorial definition without showing that it is the real lamination of some cubic polynomial (Proposition \ref{visuallamination}).

From Subsection \ref{ssec4.3}, show that the visual lamination $\Lambda_\R(\a_0)$ is indeed the real lamination $\L_\R(\a_0)$ of $\a_0$.
The verification is a difficulty of this article.
We employee the puzzle techniques as well as the its perturbation theories.
Another key ingredient in the proof is Peterson and Zakeri's result \cite{hausdorfflimit} on the Huasdorff limit of external rays when it approaches a parabolic parameter. 
This is carefully discussed in Subsection \ref{ssec2.4}.

Finally, quasiconformal surgeries allows us to transfer the result in $\S$ to the tame boundary of $\H$.
This is done in Subsection \ref{ssec4.6} where the main theorems are proved.





\section{External Rays and Laminations}


\subsection{Notations}
Let $\mathbb{C}$ denote the complex plane.
The \emph{Riemann's sphere} $\CC:=\mathbb{C} \cup \{\infty\}$ is the one point compactification of $\mathbb{C}$. 
Denote $\mathbb{D}:=\{z \in \C: |z|<1 \}$.

Denote $\RZ:=\R/\Z$
Let $\pi: \mathbb{R} \to \RZ$ be the natural projection map.
$\theta_{1},\theta_{2}, \cdots \theta_{n} \in \RZ$ are said to be in \emph{positive cyclic order} if there exist $\tilde{\theta}_{1}, \tilde{\theta}_{2}, \cdots, \tilde{\theta}_{n} \in \mathbb{R}$ such that $\pi(\tilde{\theta}_{k})=\theta_{k}$ for $1 \leq k \leq n$ and
\begin{equation*}
    \tilde{\theta}_{1}<\tilde{\theta}_{2}< \cdots <\tilde{\theta}_{n}<\tilde{\theta}_{1}+1.
\end{equation*}
For any $\theta_{1}, \theta_{2} \in \mathbb{R}/\mathbb{Z}$, the \emph{positive open interval} is defined to be
$$
    (\theta_{1},\theta_{2})_{+}:=\{\theta \in \mathbb{R}/\mathbb{Z}: \theta_{1},\theta,\theta_{2}~\mbox{are in positive cyclic order}\}.
$$
The \emph{length} of the interval $(\theta_{1},\theta_{2})_{+}$ is defined to be
$
    \LL (\theta_{1},\theta_{2})_{+}:=\tilde{\theta}_{2}-\tilde{\theta}_{1},
$
where $\tilde{\theta}_{1}, \tilde{\theta}_{2}$ are representatives of $\theta_{1},\theta_{2}$ such that $\tilde{\theta}_{1}<\tilde{\theta}_{2}<\tilde{\theta}_{1}+1$.
Similarly, one can define other kinds of intervals on $\RZ$.
The length of these intervals are all defined to be $ \LL (\theta_{1},\theta_{2})_{+}$.
Let $E \subset \RZ$ be a finite set, define 
$$\Delta E:=\min \{ \Delta (\theta,\theta')_+ :  \theta,\theta' \in E  \}.$$

For $d \geq 2$, let $\tau_d: \RZ \to \RZ$ be the induced map of the map $\theta \mapsto d \theta : \mathbb{R} \to \mathbb{R}$ on $\RZ$.

\subsection{External rays of Polynomials}\label{polynomialbasin}
Let $\AA$ be complex manifold.
A holomorphic map $f: \AA \times \C \to \C, ~ (\a,z) \mapsto f_\a(z)$ is called a \emph{holomorphic family of polynomials} of degree $d \geq 2$ if for any $\a \in \AA$, $f_\a$ is a polynomial map of degree $d$.
It is usually denoted by $\mathcal{F}=\{ f_\a\}_{\a \in \AA}$.
Let $\cc(\a)$ denote the set of critical points of $f_\a$.

For $\a \in \AA$,
$\infty$ is a fixed critical point of $f_\a$ with multiplicity $d$.
Let 
$
    K_\a:=\{z \in \mathbb{C} : \{ f_\a^n(z) \} \mbox{ is bounded} \},
$
denote the filled-in Julia set, $J_\a:=\partial K_\a$ be the Julia set.
Then $\Omega_\a:= \C \setminus K_\a$ is the \emph{attracting basin} of $\infty$.
The Green function $G_\a : \C \to [0,+\infty)$ of $f_\a$ is defined to be 
$$
    G_\a(z):=\lim_{n \to \infty} \frac{1}{d^n} \log_+ |f_\a^n(z)|
$$
where $\log_+:=\max \{\log , 0 \}$.
For any $s>1$, define $\Omega_{\a}(s):=\{ z \in \Omega_\a : \exp \circ G_\a (z) > s \}$.
Let 
$
    s_{\a}:=\max \{\exp \circ G_{\a}(c): c \in \Omega_\a \cap \cc(\a) \}.
$
Let $\mathscr{B}:=\{(\a,z)\in \AA \times \CC: z \in \Omega_{\a}(s_{\a}) \}$.
There exists a holomorphic map $\varphi:  \mathscr{B} \to \CC \setminus \overline{\mathbb{D}},~(\a,z) \mapsto \varphi_\a(z)$ such that for any $\a \in \AA$ and $z \in \Omega_{\a}(s_{\a})$, we have 
\begin{equation}\label{boteq3}
    \varphi_\a(f_\a(z))=(\varphi_\a(z))^d,\qquad \lim_{z \to \infty}\frac{\varphi_\a(z)}{z}=1. 
\end{equation}
Fix $\a\in \AA$, the map $\varphi_\a$ is called the \emph{B\"ottcher map} of $f_\a$ with respect to $\infty$.  
Denote $\psi_{\a}$ to be the inverse of $\varphi_{\a}$.
By \eqref{boteq3}, we have 
\begin{equation}\label{boteq4}
   f_\a \circ \psi_{\a} (w)=\psi_{\a} (w^d),
\end{equation}
According to \cite{douady1984etude}, there exists a unique $s_{\a}(\theta) \in [1, {s_{\a}}]$ such that $\psi_\a$ admits a canonical (means that \eqref{boteq4} still holds) smooth extension to $(s_{\a}(\theta),+\infty)e^{2\pi i \theta}$ without hitting any iterative preimages of critical points.
If $s_{\a}(\theta)=1$, we define the \emph{external ray}  with angle $\theta$ is defined to be
$$
    R_{\a}(\theta):=\psi_{\a}((1,+\infty)e^{2\pi i \theta}).
$$
If $\lim_{s \to 1} \psi_{\a}(se^{2\pi i \theta})$ exists, we say that the external ray $ R_{\a}(\theta)$ \emph{land}s at $z_\theta(\a):=\psi_{\a}(e^{2\pi i \theta})$.
If $\theta \in \mathbb{Q}/\mathbb{Z}$ and $s_{\a}(\theta)=1$, then $R_{\a}(\theta)$ lands at an eventually repelling or parabolic periodic point.
If $K_\a$ is connected, then for any eventually repelling or parabolic periodic point $z$, there exist at least one, and at most finitely many $\theta \in \mathbb{Q}/\mathbb{Z}$ such that $R_\a(\theta)$ lands at $z$.

Suppose that $R_\a(\theta)$ and $R_\a(\theta')$ land  at a common point, define the \emph{corresponding sector} $S_\a(\theta,\theta')_+$ of $(\theta,\theta')_+$ to be the connected component of  $\C \setminus  (\overline{R_\a(\theta)} \cup \overline{R_\a(\theta')})$.
Then the followings are equivalent.
\begin{enumerate}
    \item The angular width $\LL(\theta,\theta')_+<1/d$, and $\LL (\tau_d(\theta),\tau_d(\theta'))_+ = d \LL(\theta,\theta')_+ $.
    \item The sector $S_\a(\theta,\theta')_+$ does not contain any critical point of $f_\a$.
    \item The map $f_\a : S_\a(\theta,\theta')_+ \to S_\a(\tau_d(\theta),\tau_d(\theta'))_+$ is a conformal map.
\end{enumerate}

We also need Roesch-Yin's characterization of limbs structure of the filled-in Julia set with respect to the bounded Fatou components.

\begin{thm}[Roesch-Yin \cite{roesch2008boundary}]\label{roeschyin}
    Let $f_\a$ be a polynomial of degree $d$ with connected Julia set, and $B$ be a bounded Fatou component of $f_\a$.
    For each $x \in \partial B$, there exists a compact connected set $L_x \subset K_\a \setminus B$ such that $L_x \cap \overline{B}=\{ x\}$ and
    $$K_\a=B \cup \left(\bigsqcup_{x \in \partial B} L_x \right).$$
    Moreover, the following holds.
   \begin{enumerate}
       \item If $L_x=\{ x\}$, then there exists a unique external ray which lands at $x$.
       \item If $L_x\neq \{ x \}$, then there exist two external rays $R_\a(\theta^-)$, $R_\a(\theta^+)$ land at $x$ such that they together with their common landing point separate $L_x$ away from $B$.
       If $B$ is a fixed Fatou component of $f_\a$, then there exists $n$ such that $L_{x_n}$ contains a critical point of $f_\a$ where $x_n:=f_\a^n(x)$.
   \end{enumerate}
\end{thm}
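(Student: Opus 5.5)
The plan is to build the limbs $L_x$ purely topologically from the fact that $K_\a$ is full and connected, and only then bring in the external rays and dynamics.

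\emph{Step 1: the limbs $L_x$.} Since $K_\a$ is connected and full, $B$ is simply connected and $\Omega_\a$ is connected and unbounded. For each connected component $U$ of $K_\a\setminus\overline{B}$, I claim $\overline{U}\cap\partial B$ is a single point. Suppose it contained two points $x\neq y$. Choose an arc in $\overline{U}$ from $x$ to $y$ (or a chain of small continua if $\overline U$ is not arcwise connected) and close it up with an arc through $B$. The resulting closed curve lies in $K_\a$ and must surround some point of $\partial B$, hence points near $\partial B$ outside $K_\a$. This would create a bounded component of $\C\setminus K_\a$, contradicting fullness. Then I define
$$L_x:=\{x\}\cup\bigcup\{\,U : \overline U\cap\partial B=\{x\}\,\}.$$
The decomposition $K_\a=B\sqcup\bigsqcup_{x\in\partial B}L_x$ and $L_x\cap\overline B=\{x\}$ are then immediate. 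Compactness and connectedness of $L_x$ follow from the same fullness argument: a limit point of such components $U$ cannot lie in $\partial B\setminus\{x\}$ without again enclosing a bounded complementary region.

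\emph{Step 2: external rays at $x$.} I use the Böttcher coordinate $\psi_\a$ and the cyclic order on $\RZ$. The angles $\theta$ whose rays accumulate (via impressions) on $L_x\setminus\{x\}$ form a closed interval $[\theta^-,\theta^+]$; this uses that $\C\setminus(\overline B\cup L_x)$ is connected, so the angles separate in cyclic order according to $x\in\partial B$. If $L_x=\{x\}$, this interval degenerates and we get a single angle. The key point is that the rays $R_\a(\theta^\pm)$ actually land at $x$. Their accumulation sets are connected subsets of $J_\a$ lying in the boundary of both $\overline B$-side and $L_x$-side pieces, which by Step 1 forces them into $\overline{L_x}\cap\overline{B}=\{x\}$. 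Uniqueness of the ray in case (1) follows similarly: two distinct rays landing at $x$ would bound a sector that must contain part of $K_\a$ other than $B$, hence part of $L_x$.

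\emph{Step 3: the critical point claim.} Let $B$ be fixed, and suppose no $L_{x_n}$ contains a critical point, where $x_n=f_\a^n(x)$. Since $f_\a(B)=B$, $f_\a$ maps $L_{x_n}$ into $L_{x_{n+1}}$, and the sector $S_\a(\theta^-_n,\theta^+_n)_+$ contains no critical point. By the equivalence (1)$\Leftrightarrow$(2) of Subsection \ref{polynomialbasin}, the angular width is multiplied by exactly $d$ at each step: $\LL(\theta_{n}^-,\theta_{n}^+)_+=d^n\LL(\theta^-,\theta^+)_+$. Since $L_x\ne\{x\}$, the initial width is positive, so this eventually exceeds $1/d$, a contradiction.

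\emph{Main obstacle.} I expect the main obstacle to be the landing claim in Step 2 when $J_\a$ is not locally connected. A priori the impression of $\theta^\pm$ could be a nondegenerate continuum. If the purely topological argument fails, I would first try Roesch–Yin's route. Using Sullivan's theorem, reduce to $B$ periodic (attracting, parabolic or Siegel type). In the attracting and parabolic cases, build puzzle pieces bounded by periodic rays and internal rays to show $\partial B$ is a Jordan curve and that nested puzzle pieces around $x$ shrink on the $B$-side. Landing of $R_\a(\theta^\pm)$ at $x$ then follows by the Lindelöf-type argument above.
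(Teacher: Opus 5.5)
The paper gives no proof of this statement; it is quoted from Roesch--Yin, whose argument is dynamical. Your proposal has a genuine gap already in Step~1, which you present as pure topology. The claim that each component $U$ of $K_\a\setminus\overline{B}$ has $\overline{U}\cap\partial B$ a single point does not follow from connectedness and fullness, even together with $\partial B\subset\partial K_\a$. Consider
\[
K=\overline{\D}\cup\{re^{i\sin(1/(r-1))}:1<r\le 2\},\qquad B=\D .
\]
This set has the following properties:
\begin{itemize}
\item $K$ is compact, connected and full. The curve meets each circle $|z|=r$, $1<r\le 2$, in exactly one point, so $\C\setminus K$ is connected.
\item $B$ is a component of the interior of $K$, and $\partial B\subset\partial K$.
\item Nevertheless $U=K\setminus\overline{\D}$ satisfies $\overline{U}\cap\partial B=\{e^{i\theta}:|\theta|\le 1\}$, an arc.
\end{itemize}
Any arc in $\overline U$ joining two points of this arc runs along $\partial B$. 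So your closed curve encloses only a piece of $B$ and no point of $\partial B$. The ``chain of small continua'' variant fails for the same reason, and in this example no limb decomposition exists at all. Your compactness argument for $L_x$ has the same defect. Step~2 inherits it too: concluding that the accumulation set of $R_\a(\theta^\pm)$ is $\{x\}$ silently assumes that limbs $L_y$ with $y\to x$ do not pile up on $L_x\setminus\{x\}$. Nothing you have established rules that out.

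What is missing is a dynamical separation mechanism.
\begin{itemize}
\item Internal rays of $B$ at (pre)periodic angles land at repelling or parabolic points of $\partial B$, and these points receive external rays.
\item Such an internal ray, its landing point and a co-landing external ray form a cut from the center of $B$ to $\infty$ that misses $K_\a\setminus\overline{B}$.
\item Suppose one knows that $\partial B$ is a Jordan curve. This is the substantial content of Roesch--Yin, proved with puzzles, and it holds for attracting and parabolic components, not Siegel disks. That hypothesis is missing from the statement as transcribed, though the paper only uses those cases.
\item Then the cut points are dense in $\partial B$, and any two points of $\partial B$ are separated by two cuts. Step~1 follows, including compactness of $L_x$.
\item The landing of $R_\a(\theta^\pm)$ at $x$ likewise has to come from the shrinking of the corresponding puzzle pieces, not from your accumulation-set heuristic.
\end{itemize}
So the route in your ``main obstacle'' paragraph is not a fallback; it must already carry Step~1.

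Step~3 is sound once Steps~1--2 are in place, with two corrections. First, local orientation preservation at $x_n$ is what shows the image sector avoids $B$, so that $f_\a(L_{x_n})\subset L_{x_{n+1}}$. Second, the width grows by a factor at least $d$, not exactly $d$, since more rays may land at $x_{n+1}$.
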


\subsection{Lamination}
In this part, we briefly survey the theory of lamination.
One may refer to \cite{kiwi2001rational,KIWI2004207} for a complete introduction.

Following Kiwi, for $X \subset \RZ$ which is invariant under $\tau_d$,  a  \emph{$\tau_d$ invariant lamination} $\L\subset X \times X$ on $X$ is an equivalence relation such that the followings hold.
\begin{enumerate}
        \item[(R1)]\label{lr1} $\L$ is closed in $X \times X$.
        \item[(R2)]\label{lr2} Each $\L$-equivalence class is a finite set.
        \item[(R3)]\label{lr3} If $E$ is a $\L$-equivalence class, then $\tau_d(E)$ is also a $\L$-equivalence class.
        \item[(R4)]\label{lr4} $\tau_d: E \to \tau_d(E)$ is \emph{consecutive-preserving}, i.e. for any component $(\theta,\theta')_+$ of $\RZ\setminus E$, $(\tau_d(\theta),\tau_d(\theta'))_+$ is a  component of $\RZ \setminus \tau_d(E)$.
        \item[(R5)]\label{lr5} $\L$-equivalence classes are \emph{pairwise unlinked}, i.e. for any two equivalence class $E \neq E'$, $E$ is contained in a single complementary arc of $\RZ \setminus E'$.
\end{enumerate}
These properties are called the \emph{axioms of lamination}.
In the following, a $\tau_d$-invariant lamination is briefly referred as lamination.

Recall that if $f_\a$ is a polynomial of degree $d$ with connected Julia set.
Then the \emph{rational lamination}  is defined by 
$$
    \L_\Q(\a):=\{ (\theta,\theta') \in (\Q/\Z)^2: R_\a(\theta), R_\a(\theta') \mbox{ land at a common point} \}.
$$
Furthermore, if $J_\a$ is locally connected, then the \emph{real lamination}  is defined by
$$
    \L_\R(\a):=\{ (\theta,\theta') \in \RZ \times \RZ: R_\a(\theta), R_\a(\theta') \mbox{ land at a common point} \}.
$$
One may curious whether the rational lamination and the real lamination is indeed  a ($\tau_d$-invariant) lamination?
Kiwi obtain the following result.

\begin{thm}[Kiwi \cite{kiwi2001rational,KIWI2004207}]\label{kiwicontain}
    Let $f_\a$ be a polynomial with connected an locally connected Julia set.
    The following holds.
    \begin{itemize}
        \item The rational lamination $\L_\Q(\a)$ is a lamination.
        \item If $f_\a$ do not have irrationally neutral cycle, then $\L_\R(\a)$ is a lamination. 
        \item If no critical point of $f_\a$ with infinite orbit lies on the boundaries of Fatou components, then $\L_\R(\a)$ is the smallest closed equivalence relation containing $\L_\Q(\a)$.
    \end{itemize}
    
\end{thm}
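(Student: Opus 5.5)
Since this result is quoted from Kiwi, I would only reconstruct the argument from standard landing theory and not rework the full combinatorics. For the rational lamination $\L_\Q(\a)$ I would check the axioms one at a time.

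\emph{(R2).} Every rational ray lands at an eventually repelling or parabolic point, and by Douady--Hubbard only finitely many rays land there, so each class is finite.

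\emph{(R3).} $f_\a$ maps the landing point of $R_\a(\theta)$ to the landing point of $R_\a(\tau_d(\theta))$ by \eqref{boteq4}, so $\tau_d(E)$ lies in a single class. For equality I would pull back. Near a landing point $z$ of local degree $k$, $f_\a$ is conjugate to $w\mapsto w^k$. The rays landing at $f_\a(z)$ then lift to exactly $k$ times as many rays landing at $z$, and each of these has an angle in $\tau_d^{-1}$ of the original angles. Hence every ray at $f_\a(z)$ is the image of a ray at $z$.

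\emph{(R4).} The same local model $w\mapsto w^k$ preserves cyclic order and sends consecutive access directions at $z$ to consecutive access directions at $f_\a(z)$. This gives consecutive-preservation.

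\emph{(R5).} Rays are pairwise disjoint arcs landing at distinct points for distinct classes. If two classes were linked, the Jordan curve formed by two rays of one class would separate two rays of the other class. Those two rays would then have to cross it, which is impossible.

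For $\L_\R(\a)$ with $J_\a$ locally connected, the Carathéodory extension $\psi_\a:\RZ\to J_\a$ is continuous. Thus $\L_\R(\a)=\{(\theta,\theta'):\psi_\a(\theta)=\psi_\a(\theta')\}$ is closed, which is (R1). Axioms (R3)--(R5) follow exactly as above, since the local arguments never used rationality. The real content is (R2). Here I would split an infinite class into two cases, according to whether its landing point is eventually periodic or wandering. In the periodic case, a periodic point with infinitely many accesses cannot be repelling or parabolic, because the first return map would have to permute an infinite set of rotation-compatible accesses; so it would be an irrationally neutral (Cremer) point, which is excluded. In the wandering case, I would invoke Kiwi's bound on wandering orbit portraits: a wandering point has at most $d$ (indeed finitely many) accesses, via the wandering-polygon argument. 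This no-wandering-infinite-polygon step is where I expect the main obstacle.

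For the last item, let $\widehat\L$ be the smallest closed equivalence relation containing $\L_\Q(\a)$. Then $\widehat\L\subset\L_\R(\a)$ because $\L_\R(\a)$ is closed. For the converse, take $(\theta,\theta')\in\L_\R(\a)$ with common landing point $z$. The two rays cut $K_\a$, and I would show that the wake on one side contains rational rays landing together on pairs that converge to $(\theta,\theta')$. I would obtain these pairs by taking preimages of a periodic cut point and using density of iterated preimages in $J_\a$, and I would use Theorem \ref{roeschyin} to control what happens near bounded Fatou components. This approximation fails only when $z$ is a point where no rational cut point accumulates on both sides. That is exactly the situation of an infinite-orbit critical point sitting on the boundary of a Fatou component, where the two sides are separated only by the Fatou component. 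The hypothesis excludes this, so $\L_\R(\a)=\widehat\L$.
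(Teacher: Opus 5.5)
The paper does not prove this statement; it only cites Kiwi, so the comparison is with his work.

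\textbf{First two items.} Your reconstruction is acceptable. The local model $w\mapsto w^k$ gives (R3) and (R4), and disjointness of rays gives (R5). Continuity of the Carath\'eodory extension gives (R1) for $\L_\R(\a)$. For (R2), after discarding the finitely many critical hits along an orbit, you reduce to Douady--Hubbard finiteness at repelling or parabolic cycles and to Kiwi's wandering-orbit-portrait theorem. You never check (R1) for $\L_\Q(\a)$. Under the standing local connectivity hypothesis this is immediate, since $\L_\Q(\a)=\L_\R(\a)\cap(\Q/\Z)^2$ is relatively closed.

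\textbf{Third item: genuine gap.} Density of iterated preimages of a cut point gives rational cut points $w_n\to z$. It does not give rational pairs converging to $(\theta,\theta')$. Take $\theta,\theta'$ consecutive at $z$, and take $w_n$ in the part of $K_\a$ cut off on the side of $(\theta,\theta')_+$. Then all angles at $w_n$ lie in $[\theta,\theta']_+$. By continuity of $\psi_\a$, every limit angle lands at $z$. So the leaves at $w_n$ may accumulate only on the degenerate pairs $(\theta,\theta)$ or $(\theta',\theta')$. This is exactly what happens when $w_n$ lies in small decorations hanging off near $z$, and density does not exclude it.

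What you actually need is that rational cut rays separate $z$ from the landing point of every nearby leaf. Equivalently, preperiodic cut points must occur on every arc of $K_\a$ leaving $z$, arbitrarily close to $z$; this is triviality of Kiwi's fibres at $z$. Your sentence that the approximation ``fails only when'' an infinite-orbit critical point sits on a Fatou boundary restates the conclusion rather than proving it. You also take for granted that a periodic cut point exists at all.

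\textbf{A workable route.} It has two steps.
\begin{enumerate}
\item Suppose $z$ lies on the boundary of a Fatou component and receives at least two rays. The limb argument behind Theorem \ref{roeschyin} applies: nontrivial limbs at distinct points are disjoint, and there are finitely many critical points. Combined with the hypothesis, this forces $z$ to be preperiodic, so $(\theta,\theta')\in\L_\Q(\a)$ already.
\item Suppose $z$ lies on no Fatou boundary. Then on the side of $(\theta,\theta')_+$ the leaf borders neither a finite gap (by consecutiveness) nor a Fatou gap. Hence it is a one-sided limit of leaves of $\L_\R(\a)$, which may themselves be irrational. You must then produce rational cut points inside the thin strips between those leaves and $(\theta,\theta')$. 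That requires an expansion or no-wandering-continua argument, which is the real content of Kiwi's theorem and is missing from your proposal.
\end{enumerate}
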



\subsection{Huasdorff Limit of External Rays}\label{ssec2.4}
In this part, we briefly introduce Peterson-Zakeri's theory \cite{hausdorfflimit} on the Haudorff limit of external rays.

Let $\mathcal{K}$ be the metric space of all non-empty compact subset of $\CC$ equipped with the \emph{Huasdorff distance} which is defined by
\begin{align*}
    \d(K_1,K_2):= \inf \{ \rho>0 : K_1 \subset U_\rho(K_2) \mbox{ and } K_2 \subset U_\rho(K_1) \}.
\end{align*}
where $U_\rho(K):=\{ z \in \CC: d(z,K)<\rho \}$ is the $\rho$-neighborhood of $K$.
It is well-known that $\mathcal{K}$ is a compact metric space.

We say $\{ K_n \} \subset \mathcal{K}$ converges to $K \in \mathscr{K}$ in Hausdorff distance if $\d(K_n,K) \to 0$ as $n\to \infty$.
$K$ is called the \emph{Hausdorff limit} of $\{ K_n\}$.
One may verify that $K_n \to K$ in Hausdorff limit if and only if 
for every $\epsilon>0$, there exists $n_0$ such that for $n>n_0$, we have $K_n \subset U_\epsilon(K)$ and $K \subset U_\epsilon(K_n)$. 


Peterson and Zakeri study the Hausdorff limit of periodic external rays of polynomials.
Let $\mathcal{F}=\{ f_\a\}_{\a \in \AA}$ be a family of polynomials of degree $d\geq 2$.
Suppose that $\{ \a_n \} \subset  \mathcal{C}(\mathcal{F})$ converges to $ \a_0 \in \mathcal{C}(\mathcal{F})$, and
$\theta \in \RZ$ has period $p$ under $\tau_d$.
By taking subsequence, we assume that $\{ z_\theta(\a_n) \}$ converges to $z_\infty$ (which may not equal to $z_\theta(\a_0)$), and $R_{\a_n}(\theta)$ converges to $\RR(\theta)$ in the Hausdorff distance.
\begin{lem}
   The Hausdorff limit set $\RR(\theta)$ is a compact and connected invariant set of $f_{\a_0}^p$ containing $\overline{R_{\a_0}(\theta)}$ and $z_\infty$.
\end{lem}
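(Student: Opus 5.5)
We need to prove: the Hausdorff limit $\RR(\theta)$ is compact, connected, $f_{\a_0}^p$-invariant, and contains $\overline{R_{\a_0}(\theta)}$ and $z_\infty$. Throughout, $R_{\a_n}(\theta)$ means its closure in $\CC$, namely the ray together with $\infty$ and the landing point $z_\theta(\a_n)$, so each $K_n:=\overline{R_{\a_n}(\theta)}$ is a nonempty compact connected subset of $\CC$ and the Hausdorff limit makes sense in $\mathcal{K}$. Since each $\a_n\in\mathcal{C}(\mathcal{F})$ has connected Julia set, $s_{\a_n}=1$ and every ray is defined; since $\theta$ is periodic, $R_{\a_n}(\theta)$ lands.

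\emph{Compactness and connectedness.} $\RR(\theta)\in\mathcal{K}$, so it is compact by definition. For connectedness I would use the standard fact that a Hausdorff limit of compact connected sets is connected. Suppose instead $\RR(\theta)=A\sqcup B$ with $A,B$ nonempty, compact and at distance $\delta>0$. For large $n$ we have $K_n\subset U_{\delta/3}(A)\sqcup U_{\delta/3}(B)$, and $K_n$ meets both pieces because $A\subset U_{\delta/3}(K_n)$ and $B\subset U_{\delta/3}(K_n)$. This disconnects $K_n$, a contradiction.

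\emph{Containment.} First, $z_\infty=\lim z_\theta(\a_n)$ with $z_\theta(\a_n)\in K_n$, so $z_\infty\in\RR(\theta)$, since the Hausdorff limit contains all limits of points $x_n\in K_n$. Next, for the ray itself I would use the holomorphic dependence of $\psi_\a$ on $\a$ over $\mathscr{B}$. Because every parameter lies in the connectedness locus, $\psi_\a(w)$ is jointly continuous on $\{(\a,w):|w|>1\}$. Hence for each fixed $s>1$,
\[
\psi_{\a_n}(se^{2\pi i\theta})\to\psi_{\a_0}(se^{2\pi i\theta}),
\]
so every point of $R_{\a_0}(\theta)$ lies in $\RR(\theta)$. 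The point $\infty$ lies in every $K_n$, hence in $\RR(\theta)$. Since $\RR(\theta)$ is closed, it contains $\overline{R_{\a_0}(\theta)}$. The only delicate step is justifying the joint continuity at points with $|w|>1$ close to $1$. For a fixed $w$ this is local uniform convergence of the inverse B\"ottcher maps, which follows because $\varphi_\a=\lim (f_\a^k)^{1/d^k}$ converges locally uniformly in $(\a,z)$ on the escaping region.

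\emph{Invariance.} The relation $f_{\a_n}^p(R_{\a_n}(\theta))=R_{\a_n}(\tau_d^p\theta)=R_{\a_n}(\theta)$ follows from \eqref{boteq4}. By continuity it extends to the closures, including the landing point and $\infty$, giving $f_{\a_n}^p(K_n)=K_n$. Now $f_{\a_n}^p\to f_{\a_0}^p$ uniformly on compact subsets of $\C$, and near $\infty$ in the spherical metric, since the degree is fixed and the leading coefficients converge. Hence:
\begin{itemize}
\item given $y=\lim y_n$ with $y_n\in K_n$, we get $f_{\a_0}^p(y)=\lim f_{\a_n}^p(y_n)\in\RR(\theta)$, so $f_{\a_0}^p(\RR(\theta))\subset\RR(\theta)$;
\item conversely, any $x\in\RR(\theta)$ is $\lim x_n$ with $x_n=f_{\a_n}^p(y_n)$ and $y_n\in K_n$. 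Passing to a convergent subsequence $y_n\to y\in\RR(\theta)$ gives $x=f_{\a_0}^p(y)$.
\end{itemize}
Together these give $f_{\a_0}^p(\RR(\theta))=\RR(\theta)$.

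I expect the main obstacle to be the uniform convergence on the spherical sphere near $\infty$. I would handle it by noting that all $f_{\a_n}$ are monic-type of degree $d$ with bounded coefficients, so $1/f_{\a_n}^p(1/\zeta)\to 1/f_{\a_0}^p(1/\zeta)$ uniformly near $\zeta=0$.
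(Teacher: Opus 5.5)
Your proof is correct. The paper gives no proof of this lemma to compare against: it is stated as background from Petersen--Zakeri \cite{hausdorfflimit}, so your argument fills in a verification the paper omits. It uses the right four ingredients:
\begin{itemize}
\item a Hausdorff limit of continua is a continuum;
\item limits of points $x_n\in K_n$ lie in $L(\theta)$;
\item $\psi_{\a_n}(w)\to\psi_{\a_0}(w)$ for each fixed $|w|>1$;
\item uniform spherical convergence $f^p_{\a_n}\to f^p_{\a_0}$, combined with $f^p_{\a_n}(K_n)=K_n$.
\end{itemize}
The last ingredient actually gives $f^p_{\a_0}(L(\theta))=L(\theta)$, which is slightly more than forward invariance.

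The one step to tighten is the justification of $\psi_{\a_n}(w)\to\psi_{\a_0}(w)$. The phrase ``$\varphi_\a=\lim (f_\a^k)^{1/d^k}$ converges locally uniformly in $(\a,z)$ on the escaping region'' hides the choice of the $d^k$-th root branch. That branch is not obviously continuous in $\a$, and arguing via $\varphi_\a(f^k_\a(z))$ risks circularity. Either of the following clean routes works.
\begin{itemize}
\item \emph{Via the open set $\mathscr{B}$.} Use the paper's set $\mathscr{B}=\{(\a,z): \exp G_\a(z)>s_\a\}$, with $s_\a=\max_{c\in\cc(\a)}\exp G_\a(c)$ (which equals $1$ on $\mathcal{C}$). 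It is open, because $G_\a(z)$ and $s_\a$ are continuous, and $\varphi$ is holomorphic on it. Since $(\a_0,\psi_{\a_0}(w))\in\mathscr{B}$, Hurwitz's theorem gives zeros of $\varphi_{\a_n}-w$ converging to $\psi_{\a_0}(w)$. Injectivity of $\varphi_{\a_n}$ on $\Omega_{\a_n}$ (as $\a_n\in\mathcal{C}$) identifies these zeros with $\psi_{\a_n}(w)$.
\item \emph{Via normal families.} The maps $\psi_{\a_n}$ are univalent on $\{|w|>1\}$, tangent to the identity at $\infty$, with bounded constant terms, so they form a normal family. Any limit $\psi$ satisfies $f_{\a_0}\circ\psi=\psi(w^d)$ with the same normalization at $\infty$. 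By uniqueness of the B\"ottcher coordinate, $\psi=\psi_{\a_0}$.
\end{itemize}
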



According to \cite{douady1984etude}, we have the stability of the perturbation of external rays which land at repelling periodic orbits. 

\begin{lem}[continuity of external rays landing on repelling orbit]\label{stablerepelling}
    Suppose that $R_{\a_0}(\theta)$ lands at a repelling periodic or preperiodic point $z_0$. 
   \begin{enumerate}
       \item There exists a neighborhood $\mathcal{U}$ of $\a_{0}$ such that for any $\a \in \mathcal{U}\cap \mathcal{C}$, $s_{\a}(\theta)=1$ and $z_\theta(\a)$ is also eventually repelling.
    The map $\xi: (\mathcal{U} \cap \mathcal{C}) \times [1,+\infty) \to \C,~(\a,s)\mapsto \psi_\a(se^{2\pi i \theta})$ is continuous.
    \item If the forward orbit of $z_0$ does not contain any critical point, then there exist a a neighborhood $\mathcal{U}$ of $\a_{0}$, $s_{\a}(\theta)=1$ and $z_\theta(\a)$ is also eventually repelling.
    The map $\xi: \mathcal{U}  \times [1,+\infty) \to \C,~(\a,s)\mapsto \psi_\a(se^{2\pi i \theta})$ is continuous. 
    Fix $s \geq 1$, the map $\a \mapsto \psi_\a(se^{2\pi i \theta}) $ is holomorphic on $\mathcal{U}$.
    \end{enumerate}
    
\end{lem}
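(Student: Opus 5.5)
The statement just above is Lemma~\ref{stablerepelling}, the continuity of external rays landing at repelling (pre)periodic points. I would prove it with the standard Douady--Hubbard argument: first treat the periodic case by linearization, then pull back to reach the preperiodic case. The key steps are as follows.

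\textbf{Step 1 (the periodic repelling point persists).} Suppose $z_0$ is periodic of period $q$, and let $\theta$ have period $k$ under $\tau_d$, where $q \mid k$. Because $z_0$ is repelling, $(f^k_{\a_0})'(z_0)\neq 1$. The implicit function theorem then gives a holomorphic motion $\a\mapsto z_0(\a)$ of repelling periodic points on a neighborhood of $\a_0$. We also get a uniform disk $D$ around $z_0(\a)$ on which the inverse branch $g_\a$ of $f^k_\a$ fixing $z_0(\a)$ is defined. This branch is a uniform contraction and depends holomorphically on $\a$. For these local facts no assumption on critical points is needed.

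\textbf{Step 2 (controlling a tail of the ray).} Choose $s_1>1$ close to $1$ such that the arc $\gamma_{\a_0}:=\psi_{\a_0}([s_1,s_1^{d^k}]e^{2\pi i\theta})$ lies in $D$. The ray is then the union of the pieces $g_{\a_0}^n(\gamma_{\a_0})$ together with the compact piece $\psi_{\a_0}([s_1^{d^k},\infty)e^{2\pi i\theta})$. Outside $K_{\a_0}$ the Böttcher coordinate depends continuously on $\a$ at levels above the critical escape levels, so the compact piece moves continuously. Moreover, $s_\a(\theta)\le s_1$ for $\a$ near $\a_0$. I expect this to be the main subtlety: for $\a\notin\mathcal{C}$ a critical point may escape at a level below $s_1$, and then the ray may bifurcate.

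\textbf{Step 3 (the two cases).} In case (1), where $\a\in\mathcal{C}$, the Böttcher map extends to all of $\C\setminus K_\a$ and $s_\a(\theta)=1$ automatically. Continuity in $\a$ of $\psi_\a$ on compact subsets of $\{|w|>1\}$ holds uniformly for $\a\in\mathcal{C}$ near $\a_0$. By Step 2 the arc $\gamma_\a$ lies in $D$, so the ray near the landing point is $\bigcup_n g_\a^n(\gamma_\a)$. These pieces shrink geometrically to $z_0(\a)$, uniformly in $\a$. This gives landing at a repelling point and joint continuity of $(\a,s)\mapsto\psi_\a(se^{2\pi i\theta})$ up to $s=1$.

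In case (2), if the orbit of $z_0$ avoids the critical points, I would instead define the ray directly as the pullback $\bigcup_n g^n_\a(\gamma_\a)$. Only finitely many critical values must be avoided near the compact arc, and that arc stays away from the critical orbit for $\a$ near $\a_0$. Hence $\psi_\a$ extends canonically down to $s=1$ even outside $\mathcal{C}$. Holomorphy in $\a$ for fixed $s$ follows because each piece is obtained from holomorphic data by the holomorphic branches $g_\a$, and locally uniform limits of holomorphic maps are holomorphic.

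\textbf{Step 4 (the preperiodic case).} If $z_0$ is strictly preperiodic, apply Steps 1--3 to the periodic image $f^m_{\a_0}(z_0)$ and the angle $\tau_d^m(\theta)$. Then pull back by $m$ univalent branches of $f_\a^{-1}$. In case (2) these branches exist near the orbit because it contains no critical point. In case (1), the landing point of a ray with $s_\a(\theta)=1$ is a preimage of a repelling point and is therefore eventually repelling. Continuity in that case follows by choosing the branch that is continuous in $\a$ along the ray, which is possible since the ray itself avoids the critical points.
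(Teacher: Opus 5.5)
Your proposal is correct and is the standard Douady--Hubbard argument: persist the repelling cycle, control a compact piece of the ray by continuity of the B\"ottcher coordinate, pull the tail back by a contracting inverse branch, then pull back once more to reach the preperiodic point. The paper gives no proof of its own and simply cites \cite{douady1984etude}. The one step to tighten is Step~4 in case (1). When the orbit of $z_0$ meets a critical point there is no univalent branch of $f_\a^{-m}$ near $z_0$, so continuity at $s=1$ should come instead from the fact that the component of $f_\a^{-m}(W)$ containing the ray's tail, for a small disk $W$ about the periodic landing point, has uniformly small diameter for $\a$ near $\a_0$.
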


As a immediate corollary, if $\{\a_n \} \subset \mathcal{C}$, $\a_n \to \a_0$, and $R_{\a_0}(\theta)$ lands at a repelling preperiodic point $z_0$, then the Hausdorff limit of $R_{\a_n}(\theta)$ is precisely $R_{\a_0}(\theta)$. 
For the case that $z_0$ is parabolic, we shall introduce the following result \cite{hausdorfflimit} by Peterson-Zakeri which will play an important role in proving Proposition \ref{keylemma}.

\begin{thm}[Peterson--Zakeri \cite{hausdorfflimit}]\label{pz}
    The Hausdorff limit set $L(\theta)$ is path-connected.
    Let $z \in \RR(\theta)$.
    If $z$ is in the Julia set, then it is a fixed point of $f_\a^p$.
    If $z$ is in the Fatou set, then $z$ is in a parabolic basin.
\end{thm}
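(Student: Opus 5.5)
\noindent\textbf{Setup and the basic device.} Throughout, $\a_n\to\a_0$ in the connectedness locus, $\theta$ has period $p$, $z_\theta(\a_n)\to z_\infty$, and $R_{\a_n}(\theta)\to L(\theta)$ in the Hausdorff topology. I use the preceding lemma: $L(\theta)$ is compact, connected, $f_{\a_0}^p$-invariant, and contains $\overline{R_{\a_0}(\theta)}$ and $z_\infty$. Since the B\"ottcher maps depend continuously on $\a$ in $\{G>0\}$, every point of $L(\theta)\setminus \overline{R_{\a_0}(\theta)}$ lies in $K_{\a_0}$. Hence the new part $L'(\theta):=L(\theta)\cap K_{\a_0}$ consists of limits of points $z_n\in R_{\a_n}(\theta)$ whose potentials tend to $0$. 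To study it I would cut each ray into fundamental segments
$$\gamma_n^k:=\psi_{\a_n}\bigl([\,s^{1/d^{p k}},\,s^{1/d^{p(k-1)}}]\,e^{2\pi i\theta}\bigr),\qquad f_{\a_n}^p(\gamma_n^k)=\gamma_n^{k-1}.$$
The hyperbolic length of $\gamma_n^k$ in $\Omega_{\a_n}$ equals that of the corresponding radial segment in $\C\setminus\overline{\D}$, which stays bounded (roughly $\log d^p$) uniformly in $n$ and $k$. Comparing the hyperbolic metric of $\Omega_{\a_n}$ with $1/\mathrm{dist}(\cdot,K_{\a_n})$ (Koebe) then gives
$$\mathrm{diam}\,\gamma_n^k\le C\,\mathrm{dist}(\gamma_n^k,K_{\a_n}).$$

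\noindent\textbf{Julia points are fixed by $f_{\a_0}^p$.} Let $z\in L(\theta)\cap J_{\a_0}$ with $z=\lim z_n$ and $z_n\in\gamma_n^{k_n}$. Repelling periodic points are dense in $J_{\a_0}$ and persist under perturbation, so $J_{\a_0}\subset\liminf J_{\a_n}$. Therefore $\mathrm{dist}(z_n,K_{\a_n})\to0$, and so $\mathrm{diam}\,\gamma_n^{k_n}\to0$. The distance $\mathrm{dist}(\gamma_n^{k_n-1},K_{\a_n})$ also tends to $0$, because $f^p_{\a_n}$ is uniformly Lipschitz near $z$ and maps $\gamma_n^{k_n}$ onto $\gamma_n^{k_n-1}$. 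Since $f_{\a_n}^p(z_n)\in\gamma_n^{k_n-1}$ and this segment is adjacent to $\gamma_n^{k_n}$, we get $|f_{\a_n}^p(z_n)-z_n|\to0$. Passing to the limit gives $f_{\a_0}^p(z)=z$.

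I stress that this estimate cannot be run with the Green function alone: small $G_{\a_n}$ does not force proximity to $K_{\a_n}$, since a point may sit deep in a bay of $K_{\a_n}$ that is nearly enclosed. This is exactly what happens in parabolic implosion, and it is why the Fatou part needs a separate argument.

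\noindent\textbf{Fatou points lie in parabolic basins.} Let $z\in L(\theta)$ lie in a bounded Fatou component $U$ of $f_{\a_0}$. By $f^p_{\a_0}$-invariance of $L(\theta)$ and Sullivan's theorem I may pass to the periodic component in the forward orbit of $U$ and treat three cases.
\begin{itemize}
\item \emph{Attracting basin.} A trapping disk around the attracting cycle persists for $\a_n$, and every compact subset of the basin enters it after boundedly many iterates. Hence that compact subset lies in $K_{\a_n}$ for large $n$ and cannot meet $R_{\a_n}(\theta)$, a contradiction.
\item \emph{Siegel disk.} I would use that the whole $f^p_{\a_0}$-orbit of $z$ lies in $L(\theta)$, so $L(\theta)$ contains an invariant analytic curve $\sigma\subset U$. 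Near $\sigma$, $f_{\a_n}^p$ is uniformly close to an irrational rotation for bounded numbers of iterates. I would try to contradict the fact that $R_{\a_n}(\theta)$ crosses $\sigma$ along segments that are mapped to one another with potential multiplied by $d^p$: the crossings should be forced to have bounded hyperbolic size in the annulus around $\sigma$, while rotation-like dynamics prevents $\gamma_n^k$ from being carried monotonically towards $J$.
\item \emph{Parabolic basin.} This is the only case that survives.
\end{itemize}
I expect the Siegel case to be the main obstacle.

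\noindent\textbf{Path-connectedness.} By the previous steps, $L(\theta)\cap J_{\a_0}$ is a finite set of fixed points of $f^p_{\a_0}$, and the rest of $L'(\theta)$ lies in parabolic basins. In a parabolic basin I would use attracting Fatou coordinates: $f^p_{\a_0}$ becomes a translation there, and the Hausdorff limits of the segments $\gamma_n^{k}$, whose hyperbolic lengths are bounded, are arcs. Under $f_{\a_0}^p$ these arcs form chains which converge to the parabolic points, since basin orbits do. Thus $L(\theta)$ is the union of the arc $\overline{R_{\a_0}(\theta)}$, finitely many fixed points, and countably many arc-chains, each accumulating only at those fixed points. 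Gluing these pieces at their endpoints, and using the connectedness of $L(\theta)$ to see that every piece is attached, shows that $L(\theta)$ is path-connected.
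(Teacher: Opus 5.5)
The paper gives no proof of this statement; it quotes it from Petersen--Zakeri, so your sketch has to stand on its own. Two of its steps are sound. The first is the Julia-point argument: Koebe gives $\mathrm{diam}\,\gamma_n^k\le C\,\mathrm{dist}(\zeta,K_{\a_n})$ for every $\zeta\in\gamma_n^k$, and lower semicontinuity of $J$ together with the Lipschitz bound on $f^p$ then forces $f^p_{\a_0}(z)=z$. The second is the attracting case. The Siegel case, however, is only a hope as written. It needs no rotation estimates, because it follows from your Julia step. Suppose $w\in L(\theta)$ lies in a Siegel component $V$ with $F(V)=V$ for $F=f^{pq}_{\a_0}$. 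Every $F$-invariant analytic curve $\sigma\subset V$ surrounding $w$ separates $w$ from $\infty$, so the continuum $L(\theta)$ meets $\sigma$. Since $L(\theta)$ is closed and $F$-invariant, and every $F$-orbit in $\sigma$ is dense, $\sigma\subset L(\theta)$. Letting $\sigma$ tend to $\partial V$ gives $\partial V\subset L(\theta)\cap J_{\a_0}$, an infinite set, whereas you have shown $L(\theta)\cap J_{\a_0}\subset\mathrm{Fix}(f^p_{\a_0})$, which is finite.

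The genuine gap is path-connectedness. Bounded hyperbolic length does not make Hausdorff limits of the $\gamma_n^k$ arcs. What it does give, in a parametrization of constant hyperbolic speed, is Euclidean speed $\lesssim\mathrm{dist}(\cdot,J_{\a_n})$, hence a uniformly Lipschitz family. After shifting indices you obtain Lipschitz limit curves $\beta:\mathbb{R}\to\C$ with $f^p_{\a_0}(\beta(u+1))=\beta(u)$; these are paths, which is all you need at that stage. The gluing step is where the argument fails. Connectedness of $L(\theta)$ does not imply that pieces attached at finitely many points make it path-connected: a family of limit curves can accumulate on another curve, as in the topologist's sine curve. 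Nothing in your sketch excludes this, or justifies that there are only countably many chains. You also need each $\beta$ to converge as $u\to+\infty$. That end runs backward in the dynamics, so forward convergence of basin orbits to the parabolic point does not control it. Part of this is cheap: in an $f^p$-invariant basin $B$ with Fatou coordinate $\Phi$, the function $u\mapsto\Phi(\beta(u))+u$ is $1$-periodic. So once $\beta$ stays in $B$, its accumulation set at $+\infty$ is a connected subset of the finite set $\partial B\cap L(\theta)$, hence a single point. What remains are curves passing through several basins or fixed points, and the global accumulation problem. Both require controlling how the perturbed rays traverse the parabolic gates, via Fatou coordinates and parabolic implosion. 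That is the actual content of the Petersen--Zakeri theorem, and your last paragraph assumes it rather than proves it.
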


Let $\mathcal{F}=\{ f_\a\}_{\a \in \AA}$ be a family of polynomials of degree $d \geq 2$.
Let $\mathcal{C}_0 \subset \mathcal{C}$ is the set of $\a$ such that $f_\a$ has only one parabolic cycle, and its immediate basin contains only one critical points of $f_\a$.

\begin{lem}\label{ring}
    Let $f_\a$ be a polynomial of degree $d\geq 2$, and $U,V$ be a two periodic Fatou components in the same cycle with a common boundary point $z_0$, then $z_0$ is a periodic point of $f_\a$.
    If $f_\a^q(U)=V$, then $f_\a^q(z_0)=z_0$.
\end{lem}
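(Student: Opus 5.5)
The plan is to use Roesch--Yin's limb decomposition (Theorem~\ref{roeschyin}) to show that two bounded Fatou components whose closures meet do so at exactly one point. The periodicity claims then follow from uniqueness of that contact point together with a ``no cycles in a tree'' argument. Throughout, $K_\a$ is connected; we assume this as in the setting of Theorem~\ref{roeschyin}.

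\emph{Step 1: uniqueness of the contact point.} Write $K_\a=U\cup\bigsqcup_{x\in\partial U}L_x$ as in Theorem~\ref{roeschyin}. Since $V$ is connected, disjoint from $U$, and $V\subset K_\a\setminus U$, it lies in a single limb $L_x$. I expect this to need a short argument: the limbs are disjoint compact sets, and $V$ is open and connected, so it cannot be split among them. As $L_x$ is compact, $\overline V\subset L_x$. Since $L_x\cap\overline U=\{x\}$ and $z_0\in\overline V\cap\overline U$, we get $x=z_0$. If $z_1$ were another common boundary point, the same reasoning would give $\overline V\subset L_{z_1}$, which contradicts $L_{z_0}\cap L_{z_1}=\emptyset$. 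Hence $\partial U\cap\partial V=\{z_0\}$. Let $n$ be the period of the cycle. Then $f_\a^n(U)=U$ and $f_\a^n(V)=V$, so $f_\a^n(z_0)\in\partial U\cap\partial V=\{z_0\}$, and $z_0$ is periodic.

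\emph{Step 2: $f_\a^q(z_0)=z_0$.} Put $g=f_\a^q$, so that $g(U)=V$. Let $m$ be minimal with $g^m(U)=U$, and set $U_k=g^k(U)$, with $U_0=U$ and $U_1=V$. Since $g$ maps Fatou components onto Fatou components and boundaries into boundaries, $g^k(z_0)$ is a common boundary point of $U_k$ and $U_{k+1}$. By Step~1 it is their unique contact point. Suppose $g(z_0)\neq z_0$. Then $U_2$ lies in the limb of $V$ at $g(z_0)$, whereas $U$ lies in the limb of $V$ at $z_0$, and these limbs are disjoint.

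Now form the graph $\Gamma$ whose vertices are the closures $\overline{U_k}$ and whose edges join components with touching closures. I claim $\Gamma$ is a tree. If $\Gamma$ had an embedded cycle through distinct contact points, the union of the closures would contain a Jordan curve through those points. That curve would bound a domain containing points of $J_\a$, hence points of the basin of $\infty$, which is impossible because $K_\a$ is full. Alternatively, and more cleanly, one can iterate the limb statement: every component attached to $U_k$ away from its contact point with $U_{k-1}$ lies inside the limb of $U_{k-1}$ at that contact point.

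The path $U_0,U_1,U_2,\dots,U_m=U_0$ therefore returns to $U_0$. The step from $U_1$ to $U_2$ leaves $U_1$ through $g(z_0)\neq z_0$, so by the nested-limb description every later $U_k$ stays inside the limb of $U_0$ at $z_0$, on the side of $V$ away from $U$. Returning to $U_0$ is then impossible. This forces $g(z_0)=z_0$. (The backtracking case $U_2=U_0$ is also excluded, since then $g(z_0)$ would be the contact point of $V$ and $U$, namely $z_0$.)

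\emph{Main obstacle.} I expect the hardest part to be making the tree/nesting argument in Step~2 rigorous. Specifically, one must show that once the chain leaves $U_k$ through a contact point other than the one it entered by, all subsequent components remain in the corresponding limb. I would prove this by induction using Theorem~\ref{roeschyin} at each $U_k$: the limb of $U_{k}$ at the new contact point is contained in the limb of $U_{k-1}$ at the old one, because it is a connected subset of $K_\a\setminus\overline{U_{k-1}}$ attached to $\overline{U_k}$.
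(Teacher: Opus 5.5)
Your route is the paper's. The paper also gets $f_\a^p(z_0)=z_0$ from $f_\a^p(U)=U$, $f_\a^p(V)=V$ and uniqueness of the contact point. It uses that uniqueness without proof, so your Step~1 supplies something the paper omits. The paper rules out $f_\a^q(z_0)\neq z_0$ by saying the chain $V, f_\a^q(V), f_\a^{2q}(V),\dots$ would close into a ``circle'' of touching components in $K_\a$, and your nested-limb induction is a precise version of that.

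One justification is wrong as stated, and you use it twice. From ``the limbs are pairwise disjoint compact sets'' and ``$V$ is connected'' you cannot conclude that $V$ lies in a single limb. There are uncountably many limbs, and a connected set can be a disjoint union of uncountably many compact sets (an arc is the union of its points). Your induction step (``a connected subset of $K_\a\setminus\overline{U_{k-1}}$ attached to $\overline{U_k}$'') relies on the same inference.

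The fix comes from part (2) of Theorem~\ref{roeschyin}. For a nontrivial limb $L_x$ of $U$, let $S_x$ be the component of the complement of the two rays landing at $x$ that does not contain $U$.
\begin{itemize}
\item $S_x\cap K_\a=L_x\setminus\{x\}$: any other limb $L_{x'}$ meeting $S_x$ would have to cross these rays, hence contain $x\in\overline U$.
\item Distinct sectors are disjoint. The connected set $\partial S_{x'}$ misses $\partial S_x$, and it cannot lie in $S_x$ because $x'\notin L_x$. So neither sector meets the other's boundary, and if they met, each would contain the other and they would coincide.
\end{itemize}
Since $K_\a\setminus\overline U$ is covered by these pairwise disjoint open sectors, every connected subset of $K_\a\setminus\overline U$ lies in a single limb. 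That is what both of your steps need.

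You should also say why every later step leaves $U_k$ through a new point, i.e. $g^k(z_0)\neq g^{k-1}(z_0)$ for all $k\ge 1$, not only $k=1$. Suppose $w=g^{k-1}(z_0)$ were fixed by $g$. By Step~1 there is $N$ with $g^N(z_0)=z_0$, so for $jN\ge k-1$ we get $z_0=g^{jN}(z_0)=w$, contradicting $g(z_0)\neq z_0$.

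With these additions the proof is complete. You also assume $K_\a$ connected; that holds wherever the paper uses the lemma.
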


\begin{proof}
    Let $p$ denote the period of the cycle containing $U,V$.
    Since $f_\a^p(U)=U$ and $f_\a^p(V)=V$, we see that $f_\a^p(z_0)$ is also a common boundary point of $U$ and $V$.
    It follows that $f_\a^p(z_0)=z_0$.
    Hence $z_0$ is periodic.

     \begin{figure}[h]
  \begin{center}
    \includegraphics[width=0.6\linewidth]{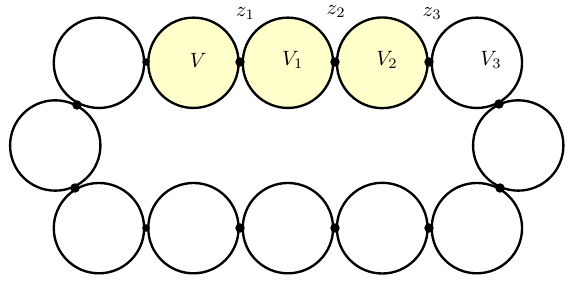}
    \caption{}
 \end{center}
\end{figure}

    Now assume that $f_\a^q(U)=V$, we show $f_\a^q(z_0)=z_0$ by contradiction.
    Suppose that $z_1=f_\a^q(z_0)\neq z_0$, then $V_1:=f_\a^q(V) \neq V$ and $\partial V \cap \partial V_1=\{ z_1 \}$.
    Applying $f_\a^q$ recursively, we obtain a sequence of Fatou components $V_n:=f_\a^{nq}(V)$ and $z_n:=f_\a^n(z)$ such that $\partial V_n \cap \partial V_{n+1}=\{ z_{n+1}\}$.
    Since $V$ is periodic, there exists $m\leq p$ such that $V_m=V$.
    It follows that we get finitely many Fatou components attaching one by one and forms a circle in $K_\a$.
    This is impossible.

\end{proof}

\begin{lem}\label{presevecyclic}
    Let $f_\a$ be a polynomial of degree $d\geq 2$, and $\theta \in \QZ$ has periodic $p$ under $\tau_d$, and $R_\a(\theta)$ lands at a parabolic point $z$.
    Let $B$ be a parabolic Fatou component of $z$.
    Then $B$ is a periodic Fatou component with exact period $p$.
\end{lem}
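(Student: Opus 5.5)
The plan is to work locally at the parabolic point $z$ and use the cyclic order of the rays landing there. First, $z$ is periodic, since $R_\a(\theta)$ is periodic and $f_\a^p(z)$ is the landing point of $R_\a(\tau_d^p(\theta))=R_\a(\theta)$. Since $K_\a$ is connected, only finitely many rays land at $z$; call their angles $\theta_1,\dots,\theta_m$. Every point in the forward orbit of $z$ is a parabolic point, so it is not critical. Hence $f_\a$ is a local homeomorphism near each point of the cycle of $z$, and it maps the rays landing at $f_\a^j(z)$ onto those landing at $f_\a^{j+1}(z)$ preserving cyclic order. I will quote the standard fact (Milnor's \emph{Dynamics in One Complex Variable}, orbit portraits) that all $\theta_i$ have the same exact period under $\tau_d$. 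Since $\theta$ is one of them, this common period is $p$. Consequently $f_\a^p$ fixes $z$ and every ray landing at $z$, so it maps each sector $S_\a(\theta_i,\theta_{i+1})_+$ between consecutive rays into itself near $z$.

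Next I locate $B$. By the Leau--Fatou flower theorem, near $z$ there are finitely many repelling and attracting directions for $f_\a^{q}$ (with $q$ the exact period of $z$), and they alternate. Every ray landing at $z$ does so tangentially to a repelling direction. A parabolic Fatou component $B$ at $z$ contains an attracting petal at $z$. Such a petal cannot meet any ray, so $B$ lies in a single sector $S=S_\a(\theta_i,\theta_{i+1})_+$. The key step is to show that $S$ contains exactly one attracting direction at $z$. Suppose $S$ contained two. Then there would be a repelling direction strictly between them inside $S$. Along that direction I would produce a boundary point of the two corresponding basin components, and hence a ray landing at $z$ in that direction, contradicting consecutiveness of $\theta_i,\theta_{i+1}$. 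This would follow Milnor's argument that each repelling direction carries a landing ray when $J_\a$ is connected. Granting this, $f_\a^p(B)$ is again a Fatou component at $z$ containing the unique attracting petal in $S$, so $f_\a^p(B)=B$ and the period of $B$ divides $p$.

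For exactness, suppose $f_\a^k(B)=B$ with $k$ the exact period of $B$, and assume $k<p$. The point $z$ lies on $\partial B$, and $\partial B$ contains exactly one point of the parabolic cycle, namely the point to which the orbits in $B$ converge under $f_\a^{k}$. Hence $f_\a^k(z)=z$, in the spirit of Lemma \ref{ring}. Then $f_\a^k$ acts on $\{\theta_1,\dots,\theta_m\}$ as a cyclic-order-preserving bijection, that is, a rotation. It sends the sector containing the petal of $B$ to the sector containing the petal of $f_\a^k(B)=B$. By the uniqueness above, $f_\a^k$ fixes $S$, so the rotation is trivial. Then $\tau_d^k(\theta_i)=\theta_i$, which contradicts the fact that $\theta_i$ has exact period $p>k$. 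Therefore $B$ has exact period $p$.

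The main obstacle is the step that each sector between consecutive rays at $z$ contains exactly one attracting direction, equivalently that every repelling direction at $z$ is the landing direction of some ray. I would try first to prove it by taking two adjacent immediate basin components in $S$, finding a common boundary point in that repelling direction, and applying Theorem \ref{roeschyin} to obtain external rays landing there.
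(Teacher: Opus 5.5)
Your proposal is correct and follows essentially the paper's route. First, $f^p$ fixes every ray landing at $z$ and hence fixes $B$, so the period $k$ of $B$ divides $p$. Then $f^k(z)=z$ because $z$ is the parabolic point of $B$, and since $f^k$ is an orientation-preserving local homeomorphism at $z$ that fixes the sector containing $B$, it must fix $R_\a(\theta)$, which gives $p\mid k$.

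Your Roesch--Yin step supplies the justification for $f^p(B)=B$ that the paper only asserts. It is cleanest to apply Theorem \ref{roeschyin} to $B$ at $x=z$ itself: any other Fatou component with $z$ on its boundary lies in the nontrivial limb $L_z$, so it is separated from $B$ by two rays landing at $z$. Note that what holds in general is ``at most one'' such component (or attracting direction) per sector, not ``exactly one''. In the primitive case, two rays land tangent to the same repelling direction and bound a sector that contains no attracting direction.
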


\begin{proof}
    Since $f_\a$ is locally orientational-preserving near every parabolic point, the period $q$ of $B$ divides $p$.
    On the other hand, $f_\a^q(z)$ is a parabolic periodic point on $\partial B$.
    Thus $B=f_\a^q(B)$ is both the parabolic basion of $z$ and $f_\a^q(z)$.
    It follows that $f_\a^q(z)=z$.
    Again by the orientational-preserving properties, we see that $f_\a^q(R_\a(\theta))=R_\a(\theta)$.
    Thus, $p$ also divides $p$, i.e. $p=q$.

\end{proof}

\begin{lem}\label{1fixedpoint}
    Let $B$ be a fixed attracting or parabolic component
    If $f_\a|_B$ has degree $2$, then there exists a unique fixed point of $f_\a$ on $\partial B$.
\end{lem}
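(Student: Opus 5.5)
Here $f_\a$ is a polynomial with connected Julia set. In that setting $B$ is simply connected and $f_\a|_B:B\to B$ is a proper map of degree $2$. The plan is to conjugate this restriction to a model Blaschke product. I will then use the boundary extension, i.e. Carath\'eodory theory or the Roesch--Yin limb structure of Theorem \ref{roeschyin}, to transfer fixed points between $\partial\D$ and $\partial B$.

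Existence is the easier half. Take a Riemann map $\phi:\D\to B$. When $B$ is attracting, $\phi^{-1}\circ f_\a\circ\phi$ is a degree-$2$ Blaschke product $g$ with an attracting fixed point in $\D$. When $B$ is parabolic, $g$ is a degree-$2$ Blaschke product whose Denjoy--Wolff point lies on $\partial\D$. In both cases $g|_{\partial\D}$ is a degree-$2$ circle covering, so it has exactly one fixed point on $\partial\D$ in the attracting case. (One can also replace $g$ by $z\mapsto z^2$ after a quasiconformal surgery, or use the combinatorial conjugacy to $\tau_2$.)

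To produce an actual fixed point on $\partial B$ I would use internal rays. Choose the unique internal ray of $B$ invariant under $f_\a$; it corresponds to the angle-$0$ ray of $\tau_2$. Its impression is a connected invariant set, and by the Roesch--Yin decomposition $\overline B\cap K$ is organized by the points $x\in\partial B$. A standard landing argument, in the style of Douady--Hubbard, shows that a periodic ray lands at a repelling or parabolic fixed point. So this ray lands at some $z_0\in\partial B$ with $f_\a(z_0)=z_0$.

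Uniqueness is the main obstacle. Suppose $z_0\ne z_1$ are both fixed points on $\partial B$. Each $z_i$ is the landing point of some fixed internal ray; if $z_i$ were accessible only through a limb, one could still pick a fixed access to it from $B$. Fixed accesses from $B$ to its boundary correspond to fixed points of the circle dynamics, via the prime-end correspondence and the equivariance $f_\a\circ\phi=\phi\circ g$. There is exactly one such fixed point, since a degree-$2$ circle covering conjugate to $\tau_2$ has only $\tau_2(t)=t\iff t=0$.

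It remains to check that every fixed boundary point admits a fixed access, not merely a periodic one. For this I would use orientation preservation at $z_i$: the accesses from $B$ to $z_i$ are finite in number, and they are permuted cyclically by $f_\a$ with a common rotation number. The argument to rule out a nontrivial rotation is the one in Lemma \ref{ring}. If the accesses were permuted nontrivially, the corresponding cut-rays together with $z_i$ would separate $K_\a$ into pieces. Those pieces would be permuted by $f_\a$, and since $f_\a|_B$ has degree $2$, the only available critical point would have to lie in $B$. That would contradict the mapping degree on the complementary sectors (conditions (1)--(3) in Subsection \ref{polynomialbasin}). Hence the access is fixed, the two accesses coincide, and so $z_0=z_1$.

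In the parabolic case, the parabolic fixed point itself is the unique fixed point, and the same access argument applies.
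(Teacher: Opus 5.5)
There is a genuine gap in the uniqueness half. The existence half (landing of the invariant internal ray) is fine. Your circle-dynamics count only sees boundary points of $B$ that are reached by an access from $B$, i.e. landing points of internal rays. The sentence ``each $z_i$ is the landing point of some fixed internal ray; if $z_i$ were accessible only through a limb, one could still pick a fixed access to it from $B$'' simply assumes this, and it is essentially the statement being proved.

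For a general simply connected domain, boundary points need not be accessible. An inaccessible fixed point on $\partial B$ would be invisible to $g|_{\partial\D}$, and your argument would not exclude it. Carath\'eodory theory does not help unless you already know $\partial B$ is locally connected. The limb decomposition of Theorem \ref{roeschyin}, as stated, does not give accessibility either.

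The missing ingredient is exactly the one the paper uses: by Roesch--Yin, $\partial B$ is a Jordan curve. Hence the Riemann map extends to a homeomorphism $\overline{\D}\to\overline{B}$ conjugating $g|_{\partial\D}$ to $f_\a|_{\partial B}$, and fixed points correspond bijectively. In the attracting case, after surgery $g(z)=z^2$. In the parabolic case $g(z)=(3z^2+1)/(z^2+3)$, and $g(z)=z$ iff $(z-1)^3=0$. Either way $g|_{\partial\D}$ has exactly one fixed point. You also never checked uniqueness of the circle fixed point in the parabolic case; this computation, or a symmetry-and-multiplicity count, supplies it.

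Your paragraph on nontrivially permuted accesses should be dropped, because two of its steps are wrong. First, the claim that ``the only available critical point would have to lie in $B$'' is false: the other critical points of $f_\a$ may sit in limbs of $B$. Second, conditions (1)--(3) of Subsection \ref{polynomialbasin} concern sectors cut out by external rays, not accesses from $B$.

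What you actually need there is that two distinct internal rays of $B$ never land at the same point. This follows from the maximum modulus argument of Corollary \ref{distinctlanding}, combined with the F.~and M.~Riesz theorem. Once accessibility is known, it makes the access to each fixed boundary point unique, hence fixed by $f_\a$. But the Jordan-curve property already gives all of this at once.
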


\begin{proof}
    For the attracting case, by quasiconformal surgery, we may assume that $B$ is  super-attracting with a unique critical point $c$ of $f_\a$ in $B$.
    Thus, the B\"ottcher map $\phi$ near $c$ can be extended to a conformal map $\phi :B \to \D$.
    Since $\partial B$ is a Jordan curve, $\phi$ extends to $\partial B$ and forms a conjugacy connecting $f_\a|_{\partial B}$ and $z\mapsto z^2$ on $\partial \D$.
    Hence there exists a unique fixed point of $f_\a$ on $\partial B$.

    For the parabolic case, let $c$ be the unique critical point of $f_\a$ in $B$.
    According to \cite{Petersen01052010}, there exists a Riemann mapping $\phi: B \to \D$ such that $\phi(c)=0$ and
    $$
       g:=\phi \circ f_\a \circ \phi^{-1}(z) = \frac{3z^2+1}{z^2+3}.
    $$
    According to \cite[Remark 1]{Petersen01052010}, there exists a continuous map $\psi:\partial \D \to \partial \D$ such that $\psi\circ g\circ \psi^{-1}(z)=(\psi\circ \phi)\circ f\circ(\psi \circ \phi)^{-1}(z)=z^2$.
    Therefore, we also conclude that there exists a unique fixed point of $f_\a$ on $\partial B$.    
\end{proof}

\begin{prop}[countinuity of the landing points]\label{keylemma}
    Let $\{ \a_n \} \subset \mathcal{C}$, and $\a_n \to \a_0  \in \mathcal{C}_0$.
    Then for any $\theta \in \QZ$ which is periodic under $\tau_d$, we have $z_\theta(\a_n) \to z_\theta(\a_0)$.
\end{prop}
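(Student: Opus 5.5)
We argue by compactness and contradiction. Passing to a subsequence, we may assume $z_\theta(\a_n)\to z_\infty$ and $R_{\a_n}(\theta)\to \RR(\theta)$ in the Hausdorff distance. It then suffices to show $z_\infty = z_\theta(\a_0)$, since every subsequence will have a further subsequence with the same limit. Let $p$ be the period of $\theta$ and $z_0:=z_\theta(\a_0)$.

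If $z_0$ is repelling, Lemma \ref{stablerepelling} (1) gives directly that $\RR(\theta)=\overline{R_{\a_0}(\theta)}$, so $z_\infty=z_0$. It remains to treat the case where $z_0$ is parabolic. Since $\a_0\in\mathcal{C}_0$, $z_0$ then lies on the unique parabolic cycle of $f_{\a_0}$.

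In the parabolic case we use the Peterson--Zakeri description. By the lemma preceding Theorem \ref{pz} and by Theorem \ref{pz}, $\RR(\theta)$ is a path-connected, $f_{\a_0}^p$-invariant compact set containing $\overline{R_{\a_0}(\theta)}$ and $z_\infty$. Its points in $J_{\a_0}$ are fixed by $f_{\a_0}^p$, and its points in the Fatou set lie in parabolic basins. Suppose $z_\infty\neq z_0$. Then $z_\infty$ is a fixed point of $f_{\a_0}^p$ in $J_{\a_0}$, and there is a path in $\RR(\theta)$ joining $z_0$ to $z_\infty$. This path must leave $\overline{R_{\a_0}(\theta)}$ at $z_0$. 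Every point of it off $\overline{R_{\a_0}(\theta)}$ and off the finitely many $f^p_{\a_0}$-fixed points lies in a parabolic basin. (The set $\RR(\theta)\setminus\overline{R_{\a_0}(\theta)}$ contains no points of $\Omega_{\a_0}$ apart from ray points: by Hausdorff convergence with $\a_n\in\mathcal{C}$ and continuity of Green's functions, points of positive potential in the limit lie on the limit of ray segments at the same angle.) Since the fixed points are finite in number, connectedness forces the path to run through closures of parabolic Fatou components. Each such component is attached to the cycle of $z_0$ and, by Lemma \ref{presevecyclic}, has exact period $p$. Moreover, because $\a_0\in\mathcal{C}_0$, the immediate basin contains only one critical point. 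So each such component $B$ is fixed by $f_{\a_0}^p$ and $f_{\a_0}^p|_B$ has degree $2$.

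The key step is to show that the path cannot escape from $z_0$ to another $f^p_{\a_0}$-fixed point. A point of $\RR(\theta)\cap\partial B$ reached from inside $B$ must be fixed by $f^p_{\a_0}$. By Lemma \ref{1fixedpoint} applied to $f^p_{\a_0}$ on $B$, the only fixed point on $\partial B$ is $z_0$. Hence the path enters and leaves each such $B$ only at $z_0$. Passing between distinct components would require a common boundary point. By Lemma \ref{ring}, such a point would be a periodic point shared by components in the same cycle, hence again fixed by $f^p_{\a_0}$ and equal to $z_0$. Therefore the path component of $\RR(\theta)$ containing $z_0$ is contained in $\overline{R_{\a_0}(\theta)}\cup\bigcup_B \overline B$, and it meets $J_{\a_0}$ only in $z_0$. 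This contradicts $z_\infty\neq z_0$.

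The main obstacle is this last step. One must justify rigorously that a path in $\RR(\theta)$ cannot creep along $J_{\a_0}$ through non-fixed points, and must use Theorem \ref{pz} carefully: Julia points of $\RR(\theta)$ are $f^p$-fixed, hence finitely many and isolated, so a path cannot traverse Julia set points except these. Once this is in place, the degree-two uniqueness of boundary fixed points from Lemma \ref{1fixedpoint} closes the argument.
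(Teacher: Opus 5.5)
Your proposal is correct and follows essentially the same route as the paper. The repelling case is handled by Lemma \ref{stablerepelling}. In the parabolic case you take a Petersen--Zakeri path in $\RR(\theta)$ from $z_\theta(\a_0)$ to $z_\infty$; it can meet $J_{\a_0}$ only at $f_{\a_0}^p$-fixed points, so it must leave an $f_{\a_0}^p$-invariant degree-two parabolic component $B$ through a second fixed point on $\partial B$, contradicting Lemma \ref{1fixedpoint}. Your additional remarks spell out details the paper leaves implicit: that $\RR(\theta)\cap\Omega_{\a_0}\subset R_{\a_0}(\theta)$, by continuity of the B\"ottcher/Green functions over $\mathcal{C}$, and the first-exit bookkeeping along the path.
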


\begin{proof}[Proof of Proposition \ref{keylemma}]
    Let $\theta \in \QZ$, then the landing point $z_{\theta}(\a_0)$ of $R_{\a_0}(\theta)$ is either eventually repelling or parabolic. 
    Hence, there are two cases. 

    {\bf repelling case:} Assume that $z_{\theta}(\a_0)$ is a repelling periodic point.
    By Lemma \ref{stablerepelling}, there exists a neighborhood $\mathcal{U} \subset \AA$ such that $\xi: \mathcal{U} \times [1,+\infty) \to \C$ is continuous.
    It follows that $z_{\theta}(\a_n)=\xi(\a_n,1)$ and $z_\theta(\a_0)=z_\infty$ for $n$ large.
    Hence the conclusion of Proposition \ref{keylemma} holds.

    {\bf parabolic case:}
    To simplify the notations, denote $z_0:=z_{\theta}(\a_0)$, $z_n:=z_\theta(\a_n)$.
    Assume $z_0$ is a parabolic periodic point.
    Let $B_1,B_2,\dots,B_q$ be all parabolic components whose boundary contains $z_0$.
    Since $\a_0 \in \mathcal{C}_0$, these $B_k$'s belong to the same periodic cycle.
    We claim that for every $B_k$, $z_0$ is the parabolic point of $B_k$.
    Notice at least one of $B_k$'s must be a parabolic basin of $z_0$, without loss of generality, we assume that it is $B_1$.
    For each $B_k$, there exists $n_k$ such that $f_\a^{n_k}(B_1)=B_k$.
    It follows that $ f_\a^{n_k}(z_0) $ is the parabolic point of $B_k$.
    By Lemma \ref{ring}, we have $f_\a^{n_k}(z_0)=z_0$.
    $z_0$ is the parabolic point of $B_k$. 
    By Lemma \ref{presevecyclic}, each $B_k$ has period $p$ under $f_\a$.
    Since $\a_0 \in \mathcal{C}_0$, then the forward orbit of $B_k$ contains exactly one critical point.
    It follows that $f_\a^p|_{B_k}$ is a proper map of degree $2$.
    By Lemma \ref{1fixedpoint}, $z_0$ is the unique fixed point of $f_\a^p$ on $\partial B_k$.

    Now we show that $z_0=z_\infty$ by contradiction.
    Notice that $z_0$ and $z_\infty$ are both fixed point of $f_{\a_0}^p$.
    Let $L(\theta)$ be the Hausdorff limit of $R_{\a_n}(\theta)$.
    By Theorem \ref{pz}, we can find path $\gamma \subset L(\theta)$ joining $z_0$ and $z_\infty$.
    This path $\gamma$ cannot entirely contained in the Julia set since every point in $J_{\a_0}\cap L(\theta)$ is a fixed point of $f_{\a_0}^p$.
    It follows that $\gamma \cap B_k \neq \emptyset$ for some $k$. 
    We claim that $\gamma \subset B_k \cup \{ z_0 \}$.
    Otherwise, we can find another fixed point of $f_{\a_0}^p$ on $\gamma \cap \partial B$.
    This contradicts the fact that $z_0$ is the unique fixed point of $f_\a^p$ on $\partial B_k$.
    Thus, we have $z_n \to z_0$ as $n\to \infty$.
    
\end{proof}

As a immediate Corollary, we obtain a parabolic version of Lemma \ref{stablerepelling}.

\begin{fact}[external rays landing on parabolic points]\label{stableparabolic}
    Suppose that $\a_0 \in \mathcal{C}_0$, and $z_0$ is a parabolic fixed point which is the common boundary point of exactly $p$ parabolic components in one cycle.   
    For $\a$ sufficiently close to $\a_0$ such that all cycles repelling, the parabolic fixed point $z_0$ bifurcates into a repelling fixed point $z_0(\a)$ and a $p$ periodic repelling cycle.
    If $R_{\a_0}(\theta)$ lands at $z_0$, then $R_\a(\theta)$ either land at $z_0(\a)$ or a point in the $p$ cycle.

\end{fact}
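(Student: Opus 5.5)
The Fact follows from Proposition \ref{keylemma} combined with the local bifurcation picture at $z_0$. I first fix the local picture. Since $\a_0 \in \mathcal{C}_0$, the argument in the parabolic case of the proof of Proposition \ref{keylemma} shows that each of the $p$ parabolic components $B_1,\dots,B_p$ at $z_0$ is $f_{\a_0}^p$-invariant. It also shows that $z_0$ is the parabolic point of every $B_k$. Hence the multiplier of $z_0$ as a fixed point of $f_{\a_0}$ is a primitive $q$-th root of unity with $q=p$. The $p$ petals are permuted cyclically, and $z_0$ is a fixed point of $f_{\a_0}^p$ of multiplicity exactly $p+1$: exactly $p$ petals, all in one cycle, since $\mathcal{C}_0$ allows only one critical point in the cycle.

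By Rouch\'e's theorem applied to $f_\a^p(z)-z$ on a small disk $D$ around $z_0$, for $\a$ near $\a_0$ the disk $D$ contains exactly $p+1$ fixed points of $f_\a^p$, counted with multiplicity. One of them is the continuation $z_0(\a)$ of the simple fixed point $z_0$ of $f_{\a_0}$; this is simple because the multiplier is $\neq 1$, so the implicit function theorem applies. The remaining $p$ points form a single $f_\a$-orbit of period $p$, since $f_\a$ maps this set to itself near the rotation and no fixed points of $f_\a$ other than $z_0(\a)$ lie in $D$. Under the hypothesis that all cycles of $f_\a$ are repelling, these points are distinct repelling points, which gives the claimed bifurcation.

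Next take $\theta$ with $R_{\a_0}(\theta)$ landing at $z_0$. The angle $\theta$ is periodic: a ray landing at a periodic parabolic point has a periodic angle, and by Lemma \ref{presevecyclic} its period is $p$. Now argue by contradiction, supposing some sequence $\a_n\to\a_0$ has $z_\theta(\a_n)\notin D$. The family $\a_n$ lies in $\mathcal{C}$, because the condition that all cycles are repelling still leaves connectivity to be checked. For maps outside $\mathcal{C}$ the ray may not land, so I restrict to $\a\in\mathcal{C}$, as in Proposition \ref{keylemma}. Proposition \ref{keylemma} then gives $z_\theta(\a_n)\to z_0$, which is the contradiction.

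Hence for $\a$ close to $\a_0$ the landing point $z_\theta(\a)$ lies in $D$. It is a fixed point of $f_\a^p$, because the ray $R_\a(\theta)$ is $f_\a^p$-invariant. The only such points in $D$ are $z_0(\a)$ and the $p$-cycle, which proves the Fact.

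The main obstacle is the local counting step: showing that the $p$ extra fixed points of $f_\a^p$ in $D$ form a single genuine $p$-cycle rather than splitting into shorter cycles. I handle this by noting that any fixed point of $f_\a^k$ in $D$ with $k<p$ would be a fixed point of $f_\a^k$ near $z_0$. But $z_0$ is a simple fixed point of $f_{\a_0}^k$ for $k<p$, because its multiplier under $f_{\a_0}^k$ is $\lambda^k\neq1$. So $f_\a^k$ has only $z_0(\a)$ as a fixed point in $D$, and every other point in $D$ fixed by $f_\a^p$ has exact period $p$.
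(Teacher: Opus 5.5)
Your proof is correct and follows the same route as the paper. You first count the $p+1$ fixed points of $f_\a^p$ near $z_0$: one is the continuation of the simple fixed point, and the rest form a single $p$-cycle because the multiplier is a primitive $p$-th root of unity. You then use the continuity of landing points from Proposition \ref{keylemma} to place $z_\theta(\a)$ among these points.

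Your write-up is more complete than the paper's. The paper only sketches the bifurcation, and states the multiplicity as $p$ rather than $p+1$. It also leaves implicit both the landing step and the necessary restriction to $\a\in\mathcal{C}$.
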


\begin{proof}
    Since $z_0$ is a parabolic fixed point with $p$ parabolic components in one cycle attached at $z_0$.
    It follows that $z_0$ must satisfy that 
    $$f_{\a_0}'(z_0)=e^{2\pi i \frac{p}{q}},\qquad f_{\a_0}^p(z)-z_0=z+a_2z^{p+1}+o(z^{p+1}).$$
    For $\a$ sufficiently close to $\a_0$ such that all cycles repelling, since $z_0$ is a fixed point of $f_{\a_0}^p$ with multiplicity $p$, it follows that $z_0$ bifurcates into $p+1$ repelling fixed points of $f_\a^p$.
    Since $z_0$ is a fixed point, one of these $p+1$ is a fixed point $z_0(\a)$ for $f_\a$.
    Noticing that $f_{\a_0}'(z_0)=e^{2\pi i \frac{p}{q}}$, we see that the remaining $p$ points must belong to a $p$ periodic cycle.

\end{proof}

Let $\Q_p$ be the set of all periodic $\theta \in \Q$ which is periodic under $\tau_d$.
This set is invariant under $\tau_d$.
Thus, we can define the lamination $\L_{\Q_p}(\a)$ on $\Q_p$ for $\a \in \mathcal{C}$.
As a Corollary of Proposition \ref{keylemma}, we deduce the following result on the semi-continuity of laminations on $\Q_p$.

\begin{cor}[continuity of periodic laminations]\label{Qp}
    Suppose that $\{\a_n \} \subset \mathcal{C}$ converges to $\a_0 \in \mathcal{C}_0$, then we have $\limsup_n \L_{\Q_p}(\a_n) \subset \L_{\Q_p}(\a_0)$.
\end{cor}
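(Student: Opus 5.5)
Since $\L_{\Q_p}(\a)$ is an equivalence relation on $\Q_p$, the set $\limsup_n \L_{\Q_p}(\a_n)$ is meant as the set of pairs $(\theta,\theta')\in\Q_p\times\Q_p$ that lie in $\L_{\Q_p}(\a_n)$ for infinitely many $n$. The plan is to take such a pair and show $(\theta,\theta')\in\L_{\Q_p}(\a_0)$ using Proposition \ref{keylemma} alone. No topological limit over varying angles is needed, because $\Q_p$ carries the discrete-pair sense here: the angles are fixed and only the parameter moves.

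First, pass to a subsequence $\{\a_{n_k}\}$ along which $(\theta,\theta')\in\L_{\Q_p}(\a_{n_k})$ for every $k$. By definition of the rational lamination, this means that $R_{\a_{n_k}}(\theta)$ and $R_{\a_{n_k}}(\theta')$ land at a common point. In the notation of the paper,
$$z_\theta(\a_{n_k}) = z_{\theta'}(\a_{n_k}) \quad \text{for all } k.$$
The subsequence still lies in $\mathcal{C}$ and converges to $\a_0\in\mathcal{C}_0$. Both $\theta$ and $\theta'$ are rational and periodic under $\tau_d$, so Proposition \ref{keylemma} applies to each of them separately. It gives
$$z_\theta(\a_{n_k})\to z_\theta(\a_0) \quad\text{and}\quad z_{\theta'}(\a_{n_k})\to z_{\theta'}(\a_0).$$
Since the two sequences coincide term by term, their limits agree, so $z_\theta(\a_0)=z_{\theta'}(\a_0)$. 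Hence $(\theta,\theta')\in\L_{\Q_p}(\a_0)$. The diagonal pairs $(\theta,\theta)$ are trivially in both relations.

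The only subtle point is making sure Proposition \ref{keylemma} is really applicable. It requires that the rays $R_{\a_n}(\theta)$ are defined, that is, $s_{\a_n}(\theta)=1$. This holds automatically because $\a_n\in\mathcal{C}$: when $K_{\a_n}$ is connected, $\psi_{\a_n}$ extends to all of $\C\setminus\overline{\D}$. It also requires that the landing points exist, which is true since rational rays land when $K$ is connected.

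One could also read $\limsup$ in the Hausdorff or Kuratowski sense on $\Q_p\times\Q_p$, using the subspace topology and requiring $(\theta_n,\theta_n')\to(\theta,\theta')$ with the angles varying. That reading would need a uniformity in Proposition \ref{keylemma} over varying angles, which is not available. I would therefore state explicitly at the outset that the set-theoretic $\limsup$ of relations on $\Q_p$ is meant. With that convention fixed, there is no real obstacle: the argument is a direct two-line consequence of the continuity of landing points at $\a_0$.
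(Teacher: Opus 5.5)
Your proposal is correct and follows the paper's proof. Both pass to a subsequence on which $R_{\a_{n_k}}(\theta)$ and $R_{\a_{n_k}}(\theta')$ co-land, then apply Proposition \ref{keylemma} to each angle to get $z_\theta(\a_0)=z_{\theta'}(\a_0)$, and your set-theoretic reading of $\limsup$ is the one the paper uses.
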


\begin{proof}
    Suppose that $(\theta,\theta') \in \limsup_n \L_{\Q_p}(\a_n) $, then there exists a subsequence $\a_{n_k}$, such that $(\theta,\theta') \in \L_{\Q_p}(\a_{n_k})$.
    Hence $R_{\a_k}(\theta)$ and $R_{\a_k}(\theta')$ land at a common point, i.e. $z_\theta(\a_{n_k})=z_{\theta'}(\a_{n_k})$ for every $k$.
    By Proposition \ref{keylemma}, since $\a_{n_k} \to \a_0$ as $k \to \infty$, we have $z_\theta(\a_0)=z_{\theta'}(\a_0)$.
    Hence $(\theta,\theta') \in \L_{\Q_p}(\a_0)$.

\end{proof}

\section{Cubic Polynomials and Generalized Internal Rays}\label{sec3}

\subsection{Mapping Scheme and B\"ottcher Maps}

Recall that we parameterize cubic polynomials by the following Branner-Hubbard normal form
$$
    f_\a(z)=z^{3}-3c^{2}z+(2c^3+v),\qquad \a=(c,v) \in \C^2.
$$
Then $\cc(\a)=\{ \pm c(\a) \}$.
Recall that the bounded hyperbolic components of cubic polynomials are divided into $4$ types, namely \A, \B, \CCC~ and (D).
Let $\mathcal{H}$ be such a hyperbolic component.
Let $B_\a(c)$ denote the attracting Fatou component containing $c(\a)$, and $p$ be the period of $B_\a(c)$.
Let $B_\a:=\bigcup_n f_\a^n (B_\a(c))$ be its \emph{immediate attracting basin}, and $\tilde{B}_\a :=\bigcup_n f_\a^{-n}(B_\a(c))$ be its whole \emph{attracting basin}.
Up to exchange $c(\a)$ and $-c(\a)$, types of bounded hyperbolic components and their tame part of the boundary can be summarized as follows.
\begin{align*}
    \A : -c(\a) \in B_\a(c),\quad \B: -c(\a) \in B_\a \setminus B_\a(c), \quad \CCC: -c(\a) \in \tilde{B}_\a \setminus B_\a, 
\end{align*}
and (D): $-c(\a) \notin \tilde{B}_\a $. 
The tame boundary is defined to be
$$\p \mathcal{H}=\{ \a \in \partial \mathcal{H} : c(\a) \mbox{ tends to an attracting cylce} \}.$$
Therefore, for $\a \in \mathcal{H} \cup \p \mathcal{H}$, $B_\a(c)$, $B_\a$ and $\tilde{B}_\a$ are always defined.

First, we may assume that $B_\a$ is a super-attracting basin, i.e. $c(\a)$ is a super-attracting periodic point of $f_\a$ with period $p$.
In fact, using the quasiconformal surgeries (Proposition \ref{multiplier}), we will see that it suffices to consider this case.
Hence we may restrict the discussion in Milnor's periodic $p$ super-attracting slices $\S$ where
$$\mathcal{S}_{p}:=\left \{ \a \in \mathbb{C}^2 : c(\a) \text{ is a $p$-periodic point of $f_\a$} \right \}.$$

Fix $\a \in \S$,
denote $O=O(\a):=\{ c(\a), c_1(\a),\dots,c_{p-1}(\a) \}$ to be the super-attracting $p$ periodic orbit of $c(\a)$ where $c_k(\a):=f_\a^k(c(\a))$ for $0\leq k \leq p-1$, and
$\tilde{O}=\tilde{O}(\a):=\bigcup_n O_n$ be the grand orbit of $c(\a)$.

In the following, we discuss the corresponding B\"ottcher maps of $x \in \tilde{O}$.
In our discussion, we always assume that $\a \in \S$ is \emph{generic}, that is, $-c(\a) \notin \tilde{O}(\a)$.
We can define the corresponding B\"ottcher map $\phi_\a^c$ near $c(\a)$ which conjugates $f_\a$ and $z\mapsto z^2$ for $c(\a)\neq 0$.
Define the \emph{exponential Green function} $G_\a^c : \C \to [0,1]$ with respect to $c(\a)$ by
\begin{align*}
    G_\a^c (z) = \lim_{n\to\infty} \sqrt[n]{|f^{np}_\a(z)-c(\a)|}.
\end{align*}
For $0<s\leq 1$, denote $B_\a^c(s):=(G_\a^c)^{-1}(\{ z : |z|<s \})$.
Let 
$$
    s_{\a}^{c}:=\min \{ G_\a^c(z): z\in (B_\a(c) \setminus \{ c(\a_0)\}) \cap \cc(f_\a^p) \}.
$$
Thus, the B\"ottcher map $\phi_\a^c$ is defined at least on $B_\a^c(s_\a^c)$ to the disk with radius $s_\a^c$ centered at the origin.
Its inverse is denoted by $\psi_\a^c$.

For generic $\a$, and each $v \in \tilde{O}\setminus \{ c(\a)\}$, the map $f_\a^n$ is conformal on a neighborhood of $v$ to a neighborhood of $c(\a)$ where $\ell_v\geq 0$ is the smallest integer such that $f_\a^{\ell_v}(v)=c(\a)$.
Thus, we can pull-back the B\"ottcher map $\phi_\a^c$ and the Green function $G_\a^c$ to get the B\"ottcher map $\phi_\a^v:=\phi_\a^c \circ f_\a^{\ell_v}$ and the Green function $G_\a^v:=G_\a^c \circ f_\a^{\ell_v}$ of $v$.
Its inverse is denoted by $\psi_\a^v$, it is defined on a small disk $D(0,\rho_\a)$ centered at the origin.
One may observe that $\rho_\a$ only depends on $\a$ not on $v$.

We shall carefully discuss the extension of $\psi_\a^v$ for $v \in \tilde{O}$ since it plays an important role in our study of laminations.
To be more systematic, we introduce the \emph{mapping scheme} $(\O,\sigma,\delta)$ and model space $\M$ over $\tilde{O}$.

\begin{defi}[mapping scheme and model space]
    For generic $\a \in \S$, the \emph{mapping scheme} of $\a$ is a triplet $(\O,\sigma,\delta)$ where 
    \begin{itemize}
        \item $\O=\O(\a)$ is the grand orbit of $c(\a)$;
        \item $\sigma : \tilde{O} \to \tilde{O}$ is defined by $\sigma(v)=f_\a(v)$;
        \item $\delta : \tilde{O} \to \mathbb{N}$ is defined by $\delta(c(\a))=2$ and $\delta(v)=1$ for $v\neq c(\a)$.
    \end{itemize}
The \emph{model space} over $\tilde{O}$ is defined to be
$\M=\M(\tilde{O}):=\tilde{O} \times \D$.
If we identify $\partial \D$ and $\RZ$, then $\partial \M$ is identified with $\O \times \RZ$.
The \emph{model map}s $g$ and $\g$ are defined by
$$g: \M \to \M,\quad (v,z)\mapsto (\sigma(v),z^{\delta(v)}).$$
and
    $\tilde{g}: \partial \M \to \partial \M$ by $ (v,t)\mapsto (\sigma(v),\tau_{\delta(v)}(t)).$
\end{defi}

Under the notation above, we conclude that there exists a map $\Psi_\a: \O \times D(0,\rho_\a) \to \tilde{B}_\a$ given by $(v,w)\mapsto \psi_\a^v(w)$ such that
\begin{equation}\label{diaeq1}
     f_\a \circ \Psi_\a  = \Psi_\a \circ g,\qquad \forall (v,w) \in \O \times D(0,\rho_\a).
\end{equation}

Now we try to extend $\Psi_\a$ to maximal domains such that \eqref{diaeq1} still holds.
There is a simple case, if $-c(\a) \notin \tilde{B}_\a$, then for each $v \in \tilde{O}$, $\phi_\a^v$ can be extended to a conformal map from $B_\a(v) $ to $ \D$.
Thus, their inverse $\psi_\a^v$ are all defined and conformal on $\D$.
Thus, the map $\Psi_\a$ can be extended to 
$\Psi_\a : \M \to \tilde{B}_\a$ such that \eqref{diaeq1} still holds, i.e. the following diagram commutes.
\begin{equation}\label{diapsi}
    \xymatrix{\ar @{} [dr]
   \M \ar[d]^{g} \ar[r]^{\Psi_{\a}} & \tilde{B}_\a \ar[d]^{f_{\a}} \\
   \M \ar[r]^{\Psi_\a}        & \tilde{B}_\a     }
\end{equation}
The extended map $\Psi_\a$ is \emph{fiberwise conformal}, i.e. fix any $v \in \tilde{O}$, the restriction map $\Psi_\a: \{ v\} \times \D \to B_\a(v)$ is a conformal map.

The opposite case $-c(\a) \in \tilde{B}_\a$ is more complicated, i,e. the parameter $\a$ belongs to a bounded hyperbolic component $\H$ of type \A~or \B, or \CCC.
Remember we excluded the case that $-c(\a) \in O(\a)$, hence $\a$ cannot be the center of the hyperbolic component $\H$. 
In the following, such maps are called \emph{ABC-hyperbolic}.

Now suppose that $\a$ is ABC-hyperbolic, and $(v,t) \in \O \times \RZ$.
Let $S(v,t)$ be the set of $0<r<1$ such that there exists a Jordan arc $\gamma_r^{v,t}: [0,r] \to B_\a(v) $ starting at $v$ such that the following properties (P1) and (P2) hold.
\begin{enumerate}
    \item[(P1)] {\bf smoothness:} $\gamma_r^{v,t}(s)$ is not an iterative preimage of $-c(\a)$ for $0\leq s < r$.
    \item[(P2)] {\bf dynamical compatibility:} Given any $0<s<r$, for any $n$ large with $g^n(v,se^{2\pi i t})\in O \times D(0,s_\a^c)$, we have 
    $$
        f_\a^n(\gamma_r^{v,t}(s))=\Psi_\a(g^n(v,se^{2\pi i t})).
    $$
    
\end{enumerate}
The property (P1) implies that $\gamma_r^{v,t}$ is a smooth Jordan arc.
Property (P2) states that $\gamma_r^{v,t}$ is canonical and competitive with the dynamics.
By (P2), we see that $\gamma_r^{v,t}$ is indeed parameterized by the exponential Green function $G_\a^v$.
According to \eqref{diaeq1}, to verify (P2), it suffices to show that there exists $n$ with $g^n(v,se^{2\pi i t})\in O \times \D(0,s_\a^c)$ such that $f_\a^n(\gamma_r^{v,t}(s))=\Psi_\a(g^n(v,se^{2\pi i t}))$ holds. 
By the uniqueness of lifting, fix $r \in S(v,t) $, the curve $\gamma:[0,r] \to B_\a(v)$ satisfying (P2) is unique and hence must be $\gamma_r^{v,t}$.
As a consequence, fix $r<r' \in S(v,t)$, we have  $\gamma_r^{v,t}=\gamma_{r'}^{v,t}$ on $[0,r]$.
In the following when $(v,t)$ is clear, $\gamma_r^{v,t}$ is sometimes briefly written as $\gamma_r$ or simply $\gamma$.


\begin{lem}[radial extension]\label{lem31}
  For generic $\a \in \S$, and $(v,t) \in \O \times \RZ$, define $$s_\a^v(t):=\sup S(v,t).$$ 
  If $s_\a^v(t)<1$, then $s_\a^v(t) \in S(v,t)$, and $\gamma_{s_\a^v(t)}^{v,t}(s_\a^v(t))$ is an iterative preimage of $-c(\a)$.
\end{lem}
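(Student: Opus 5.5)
Write $r_0:=s_\a^v(t)<1$ and $\gamma:=\gamma^{v,t}_r$ for $r<r_0$, which is consistent by uniqueness of lifting. The plan is to build a limiting arc on $[0,r_0)$ from the arcs $\gamma_r$, $r<r_0$, and then show that the endpoint exists, lies in $B_\a(v)$, and is an iterated preimage of $-c(\a)$; otherwise the arc could be continued past $r_0$.

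\textbf{Step 1: the limit point exists and is interior to $B_\a(v)$.} By (P2), $G_\a^v(\gamma(s))=s$ for every $s<r_0$, so $\gamma([0,r_0))$ lies in the compact set $\{z\in\overline{B_\a(v)}: G_\a^v(z)\le r_0\}$. Because $r_0<1$, this set stays away from $\partial B_\a(v)$, where $G^v_\a\to 1$. The limit points of $\gamma(s)$ as $s\to r_0$ form a connected compact set $X$ on which $G_\a^v\equiv r_0$. Taking $n$ with $g^n(v,r_0e^{2\pi i t})\in O\times D(0,s^c_\a)$, (P2) together with continuity gives $f_\a^n(X)=\{\Psi_\a(g^n(v,r_0e^{2\pi it}))\}$, a single point. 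Since $f_\a^n$ has discrete fibers, $X$ is a single point $z^*\in B_\a(v)$. We then set $\gamma(r_0):=z^*$.

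\textbf{Step 2: $r_0\in S(v,t)$.} The extended curve $\gamma:[0,r_0]\to B_\a(v)$ is continuous. (P1) only has to hold for $s<r_0$, and it does. (P2) for $s<r_0$ is inherited from the arcs $\gamma_r$. The curve is a Jordan arc because $G^v_\a$ is strictly increasing along it. Hence $r_0\in S(v,t)$.

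\textbf{Step 3: $z^*$ is a precritical point.} Suppose, for contradiction, that $z^*$ is not an iterated preimage of $-c(\a)$. The only critical points of $f_\a^n$ inside $B_\a(v)$ are $v$ and preimages of $\pm c$, and $c$ lies in $\tilde O$. Since $G^v_\a(z^*)=r_0>0$, we have $z^*\neq v$, so $f_\a^n$ is conformal near $z^*$.

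Let $w:=\Psi_\a(g^n(v,r_0e^{2\pi it}))$, which lies in $B^c_\a(s^c_\a)$ or its analogue on the cycle. The local inverse branch of $f^n_\a$ sending $w$ to $z^*$ lets us lift the curve $s\mapsto \Psi_\a(g^n(v,se^{2\pi it}))$ for $s\in[r_0,r_0+\epsilon]$. Here $n$ is chosen large enough that $g^n(v,(r_0+\epsilon)e^{2\pi it})$ still lies in $O\times D(0,s^c_\a)$, which is possible since the moduli $s^{2^k}$ tend to $0$. The lift agrees with $\gamma$ near $r_0$ by uniqueness of lifting, so the concatenation satisfies (P2) on $[0,r_0+\epsilon]$. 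For small $\epsilon$ it also satisfies (P1), because the precritical points are discrete in $B_\a(v)$. This gives $r_0+\epsilon\in S(v,t)$, contradicting $r_0=\sup S(v,t)$.

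\textbf{Main obstacle.} The step I expect to need most care is Step 1: ruling out accumulation on the boundary and proving that the limit set collapses to a point. The argument relies on the level-set bound $G^v_\a<1$ and on the finiteness of fibers of $f_\a^n$. I would also check that the genericity assumption $-c(\a)\notin\tilde O$ keeps $\Psi_\a$ well defined near the image point $w$.
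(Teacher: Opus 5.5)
Your proposal is correct and follows the same route as the paper. Like the paper, you glue the arcs $\gamma_r$, use (P2) and the discreteness of the fibres of $f_\a^n$ to get the limit point, and, if that point were not precritical, continue the arc past $r_0$ through a univalent branch of $(f_\a^n)^{-1}$ to contradict maximality. You also supply details the paper leaves implicit: compactness of $\{G^v_\a\le r_0\}$ inside $B_\a(v)$, connectedness of the cluster set, and (P1) on the extension.

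One point to tighten in Step 3: you should rule out $z^*\in\tilde O\cap B_\a(v)$, not just $z^*\neq v$. A component $B_\a(v)$ can contain several points of $\tilde O$, for example the two centers $x,y$ in the component of $-c(\a)$. The fix is the same argument you already use, since $G^v_\a$ vanishes at every point of $\tilde O\cap B_\a(v)$ while $G^v_\a(z^*)=r_0>0$.
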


\begin{proof}

    Assume that $s_\a^v(t)<1$.
    By the definition of $s_\a^v(t)$ and the fact that $s_\a^v(t)\geq \rho_\a>0$, 
    we can find a sequence $\{ r_k\} \subset (\rho_\a,s_\a^v(t))$ monotonely converging to $s_\a^v(t)$ and a sequence of curves $\gamma_{r_k}:[0,r_k] \to B_\a(v)$ satisfying (P1) and (P2).
    By the uniqueness of each $\gamma_{r_k}$, we can glue them together to form a Jordan arc $\gamma: [0,s_\a^v(t)) \to B_\a(v)$ which still satisfies (P1), and (P2).    

    Now fix $n$ such that $g^n(v,se^{2\pi i t})\in \{c \} \times D(0,s_\a^c)$.
    By (P2), any limit point of $\gamma(s)$ as $s \to s_\a^v(t)$ is mapped to $\Psi_\a(g^n(v, s_\a^v(t) e^{2\pi it}))$ by $f_\a^n$.
    It follows that the limit $\gamma(s)$ exists as $s \to s_\a^v(t)$.
    The limit point is denoted by $\omega$.
    Extend $\gamma$ to $\gamma:[0,s_\a^v(t)] \to B_\a(v)$ by defining $\gamma(s_\a^v(t))=\omega$.
    The extended $\gamma$ still satisfies (P1) and (P2).
    Now we show that $\omega$ must be a critical point of $f_\a^n$.
    We prove it by contradiction.
    If not, then there exists a neighborhood $V$ of $z_0$ such that $f_\a^n$ maps $V$ univalently into $B_\a^c(s_\a^c)$.    
    Pick $s'>s_\a^v(t)$ such that $\Psi_\a (g^n(v,[s_\a^v(t),s']e^{2\pi i t}))\subset f_\a^n(V)$.
    Hence we can extend the the Jordan arc $\gamma$ on $[0,s']$ by $\gamma^{v,t}(s):=(f_\a^n|_V)^{-1}\circ \Psi_\a (g^n(v,se^{2\pi i t}))$ for $s \in [s_\a^v(t),s']$.
    Clearly, the extended arc still satisfies (P1) and (P2).
    Hence $s' \in S(v,t)$.
    This contradicts the fact that $s_\a^v(t)=\sup S(v,t) $.
    Therefore, we finish the proof of the lemma.
    
\end{proof}


Notice the two notations $\psi_\a^v$ and $\Psi_\a(v,\cdot)$ are the same.
In the following these two notations will be used both.

For generic $\a \in \S$, define the \emph{reduced model space} $\M_\a \subset \M$ by
$$
    \M_\a:=\bigsqcup_{v \in \tilde{O}} (\{ v\} \times \{ re^{2\pi i t} : 0\leq r < s_\a^v(t) \}).
$$
(If $-c(\a) \notin \tilde{B}_\a$, then $\M_\a:=\M$.)
Define $\M_\a':=\{ (v,t) \in \O \times \RZ : s_\a^v(t)<1 \}$.
From Lemma \ref{lem31}, we deduce the following.

\begin{prop}[reduced model space]\label{reduced}
The reduced model space $\M_\a$ is invariant under $g$.
The map $\Psi_\a$ can be extended to $ \Psi_\a : \M_\a \to \tilde{B}_\a $ by
$$
    \Psi_\a (v,w)=\psi_\a^v(w) := \gamma_{s_\a^v(t)}^{v,t}(r),\qquad  w=se^{2\pi i t},~0\leq s < s_\a^v(t).$$
is a continuous injection such that the following diagram commutes.
\begin{equation*}
    \xymatrix{\ar @{} [dr]
   \M_\a \ar[d]^{g} \ar[r]^{\Psi_{\a}} & \tilde{B}_\a \ar[d]^{f_{\a}} \\
   \M_\a \ar[r]^{\Psi_\a}        & \tilde{B}_\a .    }
\end{equation*}
\end{prop}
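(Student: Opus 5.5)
We have to check three things: that $\M_\a$ is invariant under $g$, that $\Psi_\a$ is well defined and continuous on $\M_\a$, and that it is injective with $f_\a\circ\Psi_\a=\Psi_\a\circ g$. Throughout, write $\gamma^{v,t}$ for the maximal arc obtained in Lemma \ref{lem31}, defined on $[0,s_\a^v(t))$.

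\textbf{Invariance and the conjugacy.} Fix $(v,t)$ and $s<s_\a^v(t)$. Set $(v',t')=\g(v,t)$ and $s'=s^{\delta(v)}$.

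The curve $f_\a\circ\gamma^{v,t}_{s}$ starts at $\sigma(v)=v'$. It satisfies (P1), because the image of a non-preimage of $-c(\a)$ is again a non-preimage. It satisfies (P2) for $(v',t')$, because $f_\a^{n}\circ f_\a=f_\a^{n+1}$ and $g^{n}(g(v,se^{2\pi it}))=g^{n+1}(v,se^{2\pi it})$. Its Green parametrization is $s'=s^{\delta(v)}$. It is a Jordan arc: when $\delta(v)=1$ the map $f_\a$ is injective near the arc. When $v=c(\a)$, injectivity follows from (P2) together with the radial parametrization, since distinct parameters have distinct Green values.

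So $s'\in S(v',t')$, whence $s_\a^{v'}(t')\ge s_\a^v(t)^{\delta(v)}$. This gives $g(\M_\a)\subset\M_\a$. By the uniqueness of lifts satisfying (P2), $f_\a(\gamma^{v,t}(s))=\gamma^{v',t'}(s')$, which is exactly the commutative diagram.

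\textbf{Well-definedness at the origin and continuity.} At $w=0$ all arcs start at $v$, so $\Psi_\a(v,0)=v$ independently of $t$. On $\{v\}\times D(0,\rho_\a)$ the map $\Psi_\a$ agrees with $\psi_\a^v$, since the radial arcs of $\psi_\a^v$ satisfy (P1) and (P2).

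For continuity at $(v,w_0)$ with $w_0=s_0e^{2\pi it_0}$ and $s_0<s_\a^v(t_0)$, I would argue by lifting, following the end of the proof of Lemma \ref{lem31}.
\begin{itemize}
\item Choose $n$ with $g^n(v,w)$ in $O\times D(0,s_\a^c)$ for $w$ near the compact radial segment $[0,s_0]e^{2\pi it_0}$.
\item The compact arc $\gamma^{v,t_0}([0,s_0])$ avoids the critical points of $f_\a^n$ except possibly at $v$ itself, which is handled by the local conformal map $\psi_\a^v$. Hence $f_\a^n$ is univalent on a neighborhood $V$ of the arc, apart from the initial piece.
\item For nearby $t$, the local inverse $(f_\a^n|_V)^{-1}\circ\Psi_\a\circ g^n$ produces arcs satisfying (P1) and (P2) for $(v,t)$ up to radius slightly larger than $s_0$.
\end{itemize}
This shows that $s_\a^v$ is lower semicontinuous, so $\M_\a$ is open, and that $\Psi_\a$ is locally given by a holomorphic expression, hence continuous (even holomorphic in $w$).

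\textbf{Injectivity (main obstacle).} Suppose $\Psi_\a(v,w_1)=\Psi_\a(v,w_2)$. Both points lie in $B_\a(v)$, so they have the same Green value $G_\a^v$, hence $|w_1|=|w_2|=s$; the case $s=0$ is trivial. Take $n$ large and push forward: the images $\Psi_\a(g^n(v,w_i))$ coincide in $B_\a^c(s_\a^c)$, where $\Psi_\a$ is the Böttcher inverse and hence injective. Therefore $g^n(v,w_1)=g^n(v,w_2)$, so the angles agree after multiplication by $2^k$, where $k$ is the number of visits to $c(\a)$.

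To conclude $t_1=t_2$, I would pull back one step at a time along the orbit of $v$. At a non-critical vertex $g$ is injective, so the angles already agree there. At a visit to $c(\a)$, two distinct angles $t_1\ne t_2$ with $2t_1=2t_2$ give arcs $\gamma^{c,t_1},\gamma^{c,t_2}$ that are the two preimage branches of a single arc under $f_\a|_{B_\a(c)}$. These branches are disjoint away from $c(\a)$ as long as they do not pass through $-c(\a)$ or its preimages, and (P1) guarantees exactly that. This step needs care: the rays $t$ and $t+1/2$ could a priori meet at a point, but such a meeting point would be a critical point of $f_\a$ in $B_\a(c)\setminus\{c(\a)\}$, namely $-c(\a)$, which these arcs avoid by (P1). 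A descending induction along the orbit then gives $w_1=w_2$.
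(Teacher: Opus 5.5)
Your invariance and conjugacy step is the paper's own argument. You push $\gamma^{v,t}|_{[0,s]}$ forward by $f_\a$, check (P1) and (P2), and conclude $s^{\delta(v)}\in S(v',t')$. Taking the supremum over $s<s_\a^v(t)$ gives the strict inequality $s^{\delta(v)}<s_\a^{v'}(t')$ that membership in $\M_\a$ requires; the paper only writes a non-strict one.

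The paper then stops, treating continuity and injectivity as immediate from Lemma~\ref{lem31} and (P2). Your local-lifting argument for continuity and your push-forward/pull-back argument for injectivity within one fiber are sound and supply what the paper omits. This includes your observation that two lifts of one arc can only merge at $-c(\a)$, which (P1) excludes. One sentence is missing in the continuity step: (P1) for the lifted arcs. Iterated preimages of $-c(\a)$ of order $<n$ are critical points of $f_\a^n$, hence not in $V$. Those of order $\ge n$ are excluded because $f_\a^n$ sends the lifted arcs into $\psi_\a^c(D(0,s_\a^c))$, which contains none.

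The real gap is injectivity \emph{across} fibers. Since $\M_\a=\bigsqcup_{v\in\O}(\cdots)$, you must also exclude $\Psi_\a(v,w_1)=\Psi_\a(v',w_2)$ with $v\neq v'$. This case is not vacuous, and it arises precisely in the ABC setting. The Fatou component containing $-c(\a)$ contains two distinct points of $\O$ with the same image under $f_\a$, and so does each of its iterated preimage components. In type (A) these two points are $c(\a)$ and the point $c'\in B_\a(c)$ with $f_\a(c')=c_1(\a)$.

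Your opening step does not transfer to this case, because the relevant Green functions differ. For example, $G_\a^{c'}=(G_\a^c)^2$ on $B_\a(c)$, so equal images do not force $|w_1|=|w_2|$.

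The repair uses your own mechanism.
\begin{enumerate}
\item Push forward until both base points equal $c(\a)$ and the model points lie in $D(0,s_\a^c)$. There the B\"ottcher inverse forces the model points to coincide.
\item Run your descending induction down to the last index $j$ with $\sigma^j(v)\neq\sigma^j(v')$ and $\sigma^{j+1}(v)=\sigma^{j+1}(v')$.
\item At that index, the two smooth arcs issuing from $\sigma^j(v)$ and $\sigma^j(v')$ are lifts under $f_\a$ of one and the same arc, and both end at $f_\a^j(z)$, where $z$ is the common image point.
\item Traverse them backwards from $f_\a^j(z)$. They become two lifts of the same path from the same point. Neither meets a critical point of $f_\a$ before its final endpoint: $-c(\a)$ is excluded by (P1), and $c(\a)$ is excluded because the Green value is positive away from the base point.
\item By uniqueness of path lifting the two lifts coincide, forcing $\sigma^j(v)=\sigma^j(v')$, a contradiction.
\end{enumerate}
This is the same critical-point argument you used for the angles $t$ and $t+1/2$ at $c(\a)$, now applied to two distinct preimages of $\sigma^{j+1}(v)$.
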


\begin{proof}
    According to Lemma \ref{lem31} and (P2), it suffices to show that $g(\M_\a)\subset \M_\a$.
    For any $(v,se^{2\pi it})\in \M_\a$, denote $g(v, se^{2\pi i t}) = (v',s'e^{2\pi i t'})$.
    Since $s \leq s_\a^v(t)$, the Jordan arc $f_\a(\Psi_\a(v,[0,s]e^{2\pi i t}))$ is a curve starting at $v'=\sigma(v)$ and satisfying (P1) and (P2). 
    Parameterizing by the exponential Green function, we see that $\gamma^{v',t'}$ is defined on $[0, s']$.
    Hence, we have $s'\leq s_\a^{v'}(t')'$ by the definition of $s_\a^{v'}(t')$.  
    Thus, we conclude that $(v',s'e^{2\pi i t'}) \in \M_\a$.
    
\end{proof}

The smooth arc $\Psi_\a(v,[0,s_\a^v(t))e^{2\pi i t})=\gamma^{v,t}([0,s_\a^v(t)))$ is called the \emph{smooth internal arc} for $(v,t) \in \O\times \RZ$.
By Proposition \ref{reduced}, we see that $f_\a$ maps a smooth internal arc into a smooth internal arc, i.e. $f_\a(\Psi_\a(v,[0,s_\a^v(t))e^{2\pi i t}))\subset \Psi_\a(v,[0,s_\a^{v'}(t')e^{2\pi i t'}))$ for every $(v,t)\in \O \times \RZ$ with $\g(v,t)=(v't')$.

If $s_\a^v(t)=1$, the smooth internal arc 
$$R_\a^{v}(t):=\Psi_\a(v, [0,1)e^{2\pi i t})$$ is also called the \emph{smooth internal ray} for $(v,t)$.

If $s_\a(t)<1$, then $\Psi_\a(v,s_\a^v(t)e^{2\pi i t})$ can be defined to be its limit.
Moreover, $\omega=\Psi_\a(v,s_\a^v(t)e^{2\pi i t})$ is an iterative preimage of $-c(\a)$.
We say the {smooth internal arc} $\Psi_\a(v,[0,s_\a^v(t))e^{2\pi i t})$ \emph{terminates} at $\omega$.
Conversely, we have the following.

\begin{lem}[critical points and internal arcs]\label{lem32}
    Let $f_\a$ be an ABC-hyperbolic map.
    There exist $(x,t_\a)$ and $(y,t'_\a)$ in $\M'_\a$ such that $\g(x,t_\a)=\g(y,t'_\a)$ and 
\begin{equation}\label{critical}
    -c(\a)=\psi_\a^x(r_\a e^{2\pi i t_\a})=\psi_\a^y(r'_\a e^{2\pi i t'_\a}),
\end{equation}
where $r_\a=s_\a^x(t_\a)$, $r_\a'=s_\a^y(t_\a')$.
For any $n$-th iterative preimage of $\omega$ of $-c(\a)$, there exist at least one $(v,t) \in \M_\a'$ such that $\g(v,t)=(x,t_\a)$ or $\g(v,t)=(y,t_\a')$ and
\begin{equation}\label{precritical}
    \omega=\psi_\a^v(s_\a^v(t)e^{2\pi i t}).
\end{equation}

Conversely, we have $(v,t) \in \M_\a'$ if and only if the forward orbit of $(v,t)$ under $\g$ contains $(x,t_\a)$ or $(y,t_\a')$.
\end{lem}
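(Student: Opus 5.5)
We first locate $-c(\a)$ inside the smooth internal arcs, then pull back. Since $f_\a$ is ABC-hyperbolic, $-c(\a)\in \tilde{B}_\a$, and genericity gives $-c(\a)\notin\O$. Let $u:=f_\a(-c(\a))$. We consider two cases. If $u$ is not itself an iterated preimage of $-c(\a)$ (the strictly preperiodic-free situation), then $u$ lies on some smooth internal arc $\Psi_\a(\{w\}\times[0,s_\a^w(t))e^{2\pi i t})$ in the component $B_\a(u)$. This follows from Proposition \ref{reduced} applied to $\M_\a\ni (w,G^w_\a(u)e^{2\pi i t})$, after first checking that $\Psi_\a$ is surjective onto the non-precritical part of $\tilde B_\a$. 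That check can be done by pushing forward into $B_\a^c(s_\a^c)$ and lifting back along the Green-function gradient lines. In the general case we instead argue by the first iterate $f_\a^n(-c(\a))$ that lands in $B_\a^c(s_\a^c)$, and lift back step by step.

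Near $-c(\a)$ the map $f_\a$ is $2$-to-$1$. Hence the preimage under $f_\a$ of the radial arc through $u$ ending at $u$ consists, locally at $-c(\a)$, of exactly two arcs terminating at $-c(\a)$. Each is the end of a curve from a point of $\O$ satisfying (P1)--(P2). To see this, lift the whole arc $\gamma^{w,t}([0,G^w_\a(u)])$ by $f_\a$ starting from the two branches at $-c(\a)$ backwards, and use the uniqueness of lifting. The starting points $x,y$ are preimages of $w$ in $\O$ (possibly $x=y$, as in type \A). Their angles $t_\a,t'_\a$ satisfy $\g(x,t_\a)=\g(y,t'_\a)=(w,t)$. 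By Lemma \ref{lem31} these arcs stop exactly at $-c(\a)$, so $r_\a=s^x_\a(t_\a)$ and $r'_\a=s^y_\a(t'_\a)$ are both $<1$. This gives \eqref{critical} together with $(x,t_\a),(y,t'_\a)\in\M_\a'$.

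For an $n$-th preimage $\omega$ of $-c(\a)$ we induct on $n$, the case $n=0$ being the above. Since $\omega\ne\pm c(\a)$ (genericity), $f_\a$ is univalent near $\omega$. Write $f_\a(\omega)=\psi^{v'}_\a(s^{v'}_\a(t')e^{2\pi i t'})$ by induction. Lift the arc ending at $f_\a(\omega)$ through the local inverse branch at $\omega$ and continue it back to its starting point $v\in\O$. As before, uniqueness of lifts and (P2) show that this lift is $\gamma^{v,t}$ with $\g(v,t)=(v',t')$. It terminates at $\omega$ because $\omega$ is precritical, and it cannot terminate earlier because its image does not. Iterating, the $\g$-orbit of $(v,t)$ reaches $(x,t_\a)$ or $(y,t'_\a)$.

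For the converse, first suppose $(v,t)\in\M_\a'$. By Lemma \ref{lem31} the arc terminates at a preimage $\omega$ of $-c(\a)$, say $f_\a^n(\omega)=-c(\a)$. Applying $f_\a^n$ and Proposition \ref{reduced}, $\g^n(v,t)$ is an arc terminating at $-c(\a)$. By the local $2$-to-$1$ picture, the only arcs ending at $-c(\a)$ are the two found above, so $\g^n(v,t)\in\{(x,t_\a),(y,t'_\a)\}$. Conversely, suppose $\g^n(v,t)=(x,t_\a)$. If we had $s^v_\a(t)=1$, the arc for $(v,t)$ would pass smoothly through the level $G^v_\a=r_\a^{1/\deg}$. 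Its image under $f_\a^n$ would then be a smooth arc crossing $-c(\a)$, which contradicts (P1) for the image, since by Proposition \ref{reduced} smooth arcs map into smooth arcs. We expect the main obstacle to be the uniqueness claim that exactly two arcs terminate at $-c(\a)$, and that lifting an arc backward never meets a critical point prematurely. This needs careful bookkeeping of the degree $\delta$ at $c(\a)$ versus the local degree at $-c(\a)$.
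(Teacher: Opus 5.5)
\textbf{Gap in the inductive step for iterated preimages.} You take \emph{some} $(v',t')$ whose smooth arc terminates at $f_\a(\omega)$. You then lift it through the local inverse branch at $\omega$ and assert that the lift ``cannot terminate earlier because its image does not.'' This fails when the arc of $(v',t')$ passes through the critical value $u=f_\a(-c(\a))$ in its interior. The point $u$ is not precritical, so the image arc is perfectly smooth there. The backward lift from $\omega$, however, may arrive at the preimage $-c(\a)$ of $u$. The lifted curve then violates (P1), and its continuation below $-c(\a)$ is not even unique.

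This really happens with periodic combinatorics. Take type \A, $p=1$, $t_\a=0$. Then $(x,t_\a)=(c,0)$ is fixed by $\g$, and the arc of $(c,0)$ passes through $u$ at Green level $(s_\a^c)^2$ before terminating at $-c(\a)$. Two of the three preimages of $-c(\a)$ are the endpoints of the two branches of $f_\a^{-1}$ of the segment from $u$ to $-c(\a)$ that emanate from $-c(\a)$. If your induction hands you $(v',t')=(x,t_\a)$ for these, the lift runs into $-c(\a)$ and produces no smooth arc; only $(y,t'_\a)$ works. You flag this as the expected obstacle, but the sentence quoted above does not prove it. For an arbitrary choice of $(v',t')$ the claim is false.

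\textbf{The missing idea.} You need a criterion for choosing the arc, and this is the key step in the paper: at least one of the two arcs $\Gamma_1,\Gamma_2$ ending at $-c(\a)$ avoids the entire forward orbit of $-c(\a)$. Suppose $\Gamma_1$ contains $f_\a^k(-c(\a))$. This is a non-precritical point, so only one smooth arc passes through it, which forces $\g^k(x,t_\a)=(x,t_\a)$. If both arcs contained such a point, then $(x,t_\a)\neq(y,t'_\a)$ would be two $\g$-periodic points with the same $\g$-image, which is impossible.

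Take the good arc and lift it by $f_\a^n$ from $\omega$ in one step. This is a homeomorphism, because the arc contains no critical value of $f_\a^n$ except possibly its initial point in $\O$. Equivalently, you can carry ``the arc avoids the forward orbit of $-c(\a)$'' as part of your induction hypothesis. This gives $\g^n(v,t)\in\{(x,t_\a),(y,t'_\a)\}$, not $\g(v,t)$ as the statement literally says. The rest of your argument (the two arcs at $-c(\a)$, and the converse) matches the paper's.

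\textbf{Smaller inaccuracies.}
\begin{itemize}
\item $\Psi_\a$ is not surjective onto the non-precritical part of $\tilde B_\a$: points on the turned continuations beyond a precritical point lie on no smooth arc. Your fallback through $f_\a^m(-c(\a))\in B_\a^c(s_\a^c)$ is correct and is the paper's route.
\item The case ``$u$ precritical'' is vacuous. The only periodic points in $\tilde B_\a$ lie in $O(\a)$, and $-c(\a)\notin\O$ by genericity, so $-c(\a)$ is never periodic.
\item $x=y$ never occurs. In type \A\ one has $x=c(\a)$, and $y$ is the other preimage of $c_1(\a)$ in $B_\a(c)$.
\end{itemize}
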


\begin{proof}
    
  First, we show that there are two internal arc terminating at the critical point $-c(\a)$.
  Let $m \geq 1$ be the smallest positive integer such that $f_\a^m(-c(\a)) \in B_\a(c)$.
  One may verify that $G_\a^c ( f_\a^m(-c(\a)) ) <s_\a^c$.
  Hence, there exist $r<s_\a^c$ and $t \in \RZ$ such that $\psi_\a^c(re^{2\pi i t})=f_\a^m(-c(\a))$.
  It follows that the smooth internal arc $\psi_\a^c([0,s_\a^c(t))e^{2\pi i t})$ pass through $f_\a^m(-c(\a))$.
  Let $\Gamma$ be the connected component of $f_\a^{-m} ( \psi_\a^c([0,r]e^{2\pi i t}) )$ containing $-c(\a)$.
  Then $\Gamma \setminus \{-c(\a) \}$ contains two components $\Gamma_1,\Gamma_2$, and $f_\a^m : \Gamma_i \to \psi_\a^c([0,r]e^{2\pi i t}) $ is a homeomorphism for $i=1,2$.
  Parameterized by the exponential Green function, $\Gamma_i$ satisfies (P1) and (P2).
  Therefore, we see that $\Gamma_1$ and $\Gamma_2$ must be two smooth internal arc with respect to some $(x,t_\a),(y,t_\a')\in \O \times \RZ$ which both terminate at $-c(\a)$.
  It follows that \eqref{critical} holds.
  By \eqref{diaeq1}, since $\Gamma_1$ and $\Gamma_2$ have the same image, we have $\g(x,t_\a)=\g(y,t'_\a)$.
  
  We claim that there are no more smooth internal arc terminating at $-c(\a)$.
  If there is another $\Gamma_3$ terminating at $-c(\a)$, then $f_\a^m(\Gamma_3)$ must be a part of the internal arc passing through $f_\a^m(-c(\a))$.
  It follows that $f_\a^m(\Gamma_3)= \psi_\a^c([0,r]e^{2\pi i t})$.
  This contradicts the fact that $-c(\a)$ is a simple critical point.

  We claim that at least one of $\Gamma_1$ and $\Gamma_2$ does not contain the forward orbit of $-c(\a)$.
  Suppose that $\Gamma_1=\psi_\a^x([0,r_\a]e^{2\pi i t_\a})$ contains some iteration of $-c(\a)$, then $(x,t_\a)$ must be periodic under $\g$.
  Since $\g(y,t_\a')=g(x,t_\a)$, it follows that the forward orbit of $(x,t_\a)$ does not contain $(y,t_\a)$.
  Hence $\Gamma_2=\psi_\a^y([0,r_\a']e^{2\pi i t_\a'})$ does not contain the forward orbit of $-c(\a)$.   
  Let $\omega$ be an $n$-th preimage of $-c(\a)$.
  Assume that $\Gamma_2$ does not contain the forward orbit of $-c(\a)$.    
  Let $\Gamma'$ be the connected component of $f_\a^{-n}(\Gamma)$ which contains $\omega$.
  Since $\Gamma_2$ does not contain the forward orbit of $-c(\a)$, $f_\a^n: \Gamma' \to  \Gamma_2$ is a homeomorphism.
  Similar to the argument above, $\Gamma'$ must be a smooth internal arcs which terminates at $\omega$ with respect to some $(v,t) \in \O \times \RZ$ and $\g^n(v,t)=(y,t_\a')$.
  Hence \eqref{precritical} holds.

  For any $(v,t) \in \O\times \RZ$, if $s_\a^v(t)<1$, by Lemma \ref{lem31}, $\psi_\a^v(s_\a^v(t)e^{2\pi i t})$ is an iterative preimage of $-c(\a)$.
  Suppose that $f_\a^n( \psi_\a^v(s_\a^v(t)e^{2\pi i t}) )=-c(\a)$.
  Then the smooth internal arc of $\g^n(v,t)$ terminates at $-c(\a)$.
  It follows that $ \g^n(v,t) $ coincides with $(x,t_\a)$ or $(y,t_\a')$.
  Conversely, if $ \{ \g^n(v,t) : n \geq 0 \} $ contains $(x,t_\a)$ or $(y,t_\a')$.
  If $s_\a^v(t)=1$, then by Proposition \ref{reduced}, for any $n$, we have $s_\a^{v_n}(t_n)=1$ where $(v_n,t_n):=\g^n(v,t)$.
  Thus either $s_\a^x(t_\a)=1$ or $s_\a^y(t_\a')=1$.
  This is a contradiction.

\end{proof}

\begin{rmk}
    In fact, based on the type of hyperbolic component containing $\a$ , we can find $x,r_\a$ and $y,r'_\a$ explicitly.
There are $3$ cases.

\begin{itemize}
    \item[(A)] If $\a$ belongs to a hyperbolic component of type \A, i.e. $-c(\a) \in B_\a(c)$.
In this case, one of $x,y$ must be $c(\a)$.
We choose $x$ to be $c(\a)$, then $r_\a=s_\a^c$.
Then $y$ must be outside of the $p$ periodic orbit $O(\a)$, and $r_\a'=(s_\a^c)^2$.

\item[(B)] If $\a$ belongs to a hyperbolic component of type \B, i.e. $-c(\a) \in B_\a(c_k)$ for some $1\leq k \leq p-1$, then one of $x,y$ must be $c_k(\a)$. $x=c_k(\a)$.
Then $y$ must be outside of $O(\a)$, and $r_\a=r_\a'=(s_\a^c)^2$.

\item[(C)] Suppose that $-c(\a) \in \tilde{B}_\a \setminus B_\a$, i.e. $\a$ is in a hyperbolic component of type \CCC.
In this case $s_\a^c=1$.
Then $x,y$ are both outside of $O(\a)$.

\end{itemize}

\end{rmk}

There is another useful way to classify ABC-hyperbolic maps, which will play an important role in considering the boundary maps of hyperbolic components.

\begin{defi}[periodic \& non-periodic]
    Let $\a \in \S$ be an ABC-hyperbolic map, and $x,y \in \O$, $t_\a,t'_\a$ be given in Lemma \ref{lem32}.
    We say $f_\a$ has \emph{periodic combinatorics} if $(x,t_\a)$ is periodic under $\g$.
\end{defi}

In fact, $f_\a$ has periodic combinatorics if and only if $\a$ is in a hyperbolic component of type \A~or \B, and $t_\a$ is periodic under the doubling map.
By Lemma \ref{lem32}, an essential difference between maps with periodic combinatorics and non-periodic combinatorics is that if $\a$ has non-periodic combinatorics, then $\g(x,t_\a)\notin \M'_\a$.

\begin{rmk}\label{single}
    In the periodic case, since $\g(x,t_\a)=\g(y,t'_\a)$, by Lemma \ref{lem32}, every $(v,t) \in \M'_\a$ is eventually mapped to $(x,t_\a)$.
\end{rmk}

\subsection{Left and Right Extensions}

In this part, we continue the extension of $\Psi_\a$ to $\M$ for every ABC-hyperbolic map $\a$.
That is, for each $(v,t) \in \O \times \RZ$, continue to extend $\Psi_\a(v,\cdot)=\psi_\a^v$ along the radial line $[0,1)e^{2\pi i t}$ such that the diagram \eqref{diapsi} still holds.
Notice that the original $\psi_\a^v$ is ended at $s_\a^v(t)e^{2\pi i t}$ where $\psi_\a^v(s_\a^v(t)e^{2\pi i t})$ is an iterative preimage of $-c(\a)$.
The local behavior of $f_\a$ allows us to continue the extension of $\Psi_\a(v,\cdot)$ in two opposite directions (left and right).
This extension is the key to find the laminations of boundary maps of hyperbolic components.

A simple planar curve $\gamma: I \to \C$ is called \emph{semi-differentiable} if $\gamma'(t)$ exists and non-vanishing for every $t$ in the interior of $I$ except for a discrete set $T \subset I$, and for any $t\in T $, $\gamma'_-(t)$ and $\gamma'_+(t)$ exist and are non-vanishing.
$T$ is called the \emph{turning set} of $\gamma$.
In particular, if $\gamma$ is differentiable, then it is semi-differentiable.

A turning point $t \in T$ is called a \emph{left-turning point} or a $L$-turning point of $\gamma$ if  $\arg \gamma_+'(t)-\arg \gamma_-'(t)=\frac{\pi}{2}$.
It is called a \emph{right-turning point} or a $R$-turning point of $\gamma$ if  $\arg \gamma_-'(t)-\arg \gamma_+'(t)=\frac{\pi}{2}$.
A semi-differentiable curve $\gamma: I \to \C$ is \emph{left-turning} if every turning point is a left turning point.
Similarly, $\gamma$ is called \emph{right-turning} if every turning point is a right turning point.

For generic $\a \in \S$, and $(v,t) \in \O\times \RZ$, let $S_L(v,t)$ be the set of $0<r<1$ such that there exists a left-turning curve $\gamma: [0,r) \to B_\a(v)$ such that dynamical compatibility property (P2) holds.
Similarly, we define $S_R(v,t)$ be the corresponding set concerning right-turning curves.

\begin{lem}\label{lem34}
    For generic $\a \in \S$, and $(v,t) \in \O\times \RZ$, we have 
    $$\sup S_L(v,t) =  \sup S_R(v,t) =1 .$$
\end{lem}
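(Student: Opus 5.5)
The plan is to build the left-turning extension (the right one is symmetric) inductively across the successive pre-critical points met along the radial line $[0,1)e^{2\pi i t}$, and then show that the sequence of stopping radii cannot accumulate at a radius $r_\infty<1$.

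\textbf{Step 1: one turn.} Start with the smooth arc $\gamma^{v,t}$ on $[0,s_\a^v(t)]$ from Lemma \ref{lem31}. If $s_\a^v(t)=1$ there is nothing to do. Otherwise its endpoint $\omega$ is an iterative preimage of $-c(\a)$. By Lemma \ref{lem32}, some iterate $f_\a^n$ maps a neighborhood $V$ of $\omega$ as a degree-two branched cover onto a neighborhood of $-c(\a)$ composed with $f_\a^m$. The net effect is that near $\omega$ the composite $f_\a^{N}$, for $N$ large with $g^N(v,\cdot)$ landing in $O\times D(0,s_\a^c)$, is a local double cover $z\mapsto z^2$ in suitable coordinates. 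This holds because $-c(\a)$ is simple and the forward orbit of the relevant arc avoids further critical points.

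The image arc $\Psi_\a(g^N(v,[s_\a^v(t),s']e^{2\pi i t}))$, for $s'$ slightly larger, is a smooth arc through the critical value. Its preimage under this local double cover consists of two smooth arcs leaving $\omega$ at angles $\pm\pi/2$ relative to the incoming direction, since preimages of a straight line through a critical value of $z^2$ meet orthogonally. Define the left continuation to be the branch with $\arg\gamma'_+-\arg\gamma'_-=\pi/2$. This continuation is semi-differentiable with a single $L$-turning point, and it satisfies (P2) by construction, because (P2) only constrains $f_\a^N\circ\gamma$.

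\textbf{Step 2: continuing.} Keep going by the argument of Lemma \ref{lem31}, lifting $\Psi_\a\circ g^N$ through $f_\a^N$ where it is univalent, until the next pre-critical point. There, turn left again. Since pre-critical points are isolated, turning times form a discrete subset of $[0,1)$. This yields a left-turning curve on $[0,r)$ for every $r$ below the supremum, so $\sup S_L(v,t)$ equals the supremum of radii reachable this way.

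\textbf{Step 3: main obstacle.} Suppose the construction stops at some $r_\infty<1$, either because turning radii accumulate there or because the curve has no limit. I would rule out accumulation as follows. The curve $f_\a^N\circ\gamma$ for fixed large $N$ lies on the compact arc $\Psi_\a(g^N(v,[0,r_\infty]e^{2\pi i t}))$, and the points of that arc that are iterated images of pre-critical points with bounded Green level are finite in number. More concretely, the pre-critical points of $f_\a$ in $\{G_\a^v\le r_\infty\}\cap B_\a(v)$ form a finite set, because this region is compactly contained in $B_\a(v)$ and preimages of $-c(\a)$ accumulate only on $J_\a$. Hence there are only finitely many turns below $r_\infty$.

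Once finiteness is established, the limit point $\gamma(r_\infty)$ exists exactly as in Lemma \ref{lem31}, because any two limit points have the same image under $f_\a^N$ and that fiber is discrete. The curve can then be extended past $r_\infty$ by Step 1 or by univalent lifting, which contradicts maximality. Hence $\sup S_L(v,t)=\sup S_R(v,t)=1$. The delicate point is checking that the left-turning lift stays a Jordan arc inside $B_\a(v)$ and is uniquely determined by the turning rule. I would check this by noting that distinct lifts of the same $f_\a^N$-image arc meet only at critical points, where the rule selects one branch.
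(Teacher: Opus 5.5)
Your proposal is correct and follows essentially the same route as the paper. Both arguments go to the supremum (your stopping radius), get the limit point there from the discrete-fiber argument of Lemma \ref{lem31}, and then extend past it. The extension is either a univalent lift or, at a pre-critical point, the left branch of the preimage of the image arc under the local degree-two map $f_\a^N$. Your observation that $\{G_\a^v\le r_\infty\}\cap B_\a(v)$ is compact, so it contains only finitely many pre-critical points and the turning radii cannot accumulate below $1$, makes explicit a step the paper only covers by ``the same argument as in Lemma \ref{lem31}''.
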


\begin{proof}
    We only prove that $ \sup S_L(v,t) =1 $.
    The proof is very similar to the proof of Lemma \ref{lem31}.
    We will skip the similar parts quickly and focus on the difference.
    Denote $r_0: =\sup  S_L(v,t) $.
    Since $ S(v,t) \subset  S_L(v,t) $, we have $r_0 \geq s_\a^v(t) $.
    In the following we prove $r_0=1$ by contradiction.

    By the same argument in the proof of Lemma \ref{lem31}, we also obtain that $r_0 \in S_L(v,t) $ and a left-turning curve $\gamma: (0,r_0) \to B_\a(v)$.
    There ate two cases.

    Suppose that $\omega:= \gamma(r_0) $ is not an iterative preimage of $-c(\a)$.
    By the same argument of the proof of Lemma \ref{lem31}, we can find $r’>r_0$ and an extension $\tilde\gamma: [0,r'] \to B_\a(v)$ of $\gamma$ such that (P2) holds.
    Moreover, since $\omega$ is not an iterative preimage of $-c(\a)$, $\tilde\gamma|_{[r_0,r']}$ is smooth.
    Hence $r' \in S_L(v,t) $.
    It contradicts the fact that $r'>r_0= \sup S_L(v,t)$.

    Now assume the opposite, i.e. $\omega:= \gamma(r_0) $ is an $m$-th iterative preimage of $-c(\a)$.
    Pick $n\geq m$ large with $g(v,r_0e^{2\pi i t}):=(c,r_ne^{2\pi i t_n}) \in \{ c\} \times D(0,s_\a^c)$, and a small neighborhood $U$ of $f_\a^n(\omega)$ such that $\omega$ is the unique critical point of $f_\a^n$ in $V$ where $V$ is the connected component of $f_\a^{-n}(U)$ containing $\omega$. 
    Pick $r^\pm$ such that $r_0 \in (r^-,r^+)$ and $\Gamma_n:=\Psi_\a(g^n(v,(r^-,r^+)e^{2\pi i t})) \subset  U$.
    Let $\Gamma$ be the connected of $f_\a^{-n}(\Gamma_n) $ containing $\omega$.
    Then $\Gamma\setminus \{ \omega\}$ contains two components $\Gamma_L$ and $\Gamma_R$.
    Let $\Gamma_L$ be the component be the one such that $\Gamma_L$, $\gamma([r^-,r_0])$ and $\Gamma_R$ are in positive cyclic orders.
    Extend $\gamma$ to a curve $\gamma_L: [0,r^+] \to B_\a(v)$ such that 
    $$
        \gamma_L(r):= (f_n^\a|_{\Gamma_L})^{-1}( \Psi_\a(g^n(v,re^{2\pi i t})) ),\qquad \forall r \in [r_0,r^+].
    $$ 
    Since $\Gamma_n$ is smooth and $\omega$ is a simple critical point of $f_\a^n$, by the choice of $\Gamma_L$, it is not hard to verify that $\gamma_L$ is a left-turning curve and $r_0$ is the unique new-added turning point.
    It follows that $r^+ \in S_L(v,t)$.
    This contradicts the fact that $r^+>r_0= \sup S_L(v,t)$.

    In both cases, we get a contradiction.
    Hence, we conclude that $ \sup S_L(v,t) =1 $.
    

\end{proof}

For generic $\a \in \S$,
by Lemma \ref{lem34}, for $(v,t) \in \O \times \RZ$, since $\sup S_L(v,t) =1 $, there exists a monotone increasing sequence $\{r_n\}$ converging to $1$ such that $r_n \in S_L(v,t) $.
It follows that there exist a sequence $\gamma_{L,n}: [0,r_n] \to B_\a(v)$ such that (P2) holds.
By (P2), we can glue them together to form a left-turning curve $\gamma_L: [0,1) \to B_\a(v)$ such that (P2) holds.
Hence, we can define a map $\Psi_\a^L: \M \to \tilde{B}_\a$.
It is called the \emph{left extension} of $\Psi_\a$.
For $(v,w)\in \M$ with $w=re^{2\pi i t}$, define $\Psi_\a^L(v,w):=\gamma_L(r)$.
Similarly, we can also find a right-turning curve $\gamma_R: [0,1) \to B_\a(v) $ and the \emph{right extension} $\Psi_\a^R: \M \to \tilde{B}_\a$.
Combine our discussion above, we conclude the following.

\begin{prop}[left and right extensions of $\Psi_\a$]\label{nonsmoothray}
    The map $\Psi_\a$ has two extensions: the right extension $\Psi_\a^R : \M \to \tilde{B}_\a$, and the left extension $\Psi_\a^L: \M \to \tilde{B}_\a$ of $\Psi_\a$.
    For $\iota \in \{ L,R \}$, the following diagram
    \begin{equation*}
    \xymatrix{\ar @{} [dr]
    \M \ar[d]^{g} \ar[r]^{\Psi^\iota_{\a}} & \tilde{B}_\a \ar[d]^{f_{\a}} \\
    \M \ar[r]^{\Psi^\iota_\a}        & \tilde{B}_\a .    }
\end{equation*}
Moreover, $\Psi_\a^\iota (v,\cdot) $ is homeomorphic on $[0,1) e^{2\pi i t}$ for each fixed $t \in \RZ$.
\end{prop}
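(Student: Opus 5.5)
Most of the work for Proposition \ref{nonsmoothray} was already done in Lemma \ref{lem34}; what is left is to assemble the pieces and check three things: that the extensions are well defined, that the diagram commutes, and that each map is a homeomorphism on every radial segment.

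\textbf{Step 1: well-definedness.} Fix $\iota=L$; the case $\iota=R$ is symmetric. Fix $(v,t)\in\O\times\RZ$. By Lemma \ref{lem34} there is an increasing sequence $r_n\to 1$ with $r_n\in S_L(v,t)$, and corresponding left-turning curves $\gamma_{L,n}:[0,r_n]\to B_\a(v)$ satisfying (P2). I will show these curves agree on common domains.
\begin{itemize}
\item On $[0,s_\a^v(t)]$ every such curve coincides with $\gamma^{v,t}$, by the uniqueness of lifts noted after (P2).
\item Beyond that, consider the first point where two of the curves could differ. That point is mapped by some $f_\a^n$ onto the same smooth arc $\Psi_\a(g^n(\cdot))$.
\item Away from pre-critical points the local inverse branch is unique, so the curves cannot separate there.
\item At a pre-critical point $\omega$ there are exactly two branches $\Gamma_L,\Gamma_R$, since $-c(\a)$ is simple and $\a$ is generic. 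The left-turning condition, that is, the cyclic order of $\Gamma_L$, the incoming arc and $\Gamma_R$ as in the proof of Lemma \ref{lem34}, singles out one branch.
\end{itemize}
Hence all the curves glue to a single left-turning curve $\gamma_L:[0,1)\to B_\a(v)$, and I define $\Psi_\a^L(v,re^{2\pi it}):=\gamma_L(r)$. This extends $\Psi_\a$ on $\M_\a$ because $\gamma_L=\gamma^{v,t}$ below $s_\a^v(t)$.

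\textbf{Step 2: commutativity of the diagram.} Write $g(v,re^{2\pi it})=(v',r'e^{2\pi it'})$. The curve $f_\a\circ\gamma_L$ starts at $v'=\sigma(v)$ and satisfies (P2) for $(v',t')$, since $f_\a^{n}\circ f_\a=f_\a^{n+1}$. It remains to see that it is left-turning, with turning points exactly at images of pre-critical points.
\begin{itemize}
\item $f_\a$ is conformal at every turning point except possibly $-c(\a)$ itself, so it preserves angles and orientation there. A left turn of angle $\pi/2$ therefore stays a left turn.
\item At $-c(\a)$ the two branches $\Gamma_1,\Gamma_2$ of Lemma \ref{lem32} are both mapped onto the same smooth arc. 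Consequently $f_\a\circ\gamma_L$ is smooth through $f_\a(-c(\a))$, which is consistent with the fact that $f_\a(-c(\a))$ is not pre-critical unless the orbit returns.
\end{itemize}
Uniqueness from Step 1 then gives $f_\a\circ\gamma_L^{v,t}=\gamma_L^{v',t'}$ after reparametrizing by the Green function, which is exactly $f_\a\circ\Psi_\a^L=\Psi_\a^L\circ g$.

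\textbf{Step 3: homeomorphism on each ray, the main obstacle.} Injectivity of $r\mapsto\gamma_L(r)$ holds because $G_\a^v\circ\gamma_L(r)=r$ is strictly increasing. Indeed, (P2) forces $G_\a^c(f_\a^n(\gamma_L(r)))=r^{\deg}$. Continuity holds piecewise, and also at the turning points by construction. Since the turning set is discrete in $[0,1)$, each compact subinterval contains finitely many turning points, so the pieces glue continuously. This also gives continuity of the inverse on $\gamma_L([0,1))$, because the inverse is the restriction of the continuous function $G_\a^v$. Discreteness is the point needing care. Pre-critical points of bounded Green level $\le r<1$ form a finite set in $B_\a(v)$, since level sets $\{G_\a^v\le r\}$ are compact in $B_\a(v)$. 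Hence only finitely many turning points occur on $[0,r]$, and the claim follows.
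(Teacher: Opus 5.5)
Your proposal is correct and takes the paper's route: glue the partial left/right-turning lifts supplied by Lemma \ref{lem34}, then read off the diagram and the homeomorphism property from (P2) and the Green-function parametrization, with your Steps 1--3 spelling out what the paper leaves implicit. Two minor repairs. Smoothness of $f_\a\circ\gamma_L$ at $f_\a(-c(\a))$ does not follow from $\Gamma_1,\Gamma_2$ sharing an image; it holds because the simple critical point doubles the right angle at $-c(\a)$ into a straight angle (equivalently, by (P2) plus injectivity of $f_\a^{n-1}$ near the non-pre-critical point $f_\a(-c(\a))$). Also, finiteness of turning points below level $r$ needs discreteness of iterated preimages of $-c(\a)$ in the Fatou set, not just compactness of $\{G_\a^v\le r\}$.
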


For generic $\a \in \S$, and $(v,t)\in \O \times \RZ$ and $\iota \in \{ L,R \}$, we define the \emph{$\iota$-internal ray} $\RRR_{\a}^\iota(v,t)$ and the \emph{left-right internal ray} $\RRR_\a^{L,R}(v,t)$ of $(v,t)$ to be 
$$
    \RRR_{\a}^\iota(v,t):=\Psi_\a^\iota(v,[0,1)e^{2\pi i t}),\quad  \RRR_\a^{L,R}(v,t):= \RRR_{\a}^L(v,t) \cup \RRR_{\a}^R(v,t).
$$
and 
By (P2), it is not hard to verify that $f_\a$ maps $\iota$-internal rays to $\iota$-internal rays, i.e. for nay $(v,t) \in \O \times \RZ$, and $\iota \in \{ L ,R \}$, we have
$
    f_\a(\RRR^\iota_\a(v,t)) = \RRR_\a^\iota(\g(v,t)).
$
If $s_\a^v(t)=1$, then $ \RRR_{\a}^L(v,t),\RRR_{\a}^R(v,t) $ and $ \RRR_\a^{L,R}(v,t) $ all coincide with the smooth internal ray $R_\a^v(t)$.

 \begin{figure}[h]
  \begin{center}
   \vspace{2mm}
   \begin{minipage}{.48\linewidth}
    \includegraphics[width=\linewidth]{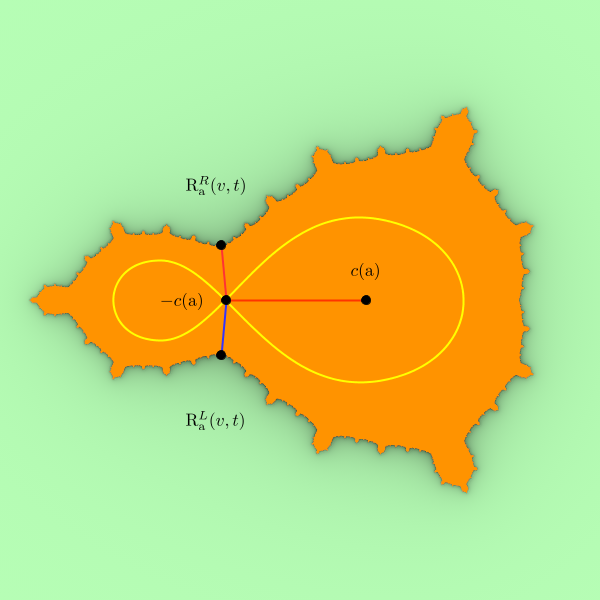}
    \caption{non-periodic}
  \end{minipage}
  \hspace{1mm}
  \begin{minipage}{.48\linewidth}
    \includegraphics[width=\linewidth]{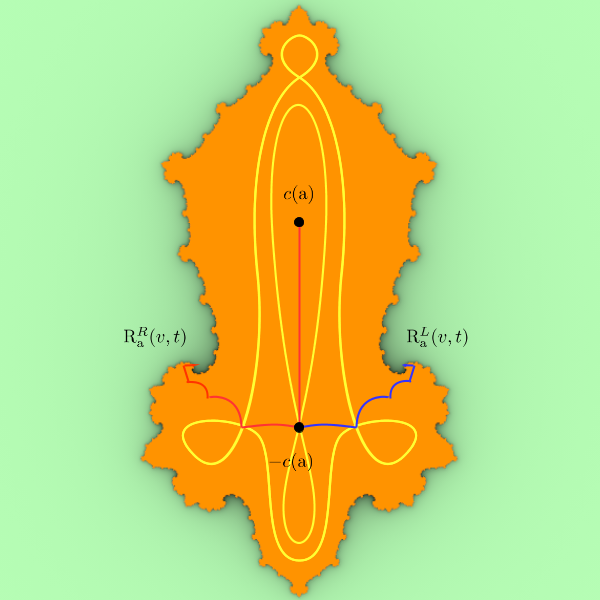}
    \caption{periodic}
  \end{minipage}
 \end{center}
\end{figure}

Let $f_\a$ be an ABC-hyperbolic map.
If $f_\a$ has periodic combinatorics, then every non-smooth left or right internal ray has infinitely many turning points.
If $f_\a$ has non-periodic combinatorics, then every non-smooth left or right internal ray has exactly one turning point.
Another property we observes for non-periodic case is the following.

\begin{rmk}[internal ray with the same end]\label{nonpercom}
    In non-periodic case, if the smooth internal arcs of $(v,t)$ and $(v',t')$ terminate at the same iterative preimage of the critical point $-c(\a)$, then 
    $\RRR_\a^L(v,t)$ and $\RRR_\a^R(v',t')$ have the same end, i.e. they coincide outside $B_\a^x(v)$ for some $0<s<1$.

    Conversely, if two left and right internal ray $\RRR_\a^\iota(v,t)$ and $\RRR_\a^{\iota'}(v',t')$ have the same end, then it must be the non-periodic case,  $(v,t)$ and $(v',t')$ terminate at the same iterative preimage of the critical point $-c(\a)$, and $\iota$ is different from $\iota'$.
\end{rmk}

Smooth internal rays and left or right turning internal rays are all called \emph{generalized internal rays}.
We have the following properties concerning the intersections of generalized internal rays.

\begin{cor}[intersections of generalized internal rays]\label{intersection}
    The following holds.
    \begin{itemize}
        \item[(I1)] Smooth internal ray does not intersect any other smooth internal ray or any left-turning or right-turning internal ray in the interior.   
        \item[(I2)] For any $(v',t') \neq (v,t)$,  $\RRR_\a^{L,R}(v',t')$ belongs to the closure of a connected component of $B_\a(v) \setminus \RRR_\a^{L,R}(v,t)$.
    \end{itemize}
\end{cor}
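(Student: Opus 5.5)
Both statements follow from the way the rays are built: every generalized internal ray is locally a lift, under an iterate $f_\a^n$, of a piece of a single smooth internal ray of the Böttcher coordinate near $c(\a)$. The plan is to push any putative intersection forward by $f_\a^n$ until everything sits inside $B_\a^c(s_\a^c)$, where $\Psi_\a$ is an honest conformal coordinate. There distinct radial segments meet only at the center.

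\textbf{Proof of (I1).} Suppose a point $z$ lies on the smooth internal ray $R_\a^v(t)$ and on another generalized ray $\RRR_\a^{\iota'}(v',t')$, away from the common starting point.

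First, the two rays must start in the same Fatou component, so $v'=v$.

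Second, by (P2) both rays are parametrized by the exponential Green function $G_\a^v$. Hence $z=\Psi_\a(v,re^{2\pi i t})=\Psi^{\iota'}_\a(v,re^{2\pi i t'})$ with the same radius $r$.

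Third, choose $n$ large with $g^n(v,re^{2\pi i t})\in O\times D(0,s_\a^c)$. Apply (P2) to both rays at the parameter $r$:
$$f_\a^n(z)=\Psi_\a(g^n(v,re^{2\pi it}))=\Psi_\a(g^n(v,re^{2\pi it'})).$$
Since $\Psi_\a$ is injective on $O\times D(0,s_\a^c)$, we get $\tau_2^{n'}(t)=\tau_2^{n'}(t')$ for the appropriate power $n'$. So $t$ and $t'$ differ by a $2^{n'}$-th root, i.e. they are distinct lifts of the same point.

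Fourth, pull back along the orbit. The smooth ray $R_\a^v(t)$ never passes an iterated preimage of $-c(\a)$, so $f_\a^n$ is a local homeomorphism near each point $\Psi_\a(v,se^{2\pi it})$ with $s\le r$. Uniqueness of lifting along $s\mapsto \Psi_\a(g^n(v,se^{2\pi i t}))$, run backwards from $z$, then forces the two rays to coincide on $[0,r]$. This includes the start of the rays, where the local Böttcher coordinate $\psi_\a^v$ is injective. Hence $t=t'$ and the rays are the same, a contradiction.

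\textbf{Proof of (I2).} Apply the same argument to the left and right rays. Two generalized rays that meet at an interior point $z$ with Green level $r$ agree on $[0,r]$ by the same backward-lifting uniqueness, except at turning points. There the lift has two branches, $\Gamma_L$ and $\Gamma_R$, exactly as in the proof of Lemma \ref{lem34}.

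So two left/right rays of different angles can meet only by reaching a common iterated preimage $\omega$ of $-c(\a)$ and continuing along the same branch afterwards. By Remark \ref{nonpercom}, in the non-periodic case this happens precisely when an $L$-ray and an $R$-ray have the same end. In that case $\RRR_\a^{L,R}(v',t')$ touches $\RRR_\a^{L,R}(v,t)$ only along a common tail, which lies on the boundary of a complementary component.

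To show that no crossing happens, use the local picture at $\omega$. The four germs near $\omega$ are the incoming smooth arcs of $(v,t)$ and $(v,t')$ and the outgoing branches $\Gamma_L$ and $\Gamma_R$; $f_\a^n$ maps them to a neighborhood of a single smooth arc. In the cyclic order around $\omega$ they appear as incoming$(t)$, $\Gamma_L$, incoming$(t')$, $\Gamma_R$, by the choice of $\Gamma_L$ in Lemma \ref{lem34}. Consequently $\RRR^{L,R}(v,t)=$ incoming$(t)\cup\Gamma_L\cup\Gamma_R$ has incoming$(t')$ on one side only. Hence $\RRR^{L,R}(v',t')\setminus\RRR^{L,R}(v,t)$ is connected and lies in one complementary component.

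The main obstacle is the bookkeeping in the periodic case. There the rays have infinitely many turning points, so the local cyclic-order argument has to be applied at each turning point. Turning consistently left or right, following the convention of Lemma \ref{lem34}, keeps the side fixed, and an induction on the turning points along the ray completes the argument.
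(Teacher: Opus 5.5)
Your route is the paper's: for (I1), push the intersection forward until the centers reach the cycle and compare inside the B\"ottcher disk; for (I2), read off the four-armed cross at a precritical point. The step that fails is the first one in (I1), ``same Fatou component, so $v'=v$''.

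For ABC-hyperbolic maps a component of $\tilde B_\a$ usually contains several points of $\O$. The smooth arcs of $(x,t_\a)$ and $(y,t'_\a)$ from Lemma \ref{lem32} both end at $-c(\a)$, so $B_\a(x)=B_\a(y)$, yet $x\neq y$ (in type (A), $x=c(\a)$ and $y\notin O(\a)$). Their Green parametrizations also differ: the same point $-c(\a)$ has level $r_\a=s^c_\a$ seen from $x$, but $r'_\a=(s^c_\a)^2$ seen from $y$. So the common radius $r$ in your second step, and the comparison of $g^n(v,re^{2\pi it})$ with $g^n(v,re^{2\pi it'})$ in your third, are unjustified in exactly the configurations that matter.

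The repair is the paper's device. Choose $n$ with $\sigma^n(v)=\sigma^n(v')=c_k\in O(\a)$ and $f^n_\a(z)\in B^{c_k}_\a(s^c_\a)$. Both rays below $z$ then map onto the same radial segment of $\psi^{c_k}_\a$ ending at $f^n_\a(z)$. Run your unique-lifting argument with the Green level of the image as parameter: it shows the two lifts through $z$ agree all the way down, because $f^n_\a$ has no critical point on the smooth ray at positive level. This gives $v'=v$ as a \emph{conclusion}, and then $t'=t$. Once repaired, your (I1) is more complete than the paper's ``clearly impossible'', which does not treat the case where the two forward images coincide.

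The same conflation distorts your picture in (I2). The two incoming arms at a turning point $\omega$ are pulled back, by a local homeomorphism, from the arcs of $x\neq y$ at $-c(\a)$. So they never belong to rays $(v,t),(v,t')$ with a common center; in the non-periodic case they are stems from two distinct centers. In the periodic case an incoming arm need not be a smooth arc at all: the branch of $\RRR^L_\a(x,t_\a)$ leaving $-c(\a)$ enters its next turning point as an incoming arm. The alternating order of incoming and outgoing arms still holds. The reason is that $f^{m+1}_\a$, where $f^m_\a(\omega)=-c(\a)$, is a degree-two branched cover near $\omega$ onto a neighborhood of a regular point of a smooth arc.

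However, your conclusion that $\RRR^{L,R}_\a(v',t')\setminus\RRR^{L,R}_\a(v,t)$ is connected is false in the periodic case. For example, $\RRR^{L,R}_\a(x,t_\a)$ and $\RRR^{L,R}_\a(y,t'_\a)$ share both outgoing arms at $-c(\a)$ up to the next turning points and split there, so the difference has at least three pieces. What must be shown is that all pieces lie on one side, and your ``induction on turning points'' has to supply that. One way: at each turning point the $L$-ray leaves its two unused arms on its right, and the $R$-ray leaves them on its left. Hence every other ray meeting $\RRR^{L,R}_\a(v,t)$ away from $v$ arrives from, and departs into, the complementary component that does not contain the smooth stem. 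The paper is no more explicit here; it pushes a crossing forward to $-c(\a)$ and inspects the cross of $(x,t_\a)$ and $(y,t'_\a)$ there. So once these points are fixed, your argument is at least as complete as the paper's.
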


\begin{proof}
    For (I1), let $R_\a^v(t)$ be a smooth internal ray, and $R(v',t')$ be another generalized internal ray of $(v',t')$ and $z$ is an intersection point of these two rays in the interior.
    Since $R_\a^v(t)$ is a smooth internal ray, $z$ cannot be an iterative preimage of $-c(\a)$. 
    Notice that $v_n=\sigma^n(v)$ and $v_n'=\sigma^n(v')$ always belong to the same Fatou component and are eventually periodic.
    It follows that $v_n=v_n'$ are both periodic for $n$ large.
    Thus, we can find $n$ large with $v_n=c_n'=c$, and an intersection point $z':=f_\a^n(z) \in B_\a^c(s_\a^c)$ of two generalized internal rays $R_\a^{c}(t_n)$ and $\RRR_\a^\iota(c,t'_n)$ where $(c,t_n):=\g^n(v,t)$ and $(c,t'_n):=\g^n(v',t')$.
    Hence $z'$ is an intersection point of two smooth internal arcs starting at the same point.
    This is clearly impossible.
    Hence (I1) holds.

    For (I2), we may assume that $B_\a(v)=B_\a(v')$, otherwise the conclusion trivially holds.
    We prove that it by contradiction.
    Assume that $\RRR_\a^{R,L}(v,t)$ and $\RRR_\a^{R,L}(v',t')$ cross at $\omega$, that means for any small neighborhood $U$ of $\omega$, $(\RRR_\a^{R,L}(v,t) \cap U)\setminus \RRR_\a^{R,L}(v',t')$ is disconnected.
    Using the same argument as (I1), generalized internal rays can only intersect at an iterative preimage of $-c(\a)$.
    Passing to iteration, we see that $R_\a^{R,L}(v_n,t_n)$ and $R_\a^{R,L}(v'_n,t'_n)$ cross at $-c(\a)$.
    By Lemma \ref{lem32}, we may assume that $(v_n,t_n)=(x,t_\a)$ and $(v_n',t_n')=(y,t_\a')$.
    By the construction of left and right internal rays, $\RRR_\a^{R,L}(x,t_\a)$ and $\RRR_\a^{R,L}(y,t_\a')$ can not cross at $-c(\a)$.
    This is a contradiction.
    Thus (I2) holds.

\end{proof}

\subsection{More discussions on Periodic Cases}
We have already seen that if $f_\a$ has periodic combinatorics, every non smooth left and right internal rays have infinitely many turning points.
In this part, we define infinitely many more general turning internal rays in periodic case.

Given a sequence $\underline{\iota}:=(\iota_1,\iota_2,\dots,\iota_n,\dots) \in \{ L,R \}^{\mathbb N}$.
A semi-differentiable curve $\gamma: [0,1) \to \C$ with an infinite turning set $T:=\{ a_1,a_2,\dots,a_n,\dots \}$ is said to be a \emph{$\underline{\iota}$-turning curve} if for every $n$, $a_n$ is a $\iota_n$-turning point of $\gamma $.
A $L$-turning curve curve $\gamma$ with an infinite turning set is a $\underline{\iota}$-turning curve if we pick $\underline{\iota}:=\{ L,L,L,\dots \}$ as the constant.


\begin{prop}[$\iota$-turning internal rays]\label{iotaturning}
    Let $\a \in \S$ be an ABC-hyperbolic map with periodic combinatorics.
    For each each sequence $\underline{\iota}:=(\iota_1,\iota_2,\dots,\iota_n,\dots) \in \{ L,R \}^{\mathbb N}$, and $(v,t) \in \O \times  \RZ$ with $s_\a^v(t)<1$, there exists a $\i$-turning curve $\gamma_{\underline{\iota}}: [0,1) \to B_\a(v)$ such that {\rm (P2)} holds.
    
\end{prop}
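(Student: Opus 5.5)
I will construct $\gamma_{\i}$ by the same continuation scheme as in the proof of Lemma \ref{lem34}. The only change is that at the $k$-th critical obstruction I choose the branch prescribed by $\iota_k$, instead of always taking the left one.

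\textbf{Setting up the induction.} Define $S_{\i}(v,t)$ as the set of $0<r<1$ for which there exists a semi-differentiable curve $\gamma:[0,r]\to B_\a(v)$ with the following properties: it satisfies (P2), its turning set is $\{a_1<\dots<a_m\}\subset(0,r)$, and each $a_j$ is an $\iota_j$-turning point. Here $m$ may depend on $r$. We have $s_\a^v(t)\in S_{\i}(v,t)$ with $m=0$, by Lemma \ref{lem31}. Two such curves are uniquely determined by their turning choices up to the current radius. Indeed, away from pre-critical points (P2) forces uniqueness of lifts, as in the remark before Lemma \ref{lem31}. At each pre-critical point there are exactly two continuations, labelled $L$ and $R$. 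Consequently, curves for $r<r'$ agree on $[0,r]$ as long as both use the prefix of $\i$. Put $r_0=\sup S_{\i}(v,t)$. Gluing as in Lemma \ref{lem31} gives a curve on $[0,r_0)$, and (P2) forces the limit at $r_0$ to exist, since $f_\a^n$ of it is the point $\Psi_\a(g^n(v,r_0e^{2\pi i t}))$.

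\textbf{Continuation past $r_0<1$.} Suppose $r_0<1$. If $\omega=\gamma(r_0)$ is not pre-critical, extend smoothly by a univalent inverse branch of $f_\a^n$, exactly as in Lemma \ref{lem31}. If $\omega$ is pre-critical, it is the $(m+1)$-st obstruction along the curve. Choose $U$, $\Gamma_n$ and the component $\Gamma$ of $f_\a^{-n}(\Gamma_n)$ through $\omega$ as in Lemma \ref{lem34}. The set $\Gamma\setminus\{\omega\}$ splits into $\Gamma_L$ and $\Gamma_R$, and I take the lift through $\Gamma_{\iota_{m+1}}$. Since $\omega$ is a simple critical point of $f_\a^n$, the new corner has angle exactly $\pi/2$, with the prescribed orientation. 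The extended curve therefore lies in $S_{\i}(v,t)$ at some radius $r^+>r_0$, which contradicts maximality. Hence $r_0=1$, and gluing yields $\gamma_{\i}:[0,1)\to B_\a(v)$ satisfying (P2).

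\textbf{Obstacles.} Two points need separate checks; the second is the real difficulty.

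First, the turning set must be discrete in $[0,1)$. I expect this from (P2): the pre-critical points on the curve map under $f_\a^n$ into the fixed smooth arc $\Psi_\a(g^n(v,[0,r]e^{2\pi it}))$. On a compact subarc there are only finitely many preimages of $-c(\a)$ of bounded depth meeting it, because Green levels are bounded away from $1$.

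Second, the turning set must be infinite, so that $\gamma_{\i}$ really is an $\i$-turning curve. Here I use periodic combinatorics. By Lemma \ref{lem32} and Remark \ref{single}, $(v,t)\in\M'_\a$ is eventually mapped to $(x,t_\a)$, which is $\g$-periodic. Hence $\g^k(v,t)$ returns to $(x,t_\a)$ infinitely often. Take an arbitrary continued curve at Green parameter $r$ just beyond a turning point. Mapping it forward by (P2) to the periodic arc of $(x,t_\a)$, it hits $-c(\a)$ again at level $r_\a$ after each period. Pulling back then produces a further pre-critical obstruction at a radius that increases to $1$. So every finite-turn curve meets another obstruction, and there are infinitely many turning points $a_1<a_2<\cdots\to1$.
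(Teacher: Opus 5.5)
Your proposal is correct and follows essentially the paper's own proof. Like the paper, you rerun the supremum/continuation argument of Lemma \ref{lem34}, taking the branch $\Gamma_{\iota_{m+1}}$ at the $(m+1)$-st pre-critical point, and you use the $\g$-periodicity of $(x,t_\a)$ to get infinitely many turning points, which accumulate only at $1$. The paper first reduces to $(v,t)=(x,t_\a)$ and reads off the turning radii explicitly from the period of $(x,t_\a)$. You instead work with $(v,t)$ directly and derive discreteness and infinitude from (P2), which avoids the pull-back step that the paper's reduction leaves implicit.
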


\begin{proof}[Sketch of the proof]
    The proof is very similar to Lemma \ref{lem34} as well as Lemma \ref{lem31}.
    So we sketch the proof of this proposition here and focus on the differences.

    Since we are in the periodic case, by Lemma \ref{lem32} and Remark \ref{single}, $(v,t)$ is eventually mapped to $(x,t_\a)$ by $\g$.
    Passing to iterations, we just need to consider the case that $(v,t)=(x,t_\a)$.
    Let $S_{\i}$ be the set of $r$ such that there exists a turning curve $\gamma_r: [0,r) \to B_\a(v)$ with a finite turning set $T_n:=\{ a_1,a_2,\dots,a_n \}$ where $n:=n(r)$ such that $a_k$ is a $\iota_k$-turning point of $\gamma_r$ and (P2) holds.
    Following the idea of the proof of Lemma \ref{lem34}, we can show that $\sup S_{\i}=1$. 
    
    In this case, we further claim that as $r \to 1$, we have $n(r) \to \infty$ since $(x,t_\a)$ is periodic.
    In fact, if the period of $(x,t_\a)$ is $mp$, then the turning points of $\gamma_r$ is precisely $\{ a_n:=\sqrt[2^m]{r_\a} : 1\leq  m \leq n(r)\}$ where $n(r)$ is the largest integer $n$ such that $r^{2^n}>r_\a$.
    Thus, we conclude that $n(r) \to \infty$ as $r\to 1$.
    Therefore, as the proof of Lemma \ref{lem34}, taking a monotone increasing sequence $r_k \to 1$, we can glue together these turning curves $\{ \gamma_{r_k} \}$ and obtain the $\i$-turning curve $\gamma_{\i}$.
    
\end{proof}

By Proposition \ref{iotaturning}, for sequence $\underline{\iota}:=(\iota_1,\iota_2,\dots,\iota_n,\dots) \in \{ L,R \}^{\mathbb N}$, and $(v,t) \in \O \times  \RZ$ with $s_\a^v(t)<1$, define the \emph{$\i$-turning internal ray} of $(v,t)$ to be 
$$
    \RRR_\a^{\i}(v,t):=\gamma_{\iota}([0,1)).
$$
Clearly, for $\iota \in \{ L, R \}$, the previously defined $\iota$-turning curve is a particular $\i$-turning internal ray if we take $\i$ as the constant $(\iota,\iota,\dots)$.
In fact, every kind of generalized internal rays we defined before can be understood as a $\i$-turning internal ray with some $\i$ and corresponding turning set $T_{\i}$.
Smooth internal rays can be understood as $\i$-turning internal ray with $T_{\i}=\emptyset$.
For non-periodic case, a $\iota$-turning internal rays can be understood as $\i$-turning internal ray with finite string $\i=(\iota)$ and singleton $T_{\i}$.

The $\i$-turning internal rays also have nice dynamical properties.
In fact, suppose that $\RRR_\a^{\i}(v,t)$ is a $\i$-turning internal ray, then we already seen that $(v,t)$ is eventually mapped to $(x,t_\a)$ which is periodic under $\g$.
Thus it is eventually mapped to $(c,t_c)$ for some $t_c$.
Then $f_\a$ maps an $\i$-turning internal ray $\RRR_\a^{\i}(v,t)$ to $\RRR_\a^{\i'}(\g(v,t))$ where $\i':=(t_2,t_3,\dots)$ is the left shift of $\i:=(t_1,t_2,\dots)$ if $(v,t)=(c,t_c)$, and $\i'=\i$ if $(v,t)\neq (c,t_c)$.
One may verify that if $(v,t)$ and $\i$ are both periodic, then the $\i$-turning internal ray $\RRR_\a^{\i}(v,t)$ is periodic under $f_\a$.

\subsection{Landing Properties of Generalized Internal Rays}\label{ssec3.4}
In this part, we study the landing properties of generalized internal rays.
We've already seen that every generalized internal ray can be understood as some $\i$-turning internal ray $\RRR_\a^{\i}(v,t)$ with $\i$, $T_{\i}$ and $(v,t)$.
Suppose that $\RRR_\a^{\i}(v,t)$ is parameterized by $\gamma:[0,1) \to B_\a(v)$.
If the limit $\lim_{s\to 1}\gamma(s)=z$ exits, then we say that the $\i$-turning internal ray $\RRR_\a^{\i}(v,t)$ \emph{lands} at $z$.
We will show in all cases, every generalized internal ray land at a point on the boundary of the bounded Fatou component (Proposition \ref{internalland}).

\begin{prop}[every generalized internal ray lands]\label{internalland}
    For generic $\a \in \S$, every generalized internal ray lands at a point on $\partial \tilde{B}_\a$.
\end{prop}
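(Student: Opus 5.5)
We need to show that every generalized internal ray $\RRR_\a^{\i}(v,t)$ lands at a point of $\partial \tilde{B}_\a$. The plan is to use hyperbolicity: a generic $\a \in \S$ considered here is ABC-hyperbolic (or has $-c(\a)\notin\tilde B_\a$), so $f_\a$ is expanding on a neighbourhood of $J_\a$ with respect to a suitable conformal metric. We prove landing by a shrinking-tail argument. Let $\gamma:[0,1)\to B_\a(v)$ parametrize the ray via the exponential Green function. By (P2) and the conjugacy of Proposition~\ref{nonsmoothray}, $f_\a^n\circ\gamma$ parametrizes another generalized internal ray $\RRR_\a^{\i_n}(\g^n(v,t))$, and $\gamma(s)$ has Green level $s$ with $G\circ f_\a = G^{\delta}$.

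\textbf{Reduction to a fundamental annulus.} Fix $s_0<1$ close to $1$, larger than the Green levels of $c(\a)$, $-c(\a)$ and all points of the finitely many turning points below level $s_0$ along the periodic part of the orbit. Consider the pieces $\gamma_k:=\gamma([s_0^{1/2^{k}},s_0^{1/2^{k+1}}])$ (with the appropriate exponents, since along the eventually periodic orbit of $v$ the degree of $f_\a^p$ on $B_\a(c)$ is $2$). Then $f_\a^{kp+\ell_v}$ maps $\gamma_k$ homeomorphically onto a piece $\beta_k$ of some generalized internal ray in the fixed region $A:=\{z\in B_\a : s_0\le G^c_\a(z)\le s_0^{1/2}\}$. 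There are only finitely many \emph{shapes} of such pieces. In the smooth case and the non-periodic case this is immediate. In the periodic case, a piece is determined by the angle and by one turning direction at most, because a piece of bounded Green-ratio contains at most one turning point $\sqrt[2^m]{r_\a}$. Hence the pieces $\beta_k$ have uniformly bounded Euclidean length $\le M$, by compactness of the set of angles and continuity of $\Psi_\a$ and of the local inverse branches at turning points. They also stay in a compact subset of $B_\a$ away from the postcritical set closure, since $G^c$ is bounded away from $1$ there.

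\textbf{Contraction.} Since $f_\a$ is hyperbolic, the postcritical set $P$ is disjoint from $J_\a$. There is a neighbourhood $W$ of $J_\a$ such that inverse branches of $f_\a^n$ along paths in $\C\setminus P$ contract the hyperbolic metric of $\C\setminus P$ by a factor $C\lambda^n$ with $\lambda<1$. For large $k$, the piece $\gamma_k$ is the lift of $\beta_k$ under the branch of $f_\a^{-(kp+\ell_v)}$ fixed by the ray. We need $\beta_k$ to avoid $P$; a small modification of the region $A$ near the attracting cycle, or measuring length in the orbifold metric, handles this. The critical value $-c(\a)$ lies on the ray only at turning points, and these are finitely many per piece. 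Thus $\operatorname{diam}\gamma_k\le C M\lambda^{k}$, which is summable, so $\gamma(s)$ is Cauchy as $s\to1$ and the limit $z$ exists. Since $G^v_\a(\gamma(s))\to1$, the limit $z$ cannot lie in $B_\a(v)$, so $z\in\partial B_\a(v)\subset\partial\tilde B_\a$.

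\textbf{Main obstacle.} The delicate step is the uniform length bound together with avoiding the critical orbit when pulling back. In type (A)/(B) with periodic combinatorics, $-c(\a)$ and its forward orbit lie inside $B_\a$ and the ray passes through infinitely many precritical points. The pullback branch is still well defined on each open piece, but the expansion estimate requires the pieces to avoid the closure of the forward orbit of $-c(\a)$. I would handle this using that the forward orbit of $-c(\a)$ converges to the cycle $O$, which lies at Green level $0$. Choosing $s_0$ above all Green levels of that orbit except finitely many, and perturbing the metric near those finitely many points, gives the contraction. Alternatively, one can invoke Carathéodory: $\partial B_\a(v)$ is a Jordan curve for hyperbolic $\a$, and the Euclidean diameters of the pieces shrink by uniform expansion on $J_\a$.
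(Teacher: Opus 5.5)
\textbf{A gap in the reduction.} Your opening reduction does not cover the whole statement. ``Generic'' only means $-c(\a)\notin\O(\a)$. When $-c(\a)\notin\tilde B_\a$, the map $f_\a$ need not be hyperbolic at all: $-c(\a)$ may lie in $J_\a$, in a parabolic basin, and so on. Then there is no expanding neighbourhood of $J_\a$, and your contraction step has nothing to run on.

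In that case, however, $\Psi_\a(v,\cdot)\colon\D\to B_\a(v)$ is a Riemann map and every generalized internal ray is one of its radial lines. Landing then follows from Carath\'eodory's theorem once $\partial B_\a(v)$ is known to be locally connected; it is a Jordan curve by Roesch--Yin. This is the one-line argument the paper uses.

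Your closing ``alternative'' has this backwards. Carath\'eodory is exactly the right tool in the non-hyperbolic case. For ABC-hyperbolic maps, the non-smooth rays are not radial lines of a Riemann map, so the Jordan property of $\partial B_\a(v)$ by itself does not make them land.

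\textbf{The ABC-hyperbolic case.} Here your argument is a genuinely different and workable route. The paper builds a puzzle from periodic internal rays, external rays and equipotentials. It proves by thickening that every impression is a singleton (Lemma~\ref{singleton}), then uses (I1) of Corollary~\ref{intersection} to trap the tail of the ray in one piece of each depth. You instead run the classical shrinking-tail argument for rays of hyperbolic polynomials. Yours is self-contained and quantitative: tails shrink exponentially, uniformly over all generalized rays. The paper's route costs more set-up but yields the puzzle/impression framework it reuses later, in the proof of (R1) in Proposition~\ref{visuallamination} and in Lemma~\ref{impression}.

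Two of your steps need to be redone.

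\emph{The uniform length bound in $A$.} It cannot come from continuity of $\Psi_\a$ in $t$, because $\Psi^\iota_\a$ jumps at angles whose smooth arcs end at precritical points. Also, a piece in $A$ is determined by its starting point on $\{G^c_\a=s_0\}$, i.e.\ by all turns made below level $s_0$, not by $t$ and one turning direction. A clean fix: take $A\subset B_\a(c)$ and choose $m$ with $f^{mp}_\a(A)\subset B^c_\a(s^c_\a)$. Each piece is then a lift under $f^{mp}_\a$ of a B\"ottcher radial segment of uniformly bounded length. The finitely many critical points $\omega$ of $f^{mp}_\a$ in $A$ are simple, so near them $|(f^{mp}_\a)'(z)|\gtrsim|f^{mp}_\a(z)-f^{mp}_\a(\omega)|^{1/2}$. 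This bounds the length of every lift.

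\emph{The postcritical set.} Your worries here are unnecessary. Green levels are non-increasing along the orbit of $-c(\a)$ once it enters $B_\a$, so $s_0$ can be chosen above all of them. Then $A\cap\overline P=\emptyset$ outright, with no metric perturbation. The branch of $f^{-N}_\a$ along $\beta_k$ is ordinary path lifting for the covering $f^N_\a$ over $\C\setminus\overline P$. The factor $\lambda^k$ comes from the fact that all but boundedly many of the intermediate images $f^j_\a(\gamma_k)$ lie in $\{G\ge r\}\subset W$; Schwarz--Pick handles the remaining steps.
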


The Proof of Proposition \ref{internalland} starts from here.
If $-c(\a) \notin \tilde{B}_\a$, then by the Carath\'eodory Theorem, the conclusion of Proposition \ref{internalland} certainly holds.
Therefore, we only need to consider the ABC-hyperbolic case.
The proof can be done by imitating the the proof of the classical result on landing properties of external rays and the hyperbolicity of $\a$.
However, here we provide a different approach by using the puzzle technique since it conceals more information.

Assume that $\a \in \mathcal{C}(\S)$ is generic.
The essential part of our puzzle construction is the finite graph $\Gamma(\varTheta,T)$ where $T$ and $\varTheta$ be two finite set of $\RZ$ satisfying the following nice conditions.
\begin{itemize}
    \item[(N1)]  $\varTheta $ is a finite set consisting of $\theta$ which are periodic under $\tau_3$ such that $R_\a(\theta)$ lands at a repelling periodic point.
    \item[(N2)]  $T$ is a finite set consisting of $t$ which is is periodic under $\tau_2$ with $s_\a^c(t)=1$ and the forward orbit of $\psi_\a^c(e^{2\pi i t})$ does not contain $-c(\a)$.
\end{itemize}
For every $t \in T$, pick an external ray $R_\a(\theta(t))$ which lands at $\psi_\a^c(e^{2\pi i t})$ for $t \in T$.
Since $f_\a$ is locally orientational preserving near $\psi_\a^c(e^{2\pi i \theta})$,  there exists $m \geq 1$ such that the curve $\gamma_\a^c(t,\theta):=R_\a(\theta(t)) \cup R_\a^c(t) \cup \{ \psi_\a^c(e^{2\pi i \theta})\}$ is invariant under $f_\a^m$.
Fix $T$ and $\varTheta$, we define an invariant graph under $f_\a$ by
$$\Gamma_\a(\varTheta,T):=\bigcup_{n \geq 0 } f_\a^n \left( \left(\bigcup_{t \in T} \gamma_\a^c(t,\theta)\right) \cup \left( \bigcup_{\theta \in \varTheta}R_\a(\theta) \right)  \right).$$

We shall also include equipotential lines in the graph.
For generic $\a \in \mathcal{C}(\S)$, and $s>1$, $0<s'<1$, denote 
$$
    X_\a:=\C \setminus \overline{\Omega_\a(s) \cup B_\a^c(s') \cup B_\a^{c_1}(s') \cup \cdots \cup B^{c_{p-1}}_\a(s')}.
$$ 
Clearly, we have $f_\a^{-1}(X_\a)\subset X_\a$.
For $n \geq 0$, define $X_n^\a:=f_\a^{-n}(X_\a)$, and $G_n^\a:=f_\a^{-n}(\Gamma_\a(\varTheta,T) \cup \partial X_\a)$.
Connected components of $X_n^\a \setminus G_n^\a$ are called \emph{puzzle pieces} of depth $n$.
For $z \notin G_n^\a$, there exists a unique puzzle piece of depth $n$ containing $z$ denoted by $P_n^\a(z)$.
For $n \geq 1$, the map $f_\a: P_n^\a(z) \to P_{n-1}^\a(f_\a(z))$ is a holomorphic proper map of degree at most $2$.
It has degree $2$ if and only if $-c(\a) \in P_n^\a(z)$.
${\bf P}_\a={\bf P}_\a(\varTheta,T,s,s’)$ is called a \emph{puzzle system}.

Let $P_n^\a \subset P_{n-1}^\a \subset \cdots \subset P_1^\a \subset P_0^\a$ be a nested sequence of puzzle pieces of ${\bf P}_\a$, define the \emph{impression} of this nested sequence to be 
$I:=\bigcap_n \overline{P_n^\a}$.
For $z \in J_\a \setminus \bigcup_nG_n^\a$, there exists a unique nested sequence $\{ P_n^\a(z) \}$ containing $z$, the intersection $I_\a(z):=\bigcap_n \overline{P_n^\a(z)}$ is called the \emph{impression} of $z$.

     \begin{figure}[h]
  \begin{center}
    \includegraphics[width=0.75\linewidth]{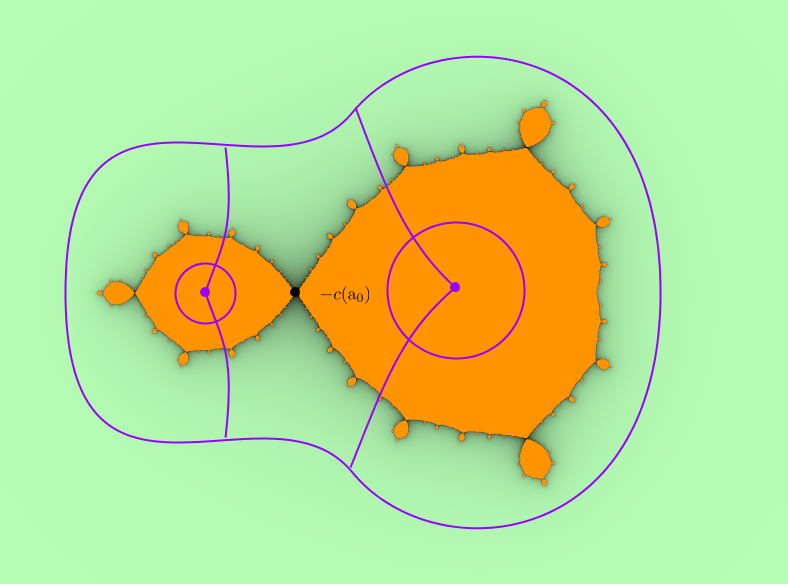}
    \caption{}\label{figring2}
 \end{center}
\end{figure}


\begin{lem}[every impression is a singleton]\label{singleton}
    Let $f_\a$ be an ABC-hyperbolic map.
    Every impression of the puzzle system $\mathrm{P}_\a(\varTheta,T,s,s’)$ is a singleton.
\end{lem}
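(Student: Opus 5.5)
Since $f_\a$ is ABC-hyperbolic, it is hyperbolic: both critical points are attracted to the super-attracting cycle $O(\a)$. So the post-critical set $P(f_\a)$ is a compact set disjoint from $J_\a$, and $f_\a$ is uniformly expanding on a neighborhood of $J_\a$ in a suitable conformal metric. The plan is to show that the diameters of puzzle pieces containing points of $J_\a$ tend to zero. Impressions of nested sequences that stay away from $J_\a$ are handled separately.

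\textbf{Step 1: reduction to pieces meeting $J_\a$.} Let $\{P_n^\a\}$ be a nested sequence with impression $I$. Points of $X_\a$ outside $J_\a$ lie in $\tilde{B}_\a$, or in a bounded Fatou component, which for ABC-hyperbolic maps is again in $\tilde{B}_\a$. Each $z\in\tilde{B}_\a$ is mapped by some $f_\a^n$ into $B^{c_k}_\a(s')$, so $z\notin X_n^\a$. Hence $\bigcap_n \overline{X_n^\a}\subset J_\a\cup(\text{boundaries})$, and therefore $I\subset J_\a$ up to points of the graph. In particular, every $P_n^\a$ meets a fixed neighborhood of $J_\a$.

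\textbf{Step 2: univalent pullbacks with bounded degree.} Fix a neighborhood $W$ of $J_\a$ that avoids $P(f_\a)$ and the forward orbit of $-c(\a)$ beyond finitely many steps. Because $-c(\a)$ lies in the Fatou set, there is a depth $N$ such that $-c(\a)$ belongs to no piece of depth $\geq N$ meeting $W$. Indeed, $-c(\a)\in\tilde{B}_\a$ eventually leaves $X_n^\a$. It follows that for $n\geq N$, the map $f_\a^{n-N}\colon P_n^\a\to P_N^\a(f_\a^{n-N}(\cdot))$ is conformal. There are only finitely many pieces of depth $N$. Each of them can be thickened slightly, using the conditions (N1) and (N2) that the rays and internal rays in the graph land at repelling points and avoid the orbit of $-c(\a)$. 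This gives simply connected neighborhoods $\widehat P$ avoiding $P(f_\a)$, so that every inverse branch of $f_\a^{k}$ extends univalently to $\widehat P$.

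\textbf{Step 3: shrinking.} Two routes lead to the conclusion. One uses Koebe distortion: the family of inverse branches $\{f_\a^{-k}|_{\widehat P}\}$ omits $P(f_\a)$, which contains at least three points, so it is normal. A limit function cannot be nonconstant, since it would have to map an open set meeting $J_\a$ into a set whose iterates remain bounded in size, contradicting the expansion on $J_\a$. Hence $\operatorname{diam} P_n^\a\to 0$. The other route applies uniform expansion in the hyperbolic metric of $\C\setminus P(f_\a)$ directly. Either way, $I$ is a point.

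\textbf{Main obstacle.} The delicate step is Step 2. Puzzle pieces are bounded by generalized internal rays and equipotentials inside $\tilde{B}_\a$. I need to check that the finitely many pieces of depth $N$ that contain critical points are exactly those that the nest eventually leaves. I also need to check that the thickening $\widehat P$ really avoids the post-critical set near the boundary arcs inside $B_\a$. My first attempt would be to choose $s'$ larger than $G^c_\a$ of every post-critical point in $B_\a(c)$, so that the post-critical set sits inside the removed disks $B^{c_k}_\a(s')$ up to finitely many iterates.
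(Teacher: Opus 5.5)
Your proposal is correct and follows the paper's thickening argument. You pass to a depth $N$ at which no puzzle piece contains the orbit of $-c(\a)$, thicken the finitely many depth-$N$ pieces so that all inverse branches are univalent, and conclude that the diameters shrink; the paper does this last step with a uniform Schwarz--Pick contraction on the thickened pieces, while you use Montel (or the hyperbolic metric of $\C\setminus P(f_\a)$), which also spares you from checking that the pulled-back thickenings nest. The concerns in your last paragraph do not arise: once $f_\a^N(-c(\a))$ lies in the open disks $B_\a^{c_k}(s')$, the compact set $\overline{X_N^\a}$ is disjoint from $\pm c(\a)$ and from $P(f_\a)$, so no adjustment of the fixed parameter $s'$ is needed.
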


\begin{proof}
    The proof is a standard application of the thicken technique \cite{milnor1992local}.
    Since $-c(\a)\in \tilde{B}_\a$, then for $n$ large, each puzzle piece of depth $n$ does not contain the forward orbit of $-c(\a)$.
    Thus, without loss of generality, we assume that every puzzle piece of depth $0$ omits the orbit of $-c(\a)$.
    Let $P^{(1)}_\a, P^{(2)}_\a,\dots, P^{(m)}_\a$ be all the puzzle pieces of depth $0$.
    
    For $1\leq k \leq m$, we apply the thicken technique to get a larger domain $\tilde{P}^{(k)}_\a$ containing the closure of $P^{(k)}_\a$
    By the construction of $\Gamma_\a(\varTheta,T)$.
    $\partial P^{(k)}_\a$ consists of internal rays, external rays and equipotential lines.
    Notice that $\partial P^{(k)}_\a$ contains exactly two internal rays with respect to some $v_k \in O(\a)$.
    Assume that $R_\a^{v_k}(t^\pm_k) $ are the two internal rays in $\partial P^{(k)}_\a \cap B_\a(v)$ such that for $t \in (t_k^-,t_k^+)_+$, $R_\a^{v_k}(t)$ intersects $ P^{(k)}_\a $.
    Pick $\tilde{t}^\pm_k \in (t^+_k,t_k^-)_+$ close to $t^\pm_k$ which is periodic under $\tau_2$ with $s_\a^{v_k}(\tilde{t}^\pm_k)=1$.
    Let $R_\a(\theta^\pm_k)$ be the external ray lands at $\psi_\a^{v_k}(e^{2\pi i \tilde{t}^\pm_k})$ respectively, and $\tilde{P}^{(k)}_\a$ be the connected component of $\C \setminus (\overline{R_\a(\theta^-_k)} \cup \overline{R_\a(\theta^+_k)} \cup R_\a^{v_k}(\tilde{t}^-_k) \cup R_\a^{v_k}(\tilde{t}^+_k))$ which intersects $R_\a^{v_k}(t)$ for $t \in (\tilde{t}_k^-,\tilde{t}_k^+)_+$.
    By construction, the closure of of $P^{(k)}_\a$ is contained in $\tilde{P}^{(k)}_\a $.

    Fix $k$, there are $3$ branch of $f_\a^{-1}$ on $\tilde{P}^{(k)}_\a$, all of them are conformal.
    Thus, they decrease the hyperbolic distance.
    Since the the closure of of $P^{(k)}_\a$ is contained in $\tilde{P}^{(k)}_\a $ for all $k$, there exists a uniform contracting constant $0<\kappa<1$ on the union of all puzzle pieces of depth $0$.
    Let $d$ be the maximal diameter of the closure of all puzzle pieces of depth $0$.
    Then for any $P_n$ of depth $n$, we have $\mathrm{diam}(\overline{P_n})\leq d \kappa^n \to 0$.
    Thus, every impression is a singleton.

\end{proof}

As a immediate corollary, we present the proof of Proposition \ref{internalland}

\begin{proof}[Proof of Proposition \ref{internalland}]

    Let $\RRR_\a^{\i}(v,t)$ be a $\i$-turning internal ray, which is parameterized by $\gamma:[0,1) \to B_\a(v)$.
    If $\RRR_\a^{\i}(v,t)$ is a smooth internal ray which belongs to $\bigcup f_\a^{-n}(\Gamma_\a(\varTheta,T))$, then it lands at a point on $\partial \tilde{B}_\a$.
    In the following, we the opposite.
    Let $X$ denote the set of all limit points of $\gamma(s)$ as $s\to 1$.
    By the (I1) of Corollary \ref{intersection}, we observe that for every $n$, $X$ must be contained in some puzzle piece $P_n^\a(X)$ of depth $n$.
    Thus, $X$ is contained in the impression $I(X):=\bigcap_n\overline{P_n^\a(X)}$.
    By Lemma \ref{singleton}, $I(X)=X$ must be a singleton $\{z\}$.
    Hence $\RRR_\a^{\i}(v,t)$ lands at $z$.

\end{proof}


A classical result obtained by the Maximal Modules Principle shows that any two smooth internal rays in a bounded Fatou component cannot land at a common point. 
Using the same idea, we obtain the corresponding result for left or right turning internal rays.

\begin{cor}[distinct landing points]\label{distinctlanding}
    Let $\a \in \S$ be an ABC-hyperbolic map.
    Any two left or right turning internal rays in a common Fatou component which do not have the same end cannot land at a common point.
\end{cor}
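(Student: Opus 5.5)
We argue by contradiction, imitating the classical maximum modulus argument for smooth internal rays. Suppose $\gamma_1=\RRR_\a^{\iota}(v,t)$ and $\gamma_2=\RRR_\a^{\iota'}(v,t')$ are two left or right turning internal rays in the same Fatou component $B_\a(v)$, without a common end, and both landing at $z\in\partial B_\a(v)$ (they land by Proposition \ref{internalland}). By (I2) of Corollary \ref{intersection} the two rays do not cross. Since they do not share an end, Remark \ref{nonpercom} shows that either they meet only on a compact subset of $B_\a(v)$, namely along a common initial piece starting at $v$ and possibly at finitely many pre-critical turning points, or they are disjoint away from $v$. In either case we can extract from $\overline{\gamma_1}\cup\overline{\gamma_2}$ a Jordan curve $C$ which consists of a tail of $\gamma_1$ and a tail of $\gamma_2$ (and, if needed, a compact arc of one ray). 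This curve passes through $z$ and is contained in $\overline{B_\a(v)}$, meeting $\partial B_\a(v)$ only at $z$. Let $U$ be the bounded complementary component of $C$.

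The key step is to show that $U$ contains points of $J_\a$. Every point $\zeta\in U$ lies in the open Jordan domain bounded by $C$. If $U\cap J_\a=\emptyset$, then $U\subset K_\a^\circ$, and since $U$ is connected and meets $B_\a(v)$, we get $U\subset B_\a(v)$. The arguments of the two tails differ: after composing with a high iterate and applying (P2), the two rays map into the disc model with distinct angles $\tau_2^n$-images. Hence the angular sector between them in the model contains infinitely many smooth internal rays $R_\a^v(s)$ whose angles lie strictly between the angles of the two tails. We then choose such a ray whose angle $s$ is periodic, with $s_\a^v(s)=1$, and which avoids the countably many pre-critical directions. This ray lies in $U$ near its landing point, so it lands inside $\overline U$. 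By (I1) it cannot meet $C$ in its interior, so its landing point is either $z$ or a point of $J_\a\cap U$. The first possibility would contradict the uniqueness of landing points of distinct smooth rays if applied to two such rays; so we either get $J_\a\cap U\neq\emptyset$, or two smooth internal rays landing at $z$, which is already excluded by the classical result.

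Once $U\cap J_\a\neq\emptyset$, we apply the maximum modulus principle to the iterates: $f_\a^n(C)\subset \overline{\tilde B_\a}$ stays bounded, since the generalized internal rays map to generalized internal rays and $f_\a^n(z)\in K_\a$. Therefore $f_\a^n$ is uniformly bounded on $U$. This contradicts the fact that $U$ contains Julia points, whose neighbourhoods contain points escaping to $\infty$.

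I expect the main obstacle to be constructing $C$ cleanly in the periodic case. There the rays have infinitely many turning points accumulating at $z$, so we must verify that the tails really are disjoint arcs forming a Jordan curve. For this I would use (I2), the fact that the turning points are isolated preimages of $-c(\a)$ with Green levels tending to $1$, and the hypothesis of no common end, which forces the two symbolic turning sequences or angles to differ at some finite stage.
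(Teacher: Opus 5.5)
Your key step has a genuine gap. It rests on the claim that, after iterating, the two rays ``map into the disc model with distinct angles''. From this you get a nondegenerate sector of smooth internal rays from $v$ lying between them inside $U$. The claim fails in the essential case $\gamma_1=\RRR_\a^{L}(v,t)$, $\gamma_2=\RRR_\a^{R}(v,t)$, and more generally for two turning rays with the same $(v,t)$ and different turning sequences. These rays do not have the same end, yet $f_\a^n$ maps them to $\RRR_\a^{L}(\g^n(v,t))$ and $\RRR_\a^{R}(\g^n(v,t))$, so their model angles agree for every $n$. This is exactly the case the paper's proof reduces to, and it is the case that guarantees a left-right internal ray has two \emph{distinct} endpoints. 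Here both rays follow the same smooth arc from $v$ to the first turning point $\omega$, and your curve $C$ consists of the two branches leaving $\omega$. There is no sector of model angles to exploit. Your last paragraph anticipates that only the turning sequences may differ, but the argument never uses this.

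The missing idea is to find a \emph{center} inside $U$ from the local picture at $\omega$. By Lemma \ref{lem32}, pulled back, exactly two smooth internal arcs terminate at $\omega$, emanating from two distinct points $v,v'\in\O$ of the same component. The $L$- and $R$-continuations are the other two local branches. Since $\omega$ is a simple critical point of the relevant iterate $f_\a^m$, incoming and outgoing branches alternate around $\omega$. So the two outgoing tails forming $C$ separate the two incoming arcs, and $U$ contains the arc toward $v$ or the one toward $v'$. Hence $U$ contains $v$ or $v'$; this is the paper's ``$\overline U$ contains $c$ or $c'$''. The maximum modulus principle gives $U\subset K_\a$, hence $U\subset B_\a(v)$ and $\overline U\cap\partial B_\a(v)=\{z\}$. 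Every smooth internal ray from that center starts in $U$, cannot meet $C$ by (I1), and therefore lands at $z$, which is absurd.

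Two smaller points. First, even in your distinct-angle case, a smooth ray lies in $U$ because it \emph{starts} in the sector at $v$ and cannot exit, not because it lies in $U$ near its landing point. Your detour through $U\cap J_\a\neq\emptyset$ is also unnecessary, since maximum modulus gives $U\subset B_\a(v)$ directly. Second, the statement allows the two rays to issue from different centers of the same component, which your setup excludes. The paper iterates until the centers coincide. If $f_\a^n(\gamma_1)=f_\a^n(\gamma_2)$ at some stage, local injectivity of $f_\a^n$ at the Julia point $z$ forces a common end.
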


\begin{proof}
    We prove it by contradiction.
    Let $R_1:=\RRR_\a^{\iota_1}(v_1,t_1)$ and $R_2:=\RRR_\a^{\iota_2}(v_2,t_2)$ be two left or right internal rays in $B_\a(v_1)=B_\a(v_2)$ with $\iota_1,\iota_2\in \{ L,R \}$.
    Suppose that $R_1$ and $R_2$ lands at a common point $w$.
    If $f_\a^n(R_1)=f_\a^n(R_2)$ holds for some $n \geq 0$, then $w$ is a critical point of $f_\a^n$.
    This is impossible since $f_\a$ is hyperbolic.

    In the following, we consider the case that $f_\a^n(R_1)\neq f_\a^n(R_2)$ for every $n \geq 0$. Notice that the orbits of $v_1,v_2$ are eventually periodic and always belong to Fatou component.
    It follows that there exists $n$ such that $\sigma^n(v_1)=\sigma^n(v_2):=c$.
    Therefore, by passing to iterations, we may assume that $v_1=v_2=c$, and $t_1=t_2=t$.
    Then $R_1$ and $R_2$ must be $\RRR_\a^L(c,t)$ and $\RRR_\a^R(c,t)$.
    Let $U$ be the unique bounded component of $\C \setminus (\overline{R_1} \cup \overline{R_2})$.
    Then either $\overline{U}$ contains $c$ or $c'$ where $c' \in B_\a(c)$ satisfies that $\sigma(c)=\sigma(c')$.
    Without loss of generality, we assume that $c \in U$.
    By the Maximal Modules Principle, we see that $U \subset B_\a(c)$.
    It follows that for any $t\in \RZ$ with $(c,t)\notin \M'$, the smooth internal ray $R_\a^c(t)$ land at a point in $\overline{U} \cap \partial B_\a(c)$.
    But $\overline{U} \cap \partial B_\a(c)=\{ w \}$.    
    Thus, they must  all land at $w$.
    This is impossible since $(c,t) \notin \M'$ can be periodic or non-periodic.
    
\end{proof}

If $s_\a^v(t)<1$, and the left and right turning internal rays $\RRR_\a^L(v,t)$  and $\RRR_\a^R(v,t)$ land at two points $z$ and $w$, then we say that $z$ and $w$ are \emph{joined} by the left-right internal ray $\RRR_\a^{L,R}(v,t)$.

\begin{lem}[joined by left-right internal ray]\label{joined}
    If $z$ and $w$ are joined by a left-right internal ray $\RRR_\a^{L,R}(v,t)$, then we can find $(v',t')$ whose forward orbit under $\g$ contains $(x,t_\a)$ with $B_\a(v)=B_\a(v')$ such that $z$ and $w$ are joined by the left-right internal ray $\RRR_\a^{L,R}(v',t')$.
\end{lem}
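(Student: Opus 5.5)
The plan is to push the left-right internal ray forward along its own smooth part until it first hits the critical point $-c(\a)$, and then to pull the whole picture back by a suitable branch that does not change the two landing points. Since $z$ and $w$ are joined, $\RRR_\a^{L,R}(v,t)$ is defined with $s_\a^v(t)<1$, so $(v,t)\in\M'_\a$. By Lemma \ref{lem32}, the forward orbit of $(v,t)$ under $\g$ contains $(x,t_\a)$ or $(y,t'_\a)$. In the first case we simply take $(v',t')=(v,t)$ and are done. It remains to treat the case where the orbit meets $(y,t'_\a)$ but never $(x,t_\a)$. Let $n\ge 0$ be minimal with $\g^n(v,t)=(y,t'_\a)$.

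In that case, the two smooth internal arcs of $(x,t_\a)$ and $(y,t'_\a)$ both terminate at $-c(\a)$ by \eqref{critical}. By the construction in the proof of Lemma \ref{lem34}, at $-c(\a)$ the left extension of one arc continues as the right extension of the other, and vice versa. The extension beyond $-c(\a)$ is obtained by lifting the same image arc $\Psi_\a(g(\cdot))$ through the two local branches $\Gamma_L,\Gamma_R$. Consequently
\[
\RRR_\a^{L}(y,t'_\a)\ \text{and}\ \RRR_\a^{R}(x,t_\a)\ \text{share their end, and}\ \RRR_\a^{R}(y,t'_\a)\ \text{and}\ \RRR_\a^{L}(x,t_\a)\ \text{share their end.}
\]
This is the same mechanism as in Remark \ref{nonpercom}, and it also holds in the periodic case at the first turning point. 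In particular the left-right internal rays $\RRR_\a^{L,R}(x,t_\a)$ and $\RRR_\a^{L,R}(y,t'_\a)$ land at the same pair of points.

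Now pull back by $f_\a^n$. Along $\RRR_\a^{L,R}(v,t)$, the map $f_\a^n$ sends the part up to the first turning point homeomorphically onto the smooth arc of $(y,t'_\a)$. Minimality of $n$ ensures no earlier critical passage on the smooth part. Take the component of $f_\a^{-n}$ of the curve formed by the smooth arc of $(x,t_\a)$ together with the shared ends. This curve contains $-c(\a)$ only as an endpoint of the smooth arcs, so near the portion traced by $\RRR_\a^{L,R}(v,t)$ beyond its turning point the pullback is univalent. Lifting the smooth arc of $(x,t_\a)$ starting from the lifted point $\omega=\psi_\a^v(s_\a^v(t)e^{2\pi i t})$ backwards gives a smooth arc from some $v'\in\tilde O$ to $\omega$. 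It satisfies (P1) and (P2) with $\g^n(v',t')=(x,t_\a)$, so it is the smooth internal arc of some $(v',t')$. It lies in the same Fatou component as $\omega$, namely $B_\a(v)$, so $B_\a(v')=B_\a(v)$. By uniqueness of lifts satisfying (P2), $\RRR_\a^{L}(v',t')$ coincides beyond $\omega$ with $\RRR_\a^{R}(v,t)$, and $\RRR_\a^{R}(v',t')$ with $\RRR_\a^{L}(v,t)$. Hence $\RRR_\a^{L,R}(v',t')$ lands at $w$ and $z$, i.e.\ it joins $z$ and $w$.

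The main obstacle is the periodic case, where the rays have infinitely many turning points. There one must check that after the first turning point the extensions of the two rays agree forever, not just locally. I expect to handle this by noting that in the periodic case every $(v,t)\in\M'_\a$ is eventually mapped to $(x,t_\a)$ (Remark \ref{single}), so the first case already applies and no lifting is needed. The lifting argument is thus only needed in the non-periodic case, where each non-smooth ray has a single turning point and Remark \ref{nonpercom} applies directly.
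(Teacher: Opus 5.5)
Your proof is correct and is essentially the paper's argument. It uses the same case split via Lemma \ref{lem32}, handles the periodic case with Remark \ref{single}, and in the non-periodic case takes the second smooth arc ending at the same pre-critical point $\omega$ and applies the left/right swap of Remark \ref{nonpercom}. You simply write out the pull-back by $f_\a^n$ that the paper leaves implicit, and that pull-back is legitimate because in the non-periodic case the smooth arc of $(x,t_\a)$ avoids the forward orbit of $-c(\a)$, not because $-c(\a)$ is only an endpoint.

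One aside is false, though you never use it. In the periodic case $\RRR_\a^L(x,t_\a)$ and $\RRR_\a^R(y,t'_\a)$ share only the branch from $-c(\a)$ to the next pre-critical point, where one turns left and the other right, so they do not share their end (this is the converse part of Remark \ref{nonpercom}). Routing that case through Remark \ref{single} is therefore necessary, not merely convenient.
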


\begin{proof}
    Since $s_\a^v(t)<1$, by Lemma \ref{lem32}, $(v,t)$ is eventually mapped to $(x,t_\a)$ or $(y,t_\a')$.
    If we are in the former case, then the conclusion already holds.
    Hence we just need to deal with the case that the forward orbit of $(v,t)$ contains $(y,t_\a')$.
    If we are in the periodic case, then by Remark \ref{single}, we see that its forward orbit will eventually contains $(x,t_\a)$.
    Hence the conclusion follows.
    If we are in the non-periodic case, choose $(v',t')$ such that the smooth internal arc of $(v,t)$ and $(v',t')$ terminate at the same point.
    Suppose that $\RRR_\a^L(v,t)$ land at $z$ and $\RRR_\a^R(v,t)$ land at $w$.
    Then by Remark \ref{nonpercom}, we see that $\RRR_\a^R(v',t')$ land at $z$ and $\RRR_\a^L(v',t')$ land at $w$.
    Hence $z$ and $w$ are joined by the left-right internal ray $\RRR_\a^{L,R}(v',t')$.
    
\end{proof}

\subsection{External Classes and External Angles}\label{ssec3.5}
Given an ABC-hyperbolic map $\a$, the landing point of a generalized internal ray $\RRR_\a^{\i}(v,t)$  is denoted by $z_\a^{\i}(v,t)$.
Since $J_\a$ is connected and locally connected, according to \cite{wanderingorbitportrait}, there are finite many external rays which land at $z_\a^{\i}(v,t)$.
The angle of these external rays are called \emph{external angles} of $\RRR_\a^{\i}(v,t)$ or $z_\a^{\i}(v,t)$.
Let $E_\a^{\i}(v,t)$ be the set of these external angles.
This set is called the \emph{external class} of $ \RRR_\a^{\i}(v,t)$ or $z_\a^{\i}(v,t)$.

For convenience, we pick a preferred element in each external class.
Suppose that  $E_\a^{\i}(v,t)=\{ \theta_1,\theta_2,\dots,\theta_n \}$, and the rays $\RRR_\a^{\i}(v,t),R_\a(\theta_1),R_\a(\theta_2),\dots,R_\a(\theta_n)$ are arranged in positive cyclic order near $z_\a^{\i}(v,t)$.
We define the \emph{preferred external angle} $\vt_\a^{\i}(v,t)$ of $ \RRR_\a^{\i}(v,t)$ or $z_\a^{\i}(v,t)$ to be $\theta_1$.
Notice that an ABC-hyperbolic map $f_\a$ is locally conformal at every point in $J_\a$.
By the orientational preserving property of conformal mappings, one may verify that 
\begin{equation}\label{tau_3}
   \tau_3 ( \vt_\a^{\i}(v,t)) = \vt_\a^{\i} ( \g(v,t) ),\qquad \forall (v,t) \in \O \times \RZ.
\end{equation}

\begin{lem}[external angles and combinatorics]\label{p&np}
    An ABC-hyperbolic map $f_\a$ has periodic combinatorics if and only if $\vt_\a^\iota(x,t_\a)$ is periodic under $\tau_3$ for $\iota \in \{L,R \}$.
\end{lem}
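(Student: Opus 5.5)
Both directions come from the equivariance relation \eqref{tau_3}, $\tau_3(\vt_\a^{\iota}(v,t))=\vt_\a^{\iota}(\g(v,t))$. It ties the $\g$-orbit of $(x,t_\a)$ to the $\tau_3$-orbit of the preferred external angle. The forward direction is immediate; the converse rests on a finiteness/injectivity statement for landing points.

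\emph{Periodic combinatorics $\Rightarrow$ periodic angle.} Suppose $\g^{k}(x,t_\a)=(x,t_\a)$ for some $k\ge 1$. By Remark \ref{single} and the discussion after Proposition \ref{iotaturning}, the constant sequences $(L,L,\dots)$ and $(R,R,\dots)$ are invariant under the shift. Hence $f_\a^k$ maps $\RRR_\a^\iota(x,t_\a)$ onto itself and fixes its landing point $z_\a^\iota(x,t_\a)$. Iterating \eqref{tau_3} $k$ times gives $\tau_3^k(\vt_\a^\iota(x,t_\a))=\vt_\a^\iota(x,t_\a)$, so the preferred angle is periodic.

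\emph{Periodic angle $\Rightarrow$ periodic combinatorics.} We argue by contraposition: assume the combinatorics is non-periodic. Since $\O$ is the grand orbit of a periodic cycle, the orbit $\g^n(x,t_\a)$ eventually lies in $O\times\RZ$. There the second coordinate evolves by doubling (with $\tau_1$ at the non-critical points of the cycle). So $(x,t_\a)$ is preperiodic but not periodic, or has an infinite orbit.

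1. \emph{Infinite orbit.} Suppose $\vt=\vt_\a^\iota(x,t_\a)$ were periodic. Then the landing points $z_n=z_\a^\iota(\g^n(x,t_\a))$ lie on a finite cycle, since $z_n$ is the landing point of $R_\a(\tau_3^n\vt)$. Hence infinitely many distinct pairs $\g^n(x,t_\a)$ would land at one point $z$ of a single periodic Fatou component boundary. I expect this to contradict Corollary \ref{distinctlanding}, extended to the mixed case via (I1) and the maximum modulus argument used there. The main work is to check that distinct left/right rays $\RRR_\a^\iota(\g^n(x,t_\a))$ never share an end in this situation. For the iterates with $n\ge1$ this holds because, by Lemma \ref{lem32} and non-periodicity, $\g(x,t_\a)\notin\M'_\a$, so these rays are smooth.

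2. \emph{Strictly preperiodic orbit.} Let $(v',t')=\g^{m}(x,t_\a)$ be periodic with $m\ge1$ minimal, and let $(w,s)$ be the other periodic preimage in the orbit, so that $\g(w,s)=\g(\g^{m-1}(x,t_\a))$. If $\vt$ were periodic, then $\tau_3^{m-1}\vt$ and the angle of the periodic preimage would both be periodic with the same image under $\tau_3$. They would therefore coincide, since $\tau_3$ is injective on periodic points. Then two different generalized internal rays in the same Fatou component land at the same point. Their images are equal, so by the first part of the proof of Corollary \ref{distinctlanding} that point would be critical for an iterate of $f_\a$. This is impossible for hyperbolic $f_\a$ on $J_\a$.

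The main obstacle is making the ``same landing point $\Rightarrow$ contradiction'' step airtight when one ray is non-smooth. The first thing I would try is to push forward one step, where all rays become smooth since $\g(x,t_\a)\notin\M_\a'$, and apply the classical distinct-landing result there.
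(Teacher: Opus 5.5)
Your forward direction matches the paper's. Your converse is correct, but it is argued differently.

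The paper uses one argument with no case split:
\begin{itemize}
\item Let $q$ be the period of $\theta=\vt_\a^\iota(x,t_\a)$. Then $f_\a^q$ sends $\RRR_\a^\iota(x,t_\a)$ to the smooth ray of $\g^q(x,t_\a)$, which lands at the same point $z$.
\item Corollary~\ref{distinctlanding} puts these two rays in different Fatou components.
\item $f_\a^q$ is an orientation-preserving local homeomorphism at $z$ that fixes $R_\a(\theta)$. Hence it fixes every access at $z$, so both components map onto $B_\a(\sigma^q(x))$ and $z$ is critical.
\end{itemize}

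You instead split according to the $\g$-orbit of $(x,t_\a)$.
\begin{itemize}
\item \emph{Infinite orbit.} Pigeonholing over the cycle $O$ and the finitely many landing points gives two distinct \emph{smooth} rays from one center that co-land. So you need Corollary~\ref{distinctlanding} only in its classical smooth-ray form.
\item \emph{Strictly preperiodic orbit.} Injectivity of $\tau_3$ on periodic angles forces $\RRR_\a^\iota(\g^{m-1}(x,t_\a))$ and its periodic sibling $R_\a^w(s)$ to co-land. Their images under $f_\a$ coincide, so local injectivity of $f_\a$ together with (I1) makes the landing point critical.
\end{itemize}
Your route avoids the ``fix one access, hence fix all accesses'' step, which the paper asserts quickly. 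It also never needs distinct landing for non-smooth rays. The price is the case split.

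One inaccuracy: in the preperiodic case the two rays need not lie in the same Fatou component. For example, in type (C) with $m=1$, $x$ lies in the strictly preperiodic component containing $-c(\a)$ while $w\in O$. Your argument never uses that claim, though. The two rays are distinct near the common landing point by (I1), because $R_\a^w(s)$ is smooth, and that is all the critical-point argument needs. For the same reason, the ``main obstacle'' in your last paragraph is already handled by what you wrote.
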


\begin{proof}
       If $f_\a$ has periodic combinatorics, then by Remark \ref{single}, $(x,t_\a)$ is periodic under $\g$.
       By \eqref{tau_3}, we see that $\vt_\a^\iota(x,t_\a)$ is periodic for $\iota \in \{L,R \}$.

       Conversely, if $f_\a$ has non-periodic combinatorics, then $(x,t_\a)$ is non-periodic. In this case, $\vt_\a^\iota(x,t_\a)$ must be non-periodic.
       We prove it by contradiction. 
       Assume that $\theta:=\vt_\a^\iota(x,t_\a)$ has period $q$ under $\tau_3$. 
       Thus $\RRR_\a^\iota(x,t_\a)$ and $R_\a(\theta)$ land at a common periodic point $z$ with period dividing $q$.
       Since $(x,t_\a)$ is non-periodic, then $f_\a^q$ maps $\RRR_\a^{L,R}(x,t_\a)$ to the smooth internal ray $R_\a^{v}(t)$ where $(v,t):=\g^q(x,t_\a)$ which also lands at $z$.
       By Corollary \ref{distinctlanding}, $B_\a(v')\neq B_\a(x)$ is another Fatou component which contains $z$ on its boundary.
       Since $f_\a^q$ is locally orientational preserving and fix $R_\a(\theta)$, it must also fix the smooth internal ray $R_\a^{v}(t)$. Thus, we have $  f_\a^q(B_\a(v)) = f_\a^q(B_\a(x)) =B_\a(v) $.
       Since $z \in \partial B_\a(v) \cap \partial B_\a(x)$, we see that $z$ is a critical point of $f_\a^q$. This contradicts the hyperbolicity of $f_\a$.
      
\end{proof}


\section{Lamination of Boundary Maps}\label{sec5}

\subsection{Parameter Internal Rays}

In this part, we study the dependence of parameter $\a$ for  generalized internal rays, external classes on parameters, and use it to study the lamination of maps on the boundaries of hyperbolic components. 

Let $\H$ be a hyperbolic component of type \A, \B, or \CCC.
Recall that $\ell:=\ell(\H)$ is the smallest integer such that $f_\a^\ell(-c(\a)) \in B_\a(c)$ for $\a \in \H$.
According to \cite{milnor2008cubic}, we have the following parameterization of $\H$.

\begin{prop}[parameterization of hyperbolic components]\label{parameterozation}
    The map $\Phi_\H  : \H \cap \S \to \D$, $\a \mapsto \phi_\a^c(f_\a^\ell(-c(\a)))$ is a  proper of degree 
    $d(\H)=\deg (f_\a^p|_{B_\a(c)} ) - 1.
$
\end{prop}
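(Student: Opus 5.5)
The plan is to follow Milnor's argument for quadratic-like parameterizations: show $\Phi_\H$ is holomorphic and proper, then compute its degree by counting preimages of the center value $0$.

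\textbf{Holomorphy.} For $\a \in \H\cap\S$, the point $f_\a^\ell(-c(\a))$ lies in $B_\a(c)$. The B\"ottcher coordinate $\phi_\a^c$ depends holomorphically on $\a$ on the domain where it is defined. Because of the critical point in $B_\a(c)$, I would first check that $\phi_\a^c$ extends holomorphically at least to the level set through $f_\a^\ell(-c(\a))$. This is where the choice of $\ell$ as the \emph{smallest} entry time matters: it gives $G_\a^c(f_\a^\ell(-c(\a)))<s_\a^c$, as in the proof of Lemma \ref{lem32}. A Hartogs or implicit-function argument then gives that $\a\mapsto \phi_\a^c(f_\a^\ell(-c(\a)))$ is holomorphic on $\H\cap\S$.

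\textbf{Properness.} Take $\a_n\to\partial(\H\cap\S)$. Since $\H$ is bounded, it suffices to show $|\Phi_\H(\a_n)|\to 1$. Suppose instead $|\Phi_\H(\a_n)|\le r<1$ along a subsequence. Then $f_{\a_n}^\ell(-c(\a_n))$ stays in a uniform B\"ottcher disk around $c(\a_n)$ of radius $r$, so both critical orbits are uniformly attracted to the super-attracting cycle. By continuity the limit map $f_{\a_0}$ is again hyperbolic with $-c(\a_0)$ in the same combinatorial position, so $\a_0\in\H$, a contradiction. The delicate point here is continuity of the extended B\"ottcher map up to radius $r$ under the limit; this follows from uniform control of $G_\a^c$ away from the critical level $s_\a^c$.

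\textbf{Degree.} A proper holomorphic map between one-dimensional complex domains has a well-defined degree, computed as the number of preimages of $0$ with multiplicity. The preimages of $0$ are the centers, namely the parameters with $f_\a^\ell(-c(\a))=c(\a)$, and these are the postcritically finite maps in $\H\cap\S$. I would count them by quasiconformal surgery on the Blaschke model $f_\a^p|_{B_\a(c)}$. This model is a proper map of degree $D=\deg(f_\a^p|_{B_\a(c)})$ with a super-attracting fixed point of local degree $2$, and in type (A) it has a second critical point. Normalize the model family so that the free critical value, pulled to $B_\a(c)$, is parameterized by $\D$. The conformal-conjugacy classes realizing the value $0$ correspond to the fixed points other than $0$ of the model ($z\mapsto z^{D}$ type symmetries). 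This yields $D-1$ centers, each simple by transversality.

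Types (B) and (C) reduce to the same count, since $\ell$ transports $-c(\a)$ into $B_\a(c)$ through univalent (type C) or degree-appropriate branches.

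\textbf{Main obstacle.} The main obstacle is the degree count, specifically establishing transversality, i.e. that each center is a simple zero. I would attempt this via Thurston rigidity for postcritically finite maps, or by constructing an explicit qc surgery inverse that realizes every Blaschke model in a covering of degree $D-1$ over $\D\setminus\{0\}$. Alternatively, I would simply cite Milnor \cite{milnor2008cubic}, which the statement itself attributes.
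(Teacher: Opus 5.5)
The paper does not prove this proposition; it only cites Milnor \cite{milnor2008cubic}. Your framework (holomorphic, proper, then count preimages of $0$) is the standard one, but the degree step is wrong rather than merely incomplete.

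You assert that $\Phi_\H^{-1}(0)$ consists of $D-1$ distinct centers, each a simple zero. In fact $\H\cap\S$ contains exactly one postcritically finite parameter $\a^*$; the paper uses this just before Corollary~\ref{semiconjugacy}. So $\Phi_\H^{-1}(0)=\{\a^*\}$, and $d(\H)=D-1$ is the \emph{order of vanishing} of $\Phi_\H$ at $\a^*$. For types (A) and (B), where $D=3,4$, the map is therefore \emph{not} transversal at the center. The ``main obstacle'' you isolate is thus a false statement, and Thurston rigidity gives uniqueness of the center, which is the opposite of what you need.

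You can check this directly in type (A). The center is $c=0$, and $c$ is a local coordinate on $\S$ there. From $f_\a(z)-f_\a(c)=(z-c)^2(z+2c)$ one gets $f^p_\a(z)-c\approx\lambda(z-c)^2(z+2c)$ with $\lambda=(f^{p-1}_\a)'(f_\a(c))\neq0$. Hence $\Phi_\H(\a)^2=\phi^c_\a(f^p_\a(-c(\a)))=12\lambda^2c^4(1+O(c))$, so $\Phi_\H$ has a double zero.

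Your remark about $z\mapsto z^D$ does point at the right source of $D-1$, but it enters through the fibres over \emph{nonzero} values. The map $z\mapsto z^D$ is a single conformal class. The rotations by $(D-1)$-th roots of unity encode the $D-1$ ways of matching a boundary fixed point of the degree-$D$ first-return model with the combinatorially distinguished one of the polynomial, which is carried along by the holomorphic motion of the Julia set. These $D-1$ sheets coalesce at $\a^*$.

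A correct argument has two options:
\begin{itemize}
\item show $\Phi_\H$ is a covering over $\D\setminus\{0\}$ (path lifting by qc deformation of the B\"ottcher position of the critical value, local injectivity by conformal rigidity off the Julia set), then count one fibre using this boundary marking; or
\item compute the local degree at the unique center directly.
\end{itemize}

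There is also a secondary error: your holomorphy justification fails in type (A). There the minimal $\ell$ is $0$, and $-c(\a)$ lies \emph{on} the critical level, $G^c_\a(-c(\a))=s^c_\a$; the remark after Lemma~\ref{lem32} gives $r_\a=s^c_\a$. So the inequality $G^c_\a(f^\ell_\a(-c(\a)))<s^c_\a$ is false, and $\phi^c_\a(-c(\a))$ is only defined as a limit along the internal arc. If you instead take the smallest positive $\ell$, as in the proof of Lemma~\ref{lem32}, you get $\ell=p$ and the map $\Phi_\H^2$, which has degree $4$, not $2$.

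The fix is to observe that $\Phi_\H^2=\phi^c_\a(f^p_\a(-c(\a)))$ is holomorphic and that $\Phi_\H$ is its continuous square root. Your properness argument needs the same shift to $f^p_\a(-c(\a))$, since in type (A) the B\"ottcher disk has radius exactly $|\Phi_\H(\a)|$.
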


For $t\in \RZ$, connected components of $\Phi_\H^{-1}((0,1)e^{2\pi i t} )$ are called {\emph{parameter internal rays}} of angle $t$ in $\H \cap \S$.
Therefore, fixing $t \in \RZ$, there are exactly $d(\H)$ parameter internal rays with angle $t$ in $\H$.
Every parameter internal ray $\mathcal{R}_\H(t)$ of angle $t$ can be canonically parameterized by $\Gamma: (0,1) \to \mathcal{R}_\H(t)$ with $\Phi_\H \circ \Gamma=\mathrm{id.}$.

    Fix any $\a \in \H \cap \S$, let $t_\a$, $r_\a$ be given in Lemma \ref{lem32}.
    the parameterization map $\Phi_\H(\a)=\phi_\a^c(f_\a^\ell(-c(\a)))$ is equal to $r_\a e^{2\pi i t_\a}$.
    Thus, for any $\a$ in a parameter internal ray $\mathcal{R}_\H(t_0)$, we have $t_\a=t_0$.
    The following Lemma shows that when the parameter $\a$ moves on the parameter internal ray $\mathcal{R}$, the structure of external rays and generalized internal rays are preserved.



\begin{lem}[invariance on parameter internal rays]\label{surgery}
    Let $\H$ be a hyperbolic component of type \A, \B, or \CCC, and $\a_1 \in \H$ with $\Phi_\H(\a_1)=r_0e^{2\pi i t_0}$.
    Let $\mathcal{R}_\H(t_0)$ be a parameter internal ray in $\H$, then for each $\a \in \mathcal{R}_\H(t_0) $, there exists a quasiconformal map $h_\a:\CC \to \CC$ such that the following diagram commutes.
    \begin{equation*}
    \xymatrix{\ar @{} [dr]
    \CC \ar[d]^{f_{\a_1}} \ar[r]^{h_\a} & \CC \ar[d]^{f_\a} \\
     \CC \ar[r]^{h_\a}        & \CC   }
\end{equation*}
Moreover, $h_\a$ preserves every external rays and generalized internal rays.
\begin{enumerate}
    \item $h_\a |_{\Omega_{\a_1}}=\psi_\a\circ\varphi_{\a_1}$ is a conformal map.
    For $\theta \in \RZ$, $h_\a(R_{\a_1}(\theta))=R_\a(\theta)$.
    \item $\M'_\a=\M'_{\a_1}$. For $(v,t) \in \M'_\a$, we have $h_\a(\RRR_{\a_1}^{\i}(v,t))=\RRR_\a^{\i}(v(\a),t)$ and $E_\a^{\i}(v(\a),t)=E_{\a_1}^{\i}(v,t)$, $\vt_\a^{\i}(v(\a),t)=\vt_{\a_1}^{\i}(v,t)$ where $v(\a):=h_\a(v)$.
    
\end{enumerate}

\end{lem}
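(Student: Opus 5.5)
The plan is to construct $h_\a$ by a holomorphic motion along the parameter ray. The map $h_\a$ is then pinned down on the Fatou set by the canonical coordinates, namely the B\"ottcher maps at $\infty$ and the generalized internal coordinates $\Psi_\a^{\i}$.

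\textbf{Step 1: the motion.} Since $\mathcal{R}_\H(t_0)\subset \H$ consists of hyperbolic maps, the Julia set $J_\a$ moves holomorphically over $\H\cap\S$ by Ma\~n\'e--Sad--Sullivan. Restricted to the arc joining $\a_1$ to $\a$, this motion gives a conjugacy $J_{\a_1}\to J_\a$ that is compatible with external angles. We then extend it to $\CC$ dynamically rather than through the $\lambda$-lemma:
\begin{itemize}
\item on $\Omega_{\a_1}$ we set $h_\a:=\psi_\a\circ\varphi_{\a_1}$, which conjugates the dynamics because both sides conjugate to $w\mapsto w^3$ and $K_\a$ is connected;
\item on $\tilde B_{\a_1}$ we define $h_\a$ fiberwise through the model space.
\end{itemize}

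\textbf{Step 2: the same model combinatorics.} The key point is that $\a$ and $\a_1$ share $t_\a=t_0$. By Lemma \ref{lem32}, $\M'_\a$ is determined by the backward $\g$-orbits of $(x,t_0)$ and $(y,t'_0)$. Here $t'_0$ is determined by $t_0$ together with the combinatorial position of $y$, and by the Remark after Lemma \ref{lem32} that position depends only on the type of $\H$. Hence $\M'_\a=\M'_{\a_1}$. The radii do change, since $r_\a$ moves along $(0,1)$. To absorb this, we use a fiberwise quasiconformal radial stretch $\rho\colon\M\to\M$ of the form $re^{2\pi it}\mapsto r^{\kappa}e^{2\pi it}$ on $\{c\}\times\D$, with $\kappa=\log r_\a/\log r_{\a_1}$. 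This stretch commutes with $z\mapsto z^{\delta}$ and maps radii to radii, so it commutes with $g$. Pulling it back to each $\{v\}\times\D$ via $g$ gives a single stretch, well defined over the grand orbit, that sends $s_{\a_1}^v(t)$ to $s_\a^v(t)$ for every $(v,t)$.

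\textbf{Step 3: definition on the basin and gluing.} On $\tilde B_{\a_1}$ we define
$$h_\a:=\Psi^\iota_\a\circ\rho\circ(\Psi^\iota_{\a_1})^{-1}.$$
Off the turning points this agrees for both choices $\iota\in\{L,R\}$, since it is pinned down on the smooth part by (P2) and the uniqueness of lifts. In general it is defined sector by sector, where the sectors are the components cut out by the left-right rays, as in Corollary \ref{intersection} (I2). By construction it maps $\RRR^{\i}_{\a_1}(v,t)$ onto $\RRR^{\i}_\a(v(\a),t)$ and commutes with $f$.

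The map is continuous across $\partial\tilde B$ for two reasons:
\begin{itemize}
\item every generalized internal ray lands (Proposition \ref{internalland});
\item the puzzle impressions shrink uniformly (Lemma \ref{singleton}).
\end{itemize}
Because of this, the extensions on $\Omega$, on $\tilde B$ and on $J$ agree on landing points, and the combined map is a homeomorphism. It is quasiconformal off $J_{\a_1}$. Since $J_{\a_1}$ is a hyperbolic Julia set of measure zero and the map extends as a holomorphic-motion homeomorphism, it is quasiconformal on $\CC$ by Rickman/Bers removability.

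\textbf{Step 4: conclusions.} Item (1) is immediate. For (2), $h_\a$ preserves landing points and sends $R_{\a_1}(\theta)$ to $R_\a(\theta)$. Being orientation preserving, it also preserves the cyclic order of the rays around $z^{\i}(v,t)$. Hence $E_\a^{\i}(v(\a),t)=E_{\a_1}^{\i}(v,t)$ and the preferred angles agree.

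The main obstacle is Step 2/3: defining the fiberwise map consistently across the pre-critical points $\omega$ where the internal arcs branch, and showing that it is a homeomorphism of $\tilde B$ and not merely of each ray. We will try first to avoid this by a quasiconformal surgery on the critical value. We deform $f_{\a_1}$ by a $g$-invariant Beltrami coefficient on $\tilde B_{\a_1}$, obtained by pulling back the radial stretch, and straighten it by the measurable Riemann mapping theorem. This produces a map in $\H\cap\S$ with $\Phi_\H$-value $r^{\kappa}e^{2\pi it_0}$ that lies on the same parameter ray, by continuity in $\kappa$ and the properness in Proposition \ref{parameterozation}. The straightening map is then $h_\a$. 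That it preserves generalized internal rays follows because it conjugates the dynamics and, in the model coordinates, is a radial stretch, hence respects (P2) and the left/right turning directions.
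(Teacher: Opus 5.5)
Your closing surgery argument is exactly the paper's proof: you pull the radial stretch $r\mapsto r^{\kappa}$ back through $\phi^c_{\a_1}$ to an $f_{\a_1}$-invariant Beltrami coefficient on $\tilde B_{\a_1}$ (standard elsewhere), straighten it with the normalization at $\infty$, and read off $\Phi_\H=r_0^{\kappa}e^{2\pi i t_0}$; items (1)--(2) then follow, as you say, from conformality on $\Omega_{\a_1}$, compatibility with (P2), and orientation preservation at the turning points. Steps 1--3 (the Ma\~n\'e--Sad--Sullivan motion, the fiberwise gluing and the removability argument) are unnecessary and, as you note yourself, not justified as written, so present the surgery as the proof rather than as a fallback.
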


\begin{proof}
    We reconstruct the internal ray $\mathcal{R}_\H(t)$ by quasconformal surgeries.
    For $s>0$, let $\ell_s: \C \to \C$ be the linear map given by $\ell_s(z)=z|z|^{s-1}$, and $\sigma_s:=\ell_s^*(\sigma_0)$ where $\sigma_0$ denote the standard complex structure.
    
    Let $\sigma'_{s}$ be the complex structure such that 
    \begin{itemize}
        \item $\sigma'_{s}:=(\phi_{\a_1}^c)^*(\sigma_s)$ on $B_{\a_1}^c( s_{\a_1}^c )$;
        \item $\sigma_s'=\sigma_0$ outside $\tilde{B}_{\a_1}$;
        \item $f_{\a_1}^*(\sigma'_s)=\sigma_s'$.
    \end{itemize}
    This complex structure $\sigma_s'$ has bounded dilatation and depends continuously on $s >0$. 
    By the Measurable Riemann Mapping Theorem, there exists a quasiconformal map $h_s:\CC \to \CC$ such that $h_s^*(\sigma_0)=\sigma_s'$ and $h_s(\infty)=\infty$, $\lim_{z\to \infty} \frac{h_s(z)}{z}=1$, and $h_s(-c(\a_1))+h_s(c(\a_1))=0$.
    It follows that $h_s\circ f_{\a_1}\circ h_s^{-1}$ is a monic and centered cubic polynomial which depends continuously on $s>0$.
    By assigning $h_s(c(\a_1))=c(\a(s))$, we have $\mathcal{R}:=\{ \a(s) : s>0 \}  $ forms a curve in $\hat{\P}$.
    Since $f_{\a(s)}= h_s\circ f_{\a_1}\circ h_s^{-1} $ is hyperbolic, then $\mathcal{R} \subset \H $.

    Since $\sigma_s'=\sigma_0$ on $\Omega_{\a_1}$, then $h_s|_{\Omega_{\a_1}}$ is a conformal conjugacy between $f_{\a_1}$ and $f_{\a(s)}$.
    Thus, $\varphi_{\a(s)}\circ h_s$ conjugates $f_{\a_1}$ and $z\mapsto z^3$.
    Combining the fact that $h_s(\infty)=\infty$, $\lim_{z\to \infty} \frac{h_s(z)}{z}=1$, we conclude that $ \varphi_{\a(s)}\circ h_s $ is the B\"ottcher map on $\Omega_{\a_1}$.
    Hence it must coincide with $\varphi_{\a_1}$.
    It follows that for $\theta \in \RZ$, $h_s(R_{\a_1}(\theta))=R_{\a(s)}(\theta)$.
    For the same reason, one may verify that $\phi_{\a(s)}^c\circ h_s|_{B_{\a_1}^c(s_{\a_1}^c)} =\ell_s \circ \phi_{\a_1}^c$.

    Now we verify that $\mathcal{R}$ coincides with a parameter internal ray of angle $t_0$.
    Notice that
    \begin{align*}
        \Phi_\H(\a(s)) &=\phi_{\a(s)}^c \circ f_{\a(s)}^\ell (-c(\a(s)))  = \phi_{\a(s)}^c \circ f_{\a(s)}^\ell \circ h_s(-c(\a_1)) \\
        &= \phi_{\a(s)}^c \circ h_s  \circ f_{\a_1}^\ell(-c(\a_1)) =\ell_s \circ \phi_{\a_1}^c \circ f_{\a_1}^\ell(-c(\a_1)) \\
        &=\ell_s \circ \Phi_\H(\a_1)=r_0^s e^{2\pi i t_0}.
    \end{align*}
    Since $\{r_0^s: s>0\}=(0,1)$, the conclusion follows.

    For $v \in \tilde{O}(\a_1)$, denote $v(\a(s)):=h_s(v(\a_1))$.
    Given a generalized internal ray $\RRR_{\a_1}^{\i}(v,t)$, since $h_s|_{B_{\a_1}^c(s_0)} \circ \psi_{\a_1}^c=\psi_{\a(s)}^c$ and $h_s$ is an orientational preserving conjugacy, we deduce that $h_s(\RRR_{\a_1}^{\i}(v,t))=\RRR_\a^{\i}(v(\a),t)$.
    By the continuity of $h_s$, we see that $h_{\a(s)}(z_{\a_1}^{\i}(v,t))=z_{\a(s)}^{\i}(v(\a(s)),t)$ and $h_{s}(z_\theta(\a_1))=z_{\a(s)}(\theta)$ for any $\theta\in \RZ$.
    It follows that $E_{\a(s)}^{\i}(v(\a(s)),t)=E_{\a_1}^{\i}(v,t)$.
    
    By changing the notation $h_s$ by $h_\a$ where $\a=\a(s)$, the proof is completed.

\end{proof}


According to \cite{wang2017hyperbolic}, it is proved that $\partial\H \cap \S$ is a Jordan curve.
It follows that every parameter internal ray \emph{lands}, i.e. $\Gamma(s)$ converges to some $\a \in \partial \H \cap \S$ as $s\to 0$.
Conversely, every boundary point in $\partial \H \cap \S$ is the landing point of a unique parameter internal ray.

In the rest of this article, we make the following standing hypothesis.

\begin{asp}
    $\H$ is a bounded hyperbolic component of type {\rm (A), (B) or (C)},
    $\a_0 \in \partial \H \cap \S$, and $\mathcal{R}:=\mathcal{R}_\H(t_0)$ is the parameter internal ray landing at $\a_0$.
\end{asp}

Notice that the original definition of $\O=\O(\a)$, $\M_\a'$, $\vt^\iota_\a(v,t)$, and $g,\g$ are all dependent on $\a$. 
However, by Lemma \ref{surgery}, we that they are independent of $\a$ on the parameter internal ray $ \mathcal{R} $.
Hence, we will continue to use the notation $\O$, $v \in \O$, $g$ and $\g$.
We will also use the notation $\M'$ and $\vt^\iota(v,t)$ for $(v,t) \in \M'$ and $\iota \in \{ L,R \}$.


\subsection{Visual Lamination}

In this part, we start the study of the real lamination $\L_\R(\a_0)$ of $\a_0$.
Our strategy is to study the structure of the left-right internal rays.

From the pictures of Julia sets generated by computers, we see that as $\a \in \mathcal{R}$ tends to $\a_0$, for any $(v,t) \in \M'$, the landing points $z_\a^L(v,t)$ and $z_\a^R(v,t)$ which are joined by the left-right internal ray $\RRR_\a^{L,R}(v,t)$  becomes closer and closer.
It is reasonable to conjecture that for the limit map $\a_0$, the two external ray $R_{\a_0}(\vt^L(v,t))$ and $R_{\a_0}(\vt^R(v,t))$ land at a common point.
For this reason, it is natural to define the following visual lamination $\Lambda_\R(\a_0)$ of $\a_0$.



\begin{defi}[visual lamination]
    Define the relation $\Lambda(\a_0) \subset \RZ \times \RZ$ which contains the diagonal $\{(\theta,\theta):\theta \in \RZ\}$ and for $\theta\neq \theta'$, we have $ (\theta,\theta') \in \Lambda(\a_0) $ if and only if there exist $(v,t) \in \M'$ such that one of $\theta,\theta'$ is $ \vt^L(v,t)$ and the other is $\vt^R(v,t)$.
    
    The \emph{visual lamination} $\Lambda_\R(\a_0)$ of $\a_0$ is defined to be the smallest equivalence relation containing $\L_\R(\H) \cup \Lambda(\a_0)$.
\end{defi}

The main purpose of this section is to show that the visual lamination $\Lambda_\R(\a_0)$ coincides with the real lamination $\L_\R(\a_0)$.

\begin{lem}[characterization of $\Lambda(\a_0)$]\label{Lc}
  The relation $\Lambda(\a_0)$ is a minimal $\tau_3$ invariant equivalence relation with the generator $E^*=\{ \vt^L(x,t_0), \vt^R(x,t_0) \}$.
  All non-singleton equivalence classes of $\Lambda(\a_0)$ are $\{  \vt^L(v,t), \vt^R(v,t) \}$ for all $(v,t)$ whose forward orbit under $\g$ contains $(x,t_0)$.
\end{lem}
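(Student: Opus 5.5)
The plan is to show three things in turn: the pairs $\{\vt^L(v,t),\vt^R(v,t)\}$ can all be taken with $(x,t_0)$ in the forward orbit; distinct such pairs are disjoint, so $\Lambda(\a_0)$ is already an equivalence relation whose non-singleton classes are exactly these pairs; and $\tau_3$ maps classes to classes, with every pair eventually landing on $E^*$. Throughout I work with a fixed $\a\in\mathcal{R}$. By Lemma \ref{surgery}, $\M'$ and the angles $\vt^\iota(v,t)$ do not depend on the choice of $\a\in\mathcal{R}$, so every statement can be checked in the dynamical plane of this one hyperbolic map $f_\a$.

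\textbf{Step 1 (reduction to the orbit of $(x,t_0)$).} By Lemma \ref{lem32}, $(v,t)\in\M'$ if and only if its $\g$-orbit contains $(x,t_0)$ or $(y,t_0')$. In the periodic case, Remark \ref{single} shows that the orbit always contains $(x,t_0)$. In the non-periodic case with $(y,t_0')$ in the orbit, I take $(v',t')$ as in Lemma \ref{joined}. By Remark \ref{nonpercom}, $\RRR^L_\a(v,t)$ and $\RRR^R_\a(v',t')$ have the same end, and likewise $\RRR^R_\a(v,t)$ and $\RRR^L_\a(v',t')$.

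The preferred external angle only depends on the germ of the curve at its landing point, since it is the first external ray following the internal ray in positive cyclic order. Hence $\vt^L(v,t)=\vt^R(v',t')$ and $\vt^R(v,t)=\vt^L(v',t')$, so the two pairs coincide as sets. It therefore suffices to consider $(v,t)$ whose orbit contains $(x,t_0)$.

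\textbf{Step 2 (pairs are genuine and disjoint).} First, $\vt^L(v,t)\neq\vt^R(v,t)$. Indeed, $\RRR^L_\a(v,t)$ and $\RRR^R_\a(v,t)$ do not have the same end, by the converse part of Remark \ref{nonpercom}. So by Corollary \ref{distinctlanding} they land at distinct points, and distinct landing points carry distinct external angles.

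Next, suppose $\vt^{\iota}(v,t)=\vt^{\iota'}(v',t')$. Then the two generalized rays land at the same point $z$, and both are immediately followed in cyclic order by the same external ray. So no external ray separates them near $z$. I then argue that they lie in the same Fatou component, because two distinct bounded Fatou components touching at $z$ are separated near $z$ by external rays (the limb structure of Theorem \ref{roeschyin}). Corollary \ref{distinctlanding} then forces them to have the same end. By Remark \ref{nonpercom}, either they are equal, or we are in the non-periodic case and they are the paired rays from Step 1. In both cases the two pairs coincide.

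Hence the non-singleton classes are exactly the pairs listed in the lemma. Transitivity is automatic, and $\Lambda(\a_0)$ is an equivalence relation.

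\textbf{Step 3 (invariance and minimality).} By \eqref{tau_3}, $\tau_3$ maps the pair of $(v,t)$ onto the pair of $\g(v,t)$, as long as $\g(v,t)\in\M'$. If $\g(v,t)\notin\M'$, which happens only for $(v,t)=(x,t_0)$ in the non-periodic case, the image is the single angle of a smooth internal ray. That angle is a singleton class: it cannot belong to a pair, because by (I1) of Corollary \ref{intersection} and the argument of Step 2 it would otherwise share an end with a non-smooth ray, which is impossible.

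The hard part is the converse: if $\theta$ is a singleton class, then $\tau_3(\theta)$ must also be a singleton class. I argue by contradiction. Suppose $\tau_3(\theta)=\vt^\iota(v,t)$ with $(v,t)\in\M'$. Because $f_\a$ is hyperbolic, it is locally conformal at the landing point $z_\theta$ of $R_\a(\theta)$. So I pull back the germ of $\RRR^\iota_\a(v,t)$ by the local inverse branch of $f_\a$ at $z_\theta$ and continue it inside the Fatou component. Using (P2) and the uniqueness of lifts, this produces a generalized internal ray $\RRR^\iota_\a(v'',t'')$ with $\g(v'',t'')=(v,t)$ and landing point $z_\theta$. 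Since $f_\a$ preserves orientation at $z_\theta$, its preferred angle is $\theta$. The lift is non-smooth, so $(v'',t'')\in\M'$, and $\theta$ would belong to a pair, a contradiction.

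Finally, minimality: every pair $\{\vt^L(v,t),\vt^R(v,t)\}$ with $(x,t_0)$ in the orbit of $(v,t)$ is mapped by the corresponding iterate of $\tau_3$ onto $E^*$. The main obstacle I expect is the lifting argument in Step 3 together with the separation claim in Step 2, namely that a shared preferred angle forces the two rays to lie in the same Fatou component.
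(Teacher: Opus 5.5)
Your proposal is correct and has the same overall structure as the paper's proof. Two pairs sharing an angle are forced to coincide via Corollary~\ref{distinctlanding} and Remark~\ref{nonpercom}, which gives transitivity. Lemma~\ref{joined} reduces every pair to one whose $\tilde g$-orbit passes through $(x,t_0)$. Invariance and minimality come from \eqref{tau_3}, with the image of $E^*$ in the non-periodic case treated separately. You also check that each pair genuinely has two elements, which the paper leaves implicit.

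The real differences are in three sub-steps.

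\emph{Rays in different Fatou components.} The paper only says this case is ``clearly impossible by the choice of external angles''. Your separation argument via Theorem~\ref{roeschyin} is the actual justification: external rays land at $z$ in both sectors between the two internal rays, so the two rays cannot share the first external ray in positive cyclic order.

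\emph{The image of $E^*$ in the non-periodic case.} To show $\tau_3(E^*)$ is a singleton class, the paper argues that otherwise that angle would be $\tau_3$-periodic, contradicting Lemma~\ref{p&np}. You instead note that the smooth ray of $\tilde g(x,t_0)$ would have to share an end with a non-smooth ray, contradicting (I1) of Corollary~\ref{intersection}. This avoids Lemma~\ref{p&np}, at the cost of reusing the separation argument.

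\emph{Singletons map to singletons.} The paper simply asserts that a singleton class has a singleton image. Your lifting argument is the missing proof, but ``uniqueness of lifts'' is not literally true there. Lifts branch where $\mathrm{R}^\iota_{\mathbf a}(v,t)$ passes through the critical value $f_{\mathbf a}(-c(\mathbf a))$. This happens exactly for $(v,t)=\tilde g(x,t_0)=\tilde g(y,t_0')$ in the periodic case.

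Pulling back from $z_\theta$, you reach $-c(\mathbf a)$ along one of the two outgoing arms. The four arms at $-c(\mathbf a)$ alternate: arm to $x$, outgoing, arm to $y$, outgoing. So continuing to $x$ produces one turning direction and continuing to $y$ the other. Choosing the continuation that turns in direction $\iota$ gives $\mathrm{R}^\iota_{\mathbf a}(x,t_0)$ or $\mathrm{R}^\iota_{\mathbf a}(y,t_0')$. With that choice made explicit, the rest of your argument goes through unchanged.
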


\begin{proof}
    First, we show that $\Lambda(\a_0)$ is an equivalence relation.
    We only need to verify the transitivity.
    Suppose that $(\theta_1,\theta_2),(\theta_2,\theta_3) \in \Lambda(\a_0)$ are two elements outside the diagonal line.
    In the following we show that $\theta_1=\theta_3$.
    
    We prove it by contradiction, assume that $\theta_1\neq \theta_3$.
    By the definition of $\Lambda(\a_0)$, there exist $(v,t),(v',t')$ such that $\{ \theta_1,\theta_2 \}=\{ \vt^L(v,t), \vt^R(v,t) \}$ and $\{ \theta_2,\theta_3 \}=\{ \vt^L(v',t'), \vt^R(v',t') \}$.   
    Assume that $\theta_2=\vt^\iota(v,t)=\vt^{\iota'}(v',t')$.
    Since $\theta_1 \neq \theta_3$, we have $(v',t')\neq (v,t)$.    
    We claim that $v$ and $v'$ are not in the same Fatou component.
    If $B_\a(v)=B_\a(v')$, then by Corollary \ref{distinctlanding}, $\RRR_\a^{\iota}(v,t)$ and $\RRR_\a^{\iota'}(v',t')$ must have the same end.
    By Remark \ref{nonpercom}, we are in the non-periodic case and the smooth internal arc of $(v,t)$ and $(v',t')$ must terminate at the same preimage of $-c(\a)$ and $\iota$ is different from $\iota'$.
    Without loss of generality, we assume $\iota=L$, $\iota'=R$.
    Again by Remark \ref{nonpercom}, $\RRR_\a^R(v,t)$ and $\RRR_\a^L(v',t')$ must have the same end, and hence land at a common point.
    By the choice of external angles and the fact $B_\a(v)=B_\a(v')$, we see that $\theta_1=\theta_3$.
    This is a contradiction.
    Therefore, we must have $B_\a(v)\neq B_\a(v')$.
    But this is clearly impossible by the choice of external angles and the fact that $\theta_2=\vt^\iota(v,t)=\vt^{\iota'}(v',t')$.
    Hence, we also reach a contradiction.
    It follows that $\theta_1=\theta_3$.
    Hence the transitivity holds, which means $\Lambda(\a_0)$ is an equivalence relation.

    Next, we show that $\Lambda(\a_0)$ is minimal.
    The above proof also implies that the non-singleton equivalence classes of $\Lambda(\a_0)$ are precisely $\{  \vt^L(v,t), \vt^R(v,t) \}$ for all $(v,t) \in \M'$. 
    Denote $E^*:=\{ \vt^L(x,t_0), \vt^R(x,t_0) \}$.
    Let $E:=\{  \vt^L(v,t), \vt^R(v,t) \}$ be an equivalence class, then the landing points $z_1,z_2$ of the external ray of these two angles are joined by the left-right internal ray $\RRR_\a^{L,R}(v,t)$.
    By Lemma \ref{joined}, we can find $(v',t')$ whose forward orbit containing $(x,t_0)$ with $B_\a(v)=B_\a(v')$ such that $z_1,z_2$ are joined by $\RRR_\a^{L,R}(v',t')$.
    Since $B_\a(v)=B_\a(v')$, we deduce that $E=\{ \vt^L(v',t'), \vt^R(v',t') \}$.
    By \eqref{tau_3}, we see that $E$ is eventually mapped to $E^*$ under $\tau_3$.
    This means that $\Lambda(\a_0)$ is minimal.

    Finally, we show that $\Lambda(\a_0)$ is $\tau_3$-invariant.
    Let $E$ be an equivalence class of $\Lambda(\a_0)$, if $E$ is a singleton, then $\tau_3(E)$ is a singleton equivalence class.
    Assume that $E:=\{  \vt^L(v,t), \vt^R(v,t) \}$ is a non-singleton equivalence class with $(v,t) \in \M'$.
    Denote $(v',t'):=\g(v,t)$.
    By \eqref{tau_3}, we have $\tau_3( \vt^\iota(v,t) ) = \vt^\iota(v',t') $.
    If $(v',t') \in \M'$, then $\tau_3(E)=\{ \vt^L(v',t'), \vt^R(v',t') \}$ is also an equivalence class.
    If $(v',t') \notin \M'$, then $ \tau_3( \vt^L(v,t) ) = \tau_3(\vt^R(v,t)) := \theta $.
    It follows that the internal arc of $(v,t)$ must terminate at $-c(\a)$.
    Hence we may assume that $(v,t)=(x,t_0)$.
    In this case, we need to show that $\{ \theta\}$ forms a singleton equivalence class.
    Otherwise, there exits $(v'',t'') \in \M'$ such that $\theta=\vt^\iota(v'',t'')$ for some $\iota \in \{L,R \}$.
    It follows that there exists $n$ such that $\tau_3^n(\theta)=\vt^\iota(x,t_0)$ and hence $\tau_3^{n+1}(\theta)=\theta$.
    This implies that both $\theta$ and $\vt^\iota(x,t_0)$ are periodic which contradicts Lemma \ref{p&np}.
    This shows that $\Lambda(\a_0)$ is $\tau_3$-invariant.

\end{proof}

The two angles $\vt^L:=\vt^L(x,t_0)$ and $\vt^R:=\vt^R(x,t_0)$ are called the two \emph{characteristic angles} of the visual lamination $\Lambda_\R(\a_0)$.


\begin{lem}[characterization of $\Lambda_\R(\a_0)$]\label{characterization}
     For any equivalence class $E$ of $\Lambda_\R(\a_0)$, there exists an equivalence class $E_\H \subset E$ of $\L_\R(\H)$ such that either
     \begin{itemize}
         \item[(C1)] $E=E_\H$, or
         \item[(C2)] for any $\theta \in E\setminus E_\H$ and $\theta' \in E_\H$, we have $(\theta,\theta') \in \Lambda(\a_0)$.
     \end{itemize}

\end{lem}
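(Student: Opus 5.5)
The plan is to describe the equivalence classes of $\Lambda_\R(\a_0)=\langle \L_\R(\H)\cup\Lambda(\a_0)\rangle$ as unions of $\L_\R(\H)$-classes glued together by pairs in $\Lambda(\a_0)$, and then to show that each such union involves at most two $\L_\R(\H)$-classes, with a rigid pattern. Since $\Lambda(\a_0)$ is an equivalence relation whose non-singleton classes are exactly the pairs $\{\vt^L(v,t),\vt^R(v,t)\}$ (Lemma \ref{Lc}), an element of $\Lambda_\R(\a_0)$ is a finite chain
$$\theta_0 \sim_{\H} \theta_0' \sim_{\Lambda} \theta_1 \sim_{\H}\theta_1' \sim_\Lambda \cdots,$$
alternating between $\L_\R(\H)$ and $\Lambda(\a_0)$. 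For a fixed $\a\in\mathcal{R}$, the $\L_\R(\H)$-class of $\vt^\iota(v,t)$ is exactly the external class $E^\iota(v,t)$ of the landing point $z_\a^\iota(v,t)$ (Lemma \ref{surgery} makes this independent of $\a$).

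The key step is a \textbf{non-chaining claim}: an $\L_\R(\H)$-class $E^\iota(v,t)$ contains at most one angle of the form $\vt^{\iota'}(v',t')$ with $(v',t')\in\M'$. Indeed, $\vt^{\iota'}(v',t')$ lies in the class of the point $z_\a^{\iota'}(v',t')$, and the preferred angle of a landing point $z$ with respect to a given generalized internal ray is determined by that ray. If two distinct pairs $(v,t)\ne(v',t')$ had turning rays landing at the same $z$, there are two cases. If the rays lie in the same Fatou component, Corollary \ref{distinctlanding} and Remark \ref{nonpercom} force the non-periodic situation with a common end. Then they are ``the same ray from both sides'' and give the same pair of preferred angles, as in the transitivity part of Lemma \ref{Lc}. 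If they lie in different Fatou components $B_\a(v)\ne B_\a(v')$ sharing the boundary point $z$, I would push forward by $f_\a$ until both components are periodic. Lemma \ref{ring} then makes the common point periodic, and in the hyperbolic setting this contradicts the fact that $z$ is not a critical point of an iterate, as in the argument of Lemma \ref{p&np}. The case where one preimage chain passes through $-c(\a)$ is handled with Lemma \ref{lem32}.

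Granting the claim, a chain starting in a class $E_\H$ can cross via $\Lambda(\a_0)$ at most once. Choose $E_\H$ to be the $\L_\R(\H)$-class that contains an angle $\vt^{\iota}(v,t)$. Its partner $\vt^{\iota^*}(v,t)$, with $\iota^*\neq\iota$, lies in another class $E'$, and $E'$ contains no further visual angle except possibly one paired back into $E_\H$. Hence $E=E_\H\cup E'$, and the partner structure gives (C2) provided $E'$ is a singleton. I expect the main obstacle to be exactly this point: proving that $E^{\iota^*}(v,t)$ is a single angle, which would make $E\setminus E_\H=\{\vt^{\iota^*}(v,t)\}$. I would attempt it by reducing to $(x,t_0)$ by iteration and using Roesch--Yin (Theorem \ref{roeschyin}). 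If $z^{\iota^*}$ had at least two external rays, its limb would eventually contain a critical point; but both critical points lie in $\tilde B_\a$ on the ``inside'' of the left-right ray configuration, which gives a contradiction. If no visual angle occurs in the class, (C1) holds directly.
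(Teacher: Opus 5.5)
There is a genuine gap: your non-chaining claim is false, and so is the singleton property of $E^{\iota^*}(v,t)$ that you identify as the main obstacle. The paper's own description of the periodic case (Remark~\ref{parabolicperturb} and the proof of Proposition~\ref{periodicmain}) is a counterexample when $p\ge 2$. Normalize the parabolic point to be fixed. For $\a\in\mathcal R$ near $\a_0$ it splits into a repelling fixed point $z(\a)$ and a $p$-cycle, and $z(\a)$ lies on the common boundary of the $p$ periodic components. Each $\RRR_\a^{L,R}(\g^k(x,t_0))$, $0\le k<p$, joins $z(\a)$ to a point of that cycle. Hence the $\L_\R(\H)$-class of $z(\a)$ contains the $p$ distinct visual angles $\vt^{\iota}(\g^k(x,t_0))=\tau_3^k(\vt^\iota)$, coming from different pairs in $\M'$, and $E$ meets $p+1$ classes of $\L_\R(\H)$. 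This is a basilica-like picture, e.g.\ the type (A) component of $\mathcal S_2$ containing $z\mapsto z^3+i$. The contradiction you expect when $B_\a(v)\neq B_\a(v')$ is not there. Lemma~\ref{p&np} produces a critical point because $f_\a^q$ maps both components touching at $z$ onto the same component. Here, distinct components of the cycle, each invariant under $f_\a^p$, simply meet at a repelling periodic point, and Lemma~\ref{ring} gives nothing beyond periodicity of $z$.

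The singleton step fails in the non-periodic case as well. The two ends of $\RRR_\a^{L,R}(x,t_0)$ are the two preimages of the landing point $u$ of the smooth ray $R_\a^{x_1}(t_1)$, where $(x_1,t_1)=\g(x,t_0)$. Whenever $u$ lies on the common boundary of two bounded Fatou components, it receives two external rays, and so does each end. Then $E\setminus E_\H$ has two angles. Theorem~\ref{roeschyin} cannot exclude this, because the limb in question contains a Fatou component of the cycle and hence a critical point. Note that, since the non-singleton classes of $\Lambda(\a_0)$ are pairs, (C2) taken literally (with $E\ne E_\H$) forces $\#E=2$. What the paper actually proves, and uses later in (R2) and (R4), is a star structure: for fixed $\a\in\mathcal R$, every point of $Z_\a(E)=\{z_\theta(\a):\theta\in E\}$ other than one center is joined to that center by a single left-right internal ray.

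The missing idea is to allow many spokes at a center and to rule out longer chains instead. Suppose $\#Z_\a(E)\ge 3$; then there is a path $z_0$--$z_1$--$z_2$ of left-right rays. By Lemma~\ref{joined}, both rays can be taken in the backward orbit of $(x,t_0)$, and after applying an iterate (the points becoming $z_0',z_1',z_2'$) the first ray is $\RRR_\a^{L,R}(x,t_0)$. In the non-periodic case $f_\a$ identifies the two ends of this ray, while some $f_\a^k$ maps $\{z_1',z_2'\}$ onto $\{z_0',z_1'\}$. This makes $z_0'$ or $z_1'$ periodic, contrary to Lemma~\ref{p&np}, so such paths occur only in the periodic case. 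There, a path of length three $z_0$--$z_1$--$z_3$--$z_4$ is excluded by the argument of Lemma~\ref{ring}. All these points are periodic, and a suitable iterate of $f_\a$ sends the arc $z_0$--$z_1$--$z_3$ (two left-right rays glued at $z_1$) to an arc meeting it only at $z_3$. Iterating then yields a closed loop in $K_\a$, which is impossible. Consequently every point of $Z_\a(E)$ is joined directly to $z_1$, which is the real content of (C2) with $E_\H$ the class of $z_1$.
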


\begin{proof}
    Let $E$ be an equivalence class of $\Lambda_\R(\a_0)$.
    Fix $\a \in \mathcal{R}$, consider the set $Z_\a(E):=\{ z_\theta(\a) : \theta \in E \}$.
    If $\# Z_\a(E) \leq 2$, then pick any point $z \in Z_\a(E)$ and let $E_\H$ be the set of angles of external rays landing at $z$.
    By the definition of $\Lambda_\R(\a_0)$, the conclusion certainly holds.

    In the following, we assume that $\# Z_\a(E) \geq 3$.
    By the definition of $\Lambda_\R(\a_0)$, we can find three points $z_0,z_1,z_2 \in Z_\a(E)$ such that $z_0,z_1$ are joined by the left-right internal ray $\RRR_\a^{L,R}(v_1,t_1)$, and $z_1,z_2$ are joined by the left-right internal ray $\RRR_\a^{L,R}(v_2,t_2)$. Let $E_\H$ be the angles of external rays landing at $z_1$.

    First, we claim that it can only happen in periodic case.
    By Lemma \ref{joined}, we may assume that $(v_1,t_1)$ and $(v_2,t_2)$ are both eventually mapped to $(x,t_0)$.
    Thus, we may assume that there exists $n$ and $k$ such that $\g^n(v_1,t_1)=(x,t_0)$, and $\g^k(v'_2,t'_2)=(x,t_0)$ where $(v_2',t_2'):=\g^n(v_2,t_2)$.
    Hence $z'_0:=f_\a^n(z_0)$ and $z_1':=f_\a^n(z_1)$ are joined by $\RRR^{L,R}_\a(x,t_0)$, and $z_1',z_2':=f_\a^n(z_2)$ are joined by $\RRR_\a^{L,R}(v_2',t_2')$.
    Suppose that we are in the non-periodic case, then by Lemma \ref{p&np}, $z'_0$ and $z'_1$ are both non-periodic.
    It follows that $f_\a^k(z'_0)=f_\a^k(z'_1)$, and $f_\a^k(z'_1)=z'_1$ or $f_\a^k(z'_1)=z'_0$.
    In both cases, we obtain that $z'_0$ or $z'_1$ is periodic.
    This is a contradiction.

     \begin{figure}[h]
  \begin{center}
    \includegraphics[width=0.65\linewidth]{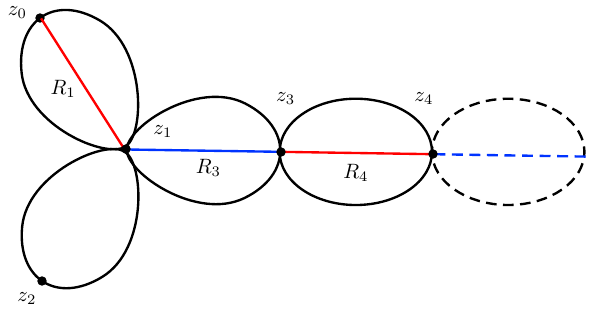}
    \caption{}\label{figring2}
 \end{center}
\end{figure}

    Therefore, we see that it must be the periodic case.
    Now we show (C2) holds by contradiction.
    Otherwise, we can find $\theta \in E \setminus E_\H$, $z_3,z_4$, such that $R_\a(\theta)$ lands at $z_4$, $z_1,z_3$ are joined by $R_3:=\RRR_\a^{L,R}(v_3,t_3)$ and $z_3,z_4$ are joined by $R_4:=\RRR_\a^{L,R}(v_4,t_4)$.
    By iteration, we may assume that $(v_1,t_1)=(x,t_0)$, and denote $R_1:=\RRR_\a^{L,R}(x,t_0)$.     
    In this case, one may verify that $z_0,z_1,z_3,z_4$ and $v_3,v_4$ are all periodic (see Figure \ref{figring2}).
    Let $m$ be the smallest integer such that $\g^m(v_1,t_1)=(v_3,t_3)$, and $\ell$ be the smallest integer such that $\g^\ell(v_3,t_3)=(v_4,t_4)$.
    It follows that $f_\a^m(R_1)=R_3$ and $f_\a^\ell(R_3)=R_4$.
    Then we must have $f_\a^m(\{ z_0 ,z_1\})=\{ z_1, z_3 \}$, and $f_\a^\ell(\{ z_1,z_3 \})=\{ z_3,z_4 \}$.
    By Lemma \ref{ring}, we see that  $f_\a^m(z_1)=z_1$ and $f_\a^m(z_0)=z_3$, $f_\a^\ell(z_3)=z_3$ and $f_\a^\ell(z_1)=z_4$.
    Therefore, we have $f_\a^{m+\ell}(z_0)=z_3$ and $f_\a^{m+\ell}(z_1)=z_4$.
    It follows that $f_\a^{m+\ell}$ maps $\Gamma_1:=R_1\cup R_3 \cup \{z_1\}$ to another curve $\Gamma_2$ with $\Gamma_1 \cap \Gamma_2=\{ z_3 \}$.
    $\Gamma_1 \cup \Gamma_2 \cup \{  z_3\}$ connects $z_0$ and $f_\a^{2(m+\ell)}(z_0)$.
    Now we follow the the same argument in the proof of Lemma \ref{ring}.
    Since $z_0$ is periodic, by applying $f_\a^{m+\ell}$ finitely many times, we obtain a circle $\bigcup_nf_\a^{n(m+\ell)}(\Gamma_1)$ in $K_\a$.   
    Hence this case is also ruled out.
    Thus, we conclude that (C2) holds.

\end{proof}


Although we will finally show that the visual lamination is the real lamination which would imply that it satisfies the axioms of lamination (R1)-(R5), to this end, we are ready to prove this weaker result in a combinatorial view point.

\begin{prop}[visual lamination is a lamination]\label{visuallamination}
    The visual lamination $\Lambda_\R(\a_0)$ is a $\tau_3$-invariant lamination, i.e. it satisfies the axioms of lamination.
\end{prop}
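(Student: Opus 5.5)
Fix $\a\in\mathcal{R}$. By Lemma \ref{surgery}, $\L_\R(\H)=\L_\R(\a)$ and all $\vt^\iota(v,t)$ are the same for every $\a$ on $\mathcal{R}$. Since $J_\a$ is locally connected and $f_\a$ is hyperbolic, Theorem \ref{kiwicontain} makes $\L_\R(\H)$ a lamination. I will describe classes of $\Lambda_\R(\a_0)$ geometrically in the dynamical plane of $f_\a$. For each $(v,t)\in\M'$ I form the closed arc
$$\Gamma(v,t):=\overline{R_\a(\vt^L(v,t))}\cup\overline{\RRR^{L,R}_\a(v,t)}\cup\overline{R_\a(\vt^R(v,t))}.$$
Lemma \ref{characterization} describes a class $E$ of $\Lambda_\R(\a_0)$: it is either a class $E_\H$ of $\L_\R(\H)$, or $E_\H$ together with the extra angles it is joined to through $\Lambda(\a_0)$. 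In the second case the landing points in $Z_\a(E)$ are at most a ``star'' around one point $z_1$. From this, (R2) follows at once: $E_\H$ is finite by Theorem \ref{kiwicontain}, and each angle of $E_\H$ is $\Lambda(\a_0)$-related to at most one other angle by Lemma \ref{Lc}. (R3) follows from the $\tau_3$-invariance of $\L_\R(\H)$ and of $\Lambda(\a_0)$ (Lemma \ref{Lc}) together with \eqref{tau_3}. The characteristic pair $E^*$ maps to a single angle $\theta$. I have to check that the class of $\theta$ in $\Lambda_\R(\a_0)$ is just its $\L_\R(\H)$-class, which uses the non-periodicity argument from the end of the proof of Lemma \ref{Lc}. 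In the periodic case $\tau_3(E^*)=E^*$ up to iteration, and that case needs separate attention.

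For (R5), unlinkedness, I use planar topology. Two distinct classes $E,E'$ correspond to two disjoint connected compact sets in $\C$. Each set is the union of the closed external rays of the class with the closures of the left-right internal rays joining its landing points. Disjointness comes from (I1)--(I2) of Corollary \ref{intersection} and from Corollary \ref{distinctlanding}. Two left-right internal rays meet only at iterated preimages of $-c(\a)$ and do not cross there. Also, distinct landing points of left or right rays inside one Fatou component are distinct, unless the rays have the same end (Remark \ref{nonpercom}), and that case is already absorbed into the equivalence. A connected set containing the rays of $E$ separates the plane near infinity into sectors indexed by the complementary arcs of $E$. Since the set for $E'$ is connected and disjoint from it, $E'$ lies in a single complementary arc. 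For (R4), consecutive-preservation, I use the sectors cut out by the same connected set. A complementary arc $(\theta,\theta')_+$ of $E$ corresponds to a sector bounded by external rays and possibly a left-right internal ray. Criterion (1)$\Leftrightarrow$(2)$\Leftrightarrow$(3) of \S\ref{polynomialbasin} still works when the boundary includes internal pieces, because $f_\a$ maps $\Gamma(v,t)$ onto $\Gamma(\g(v,t))$. Then the image sector is a complementary sector of $\tau_3(E)$ unless it contains a critical point. That exceptional case is controlled by counting: the total angular length is $1$ before and after applying $\tau_3$.

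For (R1), closedness, I take $(\theta_n,\theta_n')\in\Lambda_\R(\a_0)$ converging to $(\theta,\theta')$. Pairs from $\L_\R(\H)$ have limits in $\L_\R(\H)$, since that relation is closed. Otherwise $(\theta_n,\theta_n')$ comes from some $(v_n,t_n)\in\M'$ with depth $\to\infty$, or composes with such a pair. By Lemma \ref{singleton}, puzzle pieces shrink uniformly, at rate $\kappa^n$. So the landing points $z^L_\a(v_n,t_n)$ and $z^R_\a(v_n,t_n)$ get arbitrarily close, and the limit rays $R_\a(\theta)$ and $R_\a(\theta')$ land at the same point. That gives $(\theta,\theta')\in\L_\R(\H)$. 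I expect the main obstacle to be \textbf{(R1) in the periodic case}. There the left-right rays have infinitely many turning points, and I must make sure that the diameters of $\RRR^{L,R}_\a(v_n,t_n)$, not only the distance between its endpoints, go to zero. I would handle this by passing deep pieces of the ray through the univalent inverse branches of $f_\a$ on the thickened pieces $\tilde P^{(k)}_\a$ used in the proof of Lemma \ref{singleton}.
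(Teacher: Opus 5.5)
The gap is in (R1). Your route there differs from the paper's and can be made to work. If the classes $E^n$ are pairwise distinct, the minimal $\g$-depth of the left-right rays occurring in $E^n$ tends to infinity: only finitely many $(v,t)\in\M'$ reach $\{(x,t_0),(y,t'_\a)\}$ within $K$ steps, and each ray determines its class. Continuity of the Carath\'eodory extension of $\psi_\a$ then turns $|z_{\theta_n}(\a)-z_{\theta'_n}(\a)|\to 0$ into $(\theta,\theta')\in\L_\R(\a)=\L_\R(\H)$.

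But the sentence ``puzzle pieces shrink uniformly, so $z^L_\a(v_n,t_n)$ and $z^R_\a(v_n,t_n)$ get arbitrarily close'' skips the essential point. Nothing you wrote puts the two endpoints into a common puzzle piece, let alone one whose depth tends to infinity. (I1) alone does not do it: $G^\a_m$ also contains equipotentials inside $\tilde B_\a$, and for large $m$ the turning point of the ray lies inside a removed disk, so the two branches can exit into different pieces.

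What you need is this: if $k$ is the first time with $\g^k(v,t)\in\{(x,t_0),(y,t'_\a)\}$ and $\omega$ is the first turning point of $\RRR^{L,R}_\a(v,t)$, then $\omega$ and the two branches beyond it lie in one piece of depth $k$. The proof runs as follows.
\begin{enumerate}
\item By minimality of $k$ and Lemma \ref{lem32}, $f^k_\a$ is a local homeomorphism at $\omega$. It carries these branches onto the branches of $\RRR^{L,R}_\a(\g^k(v,t))$ beyond $-c(\a)$.
\item Along those image branches the Green function only increases from its value at $-c(\a)$. So for $s'$ chosen with $-c(\a)\in X_\a$ they avoid the equipotential part of $\partial X_\a$.
\item By (I1) they avoid the internal rays of $\Gamma_\a(\varTheta,T)$, so they miss $G^\a_0$.
\item Pulling back by $f^k_\a$ gives the claim, hence $|z^L_\a(v,t)-z^R_\a(v,t)|\le d\kappa^k$, up to the fixed shift in Lemma \ref{singleton}. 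The star structure of Lemma \ref{characterization} then finishes the argument.
\end{enumerate}

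This also shows that the obstacle you single out is misplaced. The diameter of the whole $\RRR^{L,R}_\a(v_n,t_n)$ need not tend to $0$: if all $v_n$ equal one $v$, each such ray starts at $v$ and reaches $\partial B_\a(v)$. Nor is that needed. The later turning points in the periodic case are harmless because they sit at higher potential.

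Once repaired, your (R1) bypasses the paper's reduction, via (R5), to nested arcs joined by single left-right rays. It also bypasses the paper's dichotomy: a fixed Fatou component, handled by shrinking $\overline{B_n}\cap U_n$ into an impression, versus varying components, handled by Roesch--Yin limbs and closedness of $\L_\R(\H)$.

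Three smaller remarks.
\begin{itemize}
\item In (R5) the sets attached to distinct classes are not disjoint. Two left-right rays rooted at the same $v\in\tilde O$ both contain $v$. In the periodic case, $\RRR^{L,R}_\a(x,t_0)$ and $\RRR^{L,R}_\a(y,t'_\a)$ even share an arc beyond $-c(\a)$ although they give different classes. The conclusion must come from non-crossing via (I2), as in the paper.
\item In (R4) the ``counting'' is not yet an argument. Your premise $f_\a(\Gamma(v,t))=\Gamma(\g(v,t))$ fails precisely for $(x,t_0)$ in the non-periodic case, where $f_\a$ folds both branches onto one smooth internal ray and $\g(x,t_0)\notin\M'$. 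Critical points also lie on these sets rather than inside sectors. The paper therefore separates three cases: complementary arcs whose ends co-land, arcs from $z_0$ to a joined point, and the folded case $\tau_3(\theta)=\tau_3(\theta')$.
\item Your (R3), by contrast, is fine and simpler than the paper's. The join of two relations that each map classes onto classes again does so: lift a chain one link at a time. The extra check on $E^*$ is already contained in Lemma \ref{Lc}.
\end{itemize}
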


\begin{proof}
    We verify that $\Lambda_\R(\a_0)$ satisfies the the rules (R1) to (R5).
    
   {\bf Proof of (R2)}: 
    Let $E$ be an equivalence class of $\Lambda_\R(\a_0)$.
    Let $E_\H$ be the equivalence class of $\L_\R(\H)$ we obtained in Lemma \ref{characterization}, and $z_0$ be the common landing point of $R_\a(\theta)$ with $\theta \in E_\H$ for some fixed $\a \in \mathcal{R}$.
    If $E=E_\H$ holds, then $E$ is certainly finite.
    If not, by Lemma \ref{characterization}, for any $\theta \in  E\setminus  E_\H$, $z_\theta(\a)$ and $z_0$ are joined by some left-right internal ray.
    Notice that there can be finitely may left-right internal rays which has an end point $z_0$.
    Thus $Z_\a(E)$ is finite.
    Combining the fact that every point in $J_\a$ is the landing point of finitely may external rays, we see that $E$ is finite.
    Hence (R2) holds.

     {\bf Proof of (R3)}: 
    Let $E$ be an equivalence class of $\Lambda_\R(\a_0)$, we need to show that $\tau_3(E)$ is also an equivalence class.
    Since $E_\H$ is an equivalence class of $\L_\R(\H)$ and $\tau_3$ invariant, we see that $E'=\tau_3(E_\H)$ is an equivalence class of $\L_\R(\H)$ and hence contained in an equivalence class $E'$ of $\Lambda_\R(\a_0)$. 
    We will show that $\tau_3(E)=E'$.
    First, we show that $\tau_3(E) \subset E'$.
    In fact, by Lemma \ref{characterization}, for any $\theta \in E \setminus E_\H$ and $\theta' \in E_\H$, $R_\a(\theta)$ and $R_\a(\theta')$ are joined by some left-right internal ray $\RRR_\a^{L,R}(v,t)$.
    There are two cases.
    If $f_\a(\RRR_\a^{L,R}(v,t))$ is a smooth internal ray, then $R_\a(\tau_3(\theta))$ and $R_\a(\tau_3(\theta'))$ land at a common point.
    Hence $\tau_3(\theta) \in \tau_3(E_\H)\subset E'$.
    If $f_\a(\RRR_\a^{L,R}(v,t))$ is also a left-right internal ray, then $R_\a(\tau_3(\theta))$ and $R_\a(\tau_3(\theta'))$ are joined by this left-right internal ray.
    Hence we also have $\tau_3(\theta) \in E'$.
    Therefore, we see that $\tau_3(E) \subset E'$ holds.

    Now we show the converse.
    For any $\theta' \in E' \setminus \tau_3(E_\H)$, for any $\theta'' \in E_\H$, $R_\a(\theta')$ and $R_\a(\tau_3(\theta''))$ are joined by some left-right internal ray $\RRR_\a^{L,R}(v',t')$.
    It follows that there exists $(v,t)$ with $\g(v,t)=(v',t')$, and $\theta$ with $\tau_3(\theta)=\theta'$ such that $R_\a(\theta)$ and $R_\a(\theta'')$ are joined by the left-right internal ray $\RRR_\a^{L,R}(v,t)$.
    It follows that $\theta \in E$ which implies that $\theta' \in \tau_3(E)$.
    Hence $E' \subset \tau_3(E)$.
    Therefore, (R3) holds.

 {\bf Proof of (R4)}: 
    Now we show that (R4) holds, that is, $\tau_3|_E$ is consecutive-preserving for any equivalence class of $\Lambda_\R(\a_0)$.
    Fix $\a \in \mathcal{R}$, denote the landing point of external rays with angles in $E_\H$ by $z_0$.
    Let $(\theta,\theta')_+$ be any complementary arc of $E$.
    There are two cases.
    If $R_\a(\theta)$ and $R_\a(\theta')$ land at a common point $w$, i.e. $(\theta,\theta')_+$ is a complementary arc of some equivalence class of $\L_\R(\H)$, then $(\tau_3(\theta),\tau_3(\theta'))_+$ is a complementary arc of some equivalence class of $\L_\R(\H)$.
    We should further claim that it is a complementary arc of $\tau_3(E)$ of $\Lambda_\R(\a_0)$.
    For otherwise, there exists another $\vt' \in \tau_3(E)\cap (\tau_3(\theta),\tau_3(\theta'))_+$.
    Then there exists $(v',t')\in \M'$ such that $R_\a(\vt')$ and $R_\a(\tau_3(\theta))$ are joined by $\RRR_\a^{L,R}(v',t')$.
    Since $f_\a$ is locally orientational preserving, we can find $(v,t) \in \M'$ and $\vt \in (\theta,\theta')_+ $ with $\g(v,t)=(v',t')$ and $\tau_3(\vt)=\vt'$ such that $R_\a(\theta)$ and $R_\a(\vt)$ are joined by $\RRR_\a^{L,R}(v,t)$. 
    Hence $\vt \in (\theta,\theta')_+ \cap E$.
    which contradicts the fact that $(\theta,\theta')_+$ is a complementary arc of $E$.

    Now we assume that $R_\a(\theta)$ and $R_\a(\theta')$ land at distinct points. 
    We claim that one of them must be $z_0$.
    For otherwise, by Lemma \ref{characterization}, we may assume that $R_\a(\theta)$ and $z_0$ are joined by $\RRR_\a^{L,R}(v,t)$, and $R_\a(\theta')$ and $z_0$ are joined by $\RRR_\a^{L,R}(v',t')$.
    Then we can find $\vt \in (\theta,\theta')_+ $ such that $R_\a(\vt)$ lands at $z_0$.
    Therefore, we have $\vt \in E$.
    This contradicts the fact that $(\theta,\theta')_+$ is a complementary arc of $E$.
    Hence we may assume that $R_\a(\theta)$ land at $z_0$, and $R_\a(\theta')$ and $z_0$ are joined by $\RRR_\a^{L,R}(v',t')$.
    There are two cases.
    If $\tau_3(\theta)\neq \tau_3(\theta')$, then $R_\a(\tau_3(\theta))$ land at $f_\a(z_0)$, and $R_\a(\tau_3(\theta'))$ and $f_\a(z_0)$ are joined by $\RRR_\a^{L,R}(\g(v',t'))$.
    If $(\tau_3(\theta),\tau_3(\theta'))_+$ is not a complementary arc of $\tau_3(E)$, then there exists $\vt' \in \tau_3(E) \cap (\tau_3(\theta),\tau_3(\theta'))_+$.
    Then $R_\a(\vt')$ land at $f_\a(z_0)$ or is joined to $f(z_0)$ by some left-right internal ray.
    Then by a similar argument above, there exists $\vt \in (\theta,\theta')_+$ such that $R_\a(\vt)$ land at $z_0$ or is joined to $z_0$ by some left-right internal ray.
    It follows that $\vt \in E$ which contradicts the fact that $(\theta,\theta')_+$ is a complementary arc of $E$.  
    Now we deal with the case that $\tau_3(\theta) = \tau_3(\theta')$.
    In this case, $R_\a(\theta)$ and $R_\a(\theta')$ must be joined by $\RRR_\a^{L,R}(x,t_0)$, and it must be the non-periodic case.
    One may verify that $\tau_3(E)=\{ \tau_3(\theta) \}$ is a singleton.
    Hence $(\tau_3(\theta),\tau_3(\theta))_+$ is the complementary arc of $\tau_3(E)$.

 {\bf Proof of (R5)}: 
    Let $E$ be an equivalence class of $\Lambda_\R(\a_0)$, and $\Gamma_\a(E)$ be the union of all external rays landing at $Z_\a(E)$ and all left-right internal rays joining them.
    This is a connected set.
    Let $E'$ be any other equivalence class of $\Lambda_\R(\a_0)$.
    Since $Z_\a(E) \cap Z_\a(E')=\emptyset$, the only possible intersections between $\Gamma_\a(E)$ and $\Gamma_\a(E')$ are the intersection of left and right internal rays.
    By Corollary (I2) of Corollary \ref{intersection}, $\Gamma_\a(E')$ must belong to the closure of some connected component of $\C \setminus \Gamma_\a(E)$.
    Therefore, $E'$ must belong to a connected component of $\RZ \setminus E$.
    Thus, (R5) holds.

 {\bf Proof of (R1)}: 
    It remains to show that (R1) holds, that is, $\Lambda_\R(\a_0)$ is a closed set of $\RZ \times \RZ$.
    Let $\{ (\theta_n,\theta'_n) \}$ be a sequence in $\Lambda_\R(\a_0)$ with $(\theta_n,\theta_n')$ tending to $(\theta,\theta')$ outside the diagonal line as $n\to \infty$.
    We need to show that $(\theta,\theta') \in \Lambda_\R(\a_0)$.
    Let $E^n$ be the equivalence class containing $\theta_n,\theta_n'$.
    We may assume that $E^n$'s are all distinct and hence $\theta_n$'s and $\theta'_n$'s are all different.
    If there is a subsequence of $\{ (\theta_n,\theta'_n) \} \subset \L_\R(\H)$, then $(\theta,\theta') \in \L_\R(\H)$ since $\L_\R(\H)$ is closed.  
    In the following, we may assume that $(\theta_n,\theta'_n) \notin \L_\R(\H)$ for all $n$.
    By the unlinked property (R5), for any $n$, $\theta_{n+1},\theta_{n+1}'$ either belong to $(\theta_n,\theta_n')_+$ or $(\theta_n',\theta_n)_+$.
    Without loss of generality, we may assume that $(\theta_{n+1},\theta'_{n+1})_+ \subset (\theta_{n},\theta'_{n})_+$ for all $n$.
    
    We may further assume that for every $n$, $R_\a(\theta_n)$ and $R_\a(\theta'_n)$ can be joined by a single left-right internal ray.
    In fact, if for some $n$, $R_\a(\theta_n)$ and $R_\a'(\theta_n')$ are joined by two left-right internal rays, i.e. there exists $\theta_n'' \in E^n_\H$, and left-right internal rays $R_1,R_2$ such that $\theta_n, \theta_n''$ are joined by $R_1$, and $\theta_n',\theta_n''$ are joined by $R_2$. 
    By (R5), $(\theta_{n+1},\theta_{n+1}')_+$ is either contained in $(\theta_n,\theta_n'')_+$ or $(\theta_n'',\theta_n')_+$.
    Therefore, by replacing one of $\theta_n,\theta_n'$ by $\theta_n''$ for every $n$, we obtain a new nest sequence $(\theta_n,\theta_n')_+$ which satisfies that $R_\a(\theta_n)$ and $R_\a(\theta'_n)$ are joined by a single left-right internal ray $\RRR_\a^{L,R}(v_n,t_n)$ for all $n$.

    Let $B_n:=B_\a(v_n)$.
    There are two cases.
    If $B_n=B_\a(v)$ for $n$ large, then we show that $R_\a(\theta)$ and $R_\a(\theta')$ must land on a common point on $\partial B_\a(v)$.
    We assume that $B_n=B_\a(v)$ for all $n$.
    Let $U_n$ be the connected component of $\C \setminus (\RRR_\a^{L,R}(v_n,t_n) \cup \overline{R_\a(\theta)} \cup \overline{R_\a(\theta')})$ which contains external rays with angles in $(\theta_n,\theta_n')_+$.
    Let $X_n:=\overline {B_n} \cap U_n$.
    Since $(\theta_{n+1},\theta_{n+1}')_+\subset (\theta_n,\theta_n')_+$, we know that $R_\a(\theta_{n+1})$ and $R_\a(\theta_{n+1}')$ are contained in $U_n$.
    It follows that $U_{n+1} \subset U_n$ and $X_{n+1}$ is a proper subset of $X_n$.
    Hence $R_\a(\theta)$ and $R_\a(\theta')$ must land at points in $X:=\bigcap_n X_n$.
    We claim that $X$ must be contained in some impression of the puzzle system.
    In fact, for $n$ large $X_n$ does not contain any $m$-th preimage of $c(\a)$ for $m\leq n$.
    Therefore, for each $m\geq 1$, we can find $n$ large such that $X_n \cap G_m^\a = \emptyset$.
    Hence $X_n$ is contained in some puzzle piece $P_m^\a$ of depth $m$.
    It follows that $X \subset \bigcap_m P_m^\a$.
    By Lemma \ref{singleton}, we see that $X$ is a singleton.
    Therefore, $R_\a(\theta)$ and $R_\a(\theta')$ land at a common point, which implies that $(\theta,\theta') \in \L_\R(\H) \subset \Lambda_\R(\a_0)$.    
    
    Now, we consider the complementary case.
    We may assume that $\{ B_n \}$ are all different.
    Fix $n$, by Theorem \ref{roeschyin}, $B_{n+1}$ belongs to a limb with respect to $B_n$.
    It follows that there exists a pair of external rays $R_\a(\vt_n)$, $R_\a(\vt_n')$ together with their common landing point separate $B_{n+1}$ away from $B_n$.
    Thus they separate $R_\a(\theta_{n+1})\cup R_\a(\theta_{n+1}')$ away from $R_\a(\theta_n) \cup R_\a(\theta_n')$.
    It follows that $(\theta_{n+1},\theta'_{n+1})_+\subset (\vt_n,\vt_n')_+ \subset (\theta_n,\theta_n')_+$.
    By the construction of $\vt_n,\vt_n'$, we have $(\vt_n,\vt_n') \to (\theta,\theta')$  as $n\to \infty$.
    Therefore, we have $(\theta,\theta') \in \L_\R(\H)$ since $(\vt_n,\vt_n') \in \L_\R(\H)$.
    This proves that (R1) holds.
    


\end{proof}

\subsection{Puzzles and Impressions for Boundary Maps}\label{ssec4.3}


In this part, we apply the puzzle system $\mathrm{P}_{\a_0}(\varTheta,T,s,s’)$ to the boundary map $\a_0$.
It would reveal the connection between the lamination of $\a_0$ and the lamination of $\a \in \mathcal{R}=\mathcal{R}_\H(t_0)$.

The following result is a reformulation of \cite[Theorem 3.3]{wang2023}.

\begin{thm}\label{wang}
     There exists $T=T(\a_0)$ depending on $\a_0$ such that the puzzle system $\mathrm{P}_{\a_0}(\Theta,T,s,s’)$ with $\Theta=\emptyset$ has the following cases.

  {\bf non-renormaliziable case:} If every impression is a singleton, then $(x,t_{0})$ is non-periodic under $\g$.

     {\bf renormaliziable case:} If there are non-singleton impressions, then there exists a renormaliziable period $q$ such that restriction of $f_{\a_0}^q$ on $I_{\a_0}(-c(\a_0))$ is conjugated to a parabolic quadratic polynomial $p_c$.
     An impression is not a singleton if and only if it is eventually mapped to $ I_{\a_0}(-c(\a_0)) $.
     There are following two cases.
     \begin{itemize}
         \item {\bf parabolic case:}  $(x,t_{0})$ is periodic under $\g$ and $\psi_{\a_0}^x(e^{2\pi i t_{0}})$ is a parabolic periodic point. i.e. $p_c$ is parabolic.
         \item {\bf PCF case:} $(x,t_{0})$ is non-periodic and $-c({\a_0})$ is eventually mapped to a repelling periodic point, i.e. $p_c$ is post critically finite.
    \end{itemize}

\end{thm}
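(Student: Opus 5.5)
We plan to obtain Theorem \ref{wang} by importing the puzzle analysis of \cite[Theorem 3.3]{wang2023} and translating its conclusions into the language of Section \ref{sec3}; the work is in the construction of the graph and the case division. First we fix the set $T=T(\a_0)$. For $\a\in\mathcal{R}$ the internal angles are fixed by Lemma \ref{surgery}, so we can choose finitely many $\tau_2$-periodic angles $t$ such that $s_{\a}^c(t)=1$ and the landing points $\psi_{\a_0}^c(e^{2\pi i t})$ are repelling and avoid the forward orbit of $-c(\a_0)$. We also require these angles to separate $t_0$ (and the angles of the arcs through the critical value) from the other angles. This makes (N2) hold for $\a_0$ with $\varTheta=\emptyset$. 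Lemma \ref{stablerepelling} then shows that the graph $\Gamma_{\a_0}(\emptyset,T)$, and the puzzle pieces up to any fixed depth, move continuously from $\a\in\mathcal{R}$ to $\a_0$.

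Second, we reproduce the thickening argument of Lemma \ref{singleton} for every nest of puzzle pieces whose orbit eventually avoids the pieces containing $-c(\a_0)$. In that situation $f_{\a_0}$ is eventually univalent along the nest, so uniform contraction of the hyperbolic metric forces the impression to be a singleton. Hence a non-singleton impression must be eventually mapped to $I_{\a_0}(-c(\a_0))$, which gives one direction of the ``if and only if''. If the critical nest is persistently recurrent, then the Kahn--Lyubich covering lemma (or the Yoccoz $\tau$-function argument, as in \cite{wang2023}) produces a definite sequence of annuli with divergent moduli, and the critical impression is again trivial. Otherwise there is a first return period $q$ with $f_{\a_0}^q:P_{n+q}(-c)\to P_n(-c)$ quadratic-like for all large $n$. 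Straightening this map gives a quadratic polynomial $p_c$, and $I_{\a_0}(-c(\a_0))$ is the filled Julia set of the renormalization. Its nontriviality yields the converse direction, because each preimage nest of the critical nest is mapped univalently onto it.

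Third, we identify $p_c$. The renormalization domain is cut out by the left-right internal ray structure near $\psi_{\a_0}^x(e^{2\pi i t_0})$. Because $\a_0\in\partial\H$ is approached along $\mathcal{R}$, the quadratic-like family converges to a map on the boundary of the main cardioid-type hyperbolic component of the straightened family, in which $-c$ is attracted to $B(c)$. So $p_c$ lies on the boundary of a hyperbolic component of $\mathbb{M}$. The renormalization has no attracting cycle, since the only attracting cycle of $f_{\a_0}$ is the one of $c(\a_0)$. Therefore $p_c$ is either parabolic or has its critical point on the boundary of a preimage of the basin, in which case $p_c$ is postcritically finite and its critical orbit lands on the $\beta$-type repelling point. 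We then apply Lemma \ref{p&np} through the continuity of landing points, namely Proposition \ref{keylemma} and Fact \ref{stableparabolic}. If $(x,t_0)$ is periodic, the landing point $\psi_{\a_0}^x(e^{2\pi i t_0})$ is a periodic point on $\partial B_{\a_0}(x)$ at which the two characteristic rays collide, and it must be parabolic. If $(x,t_0)$ is non-periodic but renormalizable, $-c(\a_0)$ is preperiodic to a repelling point. If $(x,t_0)$ is non-periodic and non-renormalizable, all impressions are singletons, which is the first case of the theorem.

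The main obstacle we expect is the second step. Excluding persistently recurrent non-renormalizable critical nests, and proving that the renormalization has exactly the stated parabolic or postcritically finite form, requires annulus estimates that are not available in the present paper. Here we would rely on \cite{wang2017hyperbolic,wang2023} rather than reprove them.
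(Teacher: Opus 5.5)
Your plan to import \cite[Theorem 3.3]{wang2023} matches what the paper does: it states Theorem \ref{wang} as a reformulation of that result and gives no argument, so quoting it wholesale would suffice. The part you argue yourself, however, fails. In your third step, the identification of $p_c$ is wrong. For $\a\in\mathcal{R}$ the orbit of $-c(\a)$ converges to $O(\a)$. It therefore enters the discs $B_\a^{c_j}(s')$ removed from $X_\a$ and leaves every puzzle piece, in particular the domain of $f_\a^q:P^\a_{n+q}(-c(\a))\to P^\a_n(-c(\a))$. So the perturbed renormalizations have escaping critical point, and their straightenings lie \emph{outside} $\mathbb{M}$, not in a hyperbolic component of it. Nothing in your argument places $p_c$ on the boundary of a hyperbolic component. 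Indeed, any such $p_c$ would be parabolic or irrationally indifferent, never postcritically finite with critical point in its Julia set, so your dichotomy contradicts the PCF case you want to reach.

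The split must be made at the cubic level. By \cite{wang2017hyperbolic}, either a parabolic cycle is attached to $\partial B_{\a_0}$, or $-c(\a_0)$ lies on the boundary of a component of $\tilde B_{\a_0}$. In the second case, suppose $I_{\a_0}(-c(\a_0))$ is non-trivial. Then it is $f^q_{\a_0}$-invariant and meets each $\partial B_{\a_0}(c_j)$ in at most one point, since the depth-$n$ graphs contain internal rays of $B_{\a_0}(c_j)$ with angles becoming dense. On the other hand, $f^{kq}_{\a_0}(-c(\a_0))\in\bigcup_j\partial B_{\a_0}(c_j)$ for $k$ large. Hence the critical orbit is eventually periodic. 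The cycle is the landing point of a periodic internal ray, so it is repelling or parabolic, and it cannot be parabolic because no critical point is left for a parabolic basin.

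Your argument that $\psi^x_{\a_0}(e^{2\pi i t_0})$ is parabolic when $(x,t_0)$ is periodic is circular. The collision of the characteristic rays is Corollary \ref{coland}, which the paper derives from Theorem \ref{wang} via Proposition \ref{wang2} and Lemma \ref{periodicimpression}. Proposition \ref{keylemma} and Fact \ref{stableparabolic} also presuppose a parabolic cycle. The direct argument is persistence. Since $\a_0$ is not hyperbolic, $-c(\a_0)\notin\tilde B_{\a_0}$, so all internal rays of $f_{\a_0}$ are smooth. If the periodic ray $R^x_{\a_0}(t_0)$ landed at a repelling point, then, as in Lemma \ref{stablerepelling} and Lemma \ref{hmpuzzle}(2), $R^x_\a(t_0)$ would stay smooth for $\a$ near $\a_0$, contradicting $(x,t_0)\in\M'$ along $\mathcal{R}$. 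So the landing point is parabolic and $-c(\a_0)$ lies in its basin. The critical impression then contains that basin component, which is what the non-renormalizable implication requires.

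Also, in your second step the alternatives ``the orbit eventually avoids the critical pieces'' and ``eventually mapped onto $I_{\a_0}(-c(\a_0))$'' are not exhaustive. Orbits may return to $P_n(-c(\a_0))$ for every $n$ without entering the critical nest, so that direction also needs the argument from \cite{wang2023}.
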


In the following, we chose $\varTheta$ carefully so that the  puzzle system $\mathrm{P}_{\a_0}(\varTheta,T,s,s')$ have the following nicer properties.

\begin{prop}[nice puzzle system]\label{wang2}
    There exist $\varTheta:=\varTheta(\a_0)$ such that the puzzle system $\mathrm{P}_{\a_0}(\varTheta,T,s,s')$ satisfies the following.

    \begin{itemize}
        \item  {\bf periodic case:} If $(x,t_{0})$ is periodic under $\g$,
        then there exists $k$ such that the restriction of $f_{\a_0}^k$ on $I_{\a_0}(-c(\a_0))$ is conjugated to a quadratic polynomial with a parabolic fixed point 
        $\psi_{\a_0}^x(e^{2\pi i t_{0}})$.
       \item   {\bf non-periodic case:} If $(x,t_{0})$ is not periodic under $\g$, then every impression is a singleton.
    \end{itemize}

\end{prop}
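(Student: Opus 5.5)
We will deduce Proposition \ref{wang2} from Theorem \ref{wang} by enlarging the empty angle set to a suitable finite set $\varTheta$ of periodic angles satisfying (N1). The point is that $\varTheta$ can only refine the puzzle: every puzzle piece of $\mathrm{P}_{\a_0}(\varTheta,T,s,s')$ is contained in a piece of $\mathrm{P}_{\a_0}(\emptyset,T,s,s')$ of the same depth. Hence every impression of the new system is contained in an impression of the old one. We split into the three cases of Theorem \ref{wang}.

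\textbf{Non-periodic case.} If $(x,t_0)$ is non-periodic, Theorem \ref{wang} puts us either in the non-renormalizable case or in the PCF case.
\begin{itemize}
\item In the non-renormalizable case all impressions are already singletons. We take $\varTheta=\emptyset$, and by the refinement remark the conclusion persists.
\item In the PCF case, the non-singleton impressions are exactly the iterated preimages of $I:=I_{\a_0}(-c(\a_0))$. On $I$, the map $f_{\a_0}^q$ is conjugate to a postcritically finite quadratic polynomial $p_c$, and $p_c$ has locally connected Julia set with singleton fibers. We choose finitely many periodic angles $\theta$ whose rays land at repelling periodic points of the small Julia set $I$. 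They are taken so that, transported through the conjugacy, they cut $I$ into the pieces of a standard Yoccoz puzzle for $p_c$. For such a puzzle, every nest around $-c(\a_0)$ shrinks to a point, by Yoccoz's theorem or directly since $p_c$ is subhyperbolic.
\end{itemize}
We then add these angles, together with their finitely many cycle images (to keep (N1) forward invariant), to $\varTheta$. Every nest of the new system lying in a preimage of $I$ maps under a conformal iterate into a nest around $I$. By the above, that nest shrinks to a point, so every impression is a singleton. The lifting step needs the degree bound: $f_{\a_0}$ has degree at most $2$ on pieces, and the only critical piece is the one containing $-c(\a_0)$, so preimage nests not containing $-c(\a_0)$ map conformally.

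\textbf{Periodic case.} If $(x,t_0)$ is periodic, Theorem \ref{wang} puts us in the parabolic case. There $f_{\a_0}^q|_I$ is conjugate to a parabolic quadratic polynomial, and the parabolic point $z_*:=\psi_{\a_0}^x(e^{2\pi i t_0})$ lies on $\partial B_{\a_0}(x)\cap I$. The claim only asks for some $k$ such that $f^k_{\a_0}|_I$ is conjugate to a quadratic polynomial having $z_*$ as a \emph{fixed} point. Since $z_*$ is periodic of some period $m$, we will take $k$ to be a common multiple of $q$ and $m$ that still gives a polynomial-like restriction. Concretely, $k=q$ works as soon as $z_*\in I$ is fixed by $f_{\a_0}^q$, because $I$ is $f^q$-invariant and $z_*$ is its unique non-repelling point. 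Here $\varTheta=\emptyset$ already suffices. Still, I would add periodic angles landing at repelling points to separate $I$ from the other small copies in its cycle, so that the conjugacy is a genuine polynomial-like map on a thickened piece.

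\textbf{Main obstacle.} I expect the hardest part to be the PCF case. One must check that the added rays actually land at points of $I$ and still satisfy (N1) for the whole map $f_{\a_0}$. One must also check that the rays induce, via the hybrid conjugacy, a puzzle of $p_c$ that really has shrinking nests. My first approach is to pick angles through the critical value's repelling preperiodic target and its cycle, mimicking the standard argument for Misiurewicz quadratics.
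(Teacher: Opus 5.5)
Your non-periodic case follows the paper's route. In the PCF-renormalizable case the paper also refines the puzzle by adding periodic angles whose rays co-land at a repelling periodic point of the small Julia set off the critical orbit, so that their iterated preimages cut $I_{\a_0}(-c(\a_0))$ into singletons. Your Yoccoz-puzzle version of this, and the conformal pull-back to preimages of $I$, are fine.

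The periodic case has a genuine gap. You claim that $\varTheta=\emptyset$ suffices with $k=q$, because $z_*=\psi_{\a_0}^x(e^{2\pi i t_0})$ is the unique non-repelling point of $I=I_{\a_0}(-c(\a_0))$ and is fixed by $f_{\a_0}^q$. Neither fact is established. Theorem \ref{wang} only says that $f_{\a_0}^q|_I$ is conjugate to some parabolic quadratic polynomial $p_c$, and its parabolic cycle may have period $r>1$ (e.g.\ $p_c$ with $c=-7/4$ has a parabolic $3$-cycle); nothing in Theorem \ref{wang} excludes this. In that situation $I$ contains $r$ parabolic points, and $z_*$ is not fixed by $f_{\a_0}^q$.

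Your fallback of passing to a common multiple $k=rq$ does not repair this. The map $f_{\a_0}^{rq}|_I$ is conjugate to $p_c^{\,r}$, which has degree $2^r$, so no iterate restricted to the \emph{same} $I$ is a quadratic polynomial with $z_*$ fixed. What is needed is to shrink the impression of $-c(\a_0)$ itself.

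This is exactly why the paper does not take $\varTheta=\emptyset$ in the parabolic case. The orbit separation lemma (Schleicher's Lemma 3.7, cf.\ Wang's Lemma 3.6) gives two periodic rays of $p_c$ co-landing at a repelling periodic point whose iterated preimages put the points of the parabolic cycle into distinct components. Transport these to angles $\theta_1',\theta_2'$ for $f_{\a_0}$ and set $\varTheta=\{\theta_1',\theta_2'\}$. Then every new impression contains at most one parabolic point, and for a suitable multiple $k$ of $q$ the map $f_{\a_0}^{k}$ on the new, smaller impression of $-c(\a_0)$ is quadratic-like with $z_*$ as a parabolic fixed point.

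The extra rays you propose separate $I$ from its images $f_{\a_0}^j(I)$. That is a different separation and does not address the problem. What must be separated are the parabolic periodic points lying inside $I$.
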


\begin{proof}
    We only need to consider the renormaliziable case.
    Suppose that $\a_0$ is PCF renormaliziable.
    Since the renormalized quadratic polynomial $p_c$ is post-critically finite, according to the study of quadratic polynomials, there exist $\theta,\theta'$ such that $R_c(\theta_1)$ and $R_c(\theta_2)$ lands at a common repelling periodic point which is not on the critical orbit.
    Moreover, $\bigcup_np_c^{-n}(\overline{R_c(\theta_1)} \cup \overline{R_c(\theta_2)}) $ divides the Julia set of $p_c$ into singletons.
    For cubic polynomial $f_{\a_0}$, since $I_{\a_0}(-c(\a_0))$ is homeomorphic to the filled-in Julia set of $p_c$,  we can find $\theta'_1,\theta'_2$ such that $R_{\a_0}(\theta'_1)$ and $R_{\a_0}(\theta_2')$ landing at a common repelling periodic point which is not on the orbit of $-c(\a_0)$, and $I_{\a_0}(-c(\a_0))$ can be divided into singleton sub-impressions by $\bigcup_n f_{\a_0}^{-n}(\overline{ R_{\a_0}(\theta'_1)} \cup \overline{R_{\a_0}(\theta'_2) })$.
    Let $\varTheta:=\{ \theta_1',\theta_2' \}$.
    It follows that the impression of every point in $J_{\a_0}$ is a singleton with respect to  $\mathrm{P}_{\a_0}(\varTheta,T,s,s')$.

    Now we consider the parabolic renormaliable case.
    By Theorem \ref{wang}, we see that there is a renormalization period $q$ and the renormalized quadratic polynomial $p_c$ has a unique parabolic orbit.
    By the orbit separation Lemma (\cite[Lemma 3.7]{Schleicher1997RationalPR}, see also \cite[Lemma 3.6]{wang2023}), there exists $\theta,\theta'$ such that $R_c(\theta_1)$ and $R_c(\theta_2)$ lands at a common repelling periodic point such that the parabolic points are in distinct components of $\bigcup_np_c^{-n}(\overline{R_c(\theta_1)} \cup \overline{R_c(\theta_2)}) $.
    For the same reason above, we can find $\theta'_1,\theta'_2$ such that $R_{\a_0}(\theta'_1)$ and $R_{\a_0}(\theta_2')$ landing at a common repelling periodic point such that the parabolic orbit of $f_{\a_0}$ are separated by $\bigcup_n f_{\a_0}^{-n}(\overline{ R_{\a_0}(\theta'_1)} \cup \overline{R_{\a_0}(\theta'_2) })$.
    Hence when consider the puzzle system  $\mathrm{P}_{\a_0}(\varTheta,T,s,s')$ where $\varTheta:=\{ \theta_1',\theta_2' \}$, every impression must contain at most one parabolic point.
    Therefore, there exists $k$ which is a multiple of $q$ such that $f_{\a_0}^k$ on the new impression of $-c(\a_0)$ is conjugated to a quadratic polynomial with a parabolic fixed point.
    
\end{proof}

From now on, we fix the puzzle system $\mathrm{P}_{\a_0}(\varTheta,T,s,s')$ which satisfies Proposition \ref{wang2}.
The following lemma is a variation of \cite[Lemma 3.1]{wang2023} which shows that the graphs $G_n^\a$ and $\Gamma_n^\a$ of $\mathrm{P}_{\a_0}(\varTheta,T,s,s')$ admit a holomorphic motions.

\begin{lem}[stability of graphs]\label{hmpuzzle}
    Let $\a_0 \in \S\cap \mathcal{C}$ with $-c(\a_0)\notin B_{\a_0}$.
    Fix $\varTheta,T,s,s'$, let $\{ G_n^\a \}$ be the sequence of graphs of the puzzle system $\mathrm{P}_{\a_0}(\varTheta,T,s,s’)$.
    Fix $n \geq 0$, there exists a sufficiently small neighborhood $ \mathcal{P}_n$ of $\a_0$, and a holomorphic motion $h_n: \mathcal{P}_n \times G_n^{\a_0} \to \C$ such that the following holds.
    \begin{enumerate}
        \item For $\theta$ with $R_{\a_0}(\theta)\cap X_n^{\a_0}\subset G_n^{\a_0}$,  we have $h_n(\a,R_{\a_0}(\theta)\cap X_n^{\a_0}) = R_\a(\theta) \cap X_n^\a$.
        \item For $(v,t) \in \O \times \RZ$ with $s_{\a_0}^v(t)=1$ and $R_{\a_0}^v(t) \cap X_n^{\a_0} \subset G_n^{\a_0}$, we have $s_\a^v(t)=1$ and $h_n(\a,R_{\a_0}^v(t)\cap X_n^{\a_0}) = R_\a^v(t) \cap X_n^\a$.
    \end{enumerate}

\end{lem}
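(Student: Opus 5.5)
The plan is the standard Douady--Hubbard stability argument, made uniform in $n$ only up to finite depth. The graph $G_n^{\a_0}$ is a finite union of three kinds of pieces:
\begin{itemize}
\item pieces of equipotentials $\partial \Omega_{\a_0}(s)$ and $\partial B^{c_k}_{\a_0}(s')$, together with their preimages of depth $\le n$;
\item truncated external rays $R_{\a_0}(\theta)\cap X_n^{\a_0}$, where $\theta$ ranges over preimages of depth $\le n$ of the finitely many angles in $\varTheta$ and of the $\theta(t)$, $t\in T$;
\item truncated smooth internal rays $R^v_{\a_0}(t)\cap X_n^{\a_0}$, where $(v,t)$ ranges over preimages of depth $\le n$ of $T$ in the mapping scheme.
\end{itemize}
The motion will be defined through the B\"ottcher coordinates on each type of piece. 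We then check that the motion is injective and holomorphic in $\a$, and extend it to a holomorphic motion of the whole set.

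\textbf{Step 1: the depth-zero graph $G_0^{\a_0}$.}
\begin{itemize}
\item Rays with $\theta\in\varTheta$ or $\theta=\theta(t)$ land at repelling periodic points, by (N1) and (N2). By Lemma~\ref{stablerepelling}(2), $(\a,r)\mapsto\psi_\a(re^{2\pi i\theta})$ is continuous near $\a_0$ and holomorphic in $\a$ for fixed $r\ge1$. The landing points have orbits avoiding critical points: the hypothesis $-c(\a_0)\notin B_{\a_0}$ keeps the free critical point off these orbits.
\item For the internal rays $R^c_{\a_0}(t)$ with $t\in T$, I would work in the family $\S$. There $c(\a)$ is superattracting, and $\psi^c_\a$ depends holomorphically on $\a$ on $D(0,\rho)$ for $\a$ near $\a_0$. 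Because $-c(\a_0)\notin B_{\a_0}$ (so $s^c_{\a_0}=1$) and the landing point $\psi^c_{\a_0}(e^{2\pi i t})$ is repelling with orbit avoiding $-c(\a_0)$ by (N2), the ray is stable. Concretely, use the same pullback argument as in Lemma~\ref{stablerepelling}: the periodic ray lands at a repelling point $z_0$; near $z_0$ the inverse branch of $f_\a^m$ fixing the persisting point $z_0(\a)$ is a uniform contraction; and the ray is rebuilt as the limit of pullbacks of a fundamental segment. This gives $s^v_\a(t)=1$ and holomorphic dependence of $\psi^v_\a(re^{2\pi i t})$ on $\a$ for $r\le 1$.
\item The equipotentials $\partial\Omega_\a(s)=\psi_\a(\{|w|=s\})$ and $\psi^{c_k}_\a(\{|w|=s'\})$ move holomorphically, since $s>1$ and $s'<1$ are fixed away from critical values when $\a$ is close to $\a_0$.
\end{itemize}
So define $h_0(\a,\cdot)$ by $\psi_\a\circ\varphi_{\a_0}$ on the external parts and by $\psi^{v}_\a\circ\phi^v_{\a_0}$ on the internal parts. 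Landing points are sent to their continuations. This is injective, because distinct rays stay disjoint, and the landing points are distinct repelling periodic points with holomorphic continuations.

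\textbf{Step 2: induction on $n$.} Pulling back gives $G_{n}^\a=f_\a^{-1}(G_{n-1}^\a)\cap X$. The key point is that $G_{n-1}^{\a_0}$ avoids the critical values of $f_{\a_0}$. Indeed, $-c(\a_0)$ is not on any ray of the graph, since its orbit misses the repelling landing points. For the equipotential levels, choose $s,s'$ generic if necessary, and use that $c(\a_0)$ lies in the interior of $B^c(s')$. Hence, for $\a$ in a small neighborhood $\mathcal P_n$, $f_\a$ is a covering over a neighborhood of $G_{n-1}^\a$, and $h_n(\a,z)$ is the unique lift with $f_\a(h_n(\a,z))=h_{n-1}(\a,f_{\a_0}(z))$ and $h_n(\a_0,z)=z$.

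The lift is holomorphic in $\a$ by the implicit function theorem, and injective because $h_{n-1}$ is injective and lifts are unique. It respects rays: $\psi_\a\circ\varphi_{\a_0}$ commutes with the dynamics by the B\"ottcher equations \eqref{boteq4} and \eqref{diaeq1}, so the lift coincides with the B\"ottcher definition. This gives properties (1) and (2), with $s^v_\a(t)=1$ following from the covering property along the whole pulled-back internal ray.

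\textbf{Main obstacle.} The delicate part is Step 1 for internal rays and landing points. We must show that the continuation of a smooth internal ray stays smooth up to its endpoint for $\a$ near $\a_0$, even though for $\a$ in $\H$ the point $-c(\a)$ enters $\tilde B_\a$ and truncates many internal rays. The argument is a compactness one: the compact set $R^c_{\a_0}(t)\cup\{z_0\}$ and its finite forward orbit stay a definite distance from $-c(\a_0)$. By continuity, the iterated preimages of $-c(\a)$ of bounded depth also stay away. The part of the ray near $z_0$ is produced by the contracting inverse branch, so no precritical point can lie on it. Therefore $(P1)$ holds along the entire ray.
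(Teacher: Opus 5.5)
Your route is the standard one. The paper gives no proof and only cites \cite[Lemma 3.1]{wang2023}; the usual argument there is to move $G_0^{\a_0}$ by B\"ottcher coordinates and by holomorphic continuation of the repelling landing cycles, then lift through $f_\a$ as an unbranched covering.

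\textbf{The gap.} Your lifting step rests on the claim $f_{\a_0}(-c(\a_0))\notin G^{\a_0}_{n-1}$, i.e.\ $-c(\a_0)\notin G^{\a_0}_n$, and none of the hypotheses gives this.
\begin{itemize}
\item (N1) imposes nothing on $\varTheta$ relative to the critical orbit.
\item (N2), as written, only says that the forward orbit of a repelling periodic landing point avoids $-c$. That is automatic, since a cycle through a critical point is superattracting. You silently replace it by the converse statement, that the orbit of $-c(\a_0)$ avoids the landing cycles.
\item The hypothesis $-c(\a_0)\notin B_{\a_0}$ plays no role here, so your Step~1 attribution is misplaced. Its actual job is to give $s^c_{\a_0}=1$, so that the internal rays of $T$ and the equipotentials $\partial B^{c_k}(s')$ exist, and $s^c_\a>s'$ for $\a$ near $\a_0$.
\end{itemize}

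The missing condition is genuinely necessary. Suppose $f^k_{\a_0}(-c(\a_0))$ is the landing point of a ray of $\Gamma_{\a_0}(\varTheta,T)$, for instance a PCF map with $\varTheta$ taken on the critical cycle. Then for $n>k$ the vertex $-c(\a_0)$ of $G^{\a_0}_n$ receives $2m$ rays. For nearby $\a$ off a discrete set, these split into two groups of $m$ rays landing at two distinct points. By the $\lambda$-lemma a holomorphic motion extends continuously to the closures of the rays, so no $h_n$ satisfying (1) can exist.

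The same obstruction arises when $-c(\a_0)\in\tilde B_{\a_0}\setminus B_{\a_0}$ (which the hypothesis allows) and $-c(\a_0)$ lies on a pulled-back internal ray or internal equipotential. Your fix ``choose $s,s'$ generic'' is not available, because $s,s'$ are fixed in the statement.

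So the lemma needs the extra hypothesis $-c(\a_0)\notin\bigcup_n G^{\a_0}_n$. It holds where the lemma is used, provided $\varTheta$ and $T$ are chosen off the critical orbit:
\begin{itemize}
\item $\a_0\in\partial\H$ forces $-c(\a_0)\notin\tilde B_{\a_0}$;
\item a critical orbit in a parabolic basin never meets a repelling cycle;
\item Proposition~\ref{wang2} chooses $\varTheta$ off the critical orbit in the PCF case.
\end{itemize}
Granting this hypothesis, compactness of $G^{\a_0}_{n-1}\cap\overline{X^{\a_0}_{n-1}}$ and continuity of $h_{n-1}$ give a uniform separation between $f_\a(-c(\a))$ and $G^\a_{n-1}$. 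That is exactly what your covering step needs, and smoothness of the pulled-back rays, $s^v_\a(t)=1$, then follows by induction.

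\textbf{Two smaller repairs.}
\begin{itemize}
\item In your ``main obstacle'' paragraph, continuity of precritical points of bounded depth does not exclude deep precritical points from the compact fundamental segment $I_\a$. The correct reason is that $s^c_\a>r$ for a fixed $r<1$ and all $\a$ near $\a_0$, so no precritical point in the immediate basin has Green level $\le r$. On the tail $g_\a^j(I_\a)$, a precritical point of depth $<mj$ would be a critical point of $f^{mj}_\a$, contradicting univalence on $g^j_\a(U)$. One of depth $\ge mj$ would map into $I_\a$, which is already excluded.
\item Defining $h_n$ by continuing lifts in $\a$ requires $\mathcal P_n$ to be simply connected; take a disk in the smooth curve $\S$. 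Alternatively, avoid monodromy altogether by defining $h_n$ directly by $\psi_\a\circ\varphi_{\a_0}$ and $\psi^v_\a\circ\phi^v_{\a_0}$ on each piece, as you already do at depth $0$.
\end{itemize}
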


Using Lemma \ref{hmpuzzle}, we deduce the following important lemma which asserts that the landing point of external rays with angles in an equivalence class of $\Lambda_\R(\a_0)$ belong to the same impression of the puzzle system $\mathrm{P}_{\a_0}(\varTheta,T,s,s')$.

\begin{lem}[common impression]\label{impression}
    For $(\theta,\theta') \in \Lambda_\R(\a_0)$, $z_\theta(\a_0)$ and $z_{\theta'}(\a_0)$ are in the same impression of puzzle system $\mathrm{P}_{\a_0}(\varTheta,T,s,s')$.
    
    If a $\i$-turning internal ray $\RRR_\a^{\i}(v,t)$ contains an iterative preimage $\omega(\a)$ of $-c(\a)$ such that $z_\a^{\i}(v,t)=z_\theta(\a)$ for $\a \in \mathcal{R}$, then we have 
    \begin{equation}\label{puzzle=}
        I_{\a_0}(\psi_{\a_0}^v(e^{2\pi i t}))=I_{\a_0}(z_\theta(\a_0))=I_{\a_0}(\omega(\a_0))
    \end{equation}
     where $\omega(\a_0)=\lim_{\a \to \a_0}\omega(\a)$ is also an iterative preimage of $-c(\a_0)$. 
\end{lem}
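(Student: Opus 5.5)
We prove the second statement (the identity \eqref{puzzle=}) first, and deduce the first from it. The idea is to compare the puzzle pieces of $\a_0$ with those of $\a\in\mathcal{R}$ near $\a_0$ via the holomorphic motion of Lemma \ref{hmpuzzle}, and then pass to the limit $\a\to\a_0$ along $\mathcal{R}$.

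Fix a depth $n$. By Lemma \ref{hmpuzzle}, for $\a\in\mathcal{R}\cap\mathcal{P}_n$ the graph $G_n^{\a}$ is the image of $G_n^{\a_0}$ under $h_n(\a,\cdot)$. Hence each puzzle piece $P_n^{\a_0}$ has a continuation $P_n^{\a}$, bounded by the same external angles, the same smooth internal angles and the same equipotentials. Moreover $\overline{P_n^\a}\to\overline{P_n^{\a_0}}$ in the Hausdorff sense as $\a\to\a_0$.

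Now let $\RRR_\a^{\i}(v,t)$ be as in the statement, containing the precritical point $\omega(\a)$ and landing at $z_\theta(\a)$. The key observation is that $R_\a(\theta)$, the landing point $z_\theta(\a)$, and the terminal segment of $\RRR_\a^{\i}(v,t)$ beyond the last turning point inside depth $n$ all lie in the closure of a single piece $P_n^\a$. This holds because, by (I1) of Corollary \ref{intersection} and the fact that smooth internal rays in $G_n^\a$ meet external rays only at their landing points, neither $R_\a(\theta)$ nor the generalized ray can cross $G_n^\a$ except at endpoints. Likewise, for the combinatorial point $\psi^v_{\a}(e^{2\pi i t})$-direction, the ray $\RRR^{\i}_\a(v,t)$ enters a unique piece through the internal angle $t$ near its base. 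Since $G_n^\a$ contains only smooth internal rays, which the non-smooth ray does not cross, the whole ray after $\omega(\a)$, as well as $\omega(\a)$ itself, stays in the closure of one depth-$n$ piece.

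The label of this piece is determined combinatorially by the angles $t$ and $\theta$ and the symbolic data of the graph, and so it is constant along $\mathcal{R}$ by Lemma \ref{surgery}. Passing to the limit $\a\to\a_0$, we use three facts: $R_\a(\theta)$ Hausdorff-accumulates onto a set containing $\overline{R_{\a_0}(\theta)}$ (the lemma of Subsection \ref{ssec2.4}); $\omega(\a)\to\omega(\a_0)$, since precritical points move continuously; and the pieces converge as above. Together these give that $z_\theta(\a_0)$, $\omega(\a_0)$ and $\psi_{\a_0}^v(e^{2\pi it})$ lie in $\overline{P_n^{\a_0}}$ for the same continued piece. Intersecting over $n$ yields \eqref{puzzle=}.

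For the first statement, we reduce by Lemma \ref{characterization} to two cases. In case (C1), the pair $(\theta,\theta')$ lies in $\L_\R(\H)$: the common landing point for $\a\in\mathcal{R}$ lies in the closure of one piece at every depth, and the same limiting argument applies. Otherwise $\theta$ and $\theta'$ are joined by a left-right ray $\RRR_\a^{L,R}(v,t)$, and applying \eqref{puzzle=} to $\RRR^L$ and to $\RRR^R$, which share the base data $(v,t)$, gives $I_{\a_0}(z_\theta(\a_0))=I_{\a_0}(\psi^v_{\a_0}(e^{2\pi it}))=I_{\a_0}(z_{\theta'}(\a_0))$. Transitivity then handles general classes.

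The main obstacle is the limit step. Landing points need not vary continuously at a parabolic $\a_0$, and $\psi^v_{\a_0}(e^{2\pi it})$ may lie on the graph. I would handle this by working with closures of pieces and by using the fact that the limit points of $z_\theta(\a)$ lie in the Hausdorff limit $L(\theta)$. By Theorem \ref{pz}, $L(\theta)$ is connected and meets no part of $G_n^{\a_0}$ separating pieces, apart from fixed points of the first-return map, so the whole of $L(\theta)$ is trapped in one closed piece.
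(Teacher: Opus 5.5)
Your proposal follows the paper's proof. In both, (I1) of Corollary~\ref{intersection} shows that for $\a\in\mathcal{R}$ the external ray, the generalized internal ray and $\omega(\a)$ are not separated by $G_n^\a$. Lemma~\ref{hmpuzzle}, together with continuity in $\a$ of the external and internal B\"ottcher coordinates at fixed potential, then transfers this to $\a_0$. The paper phrases that transfer as a contradiction at fixed radii $r>1$, $r'<1$, while you pass to Hausdorff limits of closed pieces, which is equivalent. For your version you should state explicitly that $\psi_\a^v(r'e^{2\pi i t})\to\psi_{\a_0}^v(r'e^{2\pi i t})$, using $s_\a^v(t)\to 1$ along $\mathcal{R}$. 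Your closing appeal to Theorem~\ref{pz} is unnecessary, and that theorem concerns only periodic angles: the Hausdorff limit of $R_\a(\theta)\cap\overline{X_n^\a}$ already lies in the limiting closed piece and contains $z_\theta(\a_0)$.
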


\begin{proof} 
    Suppose that $\RRR_\a^{\i}(v,t)$ contains an iterative preimage $\omega(\a)$ of $-c(\a)$ such that $z_\a^{\i}(v,t)=z_\theta(\a)$ for $\a \in \mathcal{R}$.
    Fix any $n$, for $\a \in \mathcal{R}$, since $G_\a^n$ consists of equipotential lines and smooth internal rays and external rays, by (I1) of Lemma \ref{intersection}, the three points $z_\a^{\i}(v,t)$ and $w(\a)$ cannot be separated by $G_n^\a$. 
    Hence for any $\a \in \mathcal{R}$,
    \begin{equation}\label{puzzle==}
            P_n^\a ( z_\a^{\i}(v,t) ) = P_n^\a(z_\theta(\a)) = P_n^\a(\omega(\a)), \quad \forall n \geq 1.
     \end{equation}

    Now, we show that $I_{\a_0}(\psi_{\a_0}^v(e^{2\pi i t}))=I_{\a_0}(z_\theta(\a_0))$ by contradiction.
    We assume the contrary that $P_n^{\a_0}(\psi_{\a_0}^v(e^{2\pi i t})) \neq  P_n^{\a_0}(z_{\theta}(\a_0)) $ for some $n\geq 1$.  
    By the continuity of the B\"ottcher map $\psi_{\a_0}$ and $\psi_{\a_0}^v$, for $r>1$ and $r'<1$ sufficiently close to $1$, we have $P_n^{\a_0}(\psi_{\a_0}^v(r'e^{2\pi i t})) \neq  P_n^{\a_0}(\psi_{\a_0}(re^{2\pi i \theta})) $.    
    By Lemma \ref{hmpuzzle}, and the continuity of B\"ottcher map of parameter, we see that for $\a $ close to $\a_0$, $P_n^\a(\psi_\a^v(r'e^{2\pi i t})) \neq P_n^\a (\psi_\a(re^{2\pi i \theta})).$
    It follows that $ P_n^\a(z_\a^{\i}(v,t)) \neq P_n^\a (z_{\theta}(\a))$.
    This contradicts \eqref{puzzle==}.
    Using a similar argument, we see that $I_{\a_0}(z_\theta(\a_0)) =  I_{\a_0}(\omega(\a_0)) $.
    Hence \eqref{puzzle=} holds.

    Suppose that $(\theta,\theta') \in \Lambda(\a_0)$, we need to show that $I_{\a_0}(z_\theta(\a_0))=I_{\a_0}(z_{\theta'}(\a_0))$.
    By Lemma \ref{characterization}, it suffices to consider two case that $(\theta,\theta') \in \L_\R(\H)$ and $(\theta,\theta')\in \Lambda_\R(\a_0)$.
    If we are in the former case, then we have $P_n^\a(z_\theta(\a))=P_n^\a(z_{\theta'}(\a))$ for every $n$ and $\a \in \mathcal{R}$.
    Then use the same argument above, we deduce that $I_{\a_0}(z_\theta(\a_0))=I_{\a_0}(z_{\theta'}(\a_0))$.
    If we are in the later case, then for $\a \in \mathcal{R}$, $z_\theta(\a)$ and $z_{\theta'}(\a)$ are joined by some left-right internal ray which contains an iterative preimage $\omega(\a)$ of $-c(\a)$.
    Then by the above discussion, we see that $I_{\a_0}(z_\theta(\a_0))=I_{\a_0}(\omega(\a_0))=I_{\a_0}(z_{\theta'}(\a_0))$.

\end{proof}

Combining the non-periodic case in Proposition \ref{wang2} and Lemma \ref{impression}, we deduce the following.

\begin{cor}[non-periodic case]\label{nonper}
    If $(x,t_0)$ is non-periodic under $\g$, then $\Lambda_\R(\a_0) \subset \L_\R(\a_0)$.
    In particular, we have $\L_\R(\H) \subset \L_\R(\a_0)$.
\end{cor}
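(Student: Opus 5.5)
The plan is to combine the non-periodic case of Proposition \ref{wang2} with Lemma \ref{impression}.

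First, let $(\theta,\theta')\in \Lambda_\R(\a_0)$. By Lemma \ref{impression}, the landing points $z_\theta(\a_0)$ and $z_{\theta'}(\a_0)$ lie in a common impression of the puzzle system $\mathrm{P}_{\a_0}(\varTheta,T,s,s')$. Since $(x,t_0)$ is non-periodic under $\g$, Proposition \ref{wang2} says that every impression of this puzzle system is a singleton. Hence $z_\theta(\a_0)=z_{\theta'}(\a_0)$. This is the equality of landing points needed, provided both rays actually land.

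Landing is automatic here. By Theorem \ref{wang}, in the non-renormalizable and PCF cases, $J_{\a_0}$ is locally connected (as recalled in the introduction for tame boundary maps). So every external ray of $f_{\a_0}$ lands and $\L_\R(\a_0)$ is defined. Then $(\theta,\theta')\in\L_\R(\a_0)$ by the definition of the real lamination, so $\Lambda_\R(\a_0)\subset\L_\R(\a_0)$. The inclusion $\L_\R(\H)\subset\L_\R(\a_0)$ follows immediately, because $\L_\R(\H)\subset\Lambda_\R(\a_0)$ by the definition of the visual lamination.

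The one delicate point is whether Lemma \ref{impression} genuinely covers every pair in $\Lambda_\R(\a_0)$, which is the equivalence relation generated by $\L_\R(\H)\cup\Lambda(\a_0)$. The lemma treats the generating pairs: those in $\L_\R(\H)$, and those joined by a left-right internal ray through some $\omega(\a)$. For a general pair I would chain through the class structure of Lemma \ref{characterization}. Each element of an equivalence class $E$ is related either within $E_\H$ or directly to $E_\H$ via $\Lambda(\a_0)$. Since "same impression" is transitive, all landing points of angles in $E$ share one impression. This is the only step I would check carefully; the rest is a direct combination of the two cited results.
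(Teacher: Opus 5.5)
Your proposal is correct and follows the paper's argument: the non-periodic case of Proposition \ref{wang2} makes every impression a singleton, and Lemma \ref{impression} then gives $z_\theta(\a_0)=z_{\theta'}(\a_0)$. The step you flag is harmless and needs no chaining through Lemma \ref{characterization}. It suffices to apply Lemma \ref{impression} to the generating pairs in $\L_\R(\H)\cup\Lambda(\a_0)$, because $\L_\R(\a_0)$ is itself an equivalence relation and therefore contains the equivalence relation $\Lambda_\R(\a_0)$ they generate.
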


For periodic case, since the impression of any iterative preimage of the critical point is not a singleton, we need some more precise characterization of the landing location of the external rays to deduce the counterpart of Corollary \ref{nonper}.

\begin{lem}[landing on the parabolic component]\label{periodicimpression}
    For any $\i \in \{ L,R \}^{\mathbb N}$, the external ray $R_{\a_0}(\vt^{\i}(x,t_0))$ land at a point on the boundary of the parabolic component containing $-c(\a_0)$.
\end{lem}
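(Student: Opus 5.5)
We are in the periodic case, so by Proposition \ref{wang2} there is $k$ such that $f_{\a_0}^k$ restricted to the impression $I:=I_{\a_0}(-c(\a_0))$ is conjugate to a quadratic polynomial with a parabolic fixed point $z^*:=\psi_{\a_0}^x(e^{2\pi i t_0})$. Let $B^*$ be the parabolic Fatou component of $f_{\a_0}$ containing $-c(\a_0)$; it lies in $I$ and $z^*\in\partial B^*$. The plan is to show that the landing point $z_{\vartheta}(\a_0)$, with $\vartheta:=\vt^{\i}(x,t_0)$, lies in $I$, and then to use the structure of the parabolic quadratic-like restriction to force it onto $\partial B^*$.

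\emph{Step 1 (landing in the critical impression).} For $\a\in\mathcal{R}$, the $\i$-turning ray $\RRR_\a^{\i}(x,t_0)$ passes through $-c(\a)$ (its first turning point) and lands at $z_\vartheta(\a)$. Lemma \ref{impression} then gives
\[
I_{\a_0}(z_\vartheta(\a_0))=I_{\a_0}(-c(\a_0))=I_{\a_0}(z^*),
\]
so $z_\vartheta(\a_0)\in I$. Applying the same argument to every turning point $\omega_j(\a)$ of $\RRR_\a^{\i}(x,t_0)$ and to every segment between consecutive turning points, I expect to obtain finer information: the whole tail of the ray beyond $\omega_j(\a)$ stays in the puzzle piece of $\omega_j(\a)$ at each depth.

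\emph{Step 2 (reduce to the parabolic basin).} Conjugate $f_{\a_0}^k|_I$ to the parabolic quadratic $P$. Under this conjugacy $B^*$ corresponds to the immediate parabolic basin $B_P$ containing the critical point, and its boundary is a Jordan curve on which $P$ acts like $z\mapsto z^2$. The turning points $\omega_j(\a)$ converge as $\a\to\a_0$ to iterated preimages $\omega_j(\a_0)$ of $-c(\a_0)$. These lie in the grand orbit of $B^*$ and in $I$, so they lie in the preimages of $B_P$ inside the renormalization. Because all the turns in the periodic case occur at preimages of $-c$ along the single periodic direction $(x,t_0)$, the $\omega_j(\a_0)$ accumulate only inside $\overline{B^*}$; by Lemma \ref{ring}-type arguments the Fatou components meeting $I$ near $z^*$ are those attached at $z^*$.

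\emph{Step 3 (pin the landing point on $\partial B^*$).} Suppose $z_\vartheta(\a_0)\notin\partial B^*$. Then by Theorem \ref{roeschyin} applied to $B^*$, the point $z_\vartheta(\a_0)$ lies in a nontrivial limb $L_y$ with $y\in\partial B^*$. There are two external rays landing at $y$, with rational (preperiodic) angles, and these separate $L_y$ from $B^*$. By Lemma \ref{stablerepelling} and Proposition \ref{keylemma} (here $\a_0\in\mathcal{C}_0$), the corresponding rays for $\a\in\mathcal{R}$ land together and still separate. For $\a$ near $\a_0$ they would therefore separate $R_\a(\vartheta)$ from the turning point $\omega_j(\a)$ that is closest to $\partial B_\a$, contradicting (I1) of Corollary \ref{intersection}. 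The landing point is then forced onto $\partial B^*$. The main obstacle is exactly this step: controlling the separating rays uniformly as $\a\to\a_0$ when $y$ may be parabolic, or a preimage of $z^*$. Here I would invoke Fact \ref{stableparabolic} and the Peterson--Zakeri Hausdorff limit Theorem \ref{pz}, and choose $y$ to be a preimage of a repelling point where possible.
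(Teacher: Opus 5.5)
\textbf{There is a genuine gap in Step~3.} Your Step~1 matches the paper's first step: Lemma~\ref{impression} puts $z_\vartheta(\a_0)$ in $I=I_{\a_0}(-c(\a_0))$. The failure comes exactly where you locate the obstacle, and it is not a technicality.

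To get a contradiction you need the separating pair rooted at $y\in\partial B^*$ to \emph{still land together} for $\a\in\mathcal{R}$. None of the tools you cite gives this:
\begin{itemize}
\item Lemma~\ref{stablerepelling} needs $y$ to be repelling with an orbit avoiding the critical points.
\item Proposition~\ref{keylemma} (equivalently Corollary~\ref{Qp}, $\limsup_n\L_{\Q_p}(\a_n)\subset\L_{\Q_p}(\a_0)$) transports co-landing only from $\a_n$ to $\a_0$, never back. It also applies only to periodic angles.
\item Fact~\ref{stableparabolic} only says that each ray landing at $z^*$ lands at one of the $p+1$ points born from $z^*$. It does not say that two such rays stay together.
\end{itemize}
In the essential case no repelling root is available. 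Suppose $I$ contains another parabolic component $B'$ attached at $z^*$ and $z_\vartheta(\a_0)\in\partial B'$. Then any two co-landing rays separating $z_\vartheta(\a_0)$ from $B^*$ must land at $z^*$, because $z^*\in\overline{B^*}\cap\overline{B'}$.

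At $z^*$ and its preimages, co-landing is created in the limit rather than inherited. For $\a\in\H$ near $\a_0$, $z^*$ splits into a repelling fixed point $z(\a)$ and a $p$-cycle. Rays landing together at $z^*$ can then land at different points of this set; for example $\vt^L,\vt^R$ do so (Remark~\ref{parabolicperturb}). This is exactly the new part $\Lambda(\a_0)$ of the lamination. So your separating curve at $\a_0$ cannot be transported to $\mathcal{R}$.

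There are also smaller issues. Theorem~\ref{roeschyin} does not give rationality of the separating angles. (I1) of Corollary~\ref{intersection} concerns internal rays meeting internal rays, so it is not what would be violated. The accumulation claim in Step~2 is unsupported.

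\textbf{The missing idea} is to use rays that land together \emph{inside} $\H$ and only converge to $z^*$, and to replace static separation by forward invariance.
\begin{itemize}
\item After normalizing $k=1$, the paper takes the angles $\theta_1,\dots,\theta_p$ of the rays landing at $z(\a)$. By Proposition~\ref{keylemma}, used in its valid direction, these rays land at $z^*$ for $\a_0$.
\item For $\a\in\mathcal{R}$ near $\a_0$, the component $B_\a(x)$ lies in $S_\a(\theta_1,\theta_2)_+$. Every image $f_\a^{np}(\RRR_\a^{\i}(x,t_0))$ lands on $\partial B_\a(x)$, so $\tau_3^{np}(\vartheta)\in(\theta_1,\theta_2)_+$ for all $n\ge 0$.
\item This is a statement about angles only, so it survives at $\a_0$: the $f_{\a_0}^p$-orbit of $z_\vartheta(\a_0)$ never leaves $\overline{S_{\a_0}(\theta_1,\theta_2)_+}$.
\item Combine this with $z_\vartheta(\a_0)\in I$ and the parabolic quadratic-like structure of Proposition~\ref{wang2}. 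The only points of $I$ with this property lie in $\overline{B^*}$. Since $z_\vartheta(\a_0)$ is in $J_{\a_0}$, it lies on $\partial B^*$, which is the lemma.
\end{itemize}
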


\begin{proof}
    From Lemma \ref{impression}, since the $\i$-turning internal ray $\RRR_\a^{\i}(x,t_0)$ contains the critical point $-c(\a)$, we know that $R_{\a_0}(\vt^{\i}(x,t_0))$ land at a point in $I_{\a_0}(-c(\a_0))$.
    
    By Proposition \ref{wang2}, $\psi_{\a_0}^x(e^{2\pi i t_0})$ is a parabolic fixed point of $f_\a^k$.
    Without loss of generality, we assume that $k=1$, i.e. $\psi_{\a_0}^x(e^{2\pi i t_0})$ is a parabolic fixed point of $f_{\a_0}$.
    It follows that $\psi_{\a_0}^x(e^{2\pi i t_0})$ must be the common point of $p$ super-attracting Fatou component and $p$ parabolic component.
    By Lemma \ref{stableparabolic}, for $\a \in \H$ and sufficiently close to $\a_0$, the parabolic point $\psi_{\a_0}^x(e^{2\pi i t_0})$ breaks into a repelling fixed point and a $p$ repelling periodic cycle.
    Let $z(\a)$ denote the fixed point, and $R_\a(\theta_1)$ be an external ray which lands at $z(\a)$.
    By Proposition \ref{keylemma}, $R_{\a_0}(\theta_1)$ lands at $ \psi_{\a_0}^x(e^{2\pi i t_0}) $ since $z(\a)$ tends to $ \psi_{\a_0}^x(e^{2\pi i t_0}) $ as $\a $ tends to $\a_0$.
    It follows that $\theta_1$ must have period $p$ under $\tau_3$.
    Let $\{ \theta_1,\theta_2,\dots,\theta_p \}$ be its orbit.
    It follows that for $\a \in \H$ and near $\a_0$, $R_\a(\theta_j)$ lands at $z(\a)$, $R_{\a_0}(\theta_j)$ lands at $\psi_{\a_0}^x(e^{2\pi i t_0})$ for $1\leq j\leq p$.
    They separates the $p$ super-attracting periodic components.
    We assume that $B_\a(x)$ are in the sector $S_\a(\theta_1,\theta_2)_+$. 
    For $\a \in \mathcal{R}$, since the orbit of $\RRR_\a^{\i}(x,t_0)$ under $f_\a^p$ always land at a point on $\partial B_\a(x)$.
    If follows that the orbit of $\vt^{\i}(x,t_0)$ under $\tau_3^p$ stays in $(\theta_1,\theta_2)_+$. 
    Then the orbit of $R_{\a_0}(\vt^{\i}(x,t_0))$ under $f_{\a_0}^p$ must stay in $S_{\a_0}(\theta_1,\theta_2)_+$.
    Notice that the points in $I_{\a_0}(-c(\a_0))$ whose orbit under $f_{\a_0}^p$ stays in $S_{\a_0}(\theta_1,\theta_2)_+$ belong to a periodic parabolic component.
    This component must be the one containing $-c(\a_0)$.    
    Therefore, we conclude that $R_{\a_0}(\vt^{\i}(x,t_0))$ must land at a point on the boundary of the parabolic component containing $-c(\a_0)$.

\end{proof}

\begin{cor}[characteristic angles]\label{coland}
   The external rays $R_{\a_0}(\vt^L)$ and $R_{\a_0}(\vt^R)$ of the two characteristic angles $\vt^L,\vt^R$ both land at the point $\psi_{\a_0}^x(e^{2\pi i t_0})$.    
\end{cor}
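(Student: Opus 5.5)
The plan is to pin down the landing point of $R_{\a_0}(\vt^\iota)$, $\iota\in\{L,R\}$, in the periodic case, using dynamics on the boundary of the parabolic component. (In the non-periodic case the statement would fall under Corollary \ref{nonper}, since singleton impressions are available there; the substantive case is the periodic one.) Assume therefore that $(x,t_0)$ is periodic under $\g$. Normalize as in the proof of Lemma \ref{periodicimpression}, so that $z_*:=\psi_{\a_0}^x(e^{2\pi i t_0})$ is a parabolic fixed point of $f_{\a_0}^k$. Let $U$ be the parabolic component containing $-c(\a_0)$.

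By Lemma \ref{periodicimpression}, applied with the constant sequences $\i=(L,L,\dots)$ and $(R,R,\dots)$, both $R_{\a_0}(\vt^L)$ and $R_{\a_0}(\vt^R)$ land on $\partial U$. Since $(x,t_0)$ is periodic under $\g$, equation \eqref{tau_3} shows that $\vt^\iota$ is periodic under $\tau_3$; let its period be $m$. Lemma \ref{p&np} guarantees this periodicity, and the period can be taken to be a multiple of the period of $(x,t_0)$. Hence the landing point $w_\iota$ of $R_{\a_0}(\vt^\iota)$ is a periodic point of $f_{\a_0}$ lying on $\partial U$, and it is fixed by $f_{\a_0}^m$.

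Next I will show that $w_\iota=z_*$. The return map of $U$ is a degree-$2$ proper map; this follows because $U$ contains exactly one critical point and $\a_0\in\mathcal{C}_0$, as in the proof of Proposition \ref{keylemma}. By Lemma \ref{1fixedpoint}, the return map $f_{\a_0}^{k}|_{\partial U}$ is then semi-conjugate to doubling and has a unique fixed point on $\partial U$, namely the parabolic point $z_*$. However, $w_\iota$ is fixed by $f_{\a_0}^m$ rather than by the first return map, so a further argument is needed. Here I would reuse the sector argument from Lemma \ref{periodicimpression}. For $\a\in\mathcal R$, the rays $R_\a(\vt^\iota)$ and all their images under $f_\a^{k}$ stay in the sector $S_\a(\theta_1,\theta_2)_+$. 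Moreover, $f_\a^{k}$ maps $\RRR_\a^\iota(x,t_0)$ to the ray of $\g^{k}(x,t_0)$ with the same turning type. I would choose $k$ so that $\g^k(x,t_0)=(x,t_0)$, using the fact that the turning points of $\RRR^\iota_\a(x,t_0)$ are exactly the preimages described in the proof of Proposition \ref{iotaturning}. Then $\tau_3^{k}(\vt^\iota)=\vt^\iota$, so $w_\iota$ is a fixed point of $f_{\a_0}^{k}$ on $\partial U$. By uniqueness, $w_\iota=z_*$.

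The main obstacle is matching the periods: I must show that the period of $(x,t_0)$ under $\g$ agrees with the return time $k$ of $U$, up to passing to iterates compatible with Proposition \ref{wang2}. If the two periods differ, uniqueness of the fixed point only applies to the appropriate iterate. In that case I would argue with combinatorial rotation numbers. On $\partial U$ the dynamics is conjugate to doubling via Lemma \ref{1fixedpoint}. The point $w_\iota$ corresponds to an angle that is periodic under doubling. This angle is forced to be the limit of the internal angles $t_0,\,2t_0,\dots$ seen along $\RRR^\iota_\a(x,t_0)$, and that limit is $0$, which corresponds to $z_*$. As a fallback, I would apply Proposition \ref{keylemma} to the periodic angles $\vt^\iota$: the landing points $z_{\vt^\iota}(\a)$ converge to $w_\iota$. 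For $\a\in\mathcal R$ these landing points lie on $\partial B_\a(x)$ inside the shrinking region bounded by the rays $\theta_1,\theta_2$ around $z(\a)\to z_*$. That region forces the limit to be $z_*$.
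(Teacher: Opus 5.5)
Your route is the paper's own. Lemma \ref{periodicimpression} puts $w_\iota$ on $\partial U$. The rays $\RRR^\iota_\a(x,t_0)$ are invariant under $f_\a^N$, where $N$ is the period of $(x,t_0)$ under $\g$, so $w_\iota$ is fixed by $f_{\a_0}^N$. One then concludes by uniqueness of the fixed point on $\partial U$. The paper writes this with ``$f^p$'' and relies on its normalization in Lemma \ref{periodicimpression} ($z_*$ fixed, with $p$ super-attracting and $p$ parabolic components attached). Under that normalization, $f^p$ is both the period of the rays and the first return map of $U$.

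You rightly see that Lemma \ref{1fixedpoint} only gives uniqueness for the first return map $f_{\a_0}^r$ of $U$. If $r$ were a proper divisor of $N$, then $f_{\a_0}^N|_{\partial U}$ would have several fixed points. But you do not close this gap. You use the letter $k$ for three a priori different integers: the iterate from Proposition \ref{wang2}, the return time of $U$, and the period of $(x,t_0)$. Identifying the last two is the whole point.

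Neither of your patches works.
\begin{itemize}
\item The ``limit of the internal angles $t_0,2t_0,\dots$'' does not exist, since $t_0$ is periodic under doubling. Moreover, these are angles in $B_\a(x)$ and have no relation to the doubling coordinate on $\partial U$.
\item The sector $S_\a(\theta_1,\theta_2)_+$ from the proof of Lemma \ref{periodicimpression} contains all of $B_\a(x)$ for every $\a\in\mathcal R$. It does not shrink, so it cannot localize $z_{\vt^\iota}(\a)$ near $z(\a)$.
\item Proposition \ref{keylemma} only says $z_{\vt^\iota}(\a)\to w_\iota$, which is the point you are trying to identify. Remark \ref{parabolicperturb} already presupposes the corollary.
\end{itemize}

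The missing step is a rotation-number-zero argument at $z_*$, in both directions.
\begin{itemize}
\item Since $f_{\a_0}^N$ fixes $R^x_{\a_0}(t_0)$, it fixes $z_*$. It is an orientation-preserving local homeomorphism there, because no critical point lies in $J_{\a_0}$.
\item Hence it permutes, preserving cyclic order, the finite set of germs at $z_*$ formed by the external rays landing at $z_*$ and the unique accesses to $z_*$ from the Fatou components touching it. There are finitely many such components, since by Theorem \ref{roeschyin} any two are separated by external rays landing at $z_*$.
\item A cyclic-order-preserving bijection of a finite cyclically ordered set with a fixed element is the identity. So $f_{\a_0}^N(U)=U$, and $r\mid N$.
\item Conversely, $z_*$ is the parabolic point of $U$; argue via Lemma \ref{ring} as in the proof of Proposition \ref{keylemma}. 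So $f_{\a_0}^r$ fixes $z_*$ and the access through $U$.
\item By the same argument, $f_{\a_0}^r$ fixes the access through $B_{\a_0}(x)$. Hence $f_{\a_0}^r(B_{\a_0}(x))=B_{\a_0}(x)$, so $p\mid r$. Also $2^{r/p}t_0=t_0$, because $\partial B_{\a_0}(x)$ is a Jordan curve. That is, $N\mid r$.
\end{itemize}
Thus $r=N$, Lemma \ref{1fixedpoint} applies to $f_{\a_0}^N|_{\partial U}$, and $w_\iota=z_*$.

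A minor point on your non-periodic remark: Corollary \ref{nonper} only yields co-landing of the two rays. The landing location $\psi^x_{\a_0}(e^{2\pi i t_0})$ comes from the first part of Lemma \ref{impression} together with singleton impressions.
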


\begin{proof}
    By Lemma \ref{periodicimpression}, we know that $R_{\a_0}(\vt^L)$ and $R_{\a_0}(\vt^R)$ both land at points on the boundary of the parabolic component containing $-c(\a_0)$.
    On the other hand, since $\RRR_\a^L(x,t_0)$ and $\RRR_\a^R(x,t_0)$ are both fixed under $f_\a^p$ for $\a \in \mathcal{R}$, we see that $R_{\a_0}(\vt^L)$ and $R_{\a_0}(\vt^R)$ must both land at a fixed point of $f_{\a_0}^p$.
    Notice that $\psi_{\a_0}^x(e^{2\pi i t_0})$ is the unique fixed point of $f_{\a_0}^p$ on the boundary of the parabolic component containing $-c(\a_0)$.
    It follows that $R_{\a_0}(\vt^L)$ and $R_{\a_0}(\vt^R)$ both land at  $\psi_{\a_0}^x(e^{2\pi i t_0})$.    
    
\end{proof}

\begin{rmk}\label{parabolicperturb}
    From Lemma \ref{periodicimpression}, for the $\a \in \mathcal{R}$ and close to $\a_0$, the parabolic fixed point of $f_{\a_0}$ breaks into a repelling fixed point $z(\a)$ of $f_\a$, and a $p$ periodic orbit.
    One of $R_\a(\vt^L)$ and $R_\a(\vt^R)$ must land at $z(\a)$ of $f_\a$, and the other lands at a point in the $p$ periodic orbit.
    Let $E$ be the equivalence class of $\Lambda_\R(\a_0)$ containing $\{ \vt^L,\vt^R \}$.
    Then $E_\H$ is the set of angles of all external rays landing at $z(\a)$.
\end{rmk}

\subsection{Relations of Laminations of Boundary and Interior}

In this part, we prove that the real lamination $\L_\R(\H)$ is contained in the real lamination $\L_\R(\a_0)$ of the boundary map $\a_0$.
The non-periodic case is implied by Corollary \ref{nonper}.

In the following, we focus on the periodic case.
To do this, since $f_{\a_0}$ have no critical point with infinite orbit lying on the boundary of any Fatou component, by Theorem \ref{kiwicontain}, we only need to show that $\L_\Q(\H) \subset \L_\Q(\a_0)$.

Recall that $\Q_p$ denote the set of all $\theta \in \RZ$ which is periodic under $\tau_3$.
According to Corollary \ref{Qp}, we deduce that $\L_{\Q_p}(\H) \subset \L_{\Q_p}(\a_0)$.
Thus, we only need to deal with the rational lamination of non-periodic angles.
We introduce the \emph{critical leaf}.


\begin{lem}[critical leaf]\label{fact}
     If we are in the periodic case, then there exist $L^\pm \in \RZ \times \RZ$ such that the following holds.
     \begin{enumerate}
         \item For $\a \in \H \cup \{ \a_0\}$ and $\theta \in L^\pm$, $R_\a(\theta)$ lands at a repelling preperiodic point on the boundary of the Fatou component containing $\pm c(\a)$.
         \item For any $\theta \in L^-\cup L^+$, $\{ \theta \}$ forms a singleton equivalence class of $\lambda_\R(\a_0)$.
         \item $\Delta L^-=\Delta L^+= 1/3$.
    \end{enumerate}
\end{lem}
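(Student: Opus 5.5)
We will construct $L^+$ and $L^-$ as pairs of external angles whose rays land at the two ends of a \emph{critical diameter} of the Fatou components containing $c(\a)$ and $-c(\a)$. By Remark \ref{single} we are in type (A) or (B), so $-c(\a)$ lies in a periodic component of $B_\a$. First we treat $L^+$. The map $f_\a: B_\a(c)\to B_\a(c_1)$ has degree $2$. In B\"ottcher coordinates this is $w\mapsto w^2$ (type (B)), or its degree-$3$ analogue in type (A), which we handle the same way with three preimages.

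Choose an angle $s\in\RZ$ that is strictly preperiodic under $\tau_2$ and eventually lands on a periodic smooth internal angle $t\in T$ of the nice puzzle system (condition (N2)). Choose it so that $s+1/2$ is also such an angle. Take $(c,s)$ and $(c,s+\tfrac12)$ with $s_\a^c(s)=s_\a^c(s+\tfrac12)=1$. This is possible because by Lemma \ref{lem32} the set $\M'$ is countable, so generic $s$ avoids it. The two smooth internal rays $R^c_\a(s)$ and $R^c_\a(s+\tfrac12)$ map to the same ray. Their landing points are distinct preimages of a single point. That point is repelling preperiodic, by hyperbolicity for $\a\in\H$, and by Lemma \ref{hmpuzzle}(2) for $\a_0$, since the orbit avoids the parabolic point.

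Let $\theta_1,\theta_2$ be the preferred external angles of these landing points. By Theorem \ref{roeschyin} and \eqref{tau_3}, $\tau_3(\theta_1)=\tau_3(\theta_2)$, and hence $\Delta\{\theta_1,\theta_2\}=1/3$. Set $L^+:=(\theta_1,\theta_2)$. By Lemma \ref{surgery} the angles are constant along $\mathcal R$ and then on all of $\H$. Lemma \ref{hmpuzzle}, together with Lemma \ref{stablerepelling}, transports the landing to $\a_0$. The symmetric construction in the component of $-c(\a)$, using internal angles at $x$ transverse to $t_0$, gives $L^-$. This proves (1) and (3).

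For (2), we must show that for $\theta\in L^\pm$ the class of $\theta$ in $\Lambda_\R(\a_0)$, and hence in $\L_\R(\a_0)$ once they are identified, is a singleton. Here is the plan.
\begin{enumerate}
\item Choose $s$ so that the landing point $z$ of $R^c_\a(s)$ satisfies $L_z=\{z\}$ in Theorem \ref{roeschyin}. Then a unique external ray lands at $z$ for $\a\in\H$, so the class in $\L_\R(\H)$ is a singleton.
\item Ensure that no left-right internal ray has an endpoint at $z$. By Lemma \ref{characterization}, this makes the class in $\Lambda_\R(\a_0)$ a singleton as well.
\item For $\a_0$ itself, use Lemma \ref{impression}. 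The puzzle piece containing $z$ is eventually disjoint from the nonsingleton impressions of Proposition \ref{wang2}, because $z$ is repelling preperiodic and its orbit avoids $I_{\a_0}(-c(\a_0))$. Hence its impression is a singleton, and any ray landing at $z(\a_0)$ must already co-land at $\a$.
\end{enumerate}

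The main obstacle is choosing $s$ so that all these genericity conditions hold simultaneously: limb-triviality at $z$, disjointness of $z$ from the countably many endpoints of left-right internal rays, and orbit avoidance of the parabolic impression. We handle this with a Baire/countability argument on internal angles. Limb-trivial boundary points are dense in $\partial B_\a(c)$, since only countably many points carry nontrivial limbs. Among those, the preperiodic ones with the required pairing under $s\mapsto s+\tfrac12$ can be picked by taking preimages of a suitable periodic point of $\partial B_\a(c_1)$ and pulling back.
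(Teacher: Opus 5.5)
\textbf{$L^-$ fails at $\a_0$.} Your $L^+$ is essentially the paper's construction: smooth internal rays of angles $t_1$ and $t_1+\frac12$ at $c$, landing at points that carry a unique external ray. Your $L^-$, however, has a genuine gap, for three reasons.
\begin{enumerate}
\item In type (B) you have $x=c_k\neq c$, so $\delta(x)=1$ and $\g$ is injective on the fibre over $x$. Two smooth internal rays from $x$ never have the same image, so a pair of angles $s,s+\frac12$ at $x$ does not give $\tau_3(\theta_1)=\tau_3(\theta_2)$. The folding of $f_\a$ on $B_\a(x)$ happens across $-c(\a)$.
\item In type (A) you have $x=c$, and your construction only produces a second leaf through $c$.
\item More seriously, in the periodic case $-c(\a_0)$ lies in the parabolic basin $U_{\a_0}(-c)$, which is a different Fatou component from $B_{\a_0}(x)$. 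Smooth internal rays of $B_\a(x)$, carried to $\a_0$ by Lemma \ref{hmpuzzle}(2), land on $\partial B_{\a_0}(x)$ or on the boundary of another preimage of the super-attracting cycle. They never land on $\partial U_{\a_0}(-c)$.
\end{enumerate}
So item (1) fails for $L^-$ at $\a_0$. With it goes the fact that $C_{\a_0}(L^-)$, which contains $U_{\a_0}(-c)$, is a connected cut. The separation arguments in Theorem \ref{laminationsubset} and Proposition \ref{periodicmain} (separating $\theta$ from $\theta\pm\frac13$) depend exactly on this cut.

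\textbf{The missing idea.} You need rays that pass through the critical point. Take $\i\in\{L,R\}^{\mathbb N}$ shift-periodic and $\i'$ differing from $\i$ only in the first letter, and set $\theta_1^-:=\vt^{\i}(x,t_0)$ and $\theta_2^-:=\vt^{\i'}(x,t_0)$.
\begin{itemize}
\item For $\a\in\mathcal R$, $f_\a$ maps $\RRR_\a^{\i}(x,t_0)$ and $\RRR_\a^{\i'}(x,t_0)$ onto the same ray. Their distinct landing points are non-critical, so the preferred angles satisfy $\tau_3(\theta_1^-)=\tau_3(\theta_2^-)$.
\item Lemma \ref{periodicimpression} places the landing points at $\a_0$ on $\partial U_{\a_0}(-c)$. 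That lemma rests on Proposition \ref{keylemma} and the nice puzzle of Proposition \ref{wang2}.
\end{itemize}

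\textbf{Consequences for (2).} This also breaks your plan for (2) on the $L^-$ side. The landing points above lie inside the non-singleton impression $I_{\a_0}(-c(\a_0))$, so your step~3 (orbit avoiding that impression) cannot apply. Instead, take the period of $\i$ large. By Theorem \ref{roeschyin}(2) applied to the first return map of $U_{\a_0}(-c)$, only finitely many cycles on $\partial U_{\a_0}(-c)$ carry nontrivial limbs. A large period therefore gives a repelling point with trivial limb, hence a unique external ray.

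\textbf{Circularity.} You cannot obtain (2) ``once $\Lambda_\R(\a_0)$ and $\L_\R(\a_0)$ are identified'': that identification is proved using this lemma. For $L^+$, argue directly with Lemma \ref{stablerepelling}(2). Rays landing at a repelling preperiodic point whose orbit avoids the critical points persist to nearby parameters. Your impression argument runs in the wrong direction.
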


\begin{proof}
    Pick $t_1 \in \Q/\Z$ such that the internal ray $R_{\a_0}^c(t_1)$ lands at a repelling periodic point $z_1$ which is the landing point of a unique external ray $R_{\a_0}(\theta_1^+)$.
    Hence $R_{\a_0}^c(t_1+1/2)$ also lands at a repelling preperiodic point $z_2$ which is the landing point of a unique external ray $R_{\a_0}(\theta_2^+)$.      
    By Lemma \ref{stablerepelling}, for $\a \in \H$, $R_\a(\theta_j^+)$ also lands at a point on $\partial B_\a(c)$.
    Define $L^+:=\{ \theta_1^+,\theta_2^+ \}$, then we have $\Delta L^+=1/3$.

    Pick a sequence $\i=(\iota_1,\iota_2,\dots) \in \{ L,R \}^{\mathbb N}$ which is periodic under the shift map, and $\i'=(\iota'_1,\iota_2',\dots)$ be a sequence such that $\iota_1\neq \iota_1'$ and $\iota_n=\iota'_n$ for every $n \geq 2$.
    Define $\theta_1^-:=\vt^{\i}(x,t_0)$ and $\theta_2^-:=\vt^{\i'}(x,t_0)$.
    Since $\tau_3(\theta_1^-)=\tau_3(\theta_2^-)$, (3) holds.
    By Lemma \ref{periodicimpression}, $R_{\a_0}(\theta_1^-)$ land at a repelling periodic point on the boundary of the parabolic component containing $-c(\a_0)$, and $R_{\a_0}(\theta_2^-)$ land at a repelling preperiodic point on the boundary.     
    By picking $\i$ with sufficiently large periodic, we may assume that (2) holds.

\end{proof}

For each $\a \in \mathcal{R}\cup \{ \a_0 \}$, we define the \emph{cut ray} of the critical leaf $L^+$ by 
$$
    C_\a(L^+):=\overline{R_\a(\theta_1^+)} \cup \overline{R_\a(\theta_2^+)} \cup \overline{R_\a^c(t_1)} \cup \overline{R_\a^c(t_1+1/2)}.
$$
For $\a \in \mathcal{R}$, define the cut ray of the critical leaf $L^+$ by 
$$
    C_\a(L^-):= \overline{R_\a(\theta_1^-)} \cup \overline{R_\a(\theta_2^-)} \cup \overline{\RRR_\a^{\i}(x,t_0)} \cup \overline{\RRR_\a^{\i'}(x,t_0)}.
$$
For $\a_0$, we define
$$
    C_{\a_0}(L^-):= \overline{R_{\a_0}(\theta_1^-)}\cup \overline{R_{\a_0}(\theta_2^-)} \cup \overline{R_{\a_0}(\theta_1^+)} \cup \overline{R_{\a_0}(\theta_2^+)} \cup B_{\a_0}(c) \cup U_{\a_0}(-c)
$$
where $U_{\a_0}(-c)$ denote the parabolic basin which contains $-c(\a_0)$.



\begin{thm}\label{laminationsubset}
    Let $\H$ be a bounded hyperbolic component of type \A, \B, or \CCC, and $\a_0 \in \partial \H \cap \S$, then we have 
    $\L_\R(\H) \subsetneq \L_\R(\a_0)$.
\end{thm}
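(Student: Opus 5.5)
The proof splits according to whether $(x,t_0)$ is periodic under $\g$, and in both cases has two parts: the inclusion $\L_\R(\H)\subset\L_\R(\a_0)$, and strictness.

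\textbf{Non-periodic case.} The inclusion is exactly Corollary \ref{nonper}. For strictness, the two characteristic angles $\vt^L=\vt^L(x,t_0)$ and $\vt^R=\vt^R(x,t_0)$ are distinct. For $\a\in\mathcal{R}$ the rays $R_\a(\vt^L)$ and $R_\a(\vt^R)$ land at distinct points: the rays $\RRR_\a^L(x,t_0)$ and $\RRR_\a^R(x,t_0)$ do not have the same end, so Corollary \ref{distinctlanding} applies. Hence $(\vt^L,\vt^R)\notin\L_\R(\H)$. On the other hand $(\vt^L,\vt^R)\in\Lambda(\a_0)\subset\Lambda_\R(\a_0)\subset\L_\R(\a_0)$, which gives strictness.

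\textbf{Periodic case: strictness.} This is immediate from Corollary \ref{coland}, since $R_{\a_0}(\vt^L)$ and $R_{\a_0}(\vt^R)$ co-land at $\psi_{\a_0}^x(e^{2\pi i t_0})$ while they do not co-land for $\a\in\mathcal{R}$.

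\textbf{Periodic case: inclusion.} Here $f_{\a_0}$ has no critical point with infinite orbit on the boundary of a Fatou component: $-c(\a_0)$ lies inside a parabolic basin and $c(\a_0)$ is periodic. So by Theorem \ref{kiwicontain}, $\L_\R(\a_0)$ is the closure of $\L_\Q(\a_0)$, and it suffices to show $\L_\Q(\H)\subset\L_\Q(\a_0)$. Corollary \ref{Qp} already gives $\L_{\Q_p}(\H)\subset\L_{\Q_p}(\a_0)$, so only strictly preperiodic pairs remain. Take $(\theta,\theta')\in\L_\Q(\H)$ with common landing point $z(\a)$ for $\a\in\H$, preperiodic. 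I will argue by induction on the preperiod $n$, pulling back co-landing of $(\tau_3\theta,\tau_3\theta')$ at $f_{\a_0}(z)$ to co-landing of $(\theta,\theta')$. The critical leaves of Lemma \ref{fact} and their cut rays $C_\a(L^\pm)$ make this possible. For each $\a\in\mathcal{R}\cup\{\a_0\}$, $C_\a(L^+)\cup C_\a(L^-)$ cuts $\C$ into three regions, each mapped injectively by $f_\a$ onto its image. By construction the external angles of the cut rays are the same for $\a$ and for $\a_0$, namely the arcs determined by $L^\pm$, which have length $1/3$.

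The pullback step runs as follows. By Lemma \ref{fact}(2), $\theta$ and $\theta'$ avoid $L^\pm$. By unlinkedness in $\L_\R(\H)$, both angles lie in the same complementary arc of $L^+\cup L^-$. Indeed, otherwise the rays $R_\a(\theta)$ and $R_\a(\theta')$ would cross the cut $C_\a(L^\pm)$, and this is excluded by (I1) and the fact that landing points on $\partial B_\a(\pm c)$ are not shared. Now $R_{\a_0}(\tau_3\theta)$ and $R_{\a_0}(\tau_3\theta')$ land at a common point $w$ by induction. Choose the branch of $f_{\a_0}^{-1}$ on the region of $\C\setminus f_{\a_0}(C_{\a_0}(L^\pm))$ corresponding to that complementary arc. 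It is univalent, and it sends $R_{\a_0}(\tau_3\theta)$ and $R_{\a_0}(\tau_3\theta')$ to $R_{\a_0}(\theta)$ and $R_{\a_0}(\theta')$, so these land together.

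\textbf{Main obstacle.} The delicate point is the base of the induction and the regions for $\a_0$. The cut $C_{\a_0}(L^-)$ contains the whole parabolic basin $U_{\a_0}(-c)$ and $B_{\a_0}(c)$, so I must check two things: that the region decomposition for $\a_0$ still matches the angular decomposition by $L^\pm$, and that a landing point $w$ never lies on the cut itself. For the first I would use Lemma \ref{periodicimpression}. For the second I would use the fact that $\theta,\theta'\notin L^\pm$ and take $L^\pm$ of large period, as in Lemma \ref{fact}, so that no pair in $\L_\Q(\H)$ lands on $\partial U_{\a_0}(-c)\cap\partial B_{\a_0}(c)$ except at the parabolic point. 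That exceptional point is already handled by Remark \ref{parabolicperturb}.
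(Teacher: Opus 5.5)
Your proposal takes essentially the same route as the paper. The non-periodic case is Corollary \ref{nonper}. In the periodic case you use the same reduction to rational laminations via Theorem \ref{kiwicontain}, Corollary \ref{Qp} for periodic angles, and induction on preperiod through the critical leaves $L^\pm$ of Lemma \ref{fact} and their cut rays.

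There are three small differences:
\begin{itemize}
\item The paper phrases the pullback as a contradiction: some $\theta\pm 1/3$ would co-land with $\theta'$ while lying in a different region. This needs only local injectivity of $f_{\a_0}$ on $J_{\a_0}$, not univalence of an inverse branch on a whole region.
\item ``Same complementary arc of $L^+\cup L^-$'' should read ``same region''. The region between the two cuts meets the circle in two arcs, and a leaf may join them.
\item Your explicit strictness argument, via $(\vt^L,\vt^R)$, Corollary \ref{distinctlanding} and Corollary \ref{coland}, is a genuine addition. The paper's proof of this theorem only establishes the inclusion.
\end{itemize}
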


\begin{proof}
    As we already noticed, it suffices to show that every $(\theta,\theta') \in \L_\Q(\H)\setminus \L_{\Q_p}(\H)$ is contained in $\L_\Q(\a_0)$ for the periodic case.

     Notice that $\theta$ and $\theta'$ must both be non-periodic.
     Let $n$ be an integer such that $\theta_n:=\tau_3^n(\theta)$ and $\theta_n':=\tau_3^n(\theta')$ are both periodic.
     Since $(\theta_n,\theta_n') \in \L_\R(\H) $, by the above discussion, $R_{\a_0}(\theta_n)$ and $R_{\a_0}(\theta'_n)$ land at common periodic point $z_0$.
     There are two cases.
     The case that $z_0$ is repelling is easier.
     We solve it by contradiction.
     Suppose that $R_{\a_0}(\theta)$ and $R_{\a_0}(\theta')$ does not land at a common point.
     Since $R_{\a_0}(\theta_n)$ and $R_{\a_0}(\theta'_n)$ both land at $z_0$.
     Thus there exists $\theta''$ such that $R_{\a_0}(\theta)$ and $R_{\a_0}(\theta'')$ land at a common repelling preperiodic point with $\tau_3^n(\theta'')=\theta'_n$.
     By Lemma \ref{stablerepelling}, for $\a$ sufficiently close to $\a_0$, the external rays $R_\a(\theta)$ and $R_\a(\theta')$ also land at a common point.
     Therefore, for $\a \in \H$ which is sufficiently to $\a_0$, $R_\a(\theta')$ and $R_\a(\theta'')$ both land at a common point.
     Notice that these two external rays are both mapped to $R_\a(\theta_n)$ under $f_\a^n$.
     This contradicts the fact that $f_\a^n$ is conformal near any point in the Julia set.
     Hence we conclude that $(\theta,\theta') \in \L_\Q(\a_0)$.

     The remaining case is that $z_0$ is a parabolic periodic point.
     By the result we obtained for periodic case and induction, we may assume that $(\tau_3(\theta),\tau_3(\theta')) \in \L_\Q(\a_0) $.
     By (1) and (2) of Lemma \ref{fact}, $\overline{R_\a(\theta)}$ and $\overline{R_\a(\theta')}$ must belong to the same connected component of $\C \setminus ( C_\a(L^-) \cup C_\a(L^+) )$.
     Therefore, $\{\theta,\theta'\}$ must belong to an connected component of $\RZ \setminus (L^-\cup L^+)$.
     Since $\Delta L^\pm=1/3$, $\theta+1/3$ and $\theta-1/3$ must belong to the other two connected component of $\RZ \setminus (L^-\cup L^+)$.
     If $(\theta,\theta') \notin \L_\Q(\a_0)$, either $(\theta+1/3,\theta') \in \L_\Q(\a_0)$ or either $(\theta-1/3,\theta') \in \L_\Q(\a_0)$.
     Without loss of generality, we assume that $(\theta+1/3,\theta') \in \L_\Q(\a_0)$.
     By Lemma \ref{fact}, $\overline{R_{\a_0}(\theta+1/3)}$ and $\overline{R_{\a_0}(\theta')}$ must belong to the same connected component of $\C \setminus (C_{\a_0}(L^-) \cup C_{\a_0}(L^+) )$.
     This is impossible since $\theta+1/3$ and $\theta'$ are in distinct connected components of $\RZ \setminus (L^-\cup L^+)$.
     Therefore, we conclude that $(\theta,\theta') \in \L_\Q(\a_0)$.

    \end{proof}

    \subsection{Visual Laminations Coincides with Real Laminations}

     In this part, we show that the visual lamination $\Lambda_\R(\a_0)$ coincides with the real lamination $\L_\R(\a_0)$ in all cases.
     First, we deal with the periodic case.

\begin{prop}[periodic case]\label{periodicmain}
    If $(x,t_0)$ is periodic, then $\Lambda_\R(\a_0) = \L_\R(\a_0)$.
\end{prop}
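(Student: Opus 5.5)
We prove the two inclusions separately. For $\Lambda_\R(\a_0)\subset \L_\R(\a_0)$: by Theorem \ref{laminationsubset} we already have $\L_\R(\H)\subset\L_\R(\a_0)$. Since $\L_\R(\a_0)$ is an equivalence relation and $\Lambda_\R(\a_0)=\langle \L_\R(\H)\cup\Lambda(\a_0)\rangle$, it suffices to show $\Lambda(\a_0)\subset\L_\R(\a_0)$.

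By Lemma \ref{Lc}, every non-singleton class of $\Lambda(\a_0)$ has the form $\{\vt^L(v,t),\vt^R(v,t)\}$ with $\g^n(v,t)=(x,t_0)$ for some $n$. Corollary \ref{coland} shows that $R_{\a_0}(\vt^L)$ and $R_{\a_0}(\vt^R)$ co-land at the parabolic point $\psi_{\a_0}^x(e^{2\pi i t_0})$. To pull this back along the orbit, I would use a continuity argument along $\mathcal{R}$. For $\a\in\mathcal{R}$, the rays $R_\a(\vt^L(v,t))$ and $R_\a(\vt^R(v,t))$ land on $\partial B_\a(v)$ and are mapped by $f_\a^n$ to the rays of $(x,t_0)$. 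By Lemma \ref{impression}, their limit landing points lie in a single impression $I_{\a_0}(\omega(\a_0))$, where $\omega(\a_0)$ is an $n$-th preimage of $-c(\a_0)$ not on the critical orbit. That impression is mapped homeomorphically onto $I_{\a_0}(-c(\a_0))$ by $f^n_{\a_0}$, since $f^n_{\a_0}$ is univalent on the relevant puzzle pieces for large depth. By the proof of Lemma \ref{periodicimpression}, both rays land on the boundary of the parabolic preimage component $U$ attached to $\omega(\a_0)$. Their images under $f_{\a_0}^n$ co-land, and $f_{\a_0}^n|_{\overline U}$ is injective. Hence the rays land at the unique preimage in $\partial U$ of the parabolic point, which gives $(\vt^L(v,t),\vt^R(v,t))\in\L_\R(\a_0)$.

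For the reverse inclusion $\L_\R(\a_0)\subset\Lambda_\R(\a_0)$, I would first reduce to rational angles. Both relations are closed: $\L_\R(\a_0)$ by Theorem \ref{kiwicontain}, since there are no infinite critical orbits on Fatou boundaries in the periodic case, and $\Lambda_\R(\a_0)$ by Proposition \ref{visuallamination}. By Theorem \ref{kiwicontain}, $\L_\R(\a_0)$ is the closure of $\L_\Q(\a_0)$, so it suffices to show $\L_\Q(\a_0)\subset\Lambda_\R(\a_0)$.

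Now take $(\theta,\theta')\in\L_\Q(\a_0)$ with common landing point $z$. If the orbit of $z$ avoids the parabolic cycle, then $z$ is eventually repelling. By Lemma \ref{stablerepelling}, the rays still co-land for $\a\in\mathcal{R}$ near $\a_0$, so $(\theta,\theta')\in\L_\R(\H)$. Otherwise $z$ is a preimage of the parabolic point. I would first treat periodic $z$, which I may take to be $\psi^x_{\a_0}(e^{2\pi i t_0})$. By Fact \ref{stableparabolic}, for $\a\in\mathcal{R}$ the rays landing at $z$ split between the repelling fixed point $z(\a)$ and the $p$-cycle. Remark \ref{parabolicperturb} identifies the rays at $z(\a)$ as $E_\H$ and says that $\vt^L,\vt^R$ join $z(\a)$ to one cycle point. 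Pushing forward by $f_\a$ then joins each cycle point to $z(\a)$ by the images of the left-right internal ray. This places all angles at $z$ into the class of $E^*\cup E_\H$ in $\Lambda_\R(\a_0)$. For preperiodic $z$ I would pull back, using that $f_{\a_0}$ is locally injective at non-critical Julia points, together with (R3)/(R4) for $\Lambda_\R(\a_0)$ from Proposition \ref{visuallamination}.

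The main obstacle is the periodic parabolic step: showing that every ray landing at $z$ for $\a_0$ lands, for nearby $\a\in\mathcal{R}$, either at $z(\a)$ or at a cycle point joined to $z(\a)$. Proposition \ref{keylemma} is the tool for this, but it requires $\a_0\in\mathcal{C}_0$. I would try to verify $\a_0\in\mathcal{C}_0$ directly from Proposition \ref{wang2}: there is a single parabolic cycle, and its immediate basin contains only $-c(\a_0)$.
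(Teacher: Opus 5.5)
Your reduction of each inclusion matches the paper. Your treatment of $\L_\R(\a_0)\subset\Lambda_\R(\a_0)$ is essentially the paper's: the repelling case by Lemma \ref{stablerepelling}, the periodic parabolic point by Fact \ref{stableparabolic} and Remark \ref{parabolicperturb}, then a pull-back for preperiodic points.

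The gap is in the pull-back step for $\Lambda(\a_0)\subset\L_\R(\a_0)$. You assert that $f^n_{\a_0}$ maps $I_{\a_0}(\omega(\a_0))$ homeomorphically onto $I_{\a_0}(-c(\a_0))$ and is injective on $\overline U$. That $\omega(\a_0)$ is not on the forward critical orbit does not give this. You need the orbit $\omega(\a_0),\dots,f^{n-1}_{\a_0}(\omega(\a_0))$ to avoid the critical impression, and in the periodic case this fails for infinitely many classes.

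\begin{itemize}
\item The partner $(y,t'_\a)$ of Lemma \ref{lem32} reaches $(x,t_0)$ only after a full period, and its $\iota$-rays pass through $-c(\a)$ itself.
\item Every later turning point $\omega(\a)$ of $\RRR^\iota_\a(x,t_0)$ ends the smooth arc of some $(v,t)\in\M'$, by Lemma \ref{lem32}. Lemma \ref{impression}, applied to the ray containing both $-c(\a)$ and $\omega(\a)$, gives $I_{\a_0}(\omega(\a_0))=I_{\a_0}(-c(\a_0))$.
\end{itemize}

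For these classes the impression contains a critical point, so $f^n_{\a_0}$ cannot be injective on it. The first return map of $U_{\a_0}(-c)$ has degree $2$. Hence knowing that the images of $R_{\a_0}(\vt^L(v,t))$ and $R_{\a_0}(\vt^R(v,t))$ co-land does not tell you which of two preimages each ray lands at. Periodicity rescues preimages of $z_0=\psi^x_{\a_0}(e^{2\pi i t_0})$ itself, since strictly preperiodic angles cannot land at the parabolic periodic point. It does not rescue preimages of the other preimage of $z_0$ on $\partial U_{\a_0}(-c)$.

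The paper closes exactly this gap with the critical leaves $L^\pm$ of Lemma \ref{fact}, inducting one iterate at a time:
\begin{enumerate}
\item Assume $(\tau_3\theta,\tau_3\theta')\in\L_\R(\a_0)$.
\item For $\a\in\mathcal{R}$, (I2) of Corollary \ref{intersection} keeps the left-right ray joining $z_\theta(\a)$ and $z_{\theta'}(\a)$ off the cut rays $C_\a(L^\pm)$. So $\theta,\theta'$ lie in one component of $\RZ\setminus(L^-\cup L^+)$.
\item At $\a_0$, if $\theta'$ did not co-land with $\theta$, it would co-land with $\theta\pm1/3$. That angle lies in another component and is separated from $\theta'$ by $C_{\a_0}(L^-)\cup C_{\a_0}(L^+)$, a contradiction.
\end{enumerate}
Your proposal lacks a device of this kind, one that transports ``same side of the critical cut'' from $\mathcal{R}$ to $\a_0$.

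Your preperiodic-parabolic step in the reverse inclusion inherits the same gap. It needs $\Lambda_\R(\a_0)\subset\L_\R(\a_0)$ to know that $\theta$ and the pulled-back angle $\theta''$ in its class co-land at $\a_0$.
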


\begin{proof}
    First, we show that $\Lambda_\R(\a_0)\subset \L_\R(\a_0)$.
    Since $\Lambda_\R(\a_0)$ is defined to be the smallest equivalence relation in $\RZ \times \RZ$ which contains $\L_\R(\H)\cup \Lambda(\a_0)$, and $\L_\R(\H) \subset \L_\R(\a_0)$, it suffices to show that $\Lambda(\a_0) \subset \L_\R(\a_0)$.

    Let $(\theta,\theta') \in \Lambda(\a_0)$.
    By Lemma \ref{Lc}, there exists a minimal integer $n$ such that $\{\tau_3^n(\theta),\tau_3^n(\theta')\} =\{ \vt^L,\vt^R \}$.
    According to Corollary \ref{coland}, we have $(\vt^L,\vt^R) \in \L_\R(\a_0)$.
    Thus, by induction, we may assume that $(\tau_3(\theta),\tau_3(\theta')) \in \L_\R(\a_0)$.
    By choosing $\i$, we may assume that $R_\a(\theta)$ and $R_\a(\theta')$ does not land at $z_{\theta''}(\a)$ for $\a \in \mathcal{R}\cup \{ \a_0 \}$ and $\theta'' \in L^- \cup L^+$.
    Fix $\a \in \mathcal{R}$, assume that $R_\a(\theta)$ and $R_\a(\theta')$ are joined by the left-right internal ray $\RRR_\a^{L,R}(v,t)$.
    By (I2) of Corollary \ref{intersection}, we know that $\RRR_\a^{L,R}(v,t)$ is contained in a connected component of $\C \setminus (C_\a(L^-)\cup C_\a(L^+))$.
    Let this connected component be $U_\a$.
    It follows that $R_\a(\theta)$ and $R_\a(\theta')$ belong to $U_\a$ for $\a \in \mathcal{R}$.
    Therefore, $\{ \theta,\theta'\}$ are in single connected component of $\RZ \setminus (L^-\cup L^+)$.
    Assume that $(\theta,\theta')\notin \L_\R(\a_0)$, then either $(\theta+1/3,\theta') \in \L_\Q(\a_0)$ or either $(\theta-1/3,\theta') \in \L_\Q(\a_0)$.
    Without loss of generality, we assume that $(\theta+1/3,\theta') \in \L_\Q(\a_0)$.
    By Lemma \ref{fact}, $R_{\a_0}(\theta+1/3)$ and $R_{\a_0}(\theta')$ must belong to the same connected component of $\C \setminus ( C_{\a_0}(L^-) \cup C_{\a_0}(L^+) )$.
    This is impossible since $\theta+1/3$ and $\theta'$ are in distinct connected components of $\RZ \setminus (L^-\cup L^+)$.
    Therefore, $(\theta,\theta') \in \L_\Q(\a_0)$.
    This shows that $\Lambda(\a_0) \subset \L_\R(\a_0)$.

    In the following, we show that $\Lambda_\R(\a_0) \supset \L_\R(\a_0)$.
    By Theorem \ref{kiwicontain} and Proposition \ref{visuallamination}, it suffices to show that $\Lambda_\R(\a_0) \supset \L_\Q(\a_0)$.
    Suppose that $(\theta,\theta') \in \L_\Q(\a_0)$, If $R_{\a_0}(\theta)$ and $R_{\a_0}(\theta')$ land at a repelling preperiodic point, then by Lemma \ref{stablerepelling}, we have  $(\theta,\theta') \in \L_\Q(\H) \subset \Lambda_\R(\a_0)$.
    Now we assume that $R_{\a_0}(\theta)$ and $R_{\a_0}(\theta')$ land at a parabolic preperiodic point $z_0$.
    First, we assume that $z_0$ is a parabolic periodic point. 
    As before, we assume it is a parabolic fixed point.
    As we notice in Remark \ref{parabolicperturb}, when the parameter $\a$ enters the hyperbolic component $\H$,  $z_0$ perturbs into a repelling fixed point, and a repelling $p$ periodic orbit.
    It follows that $R_\a(\theta)$ and $R_\a(\theta')$ land at these $p+1$ points.
    According to Remark \ref{parabolicperturb}, we may assume that the fixed point is the landing point of $R_\a(\vt^L)$, and $R_\a(\vt^R)$ lands on the $p$ periodic cycle.
    Let $E$ be the equivalence class of $\Lambda_\R(\a_0)$ containing $\vt^L,\vt^R$.
    By Proposition \ref{visuallamination}, we have $\tau_3^k(\vt^\iota) \in E$ for $0\leq k \leq p-1$ and $\iota \in \{ L,R \}$.
    For each point among these $p+1$ points, there exists $\theta'' \in E$ such that $R_\a(\theta'')$ lands at it.
    Therefore, we conclude that $\theta,\theta' \in E$, i.e. $(\theta,\theta') \in \Lambda_\R(\a_0)$. 
   
    Now we consider the case that $z_0$ is a parabolic strictly preperiodic point.
    Let $n$ be the smallest integer such that $f_{\a_0}^n(z_0)=\psi_{\a_0}^x(e^{2\pi i t_0})$.
    By the discussion of the periodic case, $\tau_3^n(\theta),\tau_3^n(\theta')$ belong to an equivalence class $E$ of $\Lambda_\R(\a_0)$.
    Moreover, $R_\a(\tau_3^n(\theta))$ and $R_\a(\tau_3^n(\theta'))$ land at the $p+1$ points which converges to $ \psi_{\a_0}^x(e^{2\pi i t_0}) $ for $\a \in \H$.
    It follows that $R_\a(\theta)$ and $R_\a(\theta')$ land at $n$-th preimages of these $p+1$ points.
    Let $E'$ be the equivalence class of $\Lambda_\R(\a_0)$ containing $\theta$.
    We prove $\theta' \in E'$ by contradiction.
    Suppose that $\theta' \notin E'$, then by Proposition \ref{visuallamination}, there exists $\theta''\in E$ such that $\tau_3^n(\theta'')= \tau_3^n(\theta') $.
    Since $\Lambda_\R(\a_0) \subset \L_\R(\a_0)$,  $R_{\a_0}(\theta) $ and $R_{\a_0}(\theta'')$ land at a common point.
    But by our assumption $(\theta,\theta') \in \L_\Q(\a_0) $, we conclude that $R_{\a_0}(\theta')$ and $R_{\a_0}(\theta'')$ land at a common point.
    Thus, the common landing point is a preimage of a critical point of $f_{\a_0}^n$.
    This contradicts the fact that we are in the parabolic case.
    This finishes the proof.

\end{proof}

\begin{prop}[non-periodic case]\label{nonperiodicmain}
    If $(x,t_0)$ is non-periodic, then we have $\L_\R(\a_0)=\Lambda_\R(\a_0)$.
\end{prop}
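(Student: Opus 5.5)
The inclusion $\Lambda_\R(\a_0)\subset \L_\R(\a_0)$ is already Corollary \ref{nonper}, so only the reverse inclusion $\L_\R(\a_0)\subset\Lambda_\R(\a_0)$ needs proof. In the non-periodic case no parabolic cycle is present, and by Proposition \ref{wang2} every impression of the puzzle system $\mathrm{P}_{\a_0}(\varTheta,T,s,s')$ is a singleton. Moreover $-c(\a_0)$ lies on the boundary of a component of $\tilde B_{\a_0}$. Either it is eventually periodic (PCF case), or its orbit is infinite and then it lies on the boundary of a Fatou component. Because of the latter possibility I would not reduce to the rational lamination via Theorem \ref{kiwicontain}. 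Instead I would argue directly with puzzles and the holomorphic motion of Lemma \ref{hmpuzzle}.

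Take $(\theta,\theta')\in\L_\R(\a_0)$ with common landing point $z_0$. \textbf{Case 1:} the orbit of $z_0$ avoids $\partial B$ for every component $B$ of $\tilde B_{\a_0}$ containing an iterated preimage of $-c(\a_0)$ on its boundary, or more simply $z_0$ is not eventually mapped to $-c(\a_0)$. Since impressions are singletons, the puzzle pieces $P^{\a_0}_n(z_0)$ shrink. The rays $R_{\a_0}(\theta),R_{\a_0}(\theta')$ enter the same pieces of every depth, bounded by graph rays $R(\eta_n),R(\eta_n')$ with $(\eta_n,\eta_n')$ nested around $\theta,\theta'$ on the same side. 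Each depth-$n$ piece is bounded by external rays, equipotentials and smooth internal rays that move holomorphically (Lemma \ref{hmpuzzle}). Hence for $\a\in\mathcal R$ close to $\a_0$ the corresponding piece $P_n^\a$ contains the landing points of $R_\a(\theta)$ and $R_\a(\theta')$. Since $\L_\R(\H)$ is the lamination of a hyperbolic map, the boundary structure of these pieces forces the two angles to lie in the same ``gap'' cut out by graph leaves. In the limit $n\to\infty$ this gives either $(\theta,\theta')\in\L_\R(\H)$, or that $z_\theta(\a)$ and $z_{\theta'}(\a)$ are separated inside $P_n^\a$ only by a left-right internal ray through a preimage of $-c(\a)$. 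I would make this precise by combining closedness of $\L_\R(\H)$ with the nested-limb argument from the proof of (R1) in Proposition \ref{visuallamination}.

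\textbf{Case 2:} $f_{\a_0}^n(z_0)=-c(\a_0)$. The point $-c(\a_0)$ is the landing point of $\psi_{\a_0}^x(e^{2\pi it_0})$ (equivalently of the limit of the pair $\RRR^{L,R}_\a(x,t_0)$), and exactly two smooth internal rays of $\a_0$ land there, namely those of $(x,t_0)$ and $(y,t_0')$. By Lemma \ref{impression}, $z_{\vt^L}(\a_0)$ and $z_{\vt^R}(\a_0)$ share the singleton impression of $-c(\a_0)$. So the external angles at $-c(\a_0)$ are exactly $\vt^L,\vt^R$ together with the $\L_\R(\H)$-classes containing them, and this set is the $\Lambda_\R(\a_0)$-class of $E^*$. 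Pulling back by the local conformal branches of $f_{\a_0}^n$ at $z_0$ (preimages of the critical point other than the critical point itself are non-critical), and using (R3)–(R4) of Proposition \ref{visuallamination} together with the generator structure of Lemma \ref{Lc}, identifies the angles at $z_0$ with a single $\Lambda_\R(\a_0)$-class. The same pullback handles points whose orbit meets the critical point after finitely many steps.

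The main obstacle is Case 1 for points on boundaries of Fatou components that accumulate on the critical orbit when $-c(\a_0)$ has infinite orbit. There I must show that no extra identifications appear at $\a_0$, i.e. that landing points distinct for $\a\in\mathcal R$ do not merge unless they are joined by a left-right internal ray. I would first try a contrapositive. Suppose $z_\theta(\a)\neq z_{\theta'}(\a)$ and they are not joined by any left-right internal ray. Then some finite-depth graph (possibly after adding to $\varTheta$ a pair of repelling periodic rays, via Lemma \ref{stablerepelling}) separates them at $\a$. That separation persists at $\a_0$ by the holomorphic motion, contradicting the assumption that the two rays have the common landing point $z_0$ at $\a_0$.
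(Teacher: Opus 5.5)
\textbf{There is a genuine gap.} Your first inclusion is correct. The pullback from $-c(\a_0)$ to its iterated preimages in Case 2 is also fine: $f_{\a_0}^n$ is locally conformal at $z_0$, and you combine this with (R3) and $\Lambda_\R(\a_0)\subset\L_\R(\a_0)$.

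The base step of Case 2, however, is asserted rather than proved. Lemma~\ref{impression} only shows that $R_{\a_0}(\vt^L)$ and $R_{\a_0}(\vt^R)$ land at $-c(\a_0)$. It says nothing about which \emph{other} rays land there, and excluding them is exactly the content of $\L_\R(\a_0)\subset\Lambda_\R(\a_0)$ at the critical point. The paper obtains this ray count separately in each subcase.
\begin{itemize}
\item If $-c(\a_0)$ is preperiodic, take $m$ with $f^{n+1}_{\a_0}:P^{\a_0}_{m+n+1}(z_0)\to P^{\a_0}_m(u(\a_0))$ of degree $2$, where $u=f(-c)$, and move both pieces by Lemma~\ref{hmpuzzle}. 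Rays at $u(\a_0)$ still co-land at $u(\a)$ by Lemma~\ref{stablerepelling}. Their lifts in the degree-two piece can only land at the two $(n+1)$-th preimages of $u(\a)$ there, and these are precisely the endpoints of the left-right internal ray through $\omega(\a)$.
\item If the orbit of $-c(\a_0)$ is infinite, one first shows: a point of $\partial B$ that is not precritical and receives two distinct rays has rational angles. To see this, push forward until the limb $L_{f^n(z)}$ of Theorem~\ref{roeschyin} must be trivial, which happens because $f^p_{\a_0}$ has finitely many critical points. Hence $u(\a_0)$ receives a single ray, and $-c(\a_0)$ receives exactly two.
\end{itemize}

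That last fact is also the idea missing from your Case 1 ``main obstacle''. A non-precritical point with infinite orbit on a Fatou boundary cannot be biaccessible, so that case is empty. Rational pairs are handled by Lemma~\ref{stablerepelling}, since in the post-critically infinite case $\L_\Q(\a_0)\subset\L_\Q(\H)$. Note too that in the PCF case Theorem~\ref{kiwicontain} \emph{does} apply, because no critical orbit is infinite, so there you may reduce to rational pairs.

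For the remaining Case 1 points (biaccessible points off all Fatou boundaries) your sketch contains no actual argument. The alternative ``separated inside $P_n^\a$ only by a left-right internal ray'' does not place $(\theta,\theta')$ in $\Lambda_\R(\a_0)$, since separated is not joined. The proposed contrapositive fails as stated. The separating depth $m$ at $\a\in\mathcal R$ depends on $\a$, and may tend to infinity as $\a\to\a_0$. Meanwhile the holomorphic motion of $G_m$ exists only on a neighbourhood $\mathcal P_m$ of $\a_0$, which shrinks with $m$.

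To close this you must transport configurations along all of $\mathcal R$ using the conjugacies of Lemma~\ref{surgery}. These preserve external and internal rays but not internal equipotentials. The paper handles this by replacing $P_n^{\a_0}(z_0)$ with a piece $Q_n$ bounded only by rational co-landing external pairs, each separating $z_0$ from a Fatou component that meets $\partial P_n^{\a_0}(z_0)$. These pairs come from limbs and the closedness of $\L_\R(\a_0)$, with a width-tripling argument ruling out irrational limits. The pairs lie in $\L_\R(\a)$ for every $\a\in\mathcal R$. Your contrapositive would additionally have to handle two distinct pieces at $\a_0$ whose closures meet exactly at $z_0$.
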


\begin{proof}
    Corollary \ref{nonper} indicates that $ \Lambda_\R(\a_0) \subset \L_\R(\a_0) $.
    In the following, we show that $ \Lambda_\R(\a_0) \supset \L_\R(\a_0) $.
    We distinguish the following two cases.

    {\bf post-critically finite case:}
    Suppose that $f_{\a_0}$ is post-critically finite, again by Theorem \ref{kiwicontain} and Proposition \ref{visuallamination}, it suffices to show that $\L_\Q(\a_0) \subset \Lambda_\R(\a_0) $.
    Suppose that $(\theta,\theta') \in \L_\Q(\a_0)$, and $R_{\a_0}(\theta)$ lands at $z_0$.
    If the forward orbit of $z_0$ does not contain the critical point $-c(\a_0)$, then by Lemma \ref{stablerepelling}, we see that for $\a$ near $\a_0$, the external rays $R_\a(\theta)$ and $R_\a(\theta')$ both land at the perturbed pre-periodic point $z_0(\a)$.
    Therefore, we see that $(\theta,\theta') \in \L_\R(\H) \subset \Lambda_\R(\a_0) $ holds.
    Now we consider the case that $z_0=\omega(\a_0)$ is preimage of $-c(\a_0)$.
    Let $n$ be the unique integer such that $f_{\a_0}^n(z_0)=-c(\a_0)$.
    Since the forward orbit of $u(\a_0):=f_{\a_0}(-c(\a_0))$ does not contain $-c(\a_0)$, then by the discussion of the former case, for $\a$ near $\a_0$, $R_\a(\tau_3^{n+1}(\theta))$ and $R_\a(\tau_3^{n+1}(\theta'))$ both land at $u(\a)$ which is the perturbed pre-periodic point of $u(\a_0)$.
    Similarly, since the smooth internal ray $R_{\a_0}^{x_1}(t_1)$ lands at $u(\a_0)$, for $\a$ near $\a_0$, $R_{\a}^{x_1}(t_1)$ also lands at $u(\a)$ where $(x_1,t_1):=\g(x,t_0)$.
    Pick $m$ large so that the map $f_{\a_0}^{n+1}: P_{m+n+1}^{\a_0}(z_0) \to P_m^{\a_0}(u(\a_0))$ has degree $2$.
    For $\a \in \mathcal{R}$ which is close to $\a_0$, denote $P_\a:=h_{m+n+1}(\a,P_{m+n+1}^{\a_0}(z_0) )$ and  $P'_\a:=h_m(\a,P_{m}^{\a_0}(u(\a_0)) )$ where $h_m$ and $h_{m+n+1}$ are holomorphic motions given in Lemma \ref{hmpuzzle}.
    By Lemma \ref{hmpuzzle}, for $\a \in \mathcal{R}$ which is close to $\a_0$, we see that $u(\a) \in P_\a'$, $z_\theta(\a), z_{\theta'}(\a) \in P_\a$, and the map $f_\a^{n+1} : P_\a \to P'_\a$ also has degree $2$.
    For $\a \in \mathcal{R}$, $\omega(\a_0)$ perturbs into an iterative pre-critical point $\omega(\a)$ in $P_\a$ connecting a pair of left and right turning internal ray $\RRR_\a^{L,R}(v,t)$.
    It follows that the two end points $z^L_\a(v,t)$ and $z^R_\a(v,t)$ of $\RRR_\a^{L,R}(v,t)$  both belong to $P_\a$ and must be mapped to $u(\a)$ by $f_\a^{n+1}$.
    Hence $z^L_\a(v,t)$ and $z^R_\a(v,t)$ are the only two $n+1$-th preimage of $u(\a)$ in $P_{n+1}^\a$.
    Therefore, we have $\{ z_\theta(\a), z_{\theta'}(\a) \}=\{ z^L_\a(v,t), z^R_\a(v,t) \}$.
    This implies that $(\theta,\theta') \in \Lambda_\R(\a_0)$.

    {\bf post-critically infinite case:}
    Now we consider the case that the forward orbit of $-c(\a_0)$ is infinite.
    We claim that $\L_\Q(\a_0)\subset \L_\Q(\H)$.
    In fact, for any $(\theta,\theta') \in \L_\Q(\a_0) $, $R_{\a_0}(\theta)$ and $R_{\a_0}(\theta')$ land at a repelling preperiodic point whose forward orbit omits $-c(\a_0)$.
    By Lemma \ref{stablerepelling}, for $\a$ near $\a_0$, the external rays $R_\a(\theta)$ and $R_\a(\theta')$ land at a common point.
    Thus $(\theta,\theta') \in \L_\Q(\H)$.
    
    Therefore, we only need to consider the case that $(\theta,\theta') \in \L_\R(\a_0)$ with $\theta\neq \theta'$, and $R_{\a_0}(\theta)$ lands at $z_0$ which has infinite forward orbit.

    \begin{clm}\label{clm2}
        Let $B$ be a Fatou component of $f_{\a_0}$ and $z \in \partial B$ is not an iterative preimage of $-c(\a_0)$.
        Suppose that two distinct external rays $R_{\a_0}(\theta_1)$ and $R_{\a_0}(\theta_2)$ land at $z$, then $\theta_1,\theta_2$ are both rational.
    \end{clm}

    \begin{proof}[Proof of Claim \ref{clm2}]
        We prove it by contradiction, if $\theta_1,\theta_2$ are not rational, then $z$ has infinite forward orbit.
        Since $z$ is not an iterative preimage of $-c(\a_0)$, for $n \geq 0$, external rays $R_{\a_0}(\tau_3^n(\theta_1))$ and $R_{\a_0}(\tau_3^n(\theta_2))$ are two distinct external rays landing at $f_{\a_0}^n(z)$.
        Thus, without loss of generality, we assume that $z \in \partial B_{\a_0}(x)$.
        By Theorem \ref{roeschyin}, $K_{\a_0}$ can be decomposed into $B_{\a_0}(x)$ and limbs $L_w$ for $w \in \partial B_{\a_0}(x)$.
        Since $f_{\a_0}^p$ has finitely many critical points, $L_{f_{\a_0}^n(z)}=\{ z \}$ for $n$ large.
        This contradicts the fact that there are at least two external rays landing at $f_{\a_0}^n(z)$.    
    \end{proof}

    We continue the proof in the post-critically infinite case.
    There are two sub-cases.    
    Suppose that $z_0$ is not on the boundary of any bounded Fatou component, in this case, we show that $(\theta,\theta') \in \L_\R(\H)$.
     By Lemma \ref{singleton}, it suffices to show that
    $$P_n^\a(z_\theta(\a))=P_n^\a(z_{\theta'}(\a)),\quad \forall n \geq 1, \a \in \mathcal{R}.$$ 
    Fix $n$, define a modified puzzle $Q_n$ of $z_0$.
    If the boundary of $P_n^{\a_0}(z_0)$ does not contain any equipotential line in the bounded Fatou component, then set $Q_n=P_n^{\a_0}(z_0)$.
    On the contrary, we assume that $\partial P_n^{\a_0}(z_0) $ contains equipotential lines in Fatou components $B_{\a_0}(v_1), B_{\a_0}(v_2),\dots,B_{\a_0}(v_q)$.

    \begin{clm}\label{clm3}
        For each $1\leq k \leq q$, there exist $\theta_k,\theta_k' \in \Q$ such that $R_{\a_0}(\theta_k)$ and $R_{\a_0}(\theta_k')$ lands at a common point and separates $z_0$ away from $B_{\a_0}(v_k)$.
    \end{clm}

    \begin{proof}[Proof of Claim \ref{clm3}]
        Since $z_0 \notin \partial B_{\a_0}(v_k)$, by Theorem \ref{roeschyin}, it belongs to a unique non-singleton limb $L_w$ for some $w \in \partial B_{\a_0}(v_k)$.
        Therefore, there exists two external rays $R_{\a_0}(\theta_1)$ and $R_{\a_0}(\theta_2)$ landing at $w$ and separating $z_0$ away from $B_{\a_0}(v_k)$.
        If $w$ is not an iterative preimage of $-c(\a)$, by Claim \ref{clm2}, $\theta^-,\theta^+$ are both rational.
        In this case, set $\theta_k,\theta_k'$ to be $\theta^-,\theta^+$.

        Now suppose that $w$ is an $n$-th preimage of $-c(\a_0)$.
        It follows that there exists $B_{\a_0}(u_1)$ such that $\partial B_{\a_0}(u_1) \cap \partial  B_{\a_0}(v_k)=\{ w \}$.
        Since $z_0 \notin \partial B_{\a_0}(u_1)$, again by Theorem \ref{roeschyin}, there exists a pair of external rays $R_{\a_0}(\theta_1^-),R_{\a_0}(\theta_1^+)$ landing at $w_1 \in \partial B_{\a_0}(u_1)$ and separating $z_0$ away from $B_{\a_0}(u_1)$.
        If $w_1$ is not an iterative preimage of $-c(\a_0)$, then set $\theta_k,\theta_k'$ to be $\theta^-_1,\theta^+_2$.
        If not, $w_1$ must be the common boundary of $B_{\a_0}(u_1)$ and some other Fatou component $B_{\a_0}(u_2)$.
        Repeating the procedure, either we find rational $\theta_k,\theta_k'$ we desired in finite many steps, or we obtain a sequence of Fatou components $\{B_{\a_0}(u_n)\}$ with $\partial B_{\a_0}(u_n) \cap \partial B_{\a_0}(u_{n+1})=\{ w_{n+1} \}$, irrational pairs $(\theta^-_n,\theta_n^+) \in \L_{\R}(\a_0)$ such that $R_{\a_0}(\theta_n^-)$ and $R_{\a_0}(\theta_n^+)$ land at $w_n$.
        Without loss of generality, we assume that $(\theta,\theta')_+\subset (\theta_n^-,\theta_n^+)_+ \subset (\theta_{n-1}^-,\theta_{n-1}^+)_+$ for all $n \geq  2$.
        By Theorem \ref{kiwicontain}, since $\L_\R(\a_0)$ is closed, there exists $(\vt^-,\vt^+) \in \L_\R(\a_0)$ and  $(\theta,\theta')_+\subset (\vt^-,\vt^+)_+ \subset (\theta_{n}^-,\theta_{n}^+)_+$.
        In the following, we will show that $(\vt^-,\vt^+)$ must be rational.

     \begin{figure}[h]
  \begin{center}
    \includegraphics[width=0.75\linewidth]{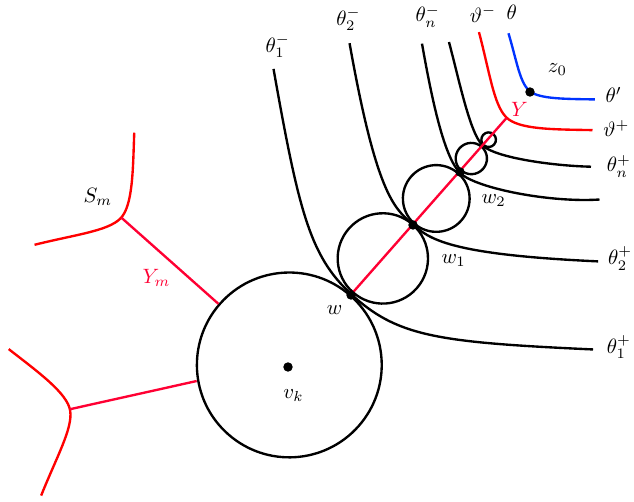}
    \caption{}
 \end{center}
\end{figure}  
        
        Let $R_n$ be the union of two internal rays inside $B_{\a_0}(u_n)$ joining $w_n$ and $w_{n+1}$.
        Define the connected set $Y:=\left(\bigcup_n R_n\right) \cup \overline{R_{\a_0}(\vt^-)}\cup \overline{R_{\a_0}(\vt^+)}$.
        Passing to iteration, we may assume that $Y$ is ``standing'' on the boundary of $B_{\a_0}(c)$, i.e. $v_k=c$ and $z_0=-c(\a_0)$.
        Let $Y_m:=f_{\a_0}^{mp}(Y)$ for $m \geq 0$.
        One may verify that $Y_m$ is also standing on $B_{\a_0}(c)$ and passing through infinitely internal rays and reaching $R_{\a_0}(\vt^-_m),R_{\a_0}(\vt_m^+)$.
        We may assume that $S_m:=S_{\a_0}(\vt^-_m,\vt^+_m)_+$ does not contain $B_{\a_0}(c)$.
        If $(\vt^-,\vt^+)$ is not a rational pair, then $S_m,m\geq 1$ are disjoint.
        Since $f_{\a_0}^p$ has finitely many critical points, it follows that for $m$ large, $S_m$ does not contain any critical point of $f_{\a_0}^p$. 
        This means the angular width of $\Delta(\vt^-_{m+1},\vt_{m+1}^+)_+=3\Delta(\vt^-_m,\vt^+_m)_+$ for $m$ large.
        This is clearly impossible.
        Thus $(\vt^-,\vt^+)$ must be rational.
        Set $\theta_k:=\vartheta^-$ and $\theta_k':=\vartheta^+$ for this case.
    \end{proof}

    By Claim \ref{clm3} and Lemma \ref{stablerepelling}, for $\a$ sufficiently close to $\a_0$, the external rays $R_\a(\theta_k)$ and $R_\a(\theta'_k)$ consistently land at a common point and separates $\{ z_\theta(\a),z_{\theta'}(\a)\}$ away from $v_k(\a)$.
    Define the modified puzzle piece $Q_n$ to be the connected component of $P_n^{\a_0}(z_0) \setminus  \bigcup_k (\overline{ R_{\a_0}(\theta_k) } \cup \overline{ R_{\a_0}(\theta_k')})$ containing $z_0$
    It follows that $Q_n \subset P_n^{\a_0}(z_0)$ does not contain any equipotential line on its boundary, i.e. it can only contains co-landing external rays with $\{(\theta_k,\theta_k'):1\leq l \leq \ell \}$ for some $\ell \geq q$, and equipotential lines outside the filled-in Julia set $K_{\a_0}$.
    As long as $K_\a$ is connected and $(\theta_k,\theta_k') \in \L_\R(\a)$ for $1\leq l \leq \ell$, the modified puzzle piece is always defined and denoted by $Q_n^\a$.
    Moreover, we know that $\{ z_\theta(\a), z_{\theta'}(\a) \} \subset Q_n^\a$ holds by the construction of $Q_n^\a$.
    By Lemma \ref{stablerepelling}, we know that for $\a_1 \in \mathcal{R}$ and sufficiently close to $\a_0$, $(\theta_k,\theta_k') \in \L_\R(\a_1)$.
    By Lemma \ref{surgery}, for any $\a \in \mathcal{R}$,  we also have $(\theta_k,\theta_k') \in \L_\R(\a)$. 
    It follows that $z_\theta(\a), z_{\theta'}(\a) \in Q_n^\a $ are contained in the same (unmodified) puzzle piece of depth $n$ for all $\a \in \mathcal{R}$.
    This complete the proof in this case.

    It remains to discuss the case that $z_0 \in \partial B_{\a_0}(v)$ which has infinite forward orbit.
    By Claim \ref{clm2}, $z_0=\omega(\a_0)$ must be an iterative preimage of $-c(\a_0)$ since it receives two external rays $R_{\a_0}(\theta)\neq R_{\a_0}(\theta')$ , and $u(\a_0):=f_{\a_0}(-c(\a_0))$ is the landing point of a unique external ray $R_{\a_0}(\vt)$.
    Let $\omega(\a)$ denote the iterative preimage of $-c(\a)$ for $\a$ near $\a_0$.
    This implies that there are only two external rays landing at $z_0$, they must be $R_{\a_0}(\theta)$ and $R_{\a_0}(\theta')$.
    Notice that for $\a \in \mathcal{R}$ near $\a_0$, the $\omega(\a)$ connects a pair of left and right turning internal ray $\RRR_\a^{L,R}(v',t)$.
    By Lemma \ref{impression}, $R_{\a_0}(\vt^L(v',t))$ and $R_{\a_0}(\vt^R(v',t))$ must land at $z_0$.
    Hence $\{ \theta,\theta' \}=\{ \vt^L(v',t), \vt^R(v',t) \}$.
    This implies that $(\theta,\theta') \in \Lambda_\R(\a_0)$.

\end{proof}

Combining Proposition \ref{periodicmain} and Proposition \ref{nonperiodicmain}, we conclude that the visual lamination indeed coincides with the real lamination.

\begin{thm}[boundary lamination in $\S$]\label{main}
     Let $\H$ be a bounded hyperbolic component of type \A, \B, or \CCC, and $\a_0 \in \partial \H \cap \S$, then 
    $\Lambda_\R(\a_0) = \L_\R(\a_0).$
\end{thm}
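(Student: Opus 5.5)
The proof is a short assembly of the two propositions just established, organized by the dichotomy of Theorem \ref{wang} and Proposition \ref{wang2}. Fix $\a_0 \in \partial \H \cap \S$ and let $\mathcal{R}=\mathcal{R}_\H(t_0)$ be the unique parameter internal ray landing at $\a_0$. By Lemma \ref{surgery}, the data $(x,t_0)$, $\M'$ and the angles $\vt^\iota(v,t)$ do not depend on the choice of $\a \in \mathcal{R}$. Hence the visual lamination $\Lambda_\R(\a_0)$ is well defined, and the alternative ``$(x,t_0)$ is periodic or not periodic under $\g$'' is a genuine dichotomy that depends only on $\a_0$.

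In the periodic case I would invoke Proposition \ref{periodicmain} directly. Its proof combines three ingredients:
\begin{itemize}
\item the inclusion $\L_\R(\H)\subset \L_\R(\a_0)$ from Theorem \ref{laminationsubset};
\item the co-landing of the characteristic rays at the parabolic point, from Corollary \ref{coland};
\item the reverse inclusion, obtained from the rational lamination via Theorem \ref{kiwicontain}. Kiwi's theorem applies here because Proposition \ref{visuallamination} shows that $\Lambda_\R(\a_0)$ is a closed lamination, and the only critical point on a Fatou boundary is preperiodic.
\end{itemize}

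In the non-periodic case I would invoke Proposition \ref{nonperiodicmain}. Its inclusion $\Lambda_\R(\a_0)\subset\L_\R(\a_0)$ comes from Corollary \ref{nonper}: by Proposition \ref{wang2} all impressions are singletons, and Lemma \ref{impression} then places co-related angles in a common impression. The reverse inclusion splits into two sub-cases. In the post-critically finite sub-case it uses the holomorphic motion of puzzle pieces (Lemma \ref{hmpuzzle}). In the post-critically infinite sub-case it uses Claims \ref{clm2} and \ref{clm3} together with the modified puzzle pieces.

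Since the two cases are exhaustive, combining them gives $\Lambda_\R(\a_0)=\L_\R(\a_0)$ for every $\a_0\in\partial\H\cap\S$.

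The main obstacle is already handled upstream, but it is worth flagging where the weight lies. It is the reverse inclusion $\L_\R(\a_0)\subset\Lambda_\R(\a_0)$ for pairs of angles landing at points with infinite forward orbit, where Theorem \ref{kiwicontain} cannot reduce the question to rational angles. In the final write-up I would check two points. First, that Kiwi's hypothesis, namely that no critical point with infinite orbit lies on a Fatou boundary, is used only in the PCF and periodic cases. Second, that in the post-critically infinite case the direct puzzle argument genuinely covers non-rational pairs. If that argument seemed insufficient, I would first try to close it with the closedness (R1) of $\Lambda_\R(\a_0)$ established in Proposition \ref{visuallamination}, approximating irrational leaves by rational ones.
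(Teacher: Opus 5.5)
Your proposal is correct and matches the paper's argument: Theorem \ref{main} is obtained by combining Proposition \ref{periodicmain} and Proposition \ref{nonperiodicmain}, split according to whether $(x,t_0)$ is periodic under $\g$, and that split is exhaustive. One small correction to your summary: in the periodic case $-c(\a_0)$ lies in a parabolic basin, so no critical point lies on the Julia set, and that is why the hypothesis of Theorem \ref{kiwicontain} holds there; a preperiodic critical point on a Fatou boundary occurs only in the post-critically finite sub-case.
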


\subsection{Proof of the Main Theorem}\label{ssec4.6}

In this part, we provide the proof of Theorem \ref{thmmain2}.
We need to transfer the main result Theorem \ref{main} in $\partial \H \cap \S$ to the tame boundary $\partial_{\rm t}\H$ in two complex dimension.
This is a standard application of quasiconformal surgeries which change the multiplier of the attracting cycle.
The following is a variation of \cite[Lemma 3.12]{wang2023}.

\begin{prop}[changing the multiplier of attracting cycle]\label{multiplier}
    Let $\H$ be a bounded hyperbolic component of type \A, \B, or \CCC.
    For every $\a_1 \in \p \H$, there exist $p \geq 1$ and $\a_0 \in \partial \H \cap \S$ such that $\L_\R(\a_0)=\L_\R(\a_1)$.
\end{prop}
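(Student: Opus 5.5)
The plan is to change the multiplier of the attracting cycle by a quasiconformal deformation supported on the grand orbit of the basin, keeping the dynamics in the basin of infinity conformally unchanged. Let $\a_1\in\p\H$, let $p$ be the period of the attracting cycle of $f_{\a_1}$ whose immediate basin contains $c(\a_1)$, and let $\lambda$ be its multiplier. Since $\a_1$ is tame, this cycle is attracting, so $0\le|\lambda|<1$. If $\lambda=0$ then $\a_1\in\S$ and we take $\a_0=\a_1$. Otherwise we use the standard surgery that replaces the attracting dynamics of $f_{\a_1}^p$ on $B_{\a_1}(c)$ by a super-attracting one.

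\textbf{Step 1: the surgery.} Let $B$ be the component of the immediate basin containing $c(\a_1)$. Take the Riemann map of $B$ together with a Blaschke model of $f_{\a_1}^p|_B$, of degree $2$ or $3$ according to the type. Replace this model by a super-attracting Blaschke product of the same degree, chosen so that the two agree near the unit circle up to a quasiconformal interpolation on an annulus. This gives a quasiregular map $F$ equal to $f_{\a_1}$ outside $\bigcup_n f_{\a_1}^{-n}(B)$. Next, spread an $F$-invariant Beltrami coefficient $\mu$ over the grand orbit of $B$ by pullback, and set $\mu=0$ on $\Omega_{\a_1}$ and on the rest of $K_{\a_1}$. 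The dilatation is bounded because each orbit passes through the non-conformal annulus only once. Straighten $\mu$ by the Measurable Riemann Mapping Theorem with the same normalization as in Lemma \ref{surgery}, so that the conjugating map $h$ is tangent to the identity at $\infty$ and preserves the marked critical points. The result is a monic centered cubic $f_{\a_0}=h\circ F\circ h^{-1}$, and $c(\a_0)$ is $p$-periodic, so $\a_0\in\S$.

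\textbf{Step 2: $\a_0\in\partial\H\cap\S$.} Deform $\lambda$ along a path to $0$, using the same surgery continuously in the parameter $\lambda$. This gives a continuous path of parameters from $\a_1$ to $\a_0$ in which every map is qc-conjugate to $f_{\a_1}$. Apply the same construction to a sequence $\a_1^{(k)}\in\H$ converging to $\a_1$, with the same target multiplier. By continuity of the surgery in the data, the resulting parameters lie in $\H\cap\S$ and accumulate on $\a_0$. Since $f_{\a_0}$ is not hyperbolic (it is qc-conjugate to $f_{\a_1}$), $\a_0\notin\H$, hence $\a_0\in\partial\H\cap\S$. Alternatively, one can invoke the parameterization of $\H$ by the multiplier and the critical value position from \cite{milnor2008cubic}, and the fact that conjugacy classes persist along this path.

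\textbf{Step 3: equality of laminations.} The map $h$ is conformal on $\Omega_{\a_1}$. As in Lemma \ref{surgery}(1), $\varphi_{\a_0}\circ h=\varphi_{\a_1}$, so $h(R_{\a_1}(\theta))=R_{\a_0}(\theta)$ for every $\theta$. Both Julia sets are locally connected, and $h$ is a homeomorphism of $\CC$ mapping $J_{\a_1}$ onto $J_{\a_0}$. Hence $R_{\a_1}(\theta)$ and $R_{\a_1}(\theta')$ co-land if and only if $R_{\a_0}(\theta)$ and $R_{\a_0}(\theta')$ co-land. This gives $\L_\R(\a_0)=\L_\R(\a_1)$.

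\textbf{Main obstacle.} The delicate point is Step 2, showing that the surgically produced parameter lies on $\partial\H$ and not on the boundary of some other component. I would handle this by running the surgery uniformly on a neighborhood of $\a_1$, so that the multiplier-changing map is continuous. It should send $\H$ into $\H\cap\S$ because it preserves both the hyperbolic type and the combinatorics.
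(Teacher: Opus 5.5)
Your route is the one the paper takes: the paper gives no argument beyond citing the multiplier-changing surgery of \cite[Lemma 3.12]{wang2023}. Steps 1 and 3 are fine, and Step 3 is exactly the argument of Lemma \ref{surgery}(1). One correction to Step 1: since $\a_1\in\partial\H$ is not hyperbolic, $-c(\a_1)\notin\tilde B_{\a_1}$, so the first-return map $f_{\a_1}^p$ on $B_{\a_1}(c)$ always has degree $2$, whatever the type.

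\textbf{The gap is Step 2.} It carries the only non-routine content of the statement: that the surgered parameter lies on $\partial\H$ for the \emph{same} $\H$. You get it by applying ``the same construction'' to $\a_1^{(k)}\in\H$ and invoking ``continuity of the surgery in the data''. But your Step 1 construction is built on the Riemann map of the whole component $B_\a(c)$ and a Blaschke model of the first return. For $\a\in\H$ of type \A~or \B, that first return has degree $3$ or $4$, because the second critical point lies in the immediate basin $B_\a$; at $\a_1$ it has degree $2$. So the surgeries at $\a_1^{(k)}$ are not perturbations of the one at $\a_1$. Consequently neither ``the resulting parameters lie in $\H\cap\S$'' nor ``they accumulate on $\a_0$'' is justified. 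The fallback to Milnor's parametrization, or to the surgery ``preserving type and combinatorics'', is not developed into an argument.

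\textbf{A possible repair.} You need a surgery that is uniform near $\a_1$.
\begin{itemize}
\item Pull a linearizing disk at the attracting point of $B_{\a_1}(c)$ back by $f_{\a_1}^p$ to get disks $W_{n-1}\Subset W_n$ with $c(\a_1)\in W_n$ and $f_{\a_1}^p:W_n\to W_{n-1}$ proper of degree $2$. Since $\overline{W_n}$ is a compact subset of the basin containing no other critical point of $f_{\a_1}^p$, these disks persist and move continuously for $\a$ in a neighborhood $\mathcal U$ of $\a_1$. This includes $\a\in\H\cap\mathcal U$, where $-c(\a)$ stays away from them.
\item Modify the first return only on $W_n$, by a degree-$2$ model interpolated inside $W_n$; each orbit crosses the interpolation annulus at most once. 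Do this along a path $s\in[0,1]$ that is trivial at $s=0$ and super-attracting at $s=1$.
\item The dilatations are uniformly bounded in $(\a,s)$. The normalized straightened parameters $\Phi(\a,s)$ are continuous in $s$ with $\Phi(\a,0)=\a$. For $\a\in\H\cap\mathcal U$ every $f_{\Phi(\a,s)}$ is hyperbolic, hence $\Phi(\a,1)\in\H\cap\S$.
\item Now let $\a_1^{(k)}\to\a_1$ in $\H$. The normalized straightening maps $h_k$ are uniformly quasiconformal, and for $k$ large they are conformal on any given compact subset of $\Omega_{\a_1}$. So a subsequence converges to a quasiconformal $h_\infty$ that is conformal on $\Omega_{\a_1}$. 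It conjugates the modified map at $\a_1$ to $f_{\a_0}$, where $\a_0=\lim_k\Phi(\a_1^{(k)},1)\in\overline{\H}\cap\S$.
\item $f_{\a_0}$ is not hyperbolic: $h_\infty(-c(\a_1))$ is a critical point whose orbit is conjugate to the $f_{\a_1}$-orbit of $-c(\a_1)$, which never meets the modified region and is not attracted to any attracting cycle. Hence $\a_0\in\partial\H\cap\S$, and your Step 3 applies to $h_\infty$ verbatim.
\end{itemize}
Taking a subsequential limit of the $h_k$ also avoids needing a.e.\ convergence of the Beltrami coefficients. That convergence fails on parabolic basins of $f_{\a_1}$ that become part of $\tilde B_\a$ for $\a\in\H$.
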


Using Proposition \ref{multiplier} and Theorem \ref{main}, we provide the proof of Theorem \ref{thmmain2}.

\begin{proof}[Proof of Theorem \ref{thmmain2}]
    For every $\a \in \p \H$, by Proposition \ref{multiplier}, there exists $\a_0 \in \partial \H \cap \S$ such that $\L_\R(\a_0)=\L_\R(\a)$.
    According to Theorem \ref{main}, we have $\L_\R(\a_0)=\Lambda_\R(\a_0)$.
    By the definition of $\Lambda_\R(\a_0)$, we know that $\Lambda_\R(\a_0)= \langle \Lambda(\a_0) \cup \L_\R(\mathcal{H}) \rangle $.
    By Proposition \ref{Lc}, $\Lambda(\a_0)$ is a minimal $\tau_3$-invariant equivalence class.
    By changing the notation $\Lambda(\a_0)$ to $\Lambda(\a)$, the proof is done.

\end{proof}

Corollary \ref{semiconjugacy} is a direct corollary of Theorem \ref{thmmain2}.

    \begin{proof}[Proof of Corollary \ref{semiconjugacy}]
        For any $z \in J_{\a^*}$, $z$ is the landing point of at least one external ray $R_{\a^*}(\theta)$, define $\pi_\a(z)$ to be the landing point of $R_\a(\theta)$.
        Theorem \ref{laminationsubset} ensures that the map $\pi_\a$ is well-defined.
        The continuity of $\pi_\a$ follows from the continuity of $\psi_\a$ and $\psi_{\a^*}$.
        
    \end{proof}

Finally, we present the proof of Theorem \ref{degree3} as an application.

\begin{proof}[Proof of Theorem \ref{degree3}]
    Let $\H$ be a hyperbolic component of type (A), (B) or (C), and $\a_1 \in \H$.
    Pick any $t_0$ which is irrational.
    Let $\mathcal{R}:=\mathcal{R}_{\H}(t_0)$ be a parameter internal ray with angle $t_0$.
    Let $\a \in \S \cap \partial \H$ be the landing point of $\mathcal{R}$.
    By Theorem \ref{laminationsubset}, we have $\L_\R(\a) \neq \L_\R(\a_1)$.
    By Theorem \ref{main}, $\lambda_\R(\a)=\langle \Lambda(\a) \cup \L_\R(\H) \rangle$.
    By Lemma \ref{Lc}, non-singleton equivalence classes of $\Lambda(\a)$ are all eventually mapped to $\{ \vt^L(x,t_0), \vt^R(x,t_0) \}$.
    These two characteristic angles $ \vt^L(x,t_0), \vt^R(x,t_0)$ are irrational provided $t_0$ is irrational.
    It follows that $\lambda_\Q(\a)=\lambda_\Q(\a_1)$.
    Thus, $f_{\a_1}$ is not combinatorially rigid.

\end{proof}

\bibliographystyle{plain}
\bibliography{reference} 
\end{document}